\DeclareMathOperator{\WF}{WF}
\DeclareMathOperator{\real}{Re}
\DeclareMathOperator{\imag}{Im}
\DeclareMathOperator{\supp}{supp}
\DeclareMathOperator{\singsupp}{sing\, supp}
\DeclareMathOperator{\Char}{Char}
\DeclareMathOperator*{\proj}{proj}
\DeclareMathOperator{\dist}{dist}
\newcommand{\COT}{ U\times\mathbb{R}^n \!\setminus\! \{0\}}
\newcommand{\fa}{\;\,\forall\,}
\newcommand{\ex}{\;\,\exists\,}
\newcommand{\N}{\mathbb{N}}
\newcommand{\R}{\mathbb{R}}
\newcommand{\C}{\mathbb{C}}
\newcommand{\E}{\mathcal{E}}
\newcommand{\D}{\mathcal{D}}
\newcommand{\CC}{\mathcal{C}}
\newcommand{\An}{\mathcal{A}}
\newcommand{\G}{\mathcal{G}}
\newcommand{\fG}{\mathfrak{G}}
\newcommand{\F}{\mathcal{F}}
\newcommand{\bG}{\mathbf{G}}
\newcommand{\sP}{\mathsf{P}}
\newcommand{\sX}{\mathsf{X}}
\newcommand{\norm}[2][]{\left\lVert #2 \right\rVert_{#1}}
\newcommand{\vNorm}[3][P]{\left\lVert #3\right\rVert_{#2}^{#1}}
\newcommand{\alp}{{\lvert\alpha\rvert}}
\newcommand{\Betr}[2][]{\left\lvert #2 \right\rvert_{#1}}
\newcommand{\bet}{{\lvert\beta\rvert}}
\newcommand{\xit}{{\lvert\xi\rvert}}
\newcommand{\etat}{\lvert\eta\rvert}
\newcommand{\Beu}[2]{\mathcal{E}^{( #1 )} ( #2 )}
\newcommand{\Rou}[2]{\mathcal{E}^{\{ #1 \}} ( #2 )}
\newcommand{\DC}[2]{\mathcal{E}^{[ #1 ]} ( #2 )}
\newcommand{\vBeu}[3][P]{\mathcal{E}^{ (#2) } ( #3 ; #1 )}
\newcommand{\vRou}[3][P]{\mathcal{E}^{\{ #2 \}} ( #3; #1 )}
\newcommand{\vDC}[3][P]{\mathcal{E}^{[ #2 ]} ( #3 ; #1  )}
\newcommand{\temp}{\mathcal{S}^\prime(\R^n)}
\newcommand{\eps}{\varepsilon}
\newcommand{\Dp}{\mathcal{D}^\prime}
\newcommand{\Ep}{\mathcal{E}^\prime}
\newcommand{\bM}{\mathbf{M}}
\newcommand{\bN}{\mathbf{N}}
\newcommand{\fM}{\mathfrak{M}}
\newcommand{\fN}{\mathfrak{N}}
\newcommand{\bL}{\mathbf{L}}
\newcommand{\fL}{\mathfrak{L}}
\newcommand{\fQ}{\mathfrak{Q}}
\newcommand{\fR}{\mathfrak{R}}
\newcommand{\bB}{\mathbf{B}}
\newcommand{\fB}{\mathfrak{B}}
\newcommand{\fW}{\mathfrak{W}}
\newcommand{\bW}{\mathbf{W}}
\newcommand{\W}{\mathcal{W}}
\newcommand{\fT}{\mathfrak{T}}
\newcommand{\fS}{\mathfrak{S}}
\newcommand{\bS}{\mathbf{S}}
\newcommand{\fJ}{\mathfrak{J}}
\newcommand{\U}{\mathcal{U}}
\theoremstyle{plain}
\newtheorem{Thm}{Theorem}[section]
\newtheorem{Prop}[Thm]{Proposition}
\newtheorem{Lem}[Thm]{Lemma}
\newtheorem{Cor}[Thm]{Corollary}
\theoremstyle{definition}
\newtheorem{Def}[Thm]{Definition}
\theoremstyle{remark}
\newtheorem{Rem}[Thm]{Remark}
\newtheorem{Ex}[Thm]{Example}
\newtheorem{Not}[Thm]{Notation}
\numberwithin{equation}{section}
\title{The Theorem of Iterates for elliptic\\ and non-elliptic Operators}
\author{Stefan F\"urd\"os}
\address{Faculty of Mathematics, University of Vienna, Oskar-Morgenstern Platz 1, 1090 Vienna, Austria}
\email{stefan.fuerdoes@univie.ac.at}
\thanks{The first author was supported by FWF grant I 3472 and J 4439. 
	The second author was supported by FWF projects P 32905 and P 33417}
\author{Gerhard Schindl}
\address{Faculty of Mathematics, University of Vienna, Oskar-Morgenstern Platz 1, 1090 Vienna, Austria}
\email{gerhard.schindl@univie.ac.at}
\subjclass[2020]{Primary 35A18, 26E10, 35H10; Secondary 46F05, 35B65, 35H20}
\keywords{ultradifferentiable vectors, wave front sets, ultradifferentiable classes, 
	Theorem of Iterates}
\begin{document}






\begin{abstract}
We introduce a new approach for the study of the Problem of Iterates using the theory on general ultradifferentiable structures developed in the last years. Our framework generalizes many of the previous settings including the Gevrey case and enables us, for the first time, to prove non-analytic Theorems of Iterates for non-elliptic differential operators. In particular, by generalizing a Theorem of Baouendi and Metivier we obtain the Theorem of Iterates for  analytic hypoelliptic operators of principal type with respect to several non-analytic ultradifferentiable structures. 
\end{abstract}


\maketitle

\section{Introduction}\label{Introduction}
In recent years there has been renewed interest in the Problem of Iterates, i.e.\ 
the study of vectors of differential operators, we mention in particular 
 \cite{MR3380075}, \cite{MR3661157}, \cite{MR3652556}, \cite{MR3208537}, \cite{MR3556261}, \cite{MR3043156}, 
\cite{Derridj2017},  \cite{MR4036740}, \cite{derridj2019}, \cite{Derridj2019a},
\cite{MR4098643}, 
\cite{HoepfnerRampazo} and \cite{MR3857012}.
For the history of the problem we refer to
 the survey \cite{MR1037999}.
 
 The aim of this paper is to present a new approach to the Problem of Iterates
 using the ultradifferentiable structures introduced in \cite{MR3285413}
 and \cite{MR3462072}, which generalizes and unifies many of the previous cases.
 
In our context  an ultradifferentiable structure $\U$ is
a subalgebra of smooth functions which is defined by estimates
on the derivatives of its elements.
Well-known ultradifferentiable structures include
 the Denjoy-Carleman classes which are given
by weight sequences
and the Braun-Meise-Taylor classes whose defining data are weight functions.
The latter  were originally introduced by  \cite{Beurling61} and \cite{Bjorck66},
but the modern formulation of these classes was given in \cite{MR1052587}.
The classes discussed in \cite{MR3285413}, which are determined by weight matrices,
i.e.\ families of weight sequences, encompass both Denjoy-Carleman classes and
Braun-Meise-Taylor classes. 
Other examples of ultradifferentiable spaces are
the Gelfand-Shilov classes, cf.\ \cite{MR0230128}
and  the recently introduced $L^p$-ultradifferentiable classes, see  \cite{MR3936096}.

Then ultradifferentiable vectors of some operator $P$
associated to the structure $\U$
are those functions (or distributions) which satisfy the 
defining estimates of $\U$ for the iterates $P^k$ of $P$.
Thus the \emph{Problem of Iterates} in its general form can rather casually
be formulated as the following question:
\begin{quote}
	Given an operator $P$ suppose that a function (or distribution) $u$ satisfies
	the defining estimates of an ultradifferentiable structure $\U$ for the iterates $P^k$
	of $P$. Can we conclude that $u$ satisfies these estimates for all derivatives?
\end{quote}
Or more concisely, are the ultradifferentiable vectors of $P$ with respect to
$\U$ already ultradifferentiable functions of 
class $\U$?
If the answer to this question is "yes'' then we say that the \emph{Theorem of Iterates} holds
for the operator $P$ and the structure $\U$.
 
Our main goal is to develop a unified approach to 
the problem of iterates using the recent development of the theory of general 
ultradifferentiable classes given in \cite{MR3285413}, \cite{MR3462072} and in particular
the microlocal theory in
\cite{FURDOS2020123451}.
This approach allows us not only to unify and generalize previously known results but also to treat cases which have not been available in the literature up to now.
In particular, in the case of principal type operators we are able to use
the technical estimate in \cite{MR654409} to infer the Theorem of Iterates
for a wide variety of ultradifferentiable classes, which include
quasianalytic and non-quasianalytic classes.
We note that, to our knowledge, this is the first time the Theorem of Iterates
is proven for a non-elliptic operator and a non-analytic ultradifferentiable structure.

In the case of Braun-Meise-Taylor classes our main Theorem takes 
a relatively concise form. However, in order to formulate it correctly, 
we need to recapitulate some notations:
We say that a differential operator $P$ defined on some open set $U\subseteq\R^n$
is of principal type\footnote{We follow here the classic definition, see e.g.\
	\cite{MR0296509} and the references therein. It sometimes does not agree 
	with the definition of principal type operators given in modern treatises, 
	for example in \cite[Chapter 26]{MR2512677}.}
or that $P$ is an operator with simple real characteristics if the principal
symbol $p_d$ of $P$ satisfies
\begin{equation*}
	\Betr{p_d(x,\xi)}+\sum_{j=1}^{n}\Betr{\partial_{\xi_j}p_d(x,\xi)}\neq 0
\end{equation*}
for all $(x,\xi)\in\COT$.

A weight function in the sense of \cite{MR1052587} 
is a continuous and increasing function $\omega: [0,\infty)\rightarrow [0,\infty)$
with $\omega(0)=0$ which satisfies
\begin{align}\tag{$\alpha$}\label{om2}
	\omega(2t)&=O(\omega(t))\quad\text{as } t\rightarrow \infty,\\
	\tag{$\beta$}\label{om3}
	\log t&=o(\omega(t))\quad\;\text{as }t\rightarrow\infty,\\
	\label{om4}\tag{$\gamma$}
	\varphi_\omega&=\omega\circ\exp\quad\,\text{is convex.}
\end{align}
We set 
\begin{equation*}
	\norm[V,\omega,h]{f}=\sup_{\substack{x\in V\\ \alpha\in\N_0^n}}
	\Betr{D^\alpha f(x)} e^{-\tfrac{1}{h}\varphi_\omega^\ast(h\alp)},
\end{equation*}
where $V\Subset U$ is a relatively compact subset of $U$, $f\in\E(U)$ is a smooth functions,
 $h>0$ and $\varphi^\ast_\omega(t):=\sup_{s\geq 0}(st-\varphi_\omega(s))$ 
is the conjugate function of $\varphi_\omega$.
The Roumieu class (of ultradifferentiable functions)
associated with $\omega$ is given by
\begin{align*}
	\Rou{\omega}{U}&=\left\{f\in\E(U):\;\fa V\Subset U\ex h>0\quad
	\norm[V,\omega,h]{f}<\infty\right\}\\
	\intertext{and the Beurling class associated to $\omega$ is}
	\Beu{\omega}{U}&=\left\{f\in\E(U):\;\fa V\Subset U\fa h>0\quad
	\norm[V,\omega,h]{f}<\infty\right\}.
\end{align*}
Similarly, for a partial differential operator $P$ of order $d$
with analytic coefficients we set 
\begin{align*}
	\vRou{\omega}{U}&=\Bigl\{u\in\Dp(U):\;\fa V\Subset U \ex h>0\quad
	\lVert u\rVert^P_{V,\omega,h}<\infty\Bigr\}\\
	\intertext{and}
	\vBeu{\omega}{U}&=\Bigl\{u\in\Dp(U):\;\fa V\Subset U \fa h>0\quad
	\lVert u\rVert^P_{V,\omega,h}<\infty\Bigr\},
\end{align*}
where
\begin{equation*}
	\vNorm{V,\omega,h}{u}=\sup_{k\in\N_0}\norm[L^2(V)]{P^ku}
	e^{-\tfrac{1}{h}\varphi_\omega^\ast(hdk)}.
\end{equation*}
Our main result in the case of weight functions is:
\begin{Thm}\label{omMainThm}
	Let $U\subseteq\R^n$ be an open set 
	and $P$ a hypoelliptic operator of principal type with analytic coefficients in $U$.
	Furthermore assume that $\omega$ is a weight function satisfying
	\begin{equation}\tag{$\Xi$}\label{om7}
		\exists H> 0:\quad	\omega(t^2)=O(\omega(Ht)) \qquad t\rightarrow \infty.
	\end{equation}
	
	Then 
	\begin{align*}
		\vRou{\omega}{U}&=\Rou{\omega}{U},\\
		\vBeu{\omega}{U}&=\Beu{\omega}{U}.
	\end{align*}
\end{Thm}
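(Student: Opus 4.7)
The plan is to reduce Theorem \ref{omMainThm} to the weight matrix version (Theorem \ref{Theorem2}) by realizing the Braun-Meise-Taylor class $\mathcal{E}^{[\omega]}$ as the class associated to a suitable ultradifferentiable scale. Concretely, to a weight function $\omega$ satisfying \eqref{om2}, \eqref{om3}, \eqref{om4} one associates the weight matrix
\begin{equation*}
\Omega = \bigl\{ \bW^l = (W^l_k)_{k\in\N_0} : l > 0 \bigr\}, \qquad W^l_k = \exp\!\bigl( \tfrac{1}{l}\varphi_\omega^\ast(lk)\bigr),
\end{equation*}
and a standard computation (see \cite{MR3285413}) shows $\Rou{\omega}{U} = \bigcup_{l>0}\Rou{\bW^l}{U}$ and $\Beu{\omega}{U} = \bigcap_{l>0}\Beu{\bW^l}{U}$, with the completely analogous identities for vector spaces $\mathcal{E}^{[\omega]}(U;P)$ once one checks that the $L^2$-seminorms transform in the same way.

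First I would verify that the inclusion $\DC{\omega}{U}\subseteq\vDC{\omega}{U}$ is automatic (the iterates $P^k u$ satisfy the required estimates since $P$ has analytic, hence $\DC{\omega}{}$-coefficients of order $d$), so that the content of the theorem is the reverse inclusion. Next, I would show that under hypothesis \eqref{om7}, the family $\Omega$ is an ultradifferentiable scale in the sense of Section \ref{sec:Scales}. The key point is that \eqref{om7} is equivalent (via the Young conjugate) to the estimate
\begin{equation*}
\varphi_\omega^\ast(2t) \leq 2\,\varphi_\omega^\ast(t) + O(t),
\end{equation*}
which in sequence language means that $W^l_{k+j} \lesssim C^{k+j} W^{l'}_k W^{l'}_j$ for a suitable larger $l'$, and similarly controls $W^l_k$ against powers $(W^{l'}_k)^s$ for $s > 1$. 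These are precisely the moderate-growth/scale conditions needed to view the parameter $l$ as indexing a monotone scale with a Baouendi-Métivier-type loss.

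Having identified $\Omega$ as an ultradifferentiable scale, I would invoke Theorem \ref{Theorem2} to conclude $\vDC{\Omega}{U} = \DC{\Omega}{U}$. Translating back via the equivalences in the first paragraph, this yields $\vDC{\omega}{U} = \DC{\omega}{U}$. In the Roumieu case one exhausts $U$ by relatively compact subsets $V\Subset W\Subset U$ and uses that the loss $s\mapsto s'= (ds-\delta)/(d-\delta)$ in Theorem \ref{ScaleCorollary1} maps the index $l$ of $\bW^l$ to some larger (but still admissible) $l'$; in the Beurling case one needs that the loss can be taken arbitrarily small as $l\to 0$, which again follows from the scale property.

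The step I expect to be the genuine obstacle is the translation between the analytic condition \eqref{om7} on $\omega$ and the combinatorial scale axioms on $\{\bW^l\}_{l>0}$. One has to carefully track how \eqref{om7} interacts with the Legendre transform $\varphi^\ast_\omega$ and verify the precise mixed estimates relating $\bW^l$ and $\bW^{l'}$ that are required by the definition of an ultradifferentiable scale; in particular, verifying that the index-shift $l\mapsto l'$ dictated by the Baouendi-Métivier loss stays inside the allowed parameter range, and that no index is lost when passing back from $\Omega$ to $\omega$ on the Beurling side. Once this dictionary is set up, the application of Theorem \ref{Theorem2} is formal.
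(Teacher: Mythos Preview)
Your overall strategy is exactly the one the paper uses: pass from $\omega$ to its associated weight matrix, verify that this matrix is (equivalent to) an admissible ultradifferentiable scale in the sense of Section~\ref{sec:Scales}, and then invoke Theorem~\ref{Theorem2}. The paper packages the translation step as Proposition~\ref{omScale} and then the proof of Theorem~\ref{omMainThm} is literally one line.

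There are, however, two concrete inaccuracies in your translation step that you should fix. First, the Young conjugate equivalence you state for \eqref{om7} is not correct: \eqref{om7} is \emph{not} equivalent to $\varphi_\omega^\ast(2t)\le 2\varphi_\omega^\ast(t)+O(t)$. The correct reformulation (Lemma~\ref{omMixedScale} and the corollary after it) is that for every $\alpha>1$ there exists $A\ge 1$ such that for all $\lambda>0$ one has
\[
\tfrac{1}{\lambda}\varphi_\omega^\ast(\alpha\lambda t)\;\le\;\tfrac{1}{A\lambda}\varphi_\omega^\ast(A\lambda t)+D(t+1),
\]
i.e.\ the estimate necessarily involves the index shift $\lambda\mapsto A\lambda$. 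This is precisely the pseudo\-homogeneity condition \eqref{R-scaleChange}/\eqref{B-scaleChange}, and it is what makes the scale admissible; without the shift you cannot conclude anything. Second, the matrix $\Omega=\fW=\{\bW^l\}$ you write down is \emph{not} itself a scale in the sense of Section~\ref{sec:Scales}, because scales there are of the form $M^\lambda_k=k!\,e^{\zeta_\lambda(k)}$. The scale generated by $\zeta_\omega(\lambda,t)=\tfrac{1}{\lambda}\varphi_\omega^\ast(\lambda t)$ is $\widehat{\fW}=\{(k!\,W^\lambda_k)_k\}$, and one needs Proposition~\ref{FacultyAbsorb} (which uses \eqref{om7} in an essential way) to conclude $\fW[\approx]\widehat{\fW}$ and hence $\DC{\omega}{U}=\DC{\widehat{\fW}}{U}$ and $\vDC{\omega}{U}=\vDC{\widehat{\fW}}{U}$. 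You correctly flagged this dictionary as the delicate part; once these two points are fixed your outline coincides with the paper's proof.
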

We may note  that the condition  \eqref{om7} 
 has appeared in various applications of 
Braun-Meise-Taylor classes, e.g.\ in the study of global pseudodifferential operators
in \cite{MR3999031}.

\subsection{Preliminaries}
We denote by $\N=\{1,2,\dots\}$ the set of positive integers and by $\N_0=\N\cup\{0\}$
the set of non-negative integers. Furthermore $U\subseteq\R^n$ is always an open set.
In this paper we focus on linear differential operators with analytic coefficients, i.e.
\begin{equation*}
P(x,D)=\sum_{\alp\leq d}a_\alpha(x)D^\alpha
\end{equation*}
 with  $a_\alpha\in \An(U)$.
 We use here the convention 
$D_j=-i\partial_{x_j}$. Then the symbol of $P$ is
\begin{align*}
p(x,\xi)&=\sum_{\alp \leq d}a_{\alpha}(x)\xi^\alpha\\
\intertext{and}
p_d(x,\xi)&=\sum_{\alp=d}a_\alpha(x)\xi^\alpha
\end{align*}
is the principal symbol of $P$.
The characteristic set of $P$ is given by
\begin{equation*}
\Char P=\left\{(x,\xi)\in\COT:\; p_d(x,\xi)=0\right\}.
\end{equation*}
Hence $\Char(P)=\emptyset$ if and only if $P$ is elliptic.

We say that a distribution $u\in\Dp(U)$ is an analytic vector of the operator $P$ if
for any $V\Subset U$ there are constants $C,h>0$ such that
	\begin{equation*}
	\norm[L^2(V)]{P^k u}\leq Ch^k k!
	\end{equation*}
for all $k\in\N_0$.
We write $\An(U;P)$ for the space of analytic vectors of $P$.
In \cite{komatsu1962} and \cite{KotakeNarasimhan} it was shown separately 
that if $P$ is elliptic then $\An(U;P)=\An(U)$. 
A similar result was proven in \cite{MR107176} for elliptic systems of vector fields.

We can consider this problem in a more general setting, if we replace the factor 
$k!$ in the estimate above by e.g.\ $(k!)^s$.
Recall that a smooth function $f\in\E(U)$ is an $s$-Gevrey function, $s\geq 1$, 
if for 
all $V\Subset U$ there are constants $C,h>0$ such that
\begin{equation*}
\sup_{x\in V} \Betr{D^\alpha f(x)}\leq Ch^\alp(\alp!)^s,\qquad \fa \alpha\in\N^n_0. 
\end{equation*}
The space of $s$-Gevrey functions on $U$ is denoted by $\G^s(U)$.
Analogously, an $s$-Gevrey vector $u$ of $P$ is a distribution $u\in\Dp(U)$
which satisfies the estimate
\begin{equation*}
\norm[L^2(V)]{P^ku}\leq Ch^k(k!)^s.
\end{equation*}
We denote the space of $s$-Gevrey vectors of $P$ by $\G^s(U;P)$ 
and if $P$ is elliptic then $\G^s(U;P)=\G^s(U)$ for all $s\geq 1$ according to \cite{MR548225}.

In fact, M\'{e}tivier \cite[Theorem 1.2]{doi:10.1080/03605307808820078} 
showed that the ellipticity of an analytic differential operator $P$ can be characterized by the regularity of its
non-analytic Gevrey vectors: If $s>1$ then
$P$ is elliptic if and only if $\G^s(U;P)=\G^s(U)$.

Clearly the Problem of Iterates is closely related to other regularity questions of 
the operator $P$, see e.g.\ \cite{MR1037999}. This connection has been extensively
 studied for operators with constant coefficients, see for example 
\cite{MR3595351}, \cite{MR3661157},  \cite{MR3208537}, \cite{MR2721087}
  and  \cite{MR0318660}.
However, in the wake of M\'{e}tivier's Theorem
 the study of vectors of a differential operator with variable coefficients has mainly split into two directions:
\begin{itemize}
\item If $P$ is elliptic then the Theorem of Iterates has been
 proven for a large class of
 ultradifferentiable structures: 
 e.g.\ for Denjoy-Carleman classes in \cite{MR557524},
 for Braun-Meise-Taylor classes in \cite{MR3652556},
 for Gelfand-Shilov classes in \cite{MR2801277}
 and for $L^q$-ultradifferentiable functions in \cite{HoepfnerRampazo}.
 In particular, in \cite{MR557524}  a microlocal elliptic Theorem of Iterates
 for Denjoy-Carleman classes is proven: If $u$ is an ultradifferentiable vector
 of $P$ with respect to a weight sequence $\bM$ then $\WF_{\{\bM\}}u\subseteq\Char P$,
 where $\WF_{\{\bM\}}u$ denotes the ultradifferentiable wavefront set of $u$
 with respect to $\bM$
introduced by \cite{MR0294849}.
 \item If $P$ is non-elliptic then it might still be possible to show 
 that analytic vectors are analytic, 
 cf.\  the surveys \cite{MR1037999} and \cite{Derridj2017}.
 For non-analytic Gevrey vectors  one tries to determine the loss of regularity 
 in terms of the Gevrey scale $(\G^s)_s$.
More precisely,
  we want to find for each $s>1$ some $s^\prime>s$ such that
 every $s$-Gevrey vector is an $s^\prime$-Gevrey function. 
  This approach was used for example in \cite{MR654409}, \cite{MR632764}, \cite{MR1037999} and \cite{Derridj2017}.
\end{itemize}

The  simplest class of non-elliptic operators with variable coefficients are the operators of principal type.
The main result on  Gevrey vectors of principal type operators is
the following result of Baouendi and M\'{e}tivier \cite[Theorem 1.3]{MR654409}:
	If $P$ is a hypoelliptic operator of principal type with analytic coefficients in $U\subseteq\R^n$
	then for each $V\Subset U$
	 there is some $\delta>0$ such that for all $s\geq 1$ we have
that every $s$-Gevrey vector $u$ of $P$ in $U$ is an $s^\prime$-Gevrey function in $V$
	where 
	$s^\prime=(ds-\delta)/(d-\delta).$

In this paper we are going to generalize the result of Baouendi and M\'{e}tivier
using the new theory on ultradifferentiable structures 
defined by weight matrices introduced in \cite{MR3285413}
which in turn will yield the Theorem of Iterates for hypoelliptic operators of principal type
with respect to ultradifferentiable structures given by certain weight matrices.
For example, the following observation was the starting point of this paper:
The prototypical example of a nontrivial weight matrix is the Gevrey matrix
\begin{equation*}
	\fG=\left\{\bG^s=\bigl((k!)^s\bigr)_k:\;s>1\right\}.
\end{equation*}
For a discussion of the properties of $\fG$ we refer to \cite[Section 5]{MR3285413}.
The Roumieu classes of ultradifferentiable functions and vectors associated to
$\fG$ are
\begin{align*}
	\Rou{\fG}{U}&=\left\{f\in\E(U):\;\fa V\Subset U\ex s>1:\;f\in\G^s(V)\right\} \\
	\intertext{and}
	\vRou{\fG}{U}&=\left\{u\in\Dp(U):\;\fa V\Subset U\ex s>1:\;u\in\G^s(V;P)\right\},\\
	\intertext{respectively, whereas the Beurling classes are given by}
	\Beu{\fG}{U}&=\left\{f\in\E(U):\;\fa V\Subset U\fa s>1:\;f\in\G^s(V)\right\}\\
	&=\bigcap_{s>1}\G^s(U)\\
	\intertext{and}
	\vBeu{\fG}{U}&=\left\{u\in\Dp(U):\;\fa V\Subset U\fa s>1:\;u\in\G^s(V;P)\right\}\\
	&=\bigcap_{s>1}\G^s(U;P),
\end{align*}
respectively.
\begin{Prop}\label{GevreyCor}
	Let $P$ be a hypoelliptic partial differential operator of principal type with 
	analytic coefficients on an open set $U\subseteq\R^n$.
	Then 
	\begin{equation*}
		\vDC{\fG}{U}=\DC{\fG}{U}.
	\end{equation*}
\end{Prop}
\begin{Not}
	Throughout the article, we are going to use the convention that $[\ast]=\{\ast\},(\ast)$ where $\ast=\bM,\fM,\omega$.
\end{Not}
\begin{proof}[Proof of Proposition \ref{GevreyCor}]
	Since $\G^s(U)\subseteq\G^s(U;P)$ for all $s\geq 1$, cf.\ 
	\cite{MR1037999},	
	it is enough to show $\vDC{\fG}{U}\subseteq\DC{\fG}{U}$.
	If $u\in\vBeu{\fG}{U}$ then $u\in\G^{1+\sigma}(U;P)$ for all $\sigma>0$.
	For $V\Subset U$ we have that $u\vert_V\in\G^{1+\sigma^\prime}(V)$ by 
	\cite[Theorem 1.3]{MR654409} where 
	$\sigma^\prime=(d(\sigma+1)-\delta)/(d-\delta)-1=d\sigma/(d-\delta)$
	for some $\delta\geq 0$ depending on the operator and $V$. 
	Thence $u\vert_V\in\bigcap_{\sigma^\prime>0}\G^{1+\sigma^\prime}(V)=\Beu{\fG}{V}$ 
	for all $V\Subset U$ and therefore $u\in\Beu{\fG}{U}$.
	
	If $u\in\vRou{\fG}{U}$ then for every $V\Subset U$ there is some $s>1$ such
	that $u\vert_V\in\G^s(V;P)$. \cite[Theorem 1.3]{MR654409} implies that
	for every $W\Subset V$ there is some $s^\prime>1$ such that 
	$u\vert_W\in\G^{s^\prime}(W)$.
	It follows that $u\in\Rou{\fG}{U}$.
\end{proof}

The concept of weight matrix was introduced in \cite{MR3285413} in
order to deal simultaneously with Denjoy-Carleman classes and Braun-Meise-Taylor classes.
It is well known that the Gevrey classes can be realized as Denjoy-Carleman classes
or as Braun-Meise-Taylor classes, but
 in general weight sequences and weight  
functions describe different classes, cf.\ \cite{BonetMeiseMelikhov07}.

The theory of weight matrices allows us to deal with countable intersections
and also countable unions (in the sense of germs)
of Denjoy-Carleman classes, which will be of some importance in our considerations.
For example, $\DC{\fG}{U}$ can neither be described as Denjoy-Carleman classes
nor as Braun-Meise-Taylor classes, cf.\ \cite[Theorem 5.22]{MR3285413}.

Weight matrices have been used to generalize and unify results regarding ultradifferentiable classes in various areas, see e.g.\ \cite{sectorextensions}, \cite{MR3711790},  \cite{MR3722569}
or \cite{MR3865684}.
In particular, in \cite{FURDOS2020123451} we defined the ultradifferentiable wavefront set associated with classes given by weight matrices and generalized 
and unified results on the wavefront set for
Denjoy-Carleman classes
 proved in \cite{MR4149078} and \cite{MR0294849} and for Braun-Meise-Taylor classes in
\cite{MR2595651}.

As we have seen,
 we can associate to each weight matrix (or weight sequence or weight function)
two different ultradifferentiable classes, the Roumieu class and the Beurling class, respectively.
Since the Gevrey classes are Roumieu classes, such spaces have been mainly
studied as for example in \cite{MR557524}. 
But when both Roumieu and Beurling classes have been
considered, there seems to be no much difference regarding the results obtained,
see e.g.\ \cite{MR3380075}, \cite{MR3208537}  or also Theorem \ref{omMainThm} above.
Nevertheless, we will notice that in the case of weight matrices there is
occasionally a difference between the Beurling and the Roumieu case when we regard
vectors of a non-elliptic operator.
\subsection{Outline of the paper}
We want to present in this paper  a throughout introduction to the theory of
ultradifferentiable vectors associated to weight matrices. 
In Section \ref{Uclasses} we recall for the convenience of the reader
 the definitions and facts from the theory of weight matrices we are going to need, including some statements concerning the ultradifferentiable wavefront set,
 which have not been explicitly stated in \cite{FURDOS2020123451}.
 Then we show in Section \ref{UVectors} that the microlocal theory
in \cite{MR557524} can be extended to classes given by weight matrices.
In particular we prove the elliptic Theorem of Iterates for these classes.
We should note that the restriction to analytic operators allows us 
to work with
rather weak conditions on the weight matrix.
In fact, we require only that the associated classes are invariant under
the action of analytic differential operators and under the composition with analytic diffeomorphisms.

Next we want to generalize Proposition \ref{GevreyCor} to other weight matrices.
In order to do so
 we introduce in Section \ref{sec:Scales} the notion of ultradifferentiable scales,
which can be considered as a special kind of weight matrices.
This allows us to extend \cite[Theorem 1.3]{MR654409}
 (i.e.\ Theorem \ref{ScaleCorollary1}) and Proposition \ref{GevreyCor}
(cf.\ Theorem \ref{Theorem2}) to ultradifferentiable scales and their associated
weight matrices, respectively.
We will see that many families of weight sequences, which have been studied
previously in the literature, constitute ultradifferentiable scales, including
the scale $(\bN^q)_{q> 1}$ of $q$-Gevrey sequences which are given by
$N_k^q=q^{k^2}$ and the scale $(\bB^\lambda)_{\lambda>0}$ given by
$B_k^\lambda=k!(\log (k+e))^{\lambda k}$. 

In Section \ref{WeightFctScales}
the proof of Theorem \ref{omMainThm} and especially condition \eqref{om7}
are discussed. Furthermore, we discuss in the second part of this section
how the exact definition of ultradifferentiable scales is tied to
the rather precise estimates obtained in \cite{MR654409}
 and how to modify it for the study of vectors of other operators.

In the final section we have included some selected topics.
  In Subsection \ref{Appendix1} we observe 
that the theory of ultradifferentiable scales developed in Section 
\ref{sec:Scales} can also be applied
to generalize the results of \cite{ASENS_1980_4_13_4_397_0}.
Subsection \ref{Appendix2}  explores for which weight sequences $\bM$ 
the associated weight 
function $\omega_{\bM}$ satisfies \eqref{om7}.
In the next subsection we take a first look at vectors determined by
a family of weight functions.
We close the paper with the proof of the following variant of 
\cite[Theorem 1.2]{doi:10.1080/03605307808820078}, where $\Rou{\bN^q}{U}$ and $\vRou{\bN^q}{U}$
are the Roumieu class and the space of Roumieu vectors of $P$ associated with the
weight sequence $\bN^q$, respectively.
\begin{Thm}\label{MetivierThm2}
	Let $P$ be a differential operator with analytic coefficients in $U$
	 and $q>1$.
	Then the following statements are equivalent:
	\begin{enumerate}
		\item $P$ is elliptic.
		\item $\vRou{\bN^q}{U}=\Rou{\bN^q}{U}$.
	\end{enumerate}
\end{Thm}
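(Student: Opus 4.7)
The plan is to treat the two implications separately.

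$(1)\Rightarrow(2)$: Assuming $P$ elliptic, the inclusion $\vRou{\bN^q}{U}\subseteq\Rou{\bN^q}{U}$ follows from the elliptic Theorem of Iterates for weight-matrix classes developed in Section~\ref{UVectors}. The $q$-Gevrey sequence $\bN^q=(q^{k^2})_k$ is log-convex and dominates the factorials, so the Denjoy--Carleman class $\Rou{\bN^q}{U}$ contains the analytic class and is stable under composition with analytic diffeomorphisms -- the only structural hypothesis needed to apply that general result. The reverse inclusion is immediate since $P$ has analytic coefficients.

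$(2)\Rightarrow(1)$: I would argue by contraposition, adapting M\'etivier's counterexample from the proof of Theorem~\ref{MetivierThm} to the $q$-Gevrey class. Assume $P$ is not elliptic; pick $(x_0,\xi_0)\in\Char P$ and, after an analytic rotation, reduce to $x_0=0$, $\xi_0=e_n$. I look for a distribution of the form
\[
u(x)=\chi(x)\sum_{k\geq 1}a_k\,e^{i\lambda_k x_n},
\]
with $\chi\in C_c^\infty(U)$ equal to $1$ near $0$ and $\lambda_k\nearrow\infty$, such that $u\in\vRou{\bN^q}{U}\setminus\Rou{\bN^q}{U}$. The key ingredient is the characteristic cancellation $p_d(0,e_n)=0$: a single application of $P$ to $\chi e^{i\lambda_k x_n}$ produces an $L^2$-bound of order $\lambda_k^{d-1}$ instead of the elliptic $\lambda_k^d$, so each iteration of $P$ gains one power of $\lambda_k$ over the regular bound.

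Concretely, choosing $\lambda_k$ growing sufficiently fast within the $\bN^q$-scale (say $\lambda_k\asymp q^{\alpha k^2}$ for a suitable $\alpha$) and coefficients $a_k$ decaying only polynomially in $\lambda_k$, I would verify two things. On the one hand, summing the estimates coming from the characteristic cancellation yields $\|P^j u\|_{L^2(V)}\leq Ch^j q^{(dj)^2}$ for some $h>0$, placing $u$ in $\vRou{\bN^q}{U}$. On the other hand, the individual summands satisfy $|a_k|\lambda_k^m\gg h^m q^{m^2}$ for $m$ near the natural stationary index in $k$, so that $D_n^m u$ grows faster than the $\bN^q$-regular bound along a subsequence and $u\notin\Rou{\bN^q}{U}$. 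Note that any M\'etivier-type counterexample that lies in some $\G^s(U;P)$ is automatically in $\vRou{\bN^q}{U}$, since $(k!)^s\leq q^{k^2}$ for large $k$, but such a function is typically still in $\Rou{\bN^q}{U}$: the construction therefore has to be genuinely strengthened by placing its singularity higher up in the $\bN^q$-scale.

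The main obstacle is the simultaneous calibration of $(\lambda_k)$ and $(a_k)$. The characteristic cancellation delivers exactly one order of $\lambda_k$ saved per iteration of $P$, which is the sharp margin needed to pass from the elliptic $\lambda_k^{dj}$ bound down to a bound compatible with $q^{(dj)^2}$. Because $\bN^q$ lacks moderate growth, the standard Cauchy-product manipulations are not available and the constants have to be tracked directly using only log-convexity of $\bN^q$ together with the explicit structure of $(\lambda_k)$. At the same time the non-regularity obstruction must dominate the full $\bN^q$-scale, not merely a Gevrey class; arranging these two conditions compatibly is the technical heart of the argument.
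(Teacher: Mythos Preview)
Your treatment of $(1)\Rightarrow(2)$ is fine and matches the paper.

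For $(2)\Rightarrow(1)$ there is a genuine gap. With a \emph{fixed} cutoff $\chi$ the claimed cancellation does not occur: on any relatively compact $V\ni x_0$ one has
\[
\bigl\lVert P\bigl(\chi e^{i\lambda_k x_n}\bigr)\bigr\rVert_{L^2(V)}
\;\geq\;\lambda_k^{d}\,\bigl\lVert p_d(\,\cdot\,,e_n)\chi\bigr\rVert_{L^2(V)}
\;-\;O\bigl(\lambda_k^{d-1}\bigr),
\]
and the first term is of order $\lambda_k^{d}$ unless $p_d(\,\cdot\,,e_n)$ vanishes identically on $\supp\chi$. The characteristic condition $p_d(x_0,e_n)=0$ at a single point gives you nothing here. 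In M\'etivier's original construction the cutoff is scaled with the frequency, $\psi\bigl(\rho^{\varepsilon}(x-x_0)\bigr)$, which forces $|x-x_0|\lesssim\rho^{-\varepsilon}$ on the support and converts the pointwise vanishing of $p_d$ into the estimate $|p(x,\rho\xi_0)|\leq C\rho^{d-\varepsilon}$; the gain is then $\varepsilon<1$ per iteration, not a full power. Your ansatz drops exactly this mechanism.

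The paper follows M\'etivier's scheme with the scaled cutoff but replaces the discrete sum by an integral
\[
u(x)=\int_{1}^{\infty}\psi\bigl(t^{\varepsilon}(x-x_0)\bigr)\,\Theta(t,\lambda')\,e^{it(x-x_0)\xi_0}\,dt,
\]
where the weight $\Theta(t,\lambda')=(4\pi\lambda')^{-1/2}\exp\bigl(-(\log t)^2/4\lambda'\bigr)$ is chosen so that its Mellin transform reproduces the $q$-Gevrey sequence exactly, $\int_0^\infty t^{k-1}\Theta(t,\lambda')\,dt=N_k^{\lambda'}$. This identity gives the non-regularity at $x_0$ directly, while the vector estimate comes from combining the $t^{d-\varepsilon}$ gain with the inequality $\omega_{\bN^{\lambda}}(t)\leq(\log t)^2/4\lambda$ and a careful choice of the auxiliary parameters $\lambda_0<\lambda<\lambda'$ and $\varepsilon$. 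The Mellin device is what replaces the ``calibration of $(\lambda_k)$ and $(a_k)$'' you allude to; without it, tracking the constants through the non-moderate-growth sequence $\bN^q$ by hand is precisely the obstacle you identify but do not resolve.
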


\section{Ultradifferentiable classes}\label{Uclasses}
\subsection{Weight matrices}
A sequence $\bM=(M_k)_{k\in\N_0}$ of positive numbers is a weight sequence if it
is normalized, i.e.\ $M_0=1$,  
$\lim_{k\rightarrow\infty}(M_k)^{1/k}=\infty$
and logarithmically convex, i.e.\
\begin{equation}\label{logconvexity}
\bigl(M_{k}\bigr)^2\leq M_{k-1}M_{k+1}
\end{equation}
for all $k\in\N$.
Note that for any such weight sequence $\bM$ we have
\begin{equation}\label{logconvexity2}
M_{j}M_k\leq M_{j+k},\qquad j,k\in\N_0.
\end{equation}
We are also going to use frequently the sequence $m_k=M_k/k!$.

For a weight sequence $\bM$, a bounded open set $V\subseteq\R^n$ and a constant $h>0$
we set
\begin{equation*}
\norm[V,\bM,h]{f}=\sup_{\substack{x\in V\\ \alpha\in\N^n_0}}
\frac{\lvert D^\alpha f(x)\rvert}{h^\alp M_\alp},\qquad f\in\E(V).
\end{equation*}
We define the Roumieu class (over an open set $U\subseteq\R^n$) associated to $\bM$ as
\begin{align*}
\Rou{\bM}{U}&=\left\{f\in\E(U):\;\fa V\Subset U\ex h>0:\;\;
\norm[V,\bM,h]{f}<\infty\right\}\\
\intertext{whereas the Beurling class associated with $\bM$ is}
\Beu{\bM}{U}&=\left\{f\in\E(U):\;\fa V\Subset U\fa h>0:\;\;
\norm[V,\bM,h]{f}<\infty\right\}.
\end{align*}
Clearly, the vector space  $\DC{\bM}{U}$ is an algebra with respect to the pointwise multiplication, due to \eqref{logconvexity2}.

Recall that a subspace $E\subseteq\E(U)$ is said to be quasianalytic if
$E$ contains no non-trivial functions of compact support, i.e.\ $E\cap\D(U)=\{0\}$. 
In the case of Denjoy-Carleman classes $\DC{\bM}{U}$ 
quasianalyticity is characterized by the Denjoy-Carleman theorem 
(see e.g.\ \cite{MR1996773}):
\begin{Thm}
Let $\bM$ be a weight sequence. 
The space $\DC{\bM}{U}$ is quasianalytic if and only if
\begin{equation}\label{quasicondition}
\sum_{k=0}^\infty \frac{M_k}{M_{k+1}}=\infty.
\end{equation}
\end{Thm}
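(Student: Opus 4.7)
This is the classical Denjoy--Carleman characterization of quasianalyticity. I would prove the two implications separately, reducing to one dimension for the harder direction.

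For $\sum M_k/M_{k+1} = \infty \Rightarrow$ quasianalyticity of $\DC{\bM}{U}$: since $\Beu{\bM}{U} \subseteq \Rou{\bM}{U}$, it suffices to treat the Roumieu case. Suppose $f \in \Rou{\bM}{U} \cap \D(U)$ is nonzero; pick a line $L$ through a point where $f \neq 0$, so that $g = f|_L$ is a nonzero compactly supported one-variable function in $\Rou{\bM}(\R)$. Repeated integration by parts gives the Fourier bound
\begin{equation*}
\lvert\hat g(\xi)\rvert \leq \frac{C}{T_{\bM}(|\xi|/h)}, \qquad T_{\bM}(r) = \sup_{k\in\N_0}\frac{r^k}{M_k},
\end{equation*}
for suitable $C,h>0$, while the Paley--Wiener theorem realises $\hat g$ as an entire function of exponential type. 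A standard computation for log-convex $\bM$ shows that $\sum M_k/M_{k+1} = \infty$ is equivalent to divergence of the Carleman integral $\int_1^\infty r^{-2}\log T_{\bM}(r)\, dr$; Carleman's uniqueness theorem for entire functions of exponential type bounded on $\R$ by a reciprocal of $T_{\bM}$ then forces $\hat g \equiv 0$, contradicting $g \not\equiv 0$.

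For the converse, assume $\sum M_k/M_{k+1} < \infty$. I would invoke the classical Fourier-product construction: for a suitably scaled sequence $a_k$ bounded by $M_k/M_{k+1}$ one defines
\begin{equation*}
\hat\varphi(\xi) = \prod_{k=1}^\infty \frac{\sin(a_k \xi)}{a_k \xi}.
\end{equation*}
Convergence of $\sum a_k$ ensures an entire function of exponential type, so $\varphi$ has compact support by Paley--Wiener, while truncating the product after $N$ factors and exploiting $\prod_{k\leq N} a_k^{-1} = M_N$ yields polynomial decay estimates that, after optimising the splitting point in $\int |\xi|^j |\hat\varphi(\xi)|\,d\xi$, produce the required $\bM$-bounds on $\varphi^{(j)}$. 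By tuning the rate at which the scaling factors tend to zero one obtains a nontrivial $\varphi$ in either $\Rou{\bM}(\R) \cap \D(\R)$ or, with a more delicate choice, $\Beu{\bM}(\R) \cap \D(\R)$; the tensor product $\varphi(x_1)\cdots\varphi(x_n)$ translated into $U$ gives the desired non-quasianalytic witness in $\DC{\bM}{U} \cap \D(U)$.

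The main obstacle is the complex-analytic step in the quasianalytic direction: Carleman's uniqueness theorem for entire functions of exponential type whose growth on $\R$ is controlled by $1/T_{\bM}$, together with the equivalence between divergence of $\sum M_k/M_{k+1}$ and divergence of the Carleman integral. Both are classical facts resting on log-convexity of $\bM$ and Phragm\'en--Lindel\"of-type arguments; the complete proof is contained in the textbook cited just before the theorem statement.
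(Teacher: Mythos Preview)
The paper does not give a proof of this statement; it is quoted as the classical Denjoy--Carleman theorem with a reference to \cite{MR1996773}. Your outline is a correct sketch of the standard proof and is essentially what one finds in that reference: reduction to one variable, the Fourier/Paley--Wiener argument combined with the equivalence between $\sum M_k/M_{k+1}=\infty$ and divergence of the Carleman integral for the quasianalytic direction, and the $\prod \sin(a_k\xi)/(a_k\xi)$ construction for the non-quasianalytic direction. One small imprecision: with $a_k$ merely \emph{bounded} by $M_k/M_{k+1}$ you do not get $\prod_{k\le N} a_k^{-1}=M_N$ on the nose, only an inequality in the right direction; and the Beurling case genuinely requires choosing $a_k=o(M_k/M_{k+1})$ with $\sum a_k<\infty$, which is possible exactly when the series converges. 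These are routine to fix and do not affect the validity of your plan.
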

We say that the sequence $\bM$ is quasianalytic if it satisfies \eqref{quasicondition}. 
Otherwise $\bM$ is non-quasianalytic.

If $\bM$ and $\bN$ are two sequences we write
\begin{align*}
\bM\leq\bN\quad &:\Longleftrightarrow \quad \fa k\in\N_0: M_k\leq N_k,
\\
\bM\preceq\bN\quad &:\Longleftrightarrow \quad \left(\tfrac{M_k}{N_k}\right)^{1/k}
\text{ is bounded for } k\rightarrow \infty,
\\
\bM\lhd\bN\quad &:\Longleftrightarrow \quad \left(\tfrac{M_k}{N_k}\right)^{1/k}\rightarrow
0 \text{ if } k\rightarrow \infty,
\end{align*}
and $\bM\approx \bN$ when $\bM\preceq\bN$ and $\bN\preceq\bM$.
We recall that $\DC{\bM}{U}\subseteq\DC{\bN}{U}$ if  $\bM\preceq\bN$ and
 $\Rou{\bM}{U}\subseteq\Beu{\bN}{U}$  when $\bM\lhd\bN$.

 For later use we note the following result in the spirit of 
\cite[Lemma 6]{MR550685}. Throughout the paper,
if not indicated otherwise,
 we are going to consider the constants appearing in the proofs to
be generic, that is they may change their value from line to line.
\begin{Lem}\label{AuxillaryLem1}
Let $\bM$ be a weight sequence and $\bL^\prime$ be a sequence with $L^\prime_0=1$ and
$\bG^1\preceq \bL^\prime\lhd\bM$. Then
there is a weight sequence $\bN$ such that
\begin{equation*}
\bL^\prime\leq\bN\lhd\bM.
\end{equation*}
\end{Lem}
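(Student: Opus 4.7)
My plan is to construct $\bN$ as a suitable log-convex majorant of $\bL^\prime$ that decays fast enough relative to $\bM$. As a first preliminary, the hypothesis $\bL^\prime \lhd \bM$ is quantified by setting $\epsilon_k := \sup_{j \geq k}(L^\prime_j/M_j)^{1/j}$; this is a non-increasing null sequence with $L^\prime_k \leq \epsilon_k^k M_k$ for every $k \geq 1$. A second observation is that for every fixed $\eta > 0$ the sequence $P^{(\eta)}_k := C_\eta \eta^k M_k$, with $C_\eta := \sup_k L^\prime_k/(\eta^k M_k)$ (finite because $(L^\prime_k/M_k)^{1/k}/\eta \to 0$), is log-convex (as the product of a log-affine sequence and $\bM$) and pointwise majorises $\bL^\prime$. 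The family $\{P^{(\eta)}\}_{\eta > 0}$ provides the main comparison tool; note however that each individual $P^{(\eta)}$ only satisfies $P^{(\eta)} \preceq \bM$ with rate $\eta$, so no single $\eta$ suffices.

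Next, I take $\bN$ to be the greedy log-convex majorant of $\bL^\prime$, defined inductively by $N_0 := 1$, $N_1 := L^\prime_1$, and $N_{k+1} := \max\bigl(L^\prime_{k+1}, N_k^2/N_{k-1}\bigr)$ for $k \geq 1$. The recursion enforces $N_{k+1} N_{k-1} \geq N_k^2$, so $\bN$ is log-convex, and $N_k \geq L^\prime_k$ by construction. Moreover $N_0 = 1$, and $\bG^1 \preceq \bL^\prime$ yields a constant $c > 0$ with $L^\prime_k \geq c^k k!$, whence $N_k^{1/k} \geq c(k!)^{1/k} \to \infty$; so $\bN$ is indeed a weight sequence.

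The main obstacle is verifying $\bN \lhd \bM$. The greedy rule produces an infinite sequence of match points $k_0 < k_1 < \cdots$ at which $N_{k_i} = L^\prime_{k_i}$ (infiniteness follows since finitely many matches would force $N_k$ to grow only log-affinely, contradicting $N_k \geq L^\prime_k \geq c^k k!$), with log-affine extrapolation between consecutive matches; on the extrapolation interval $[k_i, k_{i+1}]$ one obtains the geometric-interpolation bound $N_k \leq (L^\prime_{k_i})^{1-\beta}(L^\prime_{k_{i+1}})^\beta$ with $\beta = (k-k_i)/(k_{i+1}-k_i)$. Combined with $L^\prime_{k_j} \leq \epsilon_{k_j}^{k_j} M_{k_j}$ and the log-convexity of $\bM$, this estimate is to be compared, for each $\eta > 0$, with $P^{(\eta)}_k$ so as to conclude $\limsup_k(N_k/M_k)^{1/k} \leq \eta$; letting $\eta \to 0$ then yields $\bN \lhd \bM$. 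The careful bookkeeping needed to control the extrapolation over widely separated match points --- where $N_k$ may transiently exceed $P^{(\eta)}_k$ even though the limsup behaviour is correct --- is the most delicate part of this verification.
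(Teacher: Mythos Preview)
Your greedy construction does not work: the sequence $\bN$ need not satisfy $\bN\lhd\bM$, nor even $\bN\preceq\bM$. The chord bound $N_k\le (L^\prime_{k_i})^{1-\beta}(L^\prime_{k_{i+1}})^{\beta}$ is correct, but combined with $L^\prime_{k_j}\le C_\eta\,\eta^{k_j}M_{k_j}$ it only yields
\[
\frac{N_k}{M_k}\;\le\;C_\eta\,\eta^{k}\cdot\frac{M_{k_i}^{\,1-\beta}M_{k_{i+1}}^{\,\beta}}{M_k},
\]
and the last factor (which is $\ge 1$ by log-convexity of $\bM$) can overwhelm $\eta^{k}$ when $[k_i,k_{i+1}]$ is long and $\log M$ is strongly convex. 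Concretely, take $M_k=e^{k^{2}}$ and put $L^\prime_k=M_k/k^{k}$ when $k$ is a power of two, $L^\prime_k=k!$ otherwise (with $L^\prime_0=1$); then $\bG^{1}\preceq\bL^\prime\lhd\bM$, but after each match at a large power of two the greedy ratio $r_{k_i}=L^\prime_{k_i}/N_{k_i-1}$ vastly exceeds $\mu_{k_i}$ (because $N_{k_i-1}$ comes from a much slower previous extrapolation), and one computes that $(N_k/M_k)^{1/k}$ is unbounded along the resulting extrapolation intervals. The ``delicate bookkeeping'' you flag is a genuine obstruction, not a technicality.

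The paper's argument controls ratios rather than values. It first regularises $\bL^\prime$ to $L_k:=\inf_{h>0}C_h h^{k}M_k$ (the pointwise infimum of your family $P^{(\eta)}$), for which one has the key monotonicity that $\mu_k/\lambda_k$ is increasing and unbounded, where $\mu_k=M_k/M_{k-1}$ and $\lambda_k=L_k/L_{k-1}$. Then $\nu_k:=\max\bigl(\sqrt{\mu_k},\,\max_{j\le k}\lambda_j\bigr)$ is increasing (so $N_k:=\prod_{j\le k}\nu_j$ is log-convex), satisfies $\nu_k\ge\lambda_k$ (so $\bN\ge\bL\ge\bL^\prime$), and one verifies directly that $\nu_k/\mu_k\to 0$, whence $\bN\lhd\bM$. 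The essential missing ingredient in your approach is any bound on the ratios $N_k/N_{k-1}$ in terms of $\mu_k$, which the greedy recursion does not provide.
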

\begin{proof}
For each $h>0$ we denote by $C_h$ the smallest constant $C>0$ such that
\begin{equation*}
L_k^\prime\leq Ch^kM_k
\end{equation*}
holds for all $k\in\N_0$. We define a new sequence $\bL$ by setting
\begin{equation*}
L_k:=\inf_{h>0}C_hh^kM_k.
\end{equation*}
Clearly $\bL^\prime\leq\bL$. 
If we put $\mu_k=M_k/M_{k-1}$ and $\lambda_k=L_k/L_{k-1}$ for $k\in\N$
then we recall from \cite[Lemma 6]{MR550685} that 
$\mu_k/\lambda_k$ is increasing and unbounded.

Set
\begin{equation*}
\nu_k=\max\Bigl\{\sqrt{\mu_k},\max_{1\leq j\leq k}\lambda_j\Bigr\}
\end{equation*}
for $k\in\N$ and define the sequence $\bN$ by $N_0=1$ and 
\begin{equation*}
N_k=\prod_{j=1}^k\nu_j
\end{equation*}
if $k\in\N$. The sequence $\nu_k$ is increasing since $\mu_k$ is increasing,
thence $\bN$ satisfies \eqref{logconvexity}.
It is easy to see that $\bL\leq\bN$ and therefore $k\leq C\sqrt[k]{N_k}$
for some constant $C>0$ independent of $k\in\N$.
It follows that $\bN$ is a weight sequence since $(N_k)^{1/k}\geq (M_k)^{1/2k}$.

It remains to prove $\bN\lhd\bM$. For this it is enough to show
\begin{equation*}
\lim_{k\rightarrow\infty}\frac{\nu_k}{\mu_k}=0.
\end{equation*}
We have
\begin{equation*}
\frac{\nu_k}{\mu_k}=\max\biggl\{\bigl(\mu_k\bigr)^{-\tfrac{1}{2}},
\bigl(\mu_k\bigr)^{-1}\max_{1\leq j\leq k}\lambda_j\biggr\}
\end{equation*}
for all $k\in\N$. 
For each $\varepsilon>0$ there has to exist $k_\eps\in\N$ such that
$\lambda_k/\mu_k\leq \eps$ for all $k\geq k_\eps$. Hence
\begin{equation*}
\frac{\nu_k}{\mu_k}\leq\max\biggl\{\bigl(\mu_k\bigr)^{-\tfrac{1}{2}},\eps,
\bigl(\mu_k\bigr)^{-1}\max_{1\leq j\leq k_\eps}\lambda_j\biggr\}
\end{equation*}
and thus
$\frac{\nu_k}{\mu_k}\leq \varepsilon$
for large enough $k$.
\end{proof}

Following \cite{FURDOS2020123451} we say that a weight sequence $\bM$ is semiregular if 
\begin{gather}
\lim_{k\rightarrow\infty}\sqrt[k]{m_k}=\infty\label{AnalyticInclusion}\\
\ex C>0\fa k\in\N_0:\quad M_{k+1}\leq C^{k+1}M_k.\label{DC-Derivclosed}
\end{gather}
Observe that \eqref{AnalyticInclusion} implies that for all $\gamma>0$ there
is some constant $C>0$ such that
\begin{equation}\label{AnalyticInclAlt}
k^k\leq C \gamma^k M_k.
\end{equation}
\begin{Rem}\label{SeqRem}
If $\bM$ is a weight sequence then $\An(U)\subsetneq\DC{\bM}{U}$ if and only if
$\bM$ satisfies \eqref{AnalyticInclusion}. 
On the other hand,
if $\bM$ satisfies \eqref{DC-Derivclosed} then $\DC{\bM}{U}$ is closed under derivation, i.e.\ if $f\in\DC{\bM}{U}$
then also $\partial_jf\in\DC{\bM}{U}$ for all $1\leq j\leq n$.
We may also note that \eqref{DC-Derivclosed} is equivalent to
\begin{equation*}
	\ex C>0 \;\fa k\in\N_0:\quad m_{k+1}\leq C^{k+1}m_k.
\end{equation*}

If $\bM$ is semiregular then $\E^{[\bM]}$ is closed under composition with 
analytic mappings, that is, if $\Phi: U\rightarrow V$ is an analytic mapping
between two open sets $U\subseteq\R^{n_1}$ and $V\subseteq\R^{n_2}$ then
for all $f\in\DC{\bM}{V}$ we have $f\circ\Phi\in\DC{\bM}{U}$, 
cf.\ \cite{MR1996773} and \cite{FURDOS2020123451}, respectively.
\end{Rem}
\begin{Ex}\label{WSequencesExamples}
	We present some examples of weight sequences, which will appear throughout the paper.
\begin{enumerate}
	\item The Gevrey class of order $s> 1$ is defined by the
	semiregular non-quasianalytic weight sequence $\bG^s=(G_k^s)_k=(k!^s)_k$.
	Note that $\bG^s\lhd\bG^t$ if and only if $s<t$.
	\item Let $q,r>1$ be two parameters.
	The weight sequence $\bL^{q,r}=(L^{q,r}_k)_k$ defined by $L^{q,r}_k=k!q^{k^r}$
	is semiregular if and only if $r\leq 2$.
	Observe that for all $q,r,s>1$ we have $\bG^s\lhd\bL^{q,r}$.
	Furthermore $\bL^{q_0,r_0}\lhd \bL^{q_1,r_1}$ if $r_0<r_1$ and $q_0,\,q_1>1$
	arbitrary or if $r_0=r_1$ and $1<q_0<q_1$.
	\item Let $\sigma>0$. The semiregular weight sequence
	$\bB^\sigma=(B^\sigma_k)_k$ given by 
	$B^\sigma_k=k!(\log(k+e))^{\sigma k}$ is quasianalytic
	if and only if $\sigma\leq 1$, cf.\ \cite{MR2384272}.
	\item We can generalize the previous example, cf.\ \cite{MR2822315}:
	 For $j\in\N$ we define
	the function $\log^{(j)}$ recursively by 
	\begin{align*}
	\log^{(1)}(t)&=\log t, & \log^{(j+1)}(t)&=\log\bigl(\log^{(j)}(t)\bigr), & t &\text{ large enough}.
	\end{align*}
Furthermore set $e^{(1)}=e$ and $e^{(j+1)}=e^{(e^{(j)})}$.

We consider the 2-parameter family of semiregular 
weight sequences $\bB^{j,\sigma}$, $j\in\N$, $\sigma>0$, given by 
\begin{equation*}
B^{j,\sigma}_k=k!\left(\log^{(j)}\bigl(k+e^{(j)}\bigr)\right)^{\sigma k}.
\end{equation*}
We have that $\bB^{1,\sigma}=\bB^\sigma$ and $\bB^{j,\sigma}$ is quasianalytic
when $j\geq 2$.
If $j_1<j_2$ then $\bB^{j_2,\sigma}\lhd\bB^{j_1,\tau}$ for any $\sigma,\tau>0$
and $\bB^{j,\sigma}\lhd\bB^{j,\tau}$ for $\sigma<\tau$.
\end{enumerate}	
\end{Ex}

A weight matrix $\fM$ is a family of weight sequences such that for each pair
$\bM,\bN\in\fM$ we have either $\bM\leq\bN$ or $\bN\leq\bM$.
The Roumieu class associated with the weight matrix $\fM$ is
\begin{align*}
\Rou{\fM}{U}&=\left\{f\in\E(U):\;\fa V\Subset U\ex\bM\in\fM\ex h>0:\;\;
\norm[V,\bM,h]{f}<\infty\right\}\\
\intertext{and the corresponding Beurling class is defined by}
\Beu{\fM}{U}&=\left\{f\in\E(U):\;\fa V\Subset U\fa\bM\in\fM\fa h>0:\;\;
\norm[V,\bM,h]{f}<\infty\right\}.
\end{align*}
Observe that $\Beu{\fM}{U}=\bigcap_{\bM \in \fM}\Beu{\bM}{U}$
and $\E^{\{\fM\}}=\bigcup_{\bM \in \fM}\E^{\{\bM\}}$ in the sense of germs. 
It follows that $\DC{\fM}{U}$ is an algebra due to the
definition of the weight matrix.

Let $\fM$ and $\fN$ be two weight matrices. We define 
\begin{align*}
\fM \{\preceq\} \fN \quad &:\Longleftrightarrow  \quad  \fa \bM \in \fM \ex \bN \in \fN : \bM \preceq \bN,
\\
\fM (\preceq) \fN \quad &:\Longleftrightarrow  \quad  \fa \bN\in \fN \ex \bM \in \fM : \bM \preceq \bN,
\\
\fM \{\lhd) \fN \quad &:\Longleftrightarrow  \quad  \fa \bM\in \fM \fa \bN \in \fN : \bM \lhd \bN.
\end{align*}
Furthermore we write $\fM[\approx]\fN$ if $\fM[\preceq]\fN$ and $\fN[\preceq]\fM$.
It follows that $\DC{\fM}{U}\subseteq\DC{\fN}{U}$ if  $\fM[\preceq]\fN$ and
$\Rou{\fM}{U}\subseteq\Beu{\fN}{U}$ if $\fM\{\lhd)\fN$.

For each weight matrix $\fM$ there exists a countable weight matrix 
$\fL$ by \cite[Lemma 2.5]{FURDOS2020123451} such that
$\fM[\approx]\fL$, i.e.\ $\DC{\fM}{U}=\DC{\fL}{U}$.
It follows that $\Rou{\fM}{U}$ is non-quasianalytic if and only if
there is some non-quasianalytic sequence $\bM\in\fM$.
On the other hand 
$\Beu{\fM}{U}$ is non-quasianalytic if and only if all sequences
$\bM\in\fM$ are non-quasianalytic, see \cite[Sect. 4]{MR3601829}.
We should note that in the last statement in particular the fact
that $\fM$ is equivalent to a countable weight matrix is important:
The intersection of uncountably many non-quasianalytic Denjoy-Carleman classes might be quasianalytic, cf. \cite{MR220049}.

Combined with Remark \ref{SeqRem}  we moreover conclude that $\An(U)\subsetneq\DC{\fM}{U}$ when
\begin{equation}\label{matrixAnal}
\fa\bM\in\fM:\quad \lim_{k\rightarrow \infty}\sqrt[k]{m_k}=\infty.
\end{equation}
A weight matrix $\fM$ is called $R$-semiregular 
if $\fM$ satisfies \eqref{matrixAnal} and
\begin{align}
\label{RouDeriv}\fa \bM\in\fM\ex \bN\in\fM\ex C>0:\quad M_{k+1}&\leq C^{k+1}N_k, \qquad k\in\N_0,\\
\intertext{and $B$-semiregular if \eqref{matrixAnal} and}
 \fa \bN\in\fM\ex \bM\in\fM\ex C>0:\quad M_{k+1}&\leq C^{k+1}N_k, \qquad k\in\N_0,\label{BeuDeriv}
\end{align} 
hold. 
We say that $\fM$ is semiregular if the matrix is $R$- and $B$-semiregular.
We also write $[$semiregular$]$ when we mean $R$-semiregular or $B$-semiregular
depending on the case considered.
\begin{Rem}\label{DerivClosedAlternative}
	We list here some consequences of the conditions above.
	\begin{enumerate}
\item Let $\fM$ be a weight matrix.
The spaces $\Rou{\fM}{U}$ and $\Beu{\fM}{U}$ are closed under derivation when $\fM$ 
satisfies \eqref{RouDeriv} or \eqref{BeuDeriv}, respectively.
If $\fM$ is $[$semiregular$]$ then $\E^{[\fM]}$ is closed under
 composition with analytic mappings, see  \cite[Theorem 2.9]{FURDOS2020123451}.
 Hence, if $X$ is an analytic manifold, then $\DC{\fM}{X}$ is well defined.
 \item If $\ell\in\N$ is fixed and $\bM,\bN$ are two weight sequences satisfying
 $M_{k+\ell}\leq \gamma^{k+\ell}N_k$ for some constant 
 $\gamma>0$ independent of $k\in\N_0$ then 
 there are constants $C,h>0$ such that
 \begin{equation}\label{AltDerivClosed}
 (M_k)^{\tfrac{k+\ell}{k}}\leq Ch^kN_k, \qquad \quad k\in\N.
 \end{equation}
Indeed, it follows from \eqref{logconvexity} that the sequence $((L_k)^{1/k})_k$ is increasing 
for any weight sequence $\bL$. Thus we have
\begin{equation*}
	(M_k)^{1/k}\leq (M_{k+\ell})^{1/(k+\ell)}\leq \gamma (N_{k})^{1/(k+\ell)}
\end{equation*}
for all $k\in\N$.

 Hence if $\fM$ is a weight matrix satisfying \eqref{RouDeriv} then by iterating \eqref{RouDeriv} we obtain
 for each $\bM\in\fM$ and $\ell\in\N$ there are $\bN\in\fM$ and
  constants $C,h>0$ such that
 \eqref{AltDerivClosed} holds.
  Similarly for a weight matrix $\fM$ with
 \eqref{BeuDeriv} we have that for all $\bN\in\fM$ and $\ell\in\N$ there
 exist $\bM\in\fM$ and $C,h>0$ satisfying \eqref{AltDerivClosed}.
 \item An equivalent condition to \eqref{RouDeriv} is 
 \begin{align*}
 	\fa\bM\in\fM \ex\bN\in\fM\;\ex C>0:\; m_{k+1}\leq C^{k+1}n_k\qquad k\in\N_0.
 \end{align*}
Similarly, we can without loss of generality replace in $\eqref{BeuDeriv}$ 
$M_{k+1}$ and $N_{k}$ by $m_{k+1}$ and $n_k$, respectively.
 \end{enumerate}
\end{Rem}

\begin{Ex}\label{WMatrixExamples}
	Here are some families of weight matrices which will play a prominent role later on.
	\begin{enumerate}
		\item The Gevrey matrix $\fG=\{\bG^s:s>1\}$ is semiregular.
		Both spaces $\DC{\fG}{U}$ are non-quasianalytic and furthermore
		we have the identity $\Beu{\fG}{U}=\bigcap_{s>1}\Rou{\bG^s}{U}=
		\bigcap_{s>1}\G^s(U)$ since $\bG^s\lhd\bG^t$ for $s<t$, cf.\ 
		\cite{MR3285413}.
		\item Using the sequences $\bL^{q,r}$ we can define two families of semiregular matrices:
		\begin{align*}
		\fQ^r&=\left\{\bL^{q,r}:q>1\right\} & r&>1,\\
		\fR^q&=\left\{\bL^{q,r}:r>1\right\} & q&>1.
		\end{align*}
		Since $\bL^{q_0,r_0}\lhd\bL^{q_1,r_1}$ for $r_1>r_0$ and all $q_0,q_1>1$
		we have that $\fR^q[\approx]\fR^{q^\prime}$ for any $q,q^\prime>1$.
		Hence if we set $\fR=\fR^e$ then $\DC{\fR}{U}=\DC{\fR^q}{U}$ for all $q>1$.
		Furthermore $\fQ^r[\preceq]\fQ^{r^\prime}$ for all $r<r^\prime$ and
		$\fR(\preceq)\fQ^r$ and $\fQ^r\{\preceq\}\fR$ for all $r>1$.
		We note finally that $\fG\{\lhd)\fR$.
		\item For $j\in\N$ consider the semiregular weight matrix 
		$\fB^j=\{\bB^{j,\sigma}:\,\sigma>0\}$. 
		Then the spaces $\Beu{\fB^1}{U}$ and $\DC{\fB^j}{U}$, $j\geq 2$,
		are quasianalytic and $\Rou{\fB^1}{U}$ is non-quasianalytic.
	
	Analogously to above we set
	\begin{equation*}
	\fJ^\sigma=\left\{\bB^{j,\sigma}:j\in\N\right\},\quad \sigma>0.
	\end{equation*}
	Then $\fJ^\sigma(\preceq)\fB^{j,\sigma}$ for all $j\in\N$ and $\sigma>0$.
	Clearly we have also that $\fJ^{\sigma_1}(\approx)\fJ^{\sigma_2}$ for all
	 $\sigma_1,\sigma_2>0$  
	 and finally $\fJ^\sigma\{\approx\}\{\bB^{1,\sigma}\}$ for any $\sigma>0$.
	\end{enumerate}
\end{Ex}
\begin{Not}
	We say that $f$ is an ultradifferentiable function of class $[\ast]$ in $U$,
	$\ast=\bM,\fM,\omega$, if $f\in\DC{\ast}{U}$. 
\end{Not}
\subsection{Weight functions}\label{subsec:Weightfct}
In this section we discuss briefly the relationship between weight functions 
and weight matrices, as described in \cite{MR3285413}. 

Recall that a weight function $\omega: [0,\infty)\rightarrow[0,\infty)$  
in the sense of \cite{MR1052587}
is  continuous, increasing, $\omega(0)=0$, $\omega(t)\rightarrow \infty$ and 
satisfies the conditions \eqref{om2}, \eqref{om3} and \eqref{om4}.
If $\sigma,\tau$ are two weight functions then we write
\begin{align*}
\sigma\preceq\tau\quad & :\Longleftrightarrow\quad \tau(t)=O(\sigma(t))\quad
\text{if } t\rightarrow\infty,\\
\sigma\lhd\tau \quad & :\Longleftrightarrow \quad\tau(t)=o(\sigma(t))
\quad \text{ if }t\rightarrow\infty.
\end{align*}
It follows that $\DC{\sigma}{U}\subseteq\DC{\tau}{U}$ 
if $\sigma\preceq\tau$ and
$\sigma\lhd\tau$ implies $\Rou{\sigma}{U}\subseteq\Beu{\tau}{U}$.
We write $\omega\sim\sigma$ when $\omega\preceq\sigma$ and $\sigma\preceq\omega$.

\begin{Rem}
	\begin{enumerate}
		\item It is well known that the weight function $t^{1/s}$ generates
		the Gevrey class of order $s$, i.e.\ $\G^s(U)=\Rou{t^{1/  s}}{U}$ and
		in particular, for $s=1$, $\An(U)=\G^1(U)=\Rou{t}{U}$ is the space
		of analytic functions.
		It follows that if $\omega$ is a weight function such that 
		$\omega(t)=o(t^\alpha)$ for some $0<\alpha\leq 1$ then
		\begin{equation*}
		\G^{1/\alpha}(U)\subseteq\DC{\omega}{U}.
		\end{equation*}
		\item In general, weight sequences and weight functions
		describe distinct spaces, see \cite{BonetMeiseMelikhov07}.
		\item The space $\DC{\omega}{U}$ is quasianalytic if and only if
		\begin{equation}\label{omNQ}
		\int\limits_{1}^{\infty}\frac{\omega(t)}{t^2}\,dt=\infty.
		\end{equation}
		We say that a weight function $\omega$ is quasianalytic if $\omega$ 
		satisfies \eqref{omNQ} and non-quasianalytic otherwise. 
		If $\omega$ is non-quasianalytic
		then $\omega(t)=o(t)$ for $t\rightarrow\infty$. 
	\end{enumerate}
\end{Rem}

According to \cite{MR1052587} we can without loss of generality
 assume that $\omega$ vanishes on  $[0,1]$. 
Then the Young conjugate $\varphi^\ast_\omega(t)=\sup_{s\geq 0}(st-\varphi_\omega(s))$
 of $\varphi_\omega=\omega\circ\exp$ is
 convex, increasing,
 $\varphi_\omega^\ast(0)=0$ and $\varphi_\omega^{\ast\ast}=\varphi_\omega$.\\
Furthermore both functions 
	\begin{equation*}
	t\longmapsto \frac{\varphi_\omega(t)}{t}\quad
	\text{and}\quad
	t\longmapsto \frac{\varphi^\ast_\omega(t)}{t}
	\end{equation*}
	are increasing on $[0,\infty)$.

\begin{Def}
	Let $\omega$ be a weight function such that $\omega\vert_{[0,1]}=0$.
	The associated weight matrix $\fW=\{\bW^\lambda=(W^\lambda_k)_k:\;\lambda>0\}$
	to $\omega$ is given by 
	\begin{equation}
	W^\lambda_k=\exp\left[\lambda^{-1}\varphi_\omega^\ast(\lambda k)\right].
	\end{equation}
\end{Def}
We summarize the basic properties of $\fW$ from 
\cite[Section 5]{MR3285413}:
First, each sequence $\bW^\lambda$ is a weight sequence and $\fW$ satisfies 
\begin{equation}\label{omMG}
W^\lambda_{j+k}\leq W^{2\lambda}_jW^{2\lambda}_k
\end{equation}
for all $j,k\in\N_0$ and all $\lambda>0$. 
We note that \eqref{omMG} implies \eqref{RouDeriv} and \eqref{BeuDeriv}.
Furthermore we have
\begin{equation}\label{newexpabsorb}
\fa h\geq 1\ex A\ge 1\fa\ell>0\ex D\geq 1\fa j\in\N_0:\;\;\;h^jW^\ell_j\leq D W^{A\ell}_j.
\end{equation}
From \eqref{newexpabsorb} we obtain that
\begin{equation*}
\DC{\omega}{U}=\DC{\fW}{U}
\end{equation*}
as topological vector spaces.
Finally, $\omega(t)=o(t)$ if and only if 
\begin{equation*}
\lim_{k\rightarrow\infty}\left(w_k^\lambda\right)^{1/k}=\infty
\end{equation*}
for all $\lambda>0$, where $w^\lambda_k=W^\lambda_k/k!$.
\begin{Prop}
\begin{enumerate}
\item Let $\omega$ be a weight function with $\omega(t)=o(t)$ when $t\rightarrow\infty$.
Then the associated weight matrix is semiregular.
\item If $\sigma$, $\tau$ are two weight functions with associated weight matrices $\fS$, $\fT$ then
$\sigma\sim\tau$ if and only if $\fS\{\approx\}\fT$ and $\fS(\approx)\fT$.
\end{enumerate}
\end{Prop}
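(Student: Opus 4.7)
The plan is to derive (1) directly from the catalogue of basic properties of the associated weight matrix listed in the excerpt (cf.\ \cite[Section~5]{MR3285413}), and to obtain (2) by pushing the pointwise comparisons of $\sigma,\tau$ through the Young conjugate and back via the associated weight function of a single sequence.

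For (1), the hypothesis $\omega(t)=o(t)$ is exactly the stated equivalence $\lim_{k\to\infty}(w^\lambda_k)^{1/k}=\infty$ for every $\lambda>0$, which is precisely condition \eqref{matrixAnal} for $\fW$. The derivative-closedness conditions \eqref{RouDeriv} and \eqref{BeuDeriv} follow by specializing the moderate growth property \eqref{omMG} to $j=1$: for any $\lambda>0$,
\[
W^\lambda_{k+1}\leq W^{2\lambda}_{1}\,W^{2\lambda}_{k}\leq C^{k+1}W^{2\lambda}_k,
\]
with $C:=\max\{W^{2\lambda}_1,1\}$. Given $\bM=\bW^\lambda\in\fW$, the sequence $\bN=\bW^{2\lambda}$ witnesses \eqref{RouDeriv}; given $\bN=\bW^\lambda$, the sequence $\bM=\bW^{\lambda/2}$ witnesses \eqref{BeuDeriv}. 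Hence $\fW$ is both $R$- and $B$-semiregular.

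For the forward direction of (2), write $\tau\leq A\sigma+B$ and $\sigma\leq A'\tau+B'$ with $A,A'\geq 1$. Composing with $\exp$ preserves these estimates for $\varphi_\sigma,\varphi_\tau$, and taking Young conjugates (which reverse order) together with the scaling identity $(A\varphi+B)^*(t)=A\varphi^*(t/A)-B$ yields
\[
\varphi_\tau^*(t)\geq A\varphi_\sigma^*(t/A)-B,\qquad \varphi_\sigma^*(t)\geq A'\varphi_\tau^*(t/A')-B'.
\]
Evaluating the first inequality at $t=\mu k$, dividing by $\mu$, exponentiating, and setting $\lambda:=\mu/A$, we find $T^\mu_k\geq e^{-B/\mu}S^\lambda_k$, so $\bS^{\mu/A}\preceq \bT^\mu$ for every $\mu>0$. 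This single parametric family already provides both the conditions $\forall\lambda\,\exists\mu\colon\bS^\lambda\preceq\bT^\mu$ (choose $\mu=A\lambda$) and $\forall\mu\,\exists\lambda\colon\bS^\lambda\preceq\bT^\mu$ (choose $\lambda=\mu/A$). The second displayed inequality symmetrically furnishes the complementary family $\bT^{\lambda/A'}\preceq\bS^\lambda$. Together these four conditions are precisely $\fS\{\approx\}\fT$ and $\fS(\approx)\fT$.

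For the backward direction of (2), invoke the associated weight function of a single sequence, $\omega_\bM(t):=\sup_k\log(t^k/M_k)$. A routine computation using \eqref{om2} shows that $\bS\preceq\bT$ forces $\omega_\bT=O(\omega_\bS)$, that is, $\omega_\bS\preceq\omega_\bT$ as weight functions. Moreover, Legendre--Fenchel biconjugation $\varphi_\omega^{**}=\varphi_\omega$ entails $\omega_{\bW^\lambda}\sim\omega$ for every $\lambda>0$; this identity is part of the material in \cite[Section~5]{MR3285413}. Consequently, any sequence comparison $\bS^\lambda\preceq\bT^\mu$ extracted from the hypothesis forces $\sigma\preceq\tau$, and the symmetric comparison coming from the other half of the hypothesis yields $\tau\preceq\sigma$. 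The main bookkeeping obstacle is the careful handling of the Young conjugate identities on both sides, but these manipulations are standard in the Braun--Meise--Taylor framework; once they are in place, both directions of the equivalence reduce to indexing arithmetic.
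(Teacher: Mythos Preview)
The paper does not supply its own proof of this proposition; it is stated as a summary of facts from \cite[Section~5]{MR3285413}, so there is no in-paper argument to compare against. Your reconstruction is correct and is essentially the standard one: part~(1) follows immediately from the listed properties \eqref{omMG} and the stated equivalence of $\omega(t)=o(t)$ with \eqref{matrixAnal}, exactly as you wrote; for part~(2) the Young-conjugate manipulation is the right tool in both directions.

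One minor point of precision in your backward direction: the implication ``$\bS\preceq\bT\Rightarrow\omega_\bT=O(\omega_\bS)$'' is not valid for arbitrary weight sequences, since from $S_k\le Ch^kT_k$ one only gets $\omega_\bT(t)\le\omega_\bS(ht)+\log C$, and absorbing the dilation requires \eqref{om2} for $\omega_\bS$. In your application this is fine because $\omega_{\bS^\lambda}\sim\sigma$ and $\sigma$ satisfies \eqref{om2} by hypothesis, but you should say so explicitly rather than phrasing it as a general fact about weight sequences. With that clarification the argument is complete.
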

\begin{Ex}\label{q-GevreyOmega}
	 	Let $s>1$.
		If we consider the associated weight matrix $\fW^s=\{\bW^{\lambda,s}:\;\lambda>0\}$ of 
		the weight function $\omega_s(t)=(\max\{0,\log t\})^s$ then we
		observe that after a reparametrization of the matrix we have
		$W^{\lambda ,s}_k=e^{\lambda k^r}$ where $r=s/(s-1)$, i.e.\
		the parameters $s$ and $r$ are conjugated:
		 $\tfrac{1}{s}+\tfrac{1}{r}=1$, see
		\cite[subsection 5.5]{MR3711790}. 
	If $q=e^\lambda$ then clearly $\bW^{\lambda,s}\preceq\bL^{q,r}$ and
	it is easy to see that $\bL^{q,r}\lhd \bW^{\lambda^\prime,s}$ when
	$\lambda<\lambda^\prime$. 
	It follows that $\fW^s[\approx]\fQ^r$.
		Hence 
		\begin{equation*}
		\DC{\omega_s}{U}=\DC{\fQ^r}{U}, \qquad \tfrac{1}{s}+\tfrac{1}{r}=1.
		\end{equation*}
\end{Ex}

\subsection{The ultradifferentiable wavefront set}
The ultradifferentiable wavefront set for Roumieu classes given by weight sequences
was introduced in \cite{MR0294849}.
In \cite{MR2595651}  the wavefront set was defined in the case of weight functions.
These definitions have been generalized by \cite{FURDOS2020123451} to 
the category of classes given by weight matrices.

For the convenience of the reader
we recall from \cite{FURDOS2020123451} the definition of the wavefront set associated
to classes given by weight matrices.
We give also a summary of the results we need later on,
observing in particular that, in analogy to the results of \cite{MR0294849} in the case of a single weight sequence, semiregularity of the weight matrix is 
sufficient for the ultradifferentiable microlocal elliptic regularity Theorem
to hold for operators with analytic coefficients;
a fact that was not explicitly stated in \cite{FURDOS2020123451} because in that
paper we worked in a more general setting.

We define the Fourier transform of a distribution $u\in\Ep(U)$ to be
\begin{equation*}
\hat{u}(\xi)=\F(u)(\xi)=\left\langle u(x),e^{ix\xi}\right\rangle_x,
\end{equation*}
where the bracket on the right-hand side denotes the distributional action.
\begin{Def}\label{WF-Definition}
	Let $\fM$ be a weight matrix, $u\in\D^\prime(U)$ and 
$(x_0,\xi_0)\in \COT$. Then
	\begin{enumerate}
		\item  $(x_0,\xi_0)\notin\WF_{\{\fM\}}u$
		 iff there exist a neighborhood $V$ of $x_0$, 
		 a conic neighborhood $\Gamma$ of $\xi_0$, 
		and a bounded sequence $(u_k)_k\subseteq\E^\prime(U)$ 
		with $u_k\vert_V=u\vert_V$  such that for some $\bM\in\fM$ and a constant $h>0$ we have
		\begin{equation}\label{WF-defining}
		\sup_{\substack{\xi\in\Gamma\\ k\in\N}}
		\frac{|\xi|^k\bigl|\widehat{u}_k(\xi)\bigr|}{h^k M_k}<\infty,
		\end{equation}
		\item  $(x_0,\xi_0)\notin\WF_{(\fM)}u$ iff 
		there exist a neighborhood $V$ of $x_0$, a conic neighborhood $\Gamma$ of $\xi_0$, and  
		a bounded sequence $(u_k)_k\subseteq\E^\prime(U)$ with $u_k\vert_V=u\vert_V$
		such that  \eqref{WF-defining} is satisfied
		for all $\bM\in\fM$ and all $h>0$.
	\end{enumerate}
\end{Def}
The basic properties of the ultradifferentiable 
wavefront set are summarized in the following
Proposition.
\begin{Prop}[{\cite[Proposition 5.4(1)--(4)]{FURDOS2020123451}}]\label{WFBasicProperties}
	Let $\fM,\fN$ be weight matrices and $u\in\Dp(U)$. 
	Then the following statements hold:
	\begin{enumerate}
		\item $\WF_{[\fM]} u$ is a closed subset of $\COT$ which is conic in the second variable.
		\item $\WF_{\{\fM\}}u\subseteq\WF_{(\fM)}u$.
		\item $\WF u\subseteq\WF_{[\fN]}u\subseteq \WF_{[\fM]}u$ if $\fM[\preceq]\fN$.
		\item $\WF_{(\mathfrak{N})}u \subseteq \WF_{\{\fM\}}u$ if $\fM \{\lhd)\mathfrak{N}$.
	\end{enumerate}
\end{Prop}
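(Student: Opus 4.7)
The plan is to derive all four assertions directly from Definition \ref{WF-Definition}, arguing each time in the complement. Throughout, fix a putative non-wavefront-set point $(x_0,\xi_0)$ and exploit the defining estimate $|\xi|^k|\widehat u_k(\xi)|\leq Ch^kM_k$ on the witnessing sequence $(u_k)\subseteq\Ep(U)$ localized on $V$.

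For part (1), suppose $(x_0,\xi_0)\notin\WF_{[\fM]}u$, witnessed by a neighborhood $V\ni x_0$, a conic neighborhood $\Gamma\ni\xi_0$, and a bounded sequence $(u_k)$ satisfying \eqref{WF-defining}. The same triple $(V,\Gamma,(u_k))$ witnesses non-membership for every point $(x,\xi)$ with $x\in V$ and $\xi\in\Gamma$, since $V$ is a neighborhood of any of its points and $\Gamma$ is a conic neighborhood of each of its nonzero elements. Hence the complement of $\WF_{[\fM]}u$ is open and invariant under positive dilations in $\xi$, giving both closedness and conicity. For part (2), if $(x_0,\xi_0)\notin\WF_{(\fM)}u$ then \eqref{WF-defining} holds for all $\bM\in\fM$ and all $h>0$; in particular it holds for some fixed choice, so $(x_0,\xi_0)\notin\WF_{\{\fM\}}u$.

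For part (3), I will argue the Roumieu and Beurling inclusions $\WF_{[\fN]}u\subseteq\WF_{[\fM]}u$ separately, in each case by converting the estimate on $\bM$ into an estimate on $\bN$. If $\bM\preceq\bN$ then there exists $C>0$ with $M_k\leq C^kN_k$ for all $k$, so a bound $|\xi|^k|\widehat u_k(\xi)|\leq Ch^kM_k$ immediately yields $|\xi|^k|\widehat u_k(\xi)|\leq C(hC)^kN_k$, i.e.\ the same estimate with $h$ replaced by $hC$. In the Roumieu case $\fM\{\preceq\}\fN$ lets us choose such an $\bN\in\fN$ for the given $\bM\in\fM$; in the Beurling case $\fM(\preceq)\fN$ lets us, for each $\bN\in\fN$ and each $h'>0$, pick $\bM\in\fM$ with $\bM\preceq\bN$ and then set $h=h'/C$ in the (Beurling) estimate for $\bM$. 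This yields the two inclusions. The inclusion $\WF u\subseteq\WF_{[\fN]}u$ is the classical observation that the ultradifferentiable estimate forces faster-than-polynomial decay of $\widehat{\varphi u}$ in $\Gamma$ for any $\varphi\in\D(V)$ with $\varphi(x_0)\neq 0$: apply \eqref{WF-defining} for each fixed $k$ after writing $\widehat{\varphi u}(\xi)=\widehat\varphi\ast\widehat u_k(\xi)$ and splitting the convolution into the two regions where $\eta$ lies inside/outside a smaller cone, using the rapid decay of $\widehat\varphi$ on one side and \eqref{WF-defining} on the other.

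For part (4), suppose $(x_0,\xi_0)\notin\WF_{\{\fM\}}u$ so that \eqref{WF-defining} holds for some $\bM\in\fM$ and some $h>0$. Given arbitrary $\bN\in\fN$ and arbitrary $h'>0$, the relation $\bM\lhd\bN$ furnished by $\fM\{\lhd)\fN$ says $(M_k/N_k)^{1/k}\to 0$, so for any prescribed $\eta>0$ there is $C_\eta$ with $M_k\leq C_\eta\eta^kN_k$ for all $k$. Choosing $\eta=h'/h$ and combining with the estimate for $\bM$ gives $|\xi|^k|\widehat u_k(\xi)|\leq CC_\eta(h')^kN_k$, which is \eqref{WF-defining} for the arbitrary pair $(\bN,h')$. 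Hence $(x_0,\xi_0)\notin\WF_{(\fN)}u$, as required. The only mildly non-cosmetic step is the classical-wavefront comparison in part (3); everything else is pure definition-chasing via the orderings $\preceq$ and $\lhd$, and no further properties of $\fM$ (such as semiregularity) are needed.
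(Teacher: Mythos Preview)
Your proof is correct and follows the natural definition-chasing route. Note that the paper does not actually prove this proposition; it is quoted from \cite[Proposition~5.4(1)--(4)]{FURDOS2020123451}, so there is no in-paper argument to compare against. The arguments you give for (1), (2), (4), and the second inclusion in (3) are exactly the expected ones: the same witnessing data $(V,\Gamma,(u_k)_k)$ is reused, and the orderings $\preceq$ and $\lhd$ translate directly into estimates of the form $M_k\le C^kN_k$ (resp.\ $M_k\le C_\eta\eta^kN_k$), which are absorbed into the constant $h$.

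One small remark on the first inclusion in (3): your sketch for $\WF u\subseteq\WF_{[\fN]}u$ is correct in spirit but slightly compressed. The point is that since $u_k|_V=u|_V$ for \emph{every} $k$, any fixed $\varphi\in\D(V)$ gives $\varphi u=\varphi u_k$ for all $k$; so for each desired order $N$ of decay one may choose $k$ large and run the convolution splitting you describe, using the boundedness of $(u_k)_k$ in $\Ep$ to control the piece outside the cone. This is the standard argument (cf.\ H\"ormander, Lemma~8.1.1), and your outline captures it.
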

If we assume that $\fM$ satisfies additional conditions then we can show 
more properties of $\WF_{[\fM]}u$:
\begin{Prop}[{\cite[Proposition 5.6(1)]{FURDOS2020123451}}]\label{WF-Intersection}
	Let $\fM$ be a weight matrix satisfying \eqref{matrixAnal}
	 and $u\in\Dp(U)$.
		We have 
		\begin{equation*}
		\WF_{\{\fM\}} u = \bigcap_{\bM \in \fM} \WF_{\{\bM\}} u 
		\quad \text{ and } \quad
		\WF_{(\fM)} u = \overline{\bigcup_{\bM \in \fM} \WF_{(\bM)} u}.
		\end{equation*}
\end{Prop}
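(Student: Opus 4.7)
The plan is to derive both identities by unwinding the definitions, with the reverse inclusion of the Beurling identity the only substantive step, to be handled via a countable reduction of $\fM$ together with a diagonal construction.

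The Roumieu identity is essentially tautological. $(x_0,\xi_0)\notin\WF_{\{\fM\}}u$ asks for data $(V,\Gamma,(u_k)_k)$ with \emph{some} $\bM\in\fM$ and $h>0$ realising \eqref{WF-defining}; this is the same as asking $(x_0,\xi_0)\notin\WF_{\{\bM\}}u$ for that $\bM$. Passing to complements and reordering the existentials yields $\WF_{\{\fM\}}u=\bigcap_{\bM\in\fM}\WF_{\{\bM\}}u$. For the Beurling identity, the inclusion $\overline{\bigcup_{\bM\in\fM}\WF_{(\bM)}u}\subseteq\WF_{(\fM)}u$ is immediate because specialising a Beurling $\fM$-witness to any single $\bM$ yields a Beurling $\bM$-witness, giving $\WF_{(\bM)}u\subseteq\WF_{(\fM)}u$; the closure is then allowed since $\WF_{(\fM)}u$ is closed by Proposition \ref{WFBasicProperties}(1).

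For the reverse inclusion, I would pass to a countable weight matrix $\fL=\{\bL^j\}_{j\in\N}$ equivalent to $\fM$ (via the result cited from \cite[Lemma 2.5]{FURDOS2020123451}), ordered $\bL^1\leq\bL^2\leq\cdots$; using Proposition \ref{WFBasicProperties}(3) one checks $\bigcup_{\bM\in\fM}\WF_{(\bM)}u=\bigcup_j\WF_{(\bL^j)}u$. If $(x_0,\xi_0)\notin\overline{\bigcup_j\WF_{(\bL^j)}u}$, a common open conic neighborhood $V_0\times\Gamma_0$ of $(x_0,\xi_0)$ misses every $\WF_{(\bL^j)}u$, and for each $j$ one obtains a bounded sequence $(u^j_k)_k\subseteq\Ep(U)$ with $u^j_k\vert_{V^j}=u\vert_{V^j}$ on some $V^j\subseteq V_0$ and satisfying \eqref{WF-defining} on $\Gamma^j\subseteq\Gamma_0$ for every $h>0$. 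The single witnessing sequence would be produced diagonally: choose $k_1<k_2<\cdots$ growing rapidly and set $u_k:=u^{j(k)}_k$ with $j(k)=j$ for $k\in[k_j,k_{j+1})$. For fixed target parameters $(j_0,h)$ and tail indices with $j(k)>j_0$, the estimate for $u^{j(k)}_k$ is applied with free parameter $h':=h/(2A_{j_0,j(k)})$, where $A_{j_0,j(k)}$ is the comparison constant from $\bL^{j_0}\preceq\bL^{j(k)}$; this converts $(h')^kL^{j(k)}_k$ into at most $2^{-k}h^kL^{j_0}_k$, yielding a tail bound of the form $C_{j(k),h'}\cdot 2^{-k}h^kL^{j_0}_k$.

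The main obstacle is selecting $k_j$ growing fast enough that the constants $C_{j,h'}$---which may blow up as $j\to\infty$ or $h'\to 0$---are absorbed by $2^{k_j}$ simultaneously for all $(j_0,h)$. I would reduce this to countably many constraints by restricting $h$ to a countable dense set and exploiting the monotonicity of the estimate in $h$, then perform a standard diagonal enumeration. Hypothesis \eqref{matrixAnal} keeps each of the underlying microlocal constructions at the level of the individual weight sequences $\bL^j$ within the scope of Definition \ref{WF-Definition} and Proposition \ref{WFBasicProperties} by ensuring \eqref{AnalyticInclusion} holds throughout $\fL$.
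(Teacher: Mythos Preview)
The paper does not give its own proof of this proposition; it is quoted from \cite[Proposition~5.6(1)]{FURDOS2020123451}, so I assess your proposal on its own merits. Your Roumieu identity and the easy Beurling inclusion are fine. The diagonal construction for the reverse Beurling inclusion, however, has a real gap. The witnesses $(V^j,\Gamma^j,(u^j_k)_k)$ that you extract for each $j$ come with their own neighborhoods and cones, and nothing prevents $V^j$ and $\Gamma^j$ from shrinking to $\{x_0\}$ and a single ray as $j\to\infty$. The diagonal sequence $u_k=u^{j(k)}_k$ then satisfies only $u_k\vert_{V^{j(k)}}=u\vert_{V^{j(k)}}$, so there is no common open $V$ on which all $u_k$ coincide with $u$, as Definition~\ref{WF-Definition}(2) demands; the same problem affects the cone. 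Boundedness of the diagonal sequence in $\Ep(U)$ is also unclear, since the bound on each $(u^j_k)_k$ may grow with $j$. A smaller point: for the Beurling case the countable matrix should be enumerated decreasingly, $\bL^1\ge\bL^2\ge\cdots$, so that $\WF_{(\bL^j)}u$ increases with $j$; with your increasing order the union would collapse to $\WF_{(\bL^1)}u$ whenever $\fL$ has a minimum.

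The standard remedy avoids diagonalizing different sequences altogether. From the common conic neighborhood $V_0\times\Gamma_0$ missing every $\WF_{(\bL^j)}u$, choose a compact $K\Subset V_0$ containing $x_0$ in its interior, a closed cone $F\subset\Gamma_0$ around $\xi_0$, and a single H\"ormander cutoff sequence $\chi_k\in\D(K)$ equal to $1$ near $x_0$; set $u_k=\chi_k u$ once and for all. For each fixed $j$, the single-sequence analogue of Lemma~\ref{WFalternative} (applied with $K\times F$ disjoint from $\WF_{(\bL^j)}u$, using that \eqref{matrixAnal} supplies \eqref{AnalyticInclusion} for $\bL^j$) gives the Beurling estimate on $F$ with weight $\bL^j$. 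Since the \emph{same} sequence $(u_k)_k$ now witnesses every $j$ and every $h>0$, one obtains $(x_0,\xi_0)\notin\WF_{(\fL)}u=\WF_{(\fM)}u$ directly, with no diagonal bookkeeping.
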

Similar to the smooth category we define the $[\fM]$-singular support $\singsupp_{[\fM]} u$ of a distribution $u\in\Dp(U)$ 
as the complement of the largest subset $V\subseteq U$ such that $u\vert_V\in\DC{\fM}{V}$. We have
\begin{Prop}[{\cite[Proposition 5.4(6)]{FURDOS2020123451}}]\label{Singsupport}
	Let $\fM$ be a $[$semiregular$]$ weight matrix and $u\in\Dp(U)$.
Then
\begin{equation*}
\pi_1\left(\WF_{[\fM]}u\right)=\singsupp_{[\fM]} u
\end{equation*}
where $\pi_1:\COT\rightarrow U$ is the projection to the first variable.

\end{Prop}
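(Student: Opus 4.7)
The plan is to establish the two inclusions separately and to treat the Roumieu and Beurling cases in parallel, using the flexibility afforded by the chain structure of $\fM$ to pass between members of the matrix as needed (exactly where \eqref{RouDeriv}/\eqref{BeuDeriv} will be invoked).

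For the inclusion $\pi_1(\WF_{[\fM]}u)\subseteq\singsupp_{[\fM]}u$, I would argue by contraposition. Suppose $x_0\notin\singsupp_{[\fM]}u$, so that $u|_V\in\DC{\fM}{V}$ on some neighbourhood $V$ of $x_0$. Pick $W\Subset V$ still containing $x_0$ and, using the standard Denjoy-Carleman-type construction, build a \emph{bounded} sequence $(\chi_k)_k\subset\D(V)$ with $\chi_k\equiv 1$ on $W$ and satisfying derivative bounds $\lvert D^\alpha\chi_k(x)\rvert\leq C(Ck)^{|\alpha|}$ for $|\alpha|\leq k$. Set $u_k:=\chi_k u$. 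By Leibniz applied to $D^\alpha u_k$ for $|\alpha|\leq k$, using the $[\fM]$-estimates of $u$ on $V$ together with the derivation-closedness granted by semiregularity (cf.\ Remark \ref{DerivClosedAlternative}(2), which lets us absorb the extra factors $k^{|\alpha|}$ into a new sequence $\bN\in\fM$), one obtains a uniform bound $\lVert D^\alpha u_k\rVert_\infty\leq Ch^k N_k$ for $|\alpha|= k$. Then integration by parts yields $|\xi|^k|\widehat{u_k}(\xi)|\leq C'h^k N_k$ on \emph{all} of $\R^n\setminus\{0\}$, so $(x_0,\xi)\notin\WF_{[\fM]}u$ for every $\xi\neq 0$.

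For the reverse inclusion $\singsupp_{[\fM]}u\subseteq\pi_1(\WF_{[\fM]}u)$, again by contraposition, suppose the fibre over $x_0$ is disjoint from $\WF_{[\fM]}u$. Compactness of the unit sphere in $\R^n$ provides finitely many conic open sets $\Gamma_1,\dots,\Gamma_N$ covering $\R^n\setminus\{0\}$, common neighbourhoods $V_j\ni x_0$, weight sequences $\bM^{(j)}\in\fM$, constants $h_j$ and bounded sequences $(u_k^{(j)})_k\subset\Ep(U)$ with $u_k^{(j)}|_{V_j}=u|_{V_j}$ and the estimate \eqref{WF-defining} on each $\Gamma_j$. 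Set $V:=\bigcap_j V_j$, choose $W\Subset V$ containing $x_0$, and fix $\chi\in\D(V)$ with $\chi\equiv 1$ on $W$. Because the $\bM^{(j)}$ are linearly ordered in $\fM$, we can replace all of them by a single majorant $\bM\in\fM$ (and in the Beurling case minorant; in both cases \eqref{RouDeriv}/\eqref{BeuDeriv} let us further adjust). On $\supp\chi\subset V$ all the $u_k^{(j)}$ coincide with $u$, so $\widehat{\chi u}(\xi)=\widehat{\chi u_k^{(j)}}(\xi)=(2\pi)^{-n}(\widehat\chi\ast\widehat{u_k^{(j)}})(\xi)$ for each $j$. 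Using a homogeneous microlocal partition of unity $\sum_j\psi_j\equiv 1$ on $|\xi|\geq 1$ with $\supp\psi_j\subset\Gamma_j$ and splitting the convolution integral according to whether $\eta$ lies in $\Gamma_j$ or not, the rapid decay of $\widehat\chi$ off $\Gamma_j$ absorbs the polynomial growth of $\widehat{u_k^{(j)}}$ there, while on $\Gamma_j$ one uses the hypothesis directly. This yields, for all $k\in\N_0$, a uniform estimate
\begin{equation*}
|\widehat{\chi u}(\xi)|\leq C\,\frac{h^k M_k}{(1+|\xi|)^k},\qquad \xi\in\R^n.
\end{equation*}
The Paley-Wiener characterisation for compactly supported distributions in $\DC{\fM}{\R^n}$ (available since $\fM$ is equivalent to a countable matrix by \cite[Lemma 2.5]{FURDOS2020123451}) then gives $\chi u\in\DC{\fM}{\R^n}$, hence $u|_W\in\DC{\fM}{W}$ and $x_0\notin\singsupp_{[\fM]}u$.

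The principal obstacle is the cone-to-ball bookkeeping in the convolution estimate of the second inclusion: one must carry out the splitting so that the final constants and the weight sequence depend only on the cover and on a \emph{single} member of $\fM$. This is where semiregularity is essential, because passing from $\bM^{(j)}$ to a common $\bM\in\fM$ and then controlling expressions like $k^{|\alpha|}M_{|\alpha|}$ that arise from the Leibniz/integration-by-parts estimates requires precisely \eqref{RouDeriv} (in the Roumieu case) or \eqref{BeuDeriv} (in the Beurling case), together with \eqref{AnalyticInclAlt} applied to some member of $\fM$. The first inclusion shares the same technical kernel, namely the cutoff construction together with the absorption of polynomial factors, so both directions ultimately rely on the same feature of the matrix.
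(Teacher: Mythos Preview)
The paper does not prove this proposition but cites it from \cite{FURDOS2020123451}; the natural argument in the present framework goes through Lemma~\ref{WFalternative}. Your first inclusion is correct and follows the standard H\"ormander cutoff scheme.

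The second inclusion, however, has a genuine gap. With a \emph{single} cutoff $\chi\in\D(V)$ the off-cone part of $\widehat{\chi}\ast\widehat{u_k^{(j)}}$ can only be controlled via $|\widehat{\chi}(\eta)|\le C_N(1+|\eta|)^{-N}$ with $N\approx k$, and $C_N$ is essentially $\sup_{|\alpha|\le N}\lVert D^\alpha\chi\rVert_\infty$. To reach $|\widehat{\chi u}(\xi)|\le Ch^kM_k(1+|\xi|)^{-k}$ you therefore need $C_{k+O(1)}\le Ch^kM_k$, which forces $\chi\in\DC{\fM}{\R^n}\cap\D(V)$. But the proposition carries no non-quasianalyticity assumption: for matrices such as $\fB^j$, $j\ge2$ (Example~\ref{WMatrixExamples}(3)), every sequence in $\fM$ is quasianalytic, so $\DC{\fM}{\R^n}\cap\D(V)=\{0\}$ and no admissible $\chi$ exists. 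The remedy is exactly the device you already used in the first inclusion: replace $\chi$ by the $k$-dependent sequence $\chi_k$ with $\lvert D^\alpha\chi_k\rvert\le C(Ck)^{|\alpha|}$ for $|\alpha|\le k$, so that $|\widehat{\chi_k}(\eta)|\le C(hk)^k(1+|\eta|)^{-k}$ with $k^k\le C\gamma^kM_k$ by \eqref{AnalyticInclAlt}, independently of quasianalyticity. This is precisely Lemma~\ref{WFalternative} applied with $F=\R^n\setminus\{0\}$; Fourier inversion together with \eqref{RouDeriv}/\eqref{BeuDeriv} (to pass from $M_{|\alpha|+n+1}$ to some $N_{|\alpha|}$) then yields $u\in\DC{\fM}{W}$.
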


It is possible to choose the distributions $u_k$ in Definition \ref{WF-Definition}
 in a special manner.
For our purpose a simplified version of \cite[Lemma 5.3]{FURDOS2020123451} is sufficient:
\begin{Lem}\label{WFalternative}
	Let $\fM$ be $[$semiregular$]$, $u\in\Dp(U)$,
$K\subseteq U$ a compact subset and $F\subseteq\R^n\setminus\{0\}$ a closed cone
such that 
\begin{equation*}
\WF_{[\fM]} u\cap K\times F=\emptyset.
\end{equation*}
Furthermore assume that $\chi_k\in\D(U)$ is a sequence of functions with common support in
$K$ and for all $\alpha\in\N_0^n$ there are constants $C_\alpha,h_\alpha>0$
such that
	\begin{equation*}
	\Betr{D^{\alpha+\beta}\chi_k}\leq C_\alpha h_\alpha^\bet k^\bet,
	\qquad \bet\leq k.
	\end{equation*}
	If $\mu$ is the order of $u$ near $K$ then 
	the sequence $(\chi_k u)_k$ is bounded in
	$\E^{\prime,\mu}(K)$ and
	\begin{enumerate}
		\item in the Roumieu case we have
	\begin{equation}\label{AltWFestimate}
	\sup_F \xit^{k}\Betr{\F(\chi_k u)(\xi)}\leq Ch^k M_{k}
	\end{equation}
	for some $\bM\in\fM$ and $C,h>0$.
	\item In the Beurling case for all $\bM\in\fM$ and all $h>0$ there is
	$C=C_{\bM,h}$ such that the estimate \eqref{AltWFestimate} holds.
	
\end{enumerate}
\end{Lem}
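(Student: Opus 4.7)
The plan is to reduce the assertion to the defining Fourier decay estimate of $\WF_{[\fM]}u$ by a convolution argument. Since $\WF_{[\fM]}u$ is closed and disjoint from the compact conic set $K \times F$, a finite covering of $K$ by neighborhoods on which Definition \ref{WF-Definition} provides defining sequences, patched together by a partition of unity, yields an open neighborhood $V \Subset U$ of $K$, a slightly larger open cone $F_1 \supset F$, and a single bounded sequence $(u_k)_k \subseteq \Ep(U)$ with $u_k|_V = u|_V$ satisfying \eqref{WF-defining} on $F_1$ (for some $\bM \in \fM$, $h > 0$ in the Roumieu case, and for all $\bM \in \fM$, $h > 0$ in the Beurling case). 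Because $\supp \chi_k \subseteq K \subseteq V$, we have $\chi_k u = \chi_k u_k$ and hence
\begin{equation*}
\F(\chi_k u)(\xi) = (2\pi)^{-n}\int \widehat{\chi_k}(\xi - \eta)\,\widehat{u_k}(\eta)\,d\eta.
\end{equation*}
The boundedness of $(\chi_k u)_k$ in $\E^{\prime,\mu}(K)$ is immediate from the hypothesis applied with $\beta = 0$, which gives $\Betr{D^\alpha \chi_k} \leq C_\alpha$ uniformly in $k$.

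Next, integration by parts using the hypothesis at $\alpha = 0$ yields $\Betr{\widehat{\chi_k}(\eta)} \leq C(h_0 k / \etat)^k$ for $\eta \neq 0$, alongside the trivial uniform bound $\Betr{\widehat{\chi_k}(\eta)} \leq C$. Since $\fM$ is $[$semiregular$]$, condition \eqref{AnalyticInclAlt} provides, for every $\bM \in \fM$ and every $\gamma > 0$, an absorption $k^k \leq C_{\bM,\gamma}\,\gamma^k M_k$; consequently $\widehat{\chi_k}$ satisfies an $\bM$-type decay that can be made arbitrarily sharp. Moreover, from the uniform order bound, $\Betr{\widehat{u_k}(\eta)} \leq C(1+\etat)^\nu$ for some $\nu$ independent of $k$.

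For $\xi \in F$ with $\xit$ large, I would split the integration into $\eta \in F_1$ and $\eta \notin F_1$. On the first region, \eqref{WF-defining} applied to $\widehat{u_k}(\eta)$ combined with the uniform bound on $\widehat{\chi_k}$ gives the desired decay. On the complementary region, the fact that the (closed) sub-cone containing $\xi$ lies in the open cone $F_1$ forces $\Betr{\xi-\eta} \geq c(\xit+\etat)$, so the rapid $\bM$-decay of $\widehat{\chi_k}(\xi-\eta)$ dominates the polynomial growth of $\widehat{u_k}(\eta)$. Summing the two contributions and invoking \eqref{RouDeriv} (resp.\ \eqref{BeuDeriv}) to pass between the weight sequences of $\fM$ produces \eqref{AltWFestimate} with the correct quantifier structure in each case. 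The main technical obstacle is the cone-splitting step together with the bookkeeping of which sequences of $\fM$ appear at each stage: $[$semiregularity$]$ is precisely what absorbs the mismatch between the $\bM$ entering \eqref{WF-defining} for $u_k$ and the one ultimately appearing in \eqref{AltWFestimate} for $\chi_k u$.
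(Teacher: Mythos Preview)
Your convolution/cone-splitting core is correct and is precisely the mechanism the paper uses (the paper does not prove this lemma, citing \cite[Lemma~5.3]{FURDOS2020123451}, but it writes out the identical argument for the iterates analogue, Lemma~\ref{vWFCharLem}). The estimates on $\widehat{\chi_k}$, the absorption $k^k\leq C\gamma^kM_k$ from \eqref{AnalyticInclAlt}, and the appeal to \eqref{RouDeriv}/\eqref{BeuDeriv} are all on target.

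The gap is in your opening reduction. You assert that a partition of unity patches the local defining sequences from Definition~\ref{WF-Definition} into a \emph{single} bounded sequence $(u_k)$ with $u_k|_V=u|_V$ on a full neighborhood $V\supset K$ satisfying \eqref{WF-defining} on all of $F_1$. That step fails: if $\psi_j$ is a fixed ($k$-independent) partition of unity and $v_k=\sum_j\psi_j u_k^{(j)}$, then in the off-cone part of $\widehat{\psi_j}\ast\widehat{u_k^{(j)}}$ you only have the polynomial bound $|\widehat{u_k^{(j)}}(\eta)|\le C(1+|\eta|)^\nu$, and the fixed Schwartz decay of $\widehat{\psi_j}$ yields at best $|\xi|^{-N}$ for a fixed $N$, never $|\xi|^{-k}$. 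In fact the existence of such a global sequence is equivalent to the lemma itself (take $\chi_k\equiv 1$ near $K$), so asserting it upfront is circular.

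The standard remedy---carried out verbatim in the paper's proof of Lemma~\ref{vWFCharLem} and going back to \cite[Lemma~8.4.4]{MR1996773}---reverses the order: first run your convolution argument under the extra hypothesis that $\supp\chi_k$ lies in one local neighborhood $V_{x_0}$ from the definition, which gives \eqref{AltWFestimate} on a small cone around each $\xi_0\in F$; then cover $K$ by finitely many such neighborhoods, pick a $k$-\emph{dependent} partition of unity $\chi_{j,k}$ obeying the same derivative bounds, and decompose $\chi_k u=\sum_j(\chi_{j,k}\chi_k)u$. The partition of unity thus acts on the $\chi_k$-side, where the needed $k$-th order Fourier decay is available, not on the $u_k$-side, where it is not.
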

Lemma \ref{WFalternative} allows us directly to generalize the proof of 
\cite[Theorem 5.4]{MR0294849} in order to show the microlocal elliptic regularity theorem
 for classes
given by weight matrices and operators with analytic coefficients. 
For a more general version see \cite[Theorem 7.1]{FURDOS2020123451}. 
\begin{Thm}\label{EllipticWFThm}
	Let $P$ be a differential operator with analytic coefficients on $U$ and $\fM$ be a $[$semiregular$]$ weight matrix.
	Then we have
	\begin{equation*}
	\WF_{[\fM]}Pu\subseteq\WF_{[\fM]}u\subseteq\WF_{[\fM]} (Pu)\cup\Char P
	\end{equation*}
	for all $u\in\Dp(U)$.
\end{Thm}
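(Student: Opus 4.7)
The plan is to adapt Hörmander's original microlocal elliptic argument, using Lemma \ref{WFalternative} as the replacement for the technical Lemma~5.1 in \cite{MR0294849}. Throughout, semiregularity of $\fM$ enters twice: via the inclusion $\An(U)\subseteq\DC{\fM}{U}$ (so that the coefficients $a_\alpha$ are $\fM$-ultradifferentiable), and via the derivation-closedness conditions \eqref{RouDeriv}/\eqref{BeuDeriv} (so that losing $d$ orders of derivatives, equivalently gaining the factor $\xi^\alpha$ with $\alp\leq d$, can be absorbed into another sequence of the matrix).

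For the easier inclusion $\WF_{[\fM]}Pu\subseteq\WF_{[\fM]}u$, assume $(x_0,\xi_0)\notin\WF_{[\fM]}u$ and pick a compact neighborhood $K$ of $x_0$ and a closed cone $F$ around $\xi_0$ so that $\WF_{[\fM]}u\cap K\times F=\emptyset$. Apply Lemma \ref{WFalternative} to $u$ with a cutoff sequence $(\chi_k)$ arranged so that $\chi_k\equiv 1$ on a fixed neighborhood $V\ni x_0$, obtaining the bound $\sup_F|\xi|^k|\F(\chi_k u)(\xi)|\leq Ch^kM_k$. Setting $u_k=\chi_k u$, the sequence is bounded in $\E^\prime(K)$ with $Pu_k=Pu$ on $V$. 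Write $\F(Pu_k)=\sum_\alpha \F((\psi a_\alpha)D^\alpha u_k)$ for a fixed $\psi\in\D(U)$ equal to $1$ near $K$; since $\psi a_\alpha\in\DC{\fM}{U}$ by semiregularity, a standard convolution argument (splitting $\eta$ into $|\eta|\leq\eps|\xi|$ and its complement, using rapid $\fM$-decay of $\F(\psi a_\alpha)$) shows that the estimate is preserved on any slightly smaller subcone. The factor $\xi^\alpha$ contributes $|\xi|^{\alp}|\widehat{u_k}|\leq h^{k+\alp}M_{k+\alp}$, which is absorbed by iterating \eqref{RouDeriv}/\eqref{BeuDeriv} $d$ times (or using \eqref{AltDerivClosed}).

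For the elliptic inclusion $\WF_{[\fM]}u\subseteq\WF_{[\fM]}Pu\cup\Char P$, take $(x_0,\xi_0)\notin\Char P\cup\WF_{[\fM]}Pu$. By non-characteristicness, we may choose a neighborhood $V_0$ of $x_0$ and a closed conic neighborhood $\Gamma$ of $\xi_0$ with
\begin{equation*}
|p_d(x,\xi)|\geq c|\xi|^d,\qquad (x,\xi)\in V_0\times\Gamma,\ |\xi|\geq 1.
\end{equation*}
Shrinking $\Gamma$ if necessary, Lemma \ref{WFalternative} applied to $Pu$ furnishes a test sequence $(\chi_k)$ satisfying the derivative bounds, with $\sup_\Gamma|\xi|^k|\F(\chi_k Pu)(\xi)|\leq Ch^kM_k$. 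The core of the argument is an inductive estimate on $|\xi|^k|\F(\chi_k u)(\xi)|$, obtained by exploiting the decomposition
\begin{equation*}
P=p_d(x_0,D)+\bigl(P-p_d(x_0,D)\bigr),
\end{equation*}
where the first operator is inverted on $\Gamma$ by division by $p_d(x_0,\xi)$ (gaining the factor $|\xi|^{-d}$), and the remainder is handled by the Taylor expansion of the analytic coefficients at $x_0$, each term being of lower order in $\xi$ or having a small coefficient near $x_0$ after choosing $\chi_k$ supported in a sufficiently small neighborhood. This produces a recursion expressing $\F(\chi_k u)$ in terms of $\F(\chi_{k-d}Pu)$ plus terms of the form $\F(\widetilde\chi_j u)$ with $j<k$ and additional derivatives of the cutoffs; the derivative bounds of Lemma \ref{WFalternative} and the derivation closedness of $\fM$ ensure that the factorial growth generated at each step remains absorbable into a single sequence of the matrix. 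Iterating and summing yields the desired estimate on a subcone of $\Gamma$.

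The main difficulty lies in the bookkeeping of this induction: one must verify that the constants produced by differentiating the analytic coefficients $a_\alpha$ at each step, together with the powers of $k$ coming from the derivatives of $\chi_k$, combine to a bound of the form $C'(h')^kM'_k$ for some $\bM'\in\fM$ (Roumieu case) or for the given $\bM$ after choosing $\chi_k$ more carefully (Beurling case). This is precisely where semiregularity is essential: the inclusion $\An(U)\subseteq\DC{\fM}{U}$ controls the Cauchy-type bounds of $a_\alpha$, and iterated use of \eqref{RouDeriv} or \eqref{BeuDeriv} accommodates the $d$-fold loss in each recursion step, just as in Hörmander's original argument where the fixed analytic class plays the same role.
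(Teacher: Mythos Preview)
Your proposal is correct and follows exactly the approach the paper indicates: the paper gives no detailed proof here but simply states that Lemma~\ref{WFalternative} allows a direct generalization of \cite[Theorem~5.4]{MR0294849}, referring to \cite[Theorem~7.1]{FURDOS2020123451} for the full argument. One small remark: where the paper does spell out the analogous construction in detail (the proof of Theorem~\ref{VectorEllipticThm}), the parametrix is built from the \emph{variable} principal symbol via $Q(e^{-ix\xi}g/p_d(x,\xi))=e^{-ix\xi}(I-R)g$ rather than by freezing coefficients at $x_0$; this avoids having to shrink the support of the cutoffs to absorb the top-order remainder of $P-p_d(x_0,D)$, but your frozen-coefficient variant works equally well with the bookkeeping you describe.
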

\section{Ultradifferentiable vectors}\label{UVectors}
\subsection{Microlocal theory}
The aim of this section is to generalize the microlocal theory presented in \cite{MR557524} to the setting of weight matrices.
In order to accomplish this we have to use a more generalized notion of
vectors than the one from Section \ref{Introduction}.
For this we need to recall some notions.

Let $\sigma\in\R$. We denote the Sobolev space of order $\sigma$ by
$H^\sigma(\R^n)$, which is equipped with the norm
\begin{equation*}
\norm[\sigma]{g}=\left(\int\!(1+\xit)^{2\sigma}
\Betr{\hat{g}(\xi)}^2\,d\xi\right)^{\tfrac{1}{2}}.
\end{equation*}

The localized Sobolev space $H^\sigma_{loc}(U)$ consists of those distributions $g\in\Dp(U)$ which
satisfy $\varphi g\in H^\sigma(\R^n)$ for all $\varphi\in\D(U)$.
It is a locally convex space whose topology is given by the seminorms 
\begin{equation*}
	g\longmapsto \norm[\sigma]{\varphi g}.
\end{equation*}
A different seminorm on $H^\sigma_{loc}(U)$ is 
\begin{equation}\label{SobolevNorm}
	\norm[H^\sigma(V)]{g}=\inf\left\{\norm[\sigma]{G}:\; G\in H^\sigma(\R^n),\; G\vert_V=g\right\}
\end{equation}
where $V\Subset U$. 

\begin{Def}\label{DefinitionVectors}
	Let $\fM$ be a weight matrix, $\sP=\{P_1,\dotsc,P_\ell\}$ 
	a system of differential operators 
	of order $d_j$, $j=1,\dotsc,\ell$,
	with analytic coefficients in the open set $U\subseteq\R^n$ and $\sigma\in\R$.
	If $V\Subset U$, $\bM\in\fM$ and $h>0$ then we set
	\begin{equation*}
	\vNorm[\sP,\sigma]{V,\bM,h}{u}=\sup_{\substack{k\in\N_0\\\alpha\in\{1,\dotsc,\ell\}^k}}
	\frac{\norm[H^\sigma(V)]{P^\alpha u}}{h^{d_\alpha}M_{d_\alpha}}
	\end{equation*}
	where $P^\alpha=P_{\alpha_1}\dots P_{\alpha_k}$,
	$d_\alpha=d_{\alpha_1}+\dots +d_{\alpha_k}$ and
	$u\in\Dp(U)$ such that $P^\alpha u\in H^\sigma_{loc}(U)$ for all $\alpha\in\{1,\dotsc,\ell\}^k$ and $k\in\N_0$.
	In the case $k=0$ we use the convention $\{1,\dotsc,\ell\}^0=0$ and $d_0=0$.
	We set
	\begin{align*}
	\E^{\{\fM\}}_\sigma(U;\sP)&=
	\left\{u\in\Dp(U):\; \fa V\Subset U\ex \bM\in\fM\ex h>0:\;
	\vNorm[\sP,\sigma]{V,\bM,h}{u}<\infty\right\},\\
	\E_\sigma^{(\fM)}(U,\sP)&=
	\left\{u\in\Dp(U):\; \fa V\Subset U\fa \bM\in\fM\fa h>0:\;
	\vNorm[\sP,\sigma]{V,\bM,h}{u}<\infty\right\}.
	\end{align*}
An element of
$\vDC[\sP]{\fM}{U}=\bigcup_\sigma\E_\sigma^{[\fM]}(U;\sP)$
is called an ultradifferentiable vector of class $[\fM]$
(or an $[\fM]$-vector)
of the system $\sP$.
We also  define  $\E_{loc}^{[\fM]}(U;\sP)$ to be the space of those
distributions $u\in\Dp(U)$ such that for all $x_0\in U$ 
 there is a neighborhood $V$ of $x_0$ such that 
 $u\vert_V\in\vDC[\sP]{\fM}{V}$.
If $\sP=\{P\}$ consists of a single operator then we write
$\vDC{\fM}{U}=\vDC[\sP]{\fM}{U}$.
\end{Def}

\begin{Prop}\label{VectorProperties}
	Let $\fM$, $\fN$ be weight matrices and $\sP$ be a system of 
	analytic differential operators. Then the following holds:
	\begin{enumerate}
		\item If $\fM[\preceq]\fN$ then $\vDC[\sP]{\fM}{U}\subseteq\vDC[\sP]{\fN}{U}$.
		\item If $\fM\{\lhd)\fN$ then $\vRou[\sP]{\fM}{U}\subseteq\vBeu[\sP]{\fN}{U}$.
		\item If $\fM$ satisfies \eqref{matrixAnal} then $\DC{\fM}{U}\subseteq\vDC[\sP]{\fM}{U}$.
	\end{enumerate}
\end{Prop}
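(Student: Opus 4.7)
The plan treats the three items in turn. Items (1) and (2) follow formally from unwinding the relations $[\preceq]$ and $\{\lhd)$ between weight matrices; only (3) requires an actual estimate on iterates of $\sP$.

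For (1), I would start from the Roumieu side: given $u\in\vRou[\sP]{\fM}{U}$ and $V\Subset U$, there exist $\bM\in\fM$ and $h>0$ with $\vNorm[\sP,\sigma]{V,\bM,h}{u}<\infty$. The assumption $\fM\{\preceq\}\fN$ furnishes $\bN\in\fN$ and $C_0,h_0>0$ with $M_k\leq C_0 h_0^k N_k$, and substituting yields $\vNorm[\sP,\sigma]{V,\bN,hh_0}{u}\leq C_0\vNorm[\sP,\sigma]{V,\bM,h}{u}<\infty$. The Beurling half of (1) simply reverses the quantifiers, using $\fM(\preceq)\fN$ to select $\bM\in\fM$ after a given $\bN\in\fN$ and $h'>0$. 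For (2), the relation $\fM\{\lhd)\fN$ gives $M_k\leq C_\varepsilon\varepsilon^k N_k$ for every $\bM\in\fM$, $\bN\in\fN$, $\varepsilon>0$; specialising $\varepsilon=h'/h$ converts a single Roumieu bound $\vNorm[\sP,\sigma]{V,\bM,h}{u}<\infty$ into the Beurling bound $\vNorm[\sP,\sigma]{V,\bN,h'}{u}<\infty$ for arbitrary $\bN\in\fN$ and $h'>0$.

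For (3), fix $f\in\DC{\fM}{U}$ and nested open sets $V\Subset V'\Subset U$. Iterating the Leibniz rule on $P^\alpha f=P_{\alpha_1}\cdots P_{\alpha_k}f$ produces an expansion
\begin{equation*}
P^\alpha f=\sum_{|\gamma|\leq d_\alpha}c_{\alpha,\gamma}(x)\,D^\gamma f,
\end{equation*}
in which each $c_{\alpha,\gamma}$ is a finite sum of products of derivatives of the analytic coefficients $a_{\alpha_j,\beta}$, with the total derivative order split between $D^\gamma f$ and the coefficient derivatives bounded by $d_\alpha$. On $\overline{V'}$ one has the analytic bound $|D^\mu a_{\alpha_j,\beta}|\leq C A^{|\mu|}|\mu|!$ and the ultradifferentiable bound $|D^\gamma f|\leq C_1 h_1^{|\gamma|}M_{|\gamma|}$ (with some $\bM\in\fM$ and $h_1>0$ in the Roumieu case, with every such pair in the Beurling case). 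Combining these pointwise inside each monomial and summing, I expect to reach $\norm[L^\infty(V)]{P^\alpha f}\leq C' h_2^{d_\alpha}M_{d_\alpha}$. The $H^\sigma(V)$-norm is then controlled by this sup-bound, either via $L^2(V)\hookrightarrow H^\sigma(V)$ and a volume factor when $\sigma\leq 0$, or by repeating the same estimate for $D^{\gamma'}P^\alpha f$ with $|\gamma'|\leq\sigma$ when $\sigma>0$; in either case this places $f$ in $\vDC[\sP]{\fM}{U}$.

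The crux is the monomial-wise step: the product $|\mu_1|!\cdots|\mu_p|!\cdot M_{|\gamma|}$ must be absorbed into $M_{d_\alpha}$ up to a geometric factor in $d_\alpha$. Condition \eqref{matrixAnal} is exactly tailored for this; by \eqref{AnalyticInclAlt} it supplies $k!\leq C_\eta\eta^k M_k$ for any prescribed $\eta>0$, so every factorial fits into $\bM$ at arbitrarily small geometric cost, after which repeated use of the submultiplicativity \eqref{logconvexity2} collapses the surviving product of $M$-factors into a single $M_{d_\alpha}$. The number of monomials is bounded by $C_0^{d_\alpha}$ (using $k\leq d_\alpha$, which holds since the operators of $\sP$ have order at least one) and is likewise absorbed into $h_2^{d_\alpha}$. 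Without \eqref{matrixAnal} the factorials coming from the analytic symbols could not be dominated by $\bM$, and the argument would break down.
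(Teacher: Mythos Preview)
Your treatment of (1) and (2) matches the paper's exactly, and your Roumieu argument for (3) runs along the same lines as the paper's: both use \eqref{AnalyticInclAlt} to absorb the factorials coming from the analytic coefficients and \eqref{logconvexity2} to merge the resulting $M$-factors into a single $M_{d_\alpha}$. The paper packages this as a one-step estimate $\sup_V\lvert D^\beta Qf\rvert\le C(2h)^{|\beta|+d}M_{|\beta|+d}$ and then iterates, while you expand $P^\alpha f$ directly; the combinatorial content is the same. (A word of caution: your assertion that ``the number of monomials is bounded by $C_0^{d_\alpha}$'' needs more care --- the iterated Leibniz expansion produces multinomial weights of total size $\le k^{d_\alpha}$, not $C_0^{d_\alpha}$, and these must be paired with the factorials $\prod|\mu_j|!$ rather than bounded separately. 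The paper is equally terse here.)

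The substantive difference is the Beurling half of (3). The paper does \emph{not} simply rerun the Roumieu estimate ``with every such pair'' as you propose. Instead, for a given $\bM\in\fM$ it builds the auxiliary sequence
\[
L'_k=\max\Bigl\{k!,\ \sup_{x\in V}\sup_{|\alpha|\le k}\lvert D^\alpha f(x)\rvert\Bigr\},
\]
notes that $\bL'\lhd\bM$ because $f\in\Beu{\fM}{U}$, and then invokes Lemma~\ref{AuxillaryLem1} to produce a genuine weight sequence $\bN$ with $\bL'\le\bN\lhd\bM$. The Roumieu-type computation is then carried out once, with respect to $\bN$, yielding $\lvert P^\alpha f\rvert\le C h_1^{d_\alpha}N_{d_\alpha}$ for some \emph{fixed} $C,h_1$; finally $\bN\lhd\bM$ converts this into $\lvert P^\alpha f\rvert\le C_h h^{d_\alpha}M_{d_\alpha}$ for every $h>0$. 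This detour is exactly what absorbs the operator-dependent geometric constants (the analytic radius $A$, the term count, etc.) that your direct argument leaves standing: those constants contribute a factor bounded below by some fixed $B^{d_\alpha}$, so shrinking $h_1$ and $\eta$ alone cannot force the final $h_2$ below $B$. The paper's use of Lemma~\ref{AuxillaryLem1} is therefore not cosmetic but the mechanism that makes the Beurling conclusion go through.
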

\begin{proof}
	If $\fM\{\preceq\}\fN$ and $V\Subset U$ are given
then for all $\bM\in\fM$ and all $h>0$ there are $\bN\in\fN$ and 
$h^\prime,C>0$ such that 
	\begin{equation*}
	\vNorm[\sP,r]{V,\bN,h^\prime}{u}\leq C\vNorm[\sP,r]{V,\bM,h}{u}
	\end{equation*}
	for $u\in\Dp(U)$. Hence (1) holds in the Roumieu case. 
	The proofs of the other statements in (1) and (2) are similar.
	
	In order to show (3), recall that \eqref{matrixAnal} implies that for
	all $\bM\in\fM$ and all $\gamma>0$ there is a constant $C>0$ such that
	\begin{equation*}
	k!\leq C\gamma^kM_k,\quad k\in\N_0.
	\end{equation*}
	We may also note that if $Q=\sum_{\bet\leq d}a_\beta(x)D^\beta$
	is an operator with analytic coefficients in $U$ then for each
	$V\Subset U$ we can 
	find constants $C,r>0$ such that 
	\begin{equation*}
	\Betr{D^\alpha a_\beta(x)}\leq C r^\alp \alp !, \quad x\in V,\alpha\in\N_0^n.
	\end{equation*}
	This means that for $f\in\Rou{\fM}{U}$,
	 and $\alpha\in\N_0^n$
	we can estimate in $V\Subset U$ that
	\begin{equation*}
	\begin{split}
	\sup_{x\in V}\Betr{D^\alpha Qf(x)}&
	\leq C\sum_{\bet\leq d}\sum_{\alpha^\prime\leq\alpha}
	\binom{\alpha}{\alpha^\prime}r^{\lvert\alpha-\alpha^\prime\rvert}\left\lvert \alpha-\alpha^\prime\right\rvert!
	h^{\lvert\alpha^\prime+\beta\rvert}M_{\lvert\alpha^\prime+\beta\rvert}\\
	&\leq C\sum_{\bet\leq d}\sum_{\alpha^\prime\leq\alpha}
	\binom{\alpha}{\alpha^\prime}h^{\lvert\alpha-\alpha^\prime\rvert}
	M_{\lvert\alpha-\alpha^\prime\rvert}
	h^{\lvert\alpha^\prime+\beta\rvert}M_{\lvert\alpha^\prime+\beta\rvert}\\
	&\leq C2^{\alp}\sum_{\bet\leq d}h^{\alp+\bet}M_{\alp+\bet}\\
	&\leq C(2h)^{\alp +d}M_{\alp+d}
	\end{split}
	\end{equation*}
for some weight sequence $\bM\in\fM$ and $h\geq 1$.
Iterating this argument we conclude that there are a weight sequence $\bM\in\fM$ and
constants $C,h>0$ such that, when $\alpha\in\{1,\dotsc \ell\}^k$,
$k\in\N$, we have
\begin{equation*}
\sup_{x\in V}\Betr{P^\alpha f(x)}\leq Ch^{d_\alpha}M_{d_\alpha},
\end{equation*}
where $P^\alpha$ and $d_\alpha$ are defined as in Definition \ref{DefinitionVectors}.
Therefore
\begin{equation*}
\norm[L^2(V)]{P^\alpha f}\leq Ch^{d_\alpha}M_{d_\alpha}
\end{equation*}
for some constants $C,h>0$ independent of $k\in\N$ and
 $\alpha\in\{1,\dotsc,\ell\}^k$ and hence $f\in\vRou[\sP]{\fM}{U}$.
 
 If $f\in\Beu{\bM}{U}$ and $V\Subset U$ then we define a sequence
 $\bL^\prime\lhd\bM$ by setting
 \begin{equation*}
L^\prime_k=\max\left\{k!,\sup_{x\in V}\sup_{\alp \leq k}
	\Betr{D^\alpha f(x)}\right\}.
 \end{equation*}
 According to Lemma \ref{AuxillaryLem1} for each $\bM\in\fM$
 there is a weight sequence $\bN$ such
 that $\bG^1\leq \bL^\prime\leq \bN\lhd\bM$ and by construction we have that
 there are constants $\gamma>0$ and $C>0$ such that
 \begin{equation*}
 \Betr{D^\alpha f(x)}\leq C\gamma^\alp N_\alp,\quad \alpha\in\N^n_0,
 \end{equation*}
for $x\in V$. 
We obtain
\begin{equation*}
\begin{split}
\sup_{x\in V}\Betr{D^\alpha Qf(x)}
&\leq C\sum_{\bet \leq d}\sum_{\alpha^\prime\leq\alpha}
\binom{\alpha}{\alpha^\prime}r^{\lvert\alpha-\alpha^\prime\rvert}
\Betr{\alpha-\alpha^\prime}!\gamma^{\lvert\alpha^\prime-\beta\rvert}
N_{\lvert\alpha^\prime+\beta\rvert}\\
&\leq C\sum_{\bet \leq d}\sum_{\alpha^\prime\leq\alpha}
\binom{\alpha}{\alpha^\prime}r^{\lvert\alpha-\alpha^\prime\rvert}
N_{\lvert\alpha-\alpha^\prime\rvert}\gamma^{\lvert\alpha^\prime-\beta\rvert}
N_{\lvert\alpha^\prime+\beta\rvert}\\
&\leq C\sum_{\bet \leq d}
h_1^{\alp+\bet}N_{\alp+\bet}\\
&\leq Ch_1^{\alp+d}N_{\alp+d}.
\end{split}
\end{equation*}
Thence for each $\bM\in\fM$ and $h>0$ there is a constant $C>0$ such that
\begin{equation*}
\sup_{x\in V}\Betr{D^\alpha Qf(x)}\leq Ch^{\alp+d}M_{\alp+d}.
\end{equation*}
 From this estimate it follows in the same manner as in the Roumieu case
 that $f\in\vBeu[\sP]{\fM}{U}$.
\end{proof}

\begin{Rem}\label{SubRemark}
	Traditionally, the $L^2$-norm is mainly used in the definition of vectors, but
	 in the literature the norm in the definition of vectors
	is chosen according to the techniques used in the paper in question, see e.g.\ the discussion in \cite{MR1037999}.
	We have already mentioned that
	Definition \ref{DefinitionVectors} is more general than the definition of vectors used in Section \ref{Introduction}, because, as we will see in a moment,
	Definition \ref{DefinitionVectors} is microlocalizable, cf.\ \cite{MR557524} and
	\cite{MR632764}.
	
	However, cf.\ \cite{MR1037999}, if the system $\sP=\{P_1,\dotsc,P_\ell\}$ is 
	subelliptic, that is for each $V\Subset U$ there is
	$\eps>0$ such that for all $\sigma\in\R$ the estimate
	\begin{equation}\label{Subelliptic}
	\norm[\sigma+\eps]{\varphi}\leq 
	C\left[\sum_{j=1}^\ell \norm[\sigma]{P_j\varphi}+\norm[\sigma]{\varphi}\right],
	\qquad \varphi\in\D(V),
	\end{equation}
	holds
	for some $C>0$, then we obtain that
	\begin{equation*}
\E_\sigma^{[\fM]}(U;\sP)=\vDC[\sP]{\fM}{U}
	\end{equation*}
for all $\sigma\in\R$	when $\fM$ is $[$semiregular$]$.
	
	Indeed, if $u\in\E_\sigma^{[\fM]}(U;\sP)$ then by definition
	$P^\alpha u\in H^\sigma_{loc}(U)$ for all $\alpha$.
	It is well known that \eqref{Subelliptic} implies therefore 
	$u\in\E(U)=H^\infty_{loc}(U)$, see e.g.\ \cite{MR2304165}
	or \cite{MR132896}.
	Furthermore, $\E_{\sigma}^{[\fM]}(U;\sP)\subseteq\E_{\tau}^{[\fM]}(U;\sP)$
	for $\tau\leq \sigma$ since $\norm[\tau]{g}\leq \norm[\sigma]{g}$ for all 
	$g\in H^\sigma(\R^n)$.
	
	If now $V$ and $W$ are two open sets with $V\Subset W\Subset U$ then 
	\eqref{Subelliptic} implies that
	\begin{equation}\tag{3.2$^\prime$}\label{Subelliptic1}
		\norm[H^{\sigma+\eps}(V)]{f}\leq C\left[\sum_{j=1}^\ell\norm[H^{\sigma}(V)]{P_jf}+\norm[H^\sigma(V)]{f}\right],
		\qquad f\in\E(U),
	\end{equation}
where $\eps$ is the subellipticity index of $W$, see \ref{SubAppendix}.
	
	We suppose for a moment that $\bM, \bN\in \fM$ are two weight sequences for
	which there exists a constant $\gamma\geq 1$ such that
	\begin{equation}\label{SubellipticA}
	M_{k+1}\leq \gamma^{k+1}N_k,\qquad k\in\N_0.
	\end{equation}
If we combine \eqref{Subelliptic1} with \eqref{SubellipticA} we conclude that
\begin{equation*}
	\vNorm[\sP,\sigma+\eps]{V,\bN,h}{u}\leq C\vNorm[\sP,\sigma]{V,\bM,h/\gamma}{u}
\end{equation*}
for $u\in\E(U)$. 

If $\fM$ is $B$-semiregular and $u\in\E_{\sigma}^{(\fM)}(U;\sP)$ then by definition
\begin{equation*}
	\vNorm[\sP,\sigma]{V,\bM,h}{u}<\infty
\end{equation*}
for all $V\Subset U$, all $\bM\in\fM$ and all $h>0$.
Thence, by the above arguments we can conclude
that actually
\begin{equation*}
	\vNorm[\sP,\sigma]{V,\bM,h}{u}<\infty
\end{equation*}
for all $V\Subset U$, all $\bM\in\fM$, all $h>0$ and every $\sigma\in\R$, that is
\begin{equation*}
	\E_{\sigma}^{(\fM)}(U;\sP)=\E_{\tau}^{(\fM)}(U;\sP)
\end{equation*}
for all $\sigma,\tau\in\R$.
The Roumieu case follows similarly.
\end{Rem}

We are now able to begin to extend the microlocal theory developed in \cite{MR557524} for 
Roumieu vectors given by a semiregular weight sequence of an operator with analytic coefficients
to vectors associated to a [semiregular] weight matrix.
We follow mainly the presentation given in \cite{MR632764}.
We start with a characterization of the property of being a vector
by the Fourier transform.
\begin{Thm}\label{FourierCharThm}
Let $P$ be a differential operator of order $d$ with analytic coefficients in $U$, $u\in\Dp(U)$, $x_0\in U$
and $\fM$ be a  weight matrix.
Then
\begin{enumerate}
	\item $u\in\vRou{\fM}{V}$ for some neighborhood $V$ of $x_0$ if
	and only if there are a neighborhood $W$ of $x_0$ and a sequence 
	$f_k\in\Ep(U)$ such that $f_k\vert_W=\left(P^ku\right)\vert_W$
	and 
	\begin{equation}\label{microlocalEst1}
	\Betr{\hat{f}_k(\xi)}\leq Ch^{k}M_{dk}\bigl(1+\Betr{\xi}\bigr)^\nu,\qquad
	\qquad \fa\xi\in\R^n,
	\end{equation}
	for a sequence $\bM\in\fM$ and some constants $C,h>0$ and $\nu\in\R$.
	\item $u\in\vBeu{\fM}{V}$ for some neighborhood $V$ of $x_0$ if
	and only if there are a neighborhood $W$  of $x_0$, a sequence $f_k\in\Ep(U)$
	and a constant $\nu\in\R$ such that  $f_k\vert_W=\left(P^ku\right)\vert_W$
	and for all $\bM\in\fM$ and every 
	$h>0$ there is some $C>0$ so
	\eqref{microlocalEst1} is satisfied.
\end{enumerate}
\end{Thm}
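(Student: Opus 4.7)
The plan is to reduce both directions to the standard equivalence between $H^\sigma$-norm estimates on compactly supported distributions and pointwise polynomial bounds on their Fourier transforms, applied to suitable cutoffs of the iterates $P^ku$. In particular, the argument will not need any regularity assumption on the weight matrix $\fM$.

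For the necessity in (1), I would unpack $u\in\vRou{\fM}{V}$ as the existence of some $\sigma\in\R$ with $u\in\E^{\{\fM\}}_\sigma(V;P)$, so there exist $\bM\in\fM$ and $h>0$ with $\norm[H^\sigma(V')]{P^ku}\leq Ch^{dk}M_{dk}$ for every $V'\Subset V$. After fixing a neighborhood $W\Subset V'\Subset V$ of $x_0$ and a cutoff $\chi\in\D(V')$ with $\chi\equiv 1$ on $W$, I set $f_k:=\chi P^ku\in\Ep(U)$; then $f_k\vert_W=(P^ku)\vert_W$ and, since $\chi$ is a bounded multiplier on $H^\sigma(\R^n)$, $\norm[\sigma]{f_k}\leq C'h^{dk}M_{dk}$. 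Choosing an integer $N\geq\max\{0,-\sigma\}$ and $\psi\in\D$ with $\psi\equiv 1$ on $\supp\chi$, duality yields
\begin{equation*}
\bigl|\hat f_k(\xi)\bigr|=\bigl|\langle f_k,\psi e^{ix\xi}\rangle\bigr|\leq\norm[-N]{f_k}\,\norm[N]{\psi e^{ix\xi}}\leq C_\psi(1+\xit)^N\norm[\sigma]{f_k},
\end{equation*}
using $H^\sigma\hookrightarrow H^{-N}$ for $-N\leq\sigma$ together with the standard estimate $\norm[N]{\psi e^{ix\xi}}\leq C(1+\xit)^N$. Setting $\nu=N$ and absorbing the factor $h^d$ into a new $h$ produces \eqref{microlocalEst1}.

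For the sufficiency, given $W$ and $(f_k)_k$, I would fix any neighborhood $V\Subset W$ of $x_0$. Since $f_k$ agrees with $P^ku$ on $W\supseteq V$, one has $\norm[H^\sigma(V)]{P^ku}\leq\norm[\sigma]{f_k}$ for every $\sigma\in\R$. Plancherel and the hypothesis give
\begin{equation*}
\norm[\sigma]{f_k}^2=\int_{\R^n}(1+\xit)^{2\sigma}\bigl|\hat f_k(\xi)\bigr|^2\,d\xi\leq C^2h^{2k}M_{dk}^2\int_{\R^n}(1+\xit)^{2(\sigma+\nu)}\,d\xi,
\end{equation*}
and the last integral is finite whenever $\sigma<-\nu-n/2$. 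Fixing such a $\sigma$ yields $\norm[H^\sigma(V)]{P^ku}\leq C''h^kM_{dk}$, whence $u\in\E^{\{\fM\}}_\sigma(V;P)\subseteq\vRou{\fM}{V}$.

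The Beurling case (2) will follow from exactly the same constructions with the quantifiers over $\bM$ and $h$ inverted, because the cutoff $\chi$, the auxiliary exponent $N$ and the Sobolev index $\sigma$ depend only on the dimension $n$ and on the initial regularity of $u$, not on the weight sequence or the growth constant. I do not expect any genuine obstacle; the only point requiring some care is the book-keeping of the (possibly negative) Sobolev exponent $\sigma$ in the sufficiency direction, which must be chosen after $\nu$ but works uniformly in $\bM$ and $h$.
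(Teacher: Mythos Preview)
Your proposal is correct and follows essentially the same strategy as the paper: cut off the iterates $P^ku$ by a test function supported near $x_0$ and pass between local Sobolev bounds and pointwise Fourier bounds; the sufficiency direction is in fact identical to the paper's computation. The only difference is the specific estimate in the necessity direction: the paper introduces two cutoffs $\varphi,\psi$ with $\psi\varphi=\varphi$, writes $\hat f_k=(2\pi)^{-n}\hat\varphi\ast\F(\psi P^ku)$ and applies Cauchy--Schwarz with weight $(1+|\eta|)^{\pm\sigma}$ to obtain $\nu=-\sigma$, whereas you use the Sobolev duality $|\langle f_k,\psi e^{ix\xi}\rangle|\le\norm[-N]{f_k}\norm[N]{\psi e^{ix\xi}}$ directly, yielding $\nu=N\ge\max\{0,-\sigma\}$. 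Both are standard and equivalent for the purpose at hand; your version is slightly more elementary, the paper's gives a marginally sharper $\nu$. One small point worth spelling out explicitly in your write-up is how ``$\chi$ is a bounded multiplier on $H^\sigma$'' combines with $\supp\chi\subset V'$ and the infimum definition of $\norm[H^\sigma(V')]{\cdot}$ to give $\norm[\sigma]{\chi P^ku}\le C_\chi\norm[H^\sigma(V')]{P^ku}$; this step is correct but is doing a bit more than the phrase suggests.
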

\begin{proof}
We begin with the Roumieu case. Hence suppose that
 $u\in\E^{\{\fM\}}_\sigma(V;P)$ for some neighborhood $V$ of $x_0$ and $\sigma\in\R$. 
 Following \cite{MR632764} let $W_2\Subset W_1\Subset V$ be two neighborhoods of $x_0$ and choose
$\varphi,\psi\in\D(W_1)$ with
 $\psi\varphi=\varphi$ 
and $\varphi = 1$ in $W_2$.
If we set $f_k=\varphi P^ku$ then $f_k\in\Ep(V)$ and $f_k=P^ku$ in $W_2$.
Furthermore
\begin{equation*}
\begin{split}
\Betr{\hat{f}_k(\xi)}
&=\Betr{\mathcal{F}\left(\psi\varphi P^k u\right)(\xi)}
=\frac{1}{(2\pi)^n}\Betr{\hat{\varphi}\ast\mathcal{F}\left(\psi P^ku\right)(\xi)}\\
&=\frac{1}{(2\pi)^n}\Betr{\int_{\R^n}\!(1+\Betr{\eta})^{-\sigma}
\hat{\varphi}(\xi-\eta)(1+\Betr{\eta})^{\sigma}
\mathcal{F}\left(\psi P^k u\right)(\eta)\,d\eta\,}\\
&\leq C\norm[H^\sigma(\R^n)]{\psi P^k u}
\left(\int_{\R^n}\!(1+\Betr{\eta})^{-2\sigma}\Betr{\hat{\varphi}(\xi-\eta)}^2\,
d\eta\right)^{\tfrac{1}{2}}\\
&\leq C\norm[H^\sigma(W_1)]{P^k u}(1+\Betr{\xi})^{-\sigma}
\left(\int_{\R^n}(1+\Betr{\xi-\eta})^{2\lvert \sigma\rvert}
\Betr{\hat{\varphi}(\xi-\eta)}^2\,
d\eta\right)^{\tfrac{1}{2}}\\
&\leq Ch^kM_{dk}(1+\Betr{\xi})^{-\sigma}
\norm[H^{\lvert \sigma\rvert}(\R^n)]{\varphi}\\
&\leq Ch^kM_{dk}(1+\Betr{\xi})^{\nu}
\end{split}
\end{equation*}
for some $\bM\in\fM$ and some constants $h>0$ and $\nu=-\sigma$.

On the other hand assume that there is a sequence $f_k\in\Ep(U)$ and
a neighborhood $V$ of $x_0$  such that
 $f_k\vert_V=P^ku\vert_V$ and \eqref{microlocalEst1} holds for
 some $\bM\in\fM$ and constants $C,h,\nu>0$.
 Now let $\sigma\leq -\nu-(n+1)/2$. Then we obtain for every  $W\subseteq V$ that
\begin{equation*}
\begin{split}
\norm[H^\sigma(W)]{P^ku}&\leq \norm[H^\sigma(\R^n)]{f_k}\\
&=\left(\int_{\R^n}\!(1+\Betr{\xi})^{2\sigma}\Betr{\hat{f}_k(\xi)}^2\,d\xi
\right)^{\tfrac{1}{2}}\\
&\leq Ch^kM_{dk}
\left(\int_{\R^n}\!(1+\Betr{\xi})^{2(\sigma+\nu)}\,
d\xi\right)^{\tfrac{1}{2}}\\
&\leq C^\prime h^k M_{dk}
\end{split}
\end{equation*}
for some $C^\prime>0$ since $\sigma$ was chosen appropriately.

The Beurling case follows in a similar manner.
\end{proof}
In the definition of the wavefront set of iterates the estimate \eqref{microlocalEst1} will correspond to 
\eqref{WF-defining}.
The following statement is going to provide a correspondence of the boundedness
of the sequence $u_k$ in Definition \ref{WF-Definition}.

\begin{Prop}[{\cite[Proposition 1.6]{MR632764}}]\label{BoundedErsatz}
	Let $u\in\Dp(U)$, $P$ be an analytic partial differential operator
	 of order $d$ and $K\subseteq U$ be a compact set.
	Furthermore assume that $\chi_k\in\D(U)$ is a sequence of functions with
	common support in $K$ satisfying
	\begin{equation*}
	\Betr{D^\alpha\chi_k(x)}\leq C(Ck)^{\alp}
	\end{equation*}
	for $\alp\leq k\in\N_0$ and some constant $C>0$.
	
	If $p\in\N$ and $q\in\N_0$ then the sequence $f_k=\chi_{pdk+q}u$
	obeys the estimate
	\begin{equation*}
	\Betr{\hat{f}_k(\xi)}\leq C^\prime(C^\prime(dk+\Betr{\xi}))^{dk+\nu}\qquad \xi\in\R^n,\quad k\in\N_0,
	\end{equation*}
	for some constants $C^\prime,\nu>0$.
\end{Prop}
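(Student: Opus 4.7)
My plan is to express $\hat f_k(\xi)$ as the distributional pairing $\langle u,\chi_{pdk+q}(\cdot)\,e^{i\langle\cdot,\xi\rangle}\rangle$, exploit the finite order of $u$ on a compact neighborhood of $K$, and estimate the derivatives of the resulting test function via the Leibniz rule combined with the assumed polynomial growth of the derivatives of $\chi_k$.

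Concretely, I would first fix an open set $V$ with $K\subseteq V\Subset U$. Since $u\in\Dp(U)$, there exist an integer $\mu\geq 0$ and a constant $C_0>0$ (depending only on $u$ and $K$) such that
\[
|\hat f_k(\xi)|\leq C_0\sum_{|\alpha|\leq\mu}\sup_{x\in K}\bigl|D_x^\alpha\bigl(\chi_{pdk+q}(x)e^{ix\xi}\bigr)\bigr|
\]
for every $k\in\N_0$ and $\xi\in\R^n$. Applying the Leibniz formula and invoking the hypothesis $|D^\beta\chi_{pdk+q}(x)|\leq C(C(pdk+q))^{|\beta|}$, which is available for $|\beta|\leq pdk+q$, hence certainly for $|\beta|\leq\mu$ once $k$ is large enough, a standard multinomial identity gives
\[
\bigl|D_x^\alpha(\chi_{pdk+q}(x)e^{ix\xi})\bigr|\leq C_1\bigl(|\xi|+C_1(pdk+q)\bigr)^{|\alpha|}\leq C_2\bigl(C_2(dk+|\xi|+1)\bigr)^{\mu},
\]
after absorbing the fixed constants $p$, $q$ and $C_1$.

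Combining these yields $|\hat f_k(\xi)|\leq C'(C'(dk+|\xi|+1))^{\mu}$ for all $k\in\N_0$ and $\xi\in\R^n$; because the base is $\geq 1$ we may freely enlarge the exponent, and choosing $\nu:=\mu$ gives the stated bound (the harmless additive $+1$ in the base being absorbed into $C'$). The only real subtlety is the handling of the finitely many initial indices $k$ with $pdk+q<\mu$, for which the hypothesis on $\chi_{pdk+q}$ does not cover the entire range $|\beta|\leq\mu$; however, each such $f_k$ is a compactly supported distribution of order $\leq\mu$, its Fourier transform is automatically polynomially bounded in $|\xi|$, and the finitely many violations can therefore be absorbed into a larger constant $C'$. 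Apart from this bookkeeping step, the proof is a direct distribution-theoretic computation and no conceptual obstacle remains.
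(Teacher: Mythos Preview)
The paper does not supply its own proof of this proposition; it is merely quoted from the cited reference, so there is no argument in the paper to compare against. Your approach---writing $\hat f_k(\xi)=\langle u,\chi_{pdk+q}\,e^{i\langle\cdot,\xi\rangle}\rangle$, invoking the finite order $\mu$ of $u$ near $K$, and estimating the derivatives of the test function via the Leibniz rule together with the multinomial identity $\sum_{\beta\leq\alpha}\binom{\alpha}{\beta}a^{|\beta|}b^{|\alpha-\beta|}=(a+b)^{|\alpha|}$---is the standard one and is correct.

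One minor quibble: your claim that the additive $+1$ in the base can be ``absorbed into $C'$'' is valid for $k\geq 1$ (then $dk+|\xi|\geq d\geq 1$, so $dk+|\xi|+1\leq 2(dk+|\xi|)$), but it fails for $k=0$ and small $|\xi|$: there the stated right-hand side $C'(C'|\xi|)^{\nu}$ tends to $0$ while $\hat f_0(0)=\langle u,\chi_q\rangle$ is generally nonzero. This is a defect of the displayed inequality as written rather than of your argument; in every application within the paper the estimate is only invoked for $k\in\N$, which is also the index range appearing in the definition of $\WF_{[\fM]}(u;P)$.
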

\begin{Def}\label{vWFDef}
	Let $P$ be a differential operator with analytic coefficients
	of order $d$,
	$\fM$ a weight matrix, $u\in\Dp(U)$ and $(x_0,\xi_0)\in\COT$.
	Then we say that
	\begin{enumerate}
		\item $(x_0,\xi_0)\notin\WF_{\{\fM\}}(u;P)$ if
		there is a neighborhood $V$ of $x_0$, a conic neighborhood
		$\Gamma$ of $\xi_0$ and a sequence $f_k\in\Ep(U)$
		satisfying $f_k\vert_V=(P^ku)\vert_V$ and there are
		a sequence $\bM\in\fM$ and constants $C,h>0$ and $\nu\in\R$
		 such that
		\begin{align}
		\left\lvert\hat{f}_k(\xi)\right\rvert\label{vWF-Def1}
		&\leq Ch^k\left[\bigl(M_{dk}\bigr)^{\tfrac{1}{dk}}
		+\xit\right]^{\nu+dk}
		& \forall k&\in\N,\;\forall \xi\in\R^n\\
		\left\lvert\hat{f}_k(\xi)\right\rvert
		&\leq Ch^kM_{dk}(1+\xit)^\nu
		& \forall k&\in\N,\;\forall \xi\in\Gamma.\label{vWF-Def2}
		\end{align}
		
		\item $(x_0,\xi_0)\notin\WF_{(\fM)}(u;P)$ if
		there is a neighborhood $V$ of $x_0$, a conic neighborhood
		$\Gamma$ of $\xi_0$ and a sequence $f_k\in\Ep(U)$
		with $f_k\vert_V=(P^ku)\vert_V$ and there exists some $\nu\in\R$
		such that for all $\bM\in\fM$
		and all $h>0$ there is a constant $C>0$ for which the estimates
		\eqref{vWF-Def1} and \eqref{vWF-Def2} are satisfied.
	\end{enumerate}
\end{Def}
It is easy to see that $\WF_{[\fM]}(u;P)$ satisfies the same basic properties as 
$\WF_{[\fM]} u$, cf.\ Proposition \ref{WFBasicProperties}:
\begin{Prop}\label{WFIteratesProperties}
	Let $\fM$ and $\fN$ be two weight matrices and $u\in\Dp(U)$. Then:
	\begin{enumerate}
		\item $\WF_{[\fM]}(u;P)$ is a closed, conic in the second variable, subset of $\COT$.
		\item $\WF_{\{\fM\}}(u;P)\subseteq\WF_{(\fM)}(u;P)$.
		\item If $\fM[\preceq]\fN$ then $\WF_{[\fN]}(u;P)\subseteq\WF_{[\fM]}(u;P)$ 
		for all $u\in\Dp(U)$.
		\item If $\fM\{\lhd)\fN$ then $\WF_{(\fN)}(u;P)\subseteq\WF_{\{\fM\}}(u;P)$.
	\end{enumerate}
\end{Prop}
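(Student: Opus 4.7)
The strategy is that all four statements are direct analogs of the corresponding properties in Proposition \ref{WFBasicProperties} and follow by formal manipulation of Definition \ref{vWFDef}, without any new functional-analytic input. Throughout, I would fix the witnessing data $(V,\Gamma,f_k,\nu)$ given by the hypothesis and reuse it to produce the witness for the conclusion, changing only the weight sequence and the constants $C$ and $h$.

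Parts (1) and (2) are essentially tautological. For (1), observe that the condition ``$(x_0,\xi_0)\notin\WF_{[\fM]}(u;P)$'' depends only on the existence of a neighborhood $V$ of $x_0$ and a conic neighborhood $\Gamma$ of $\xi_0$ with suitable data, so every $(x,\xi)\in V\times\Gamma$ inherits the same witness; this gives openness. The conic property is immediate from the fact that $\Gamma$ is conic. For (2), the Beurling condition asserts the estimates \eqref{vWF-Def1}--\eqref{vWF-Def2} for all $\bM\in\fM$ and all $h>0$; specializing to one such $\bM$ and $h$ yields the Roumieu condition.

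Parts (3) and (4) require converting estimates from one weight matrix to another. For (3) in the Roumieu case, given $\bM\in\fM$, use $\fM\{\preceq\}\fN$ to pick $\bN\in\fN$ with $M_k\leq C_0\rho^k N_k$. Plugging this into \eqref{vWF-Def2} directly replaces $\bM$ by $\bN$ at the cost of replacing $h$ by $h\rho^d$. For \eqref{vWF-Def1}, pass to $(\rho')^{1/(dk)}(N_{dk})^{1/(dk)}$ in the bracket (with $\rho'$ slightly larger than $\rho$ to absorb $C_0^{1/(dk)}$ for large $k$) and pull out $(\rho')^{\nu+dk}$, which again rescales $h$. The Beurling case of (3) is analogous, using $\fM(\preceq)\fN$. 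For (4), the relation $\fM\{\lhd)\fN$ provides $(M_k/N_k)^{1/k}\to 0$, so for every $\eps>0$ there is $C_\eps$ with $M_k\leq C_\eps\eps^k N_k$; choosing $\eps$ small enough so that $h\eps^d\leq h'$ (where $h'>0$ is an arbitrary prescribed Beurling constant) converts a Roumieu-type estimate for $\bM$ and $h$ into a Beurling-type estimate for any $\bN\in\fN$ and any $h'>0$.

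The only step requiring genuine care is the rescaling in the estimate \eqref{vWF-Def1}, because both $(M_{dk})^{1/(dk)}$ and the exponent $\nu+dk$ are present, and a clean substitution $(M_{dk})^{1/(dk)}\to \rho(N_{dk})^{1/(dk)}$ produces the factor $\rho^{\nu+dk}$, which grows geometrically in $k$. The point is that this factor may be absorbed into the base $h^k$ (at the price of multiplying by a bounded power $\rho^\nu$), and thus does not destroy the estimate; one simply has to be careful that $\rho$ is independent of $k$ and $\xi$, and that the necessary constant $C_0^{1/(dk)}$ from $\bM\preceq\bN$ is handled by enlarging $\rho$ slightly or by treating finitely many small $k$ separately. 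Once this bookkeeping is done, all four assertions follow.
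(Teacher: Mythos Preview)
Your proposal is correct and matches what the paper does: the paper gives no proof at all, writing only ``It is easy to see that $\WF_{[\fM]}(u;P)$ satisfies the same basic properties as $\WF_{[\fM]} u$, cf.\ Proposition \ref{WFBasicProperties}.'' Your formal manipulations of Definition \ref{vWFDef}---in particular the bookkeeping for the bracket $[(M_{dk})^{1/(dk)}+\lvert\xi\rvert]^{\nu+dk}$ in \eqref{vWF-Def1}, where the factor $C_0^{1/(dk)}$ is bounded and $\rho^{\nu+dk}$ is absorbed into $h^k$---are exactly the intended verification.
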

We have also a variant of Lemma \ref{WFalternative}:
\begin{Lem}\label{vWFCharLem}
Let $\fM$ be $[$semiregular$]$, $u\in\Dp(U)$, and $K\subseteq U$ be a compact
subset, $F\subseteq\R^n$ a closed cone and $\chi_k\in\D(U)$  a sequence of functions
with support in $K$ such that for all $\alpha\in\N_0^n$ there are constants $C_\alpha,h_\alpha>0$ with
\begin{equation}\label{Testfunkt}
\Betr{D^{\alpha+\beta}\chi_k}\leq C_\alpha(h_\alpha k)^{\bet}\qquad \bet\leq k\in\N.
\end{equation}
\begin{enumerate}
	\item If $\WF_{\{\fM\}} (u;P)\cap K\times F=\emptyset$ then there are 
	a sequence $\bM\in\fM$ and
	constants $C,h>0$ and $\nu\in\R$ such that the sequence $\chi_{dk}P^ku$
	satisfies $\eqref{vWF-Def2}$ for $\xi\in F$.
	\item If $\WF_{(\fM)} (u;P)\cap K\times F=\emptyset$ then there is
	some $\nu\in\R$ such that for all $h>0$ and all $\bM\in\fM$
	the estimate \eqref{vWF-Def2} holds for the sequence 
	$\chi_{dk}P^ku$ in $F$.
\end{enumerate}
\end{Lem}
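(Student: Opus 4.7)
The strategy mirrors the scheme of Lemma \ref{WFalternative}, adapted to handle the iterates $P^k$. The plan proceeds in three steps.

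\emph{Step 1: Local data via compactness.} Using that $K\times(F\cap S^{n-1})$ is compact and that $\fM$ is totally ordered, the hypothesis $\WF_{\{\fM\}}(u;P)\cap K\times F=\emptyset$ combined with Definition \ref{vWFDef} produces a finite family of triples $(V_j,\Gamma_j,(f_k^{(j)})_k)$, $j=1,\dots,N$, with $V_j\subseteq U$ open, $\Gamma_j\subseteq\R^n\setminus\{0\}$ a conic set, $f_k^{(j)}\in\Ep(U)$, $f_k^{(j)}|_{V_j}=u|_{V_j}$, together with a single weight sequence $\bM\in\fM$ and uniform constants $C,h>0$, $\nu\in\R$ such that \eqref{vWF-Def1} holds globally and \eqref{vWF-Def2} holds in $\Gamma_j$. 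After further shrinking, we arrange $\bigcup_j V_j\supset K$ and that each $\Gamma_j$ contains a fixed conic neighbourhood $F'$ of $F$ (the latter being possible by compactness of $F\cap S^{n-1}$ and the freedom to shrink cones in the definition).

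\emph{Step 2: Patching into a single sequence.} Fix $\varphi_j\in\D(V_j)$ with $\sum_j\varphi_j\equiv 1$ on a neighbourhood $W\supset K$ containing $\supp\chi_{dk}$ for all $k$, and set $g_k=\sum_j\varphi_j f_k^{(j)}$. Then $g_k=u$ on $W$, so $\chi_{dk}P^ku=\chi_{dk}P^kg_k$. Since each $\widehat{\varphi}_j$ is Schwartz, a standard microlocal convolution estimate transfers \eqref{vWF-Def1} and \eqref{vWF-Def2} from the $\hat f_k^{(j)}$ to $\hat g_k$, on a slightly smaller conic neighbourhood $F''$ of $F$ and at the cost of enlarging $C,h$ (and passing to a larger $\bM\in\fM$ in the Roumieu case).

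\emph{Step 3: Kernel estimate via transposition.} We compute
\begin{equation*}
\mathcal{F}(\chi_{dk}P^k g_k)(\xi)=\bigl\langle g_k,\;{}^tP^{k}\bigl(\chi_{dk}(\cdot)e^{-i\langle\cdot,\xi\rangle}\bigr)\bigr\rangle,
\end{equation*}
and expand ${}^tP^{k}(\chi_{dk}(x)e^{-ix\xi})$ by Leibniz. Combined with the analyticity of the coefficients of $P$ (via Cauchy estimates on $W$) and the derivative bound \eqref{Testfunkt}, this yields a sum of terms $e^{-ix\xi}q_\beta(x,\xi)D^\beta\chi_{dk}(x)$, each bounded on $W$ by $Ch^{dk}(1+\xit)^{dk-\bet}(dk)^{\bet}\bet!$. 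Passing to the Fourier side, the pairing becomes a convolution in a dual variable $\eta$ against $\hat g_k$; we split it into the part where $\eta$ lies in $F''$ (where \eqref{vWF-Def2} for $\hat g_k$ yields the sought $M_{dk}$ decay) and its complement (where the rapid decay of $\widehat{\chi_{dk}}$ away from the direction of $\xi$ combined with the polynomial bound \eqref{vWF-Def1} renders the contribution negligible). This produces the desired bound for $\xi\in F$.

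The main technical obstacle is the balancing in Step 3: the $(1+\xit)^{dk}$ growth from the $dk$ derivatives inside ${}^tP^k$ must be absorbed into the cone decay $M_{dk}$ of $\hat g_k$, while the combinatorial factors $(dk)^{\bet}\bet!$ are tamed using the semiregularity of $\fM$ via \eqref{AltDerivClosed}, which permits swapping $M_{dk+\ell}$ for $h^\ell M_{dk}$ at the price of enlarging $\bM$. The Beurling case is proved by the same argument once one verifies that all constants chosen in Steps 1--3 may be taken uniformly in $\bM\in\fM$ and $h>0$, as permitted by the Beurling clause of Definition \ref{vWFDef} and Proposition \ref{VectorProperties}.
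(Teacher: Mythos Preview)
Your approach contains a genuine gap and an unnecessary detour, the latter caused by what is evidently a typo in Definition~\ref{vWFDef}: the sequence $f_k$ there must satisfy $f_k|_V=(P^ku)|_V$, not $f_k|_V=u|_V$, as is clear from how the definition is used in the proofs of Theorems~\ref{Elliptic-ThmPart1} and~\ref{VectorEllipticThm}. With the correct reading, once $\supp\chi_{dk}\subseteq V$ one has $\chi_{dk}P^ku=\chi_{dk}f_k$ directly, so $(2\pi)^{n}\mathcal{F}(\chi_{dk}P^ku)=\hat\chi_{dk}\ast\hat f_k$, and the transposition of $P^k$ in your Step~3 is unneeded. (Even under your reading, that transposition would require $\chi_{2dk}$ rather than $\chi_{dk}$ to supply enough controlled derivatives after $R^k$ has acted, as in the proof of Theorem~\ref{Elliptic-ThmPart1}.)

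The decisive flaw is Step~2. A \emph{fixed} smooth partition of unity $\varphi_j$ cannot transfer \eqref{vWF-Def1}--\eqref{vWF-Def2} to $\hat g_k$ with constants controlled in $k$: by \eqref{vWF-Def1} the integrand in the off-cone part of $\hat\varphi_j\ast\hat f_k^{(j)}$ grows like $|\eta|^{dk+\nu}$, forcing you to invoke $|\hat\varphi_j(\zeta)|\le C_N(1+|\zeta|)^{-N}$ with $N\sim dk$; for merely smooth $\varphi_j$ the constants $C_N$ are uncontrolled, and when $\fM$ is quasianalytic there are no compactly supported cutoffs in the class to substitute. The paper avoids this entirely by never patching the $f_k$. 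It works \emph{locally} first (assuming $\supp\chi_{dk}\subseteq V$), deriving from \eqref{Testfunkt} two Fourier bounds on $\hat\chi_{dk}$: a fixed-order one, $|\hat\chi_k(\eta)|\le Ch^k(1+|\eta|)^{-n-1-\nu}$, for the in-cone part of the convolution, and a $k$-adapted one,
\[
|\hat\chi_k(\eta)|\le Ch^kN_k\bigl[(M_k)^{1/k}+|\eta|\bigr]^{-k-\nu-n-1}
\]
(obtained via semiregularity, Remark~\ref{DerivClosedAlternative}(2)), which balances exactly against \eqref{vWF-Def1} off the cone. Only afterwards does it globalise, by multiplying $\chi_{dk}$ with a $k$-adapted partition of unity $\chi_{j,k}$ satisfying \eqref{Testfunkt}, so that each product $\chi_{j,dk}\chi_{dk}$ again satisfies \eqref{Testfunkt} with support inside a single $V_j$.
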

\begin{proof}
First we prove the Roumieu case.
Let $x_0\in K$ and $\xi_0\in F$. 
Then $(x_0,\xi_0)\notin\WF_{\{\fM\}}(u;P)$ and we choose $V$, $\Gamma$ and 
$f_k$ according to Definition \ref{vWFDef}.
If $\supp \chi_{dk}\subseteq V$ then $\chi_{dk}P^ku=\chi_{dk}f_k$
and therefore
\begin{equation*}
(2\pi)^n\mathcal{F}\left(\chi_{dk}P^ku\right)(\xi)
=\int\!\hat{\chi}_{dk}(\xi-\eta)\hat{f}_k(\eta)\,d\eta.
\end{equation*}
Note that without loss of generality we can always assume $\nu\geq 0$.
We observe that \eqref{Testfunkt} gives
\begin{equation*}
\Betr{\eta^{\alpha+\beta}\hat{\chi}_k(\eta)}\leq C_\alpha h_\alpha^\bet
k^{\bet},\qquad \alpha,\beta\in\N_0^n,\;\bet\leq k\in\N,
\end{equation*}
for some $C_\alpha,h_\alpha>0$.
It follows that there are constants $C,h>0$ such that
\begin{equation}\label{Estimate1}
\Betr{\hat{\chi}_k(\eta)}\leq Ch^k(1+\etat)^{-n-1-\nu}.
\end{equation}
For $\ell,j\geq 0$ we have, (cf.\ \cite[p.\ 26]{FURDOS2020123451})
\begin{equation*}
\etat^{\ell +j}\leq \sum_{\lvert\gamma\rvert=\ell+j}\binom{\ell +j}{\gamma}
\bigl\lvert\eta^\gamma\bigr\rvert.
\end{equation*}
If $j\leq k$ then
\begin{equation*}
\begin{split}
\etat^{\ell +j}\Betr{\hat{\chi}_k(\eta)}&\leq 
\sum_{\lvert\gamma\rvert=\ell+j}\binom{\ell+j}{\gamma}
\Betr{\eta^\gamma\hat{\chi}_{k}(\eta)}\\
&\leq n^{\ell j}\sum_{\substack{\alp\leq \ell\\ \bet=j}}
\Betr{\eta^{\alpha+\beta}\hat{\chi}_{k}(\eta)}\\
&\leq Ch^jk^j
\end{split}
\end{equation*}
for some $C,h>0$.
For $\bM\in\fM$ we obtain
\begin{equation*}
\begin{split}
\left(\bigl(M_k\bigr)^{1/k}+\etat\right)^{k+\nu+n+1}\Betr{\hat{\chi}_k(\eta)}
&=\sum_{\ell=0}^{\nu +n+1}\sum_{j=0}^{k}\binom{\nu+n+1}{\ell}
\binom{k}{j}\\
&\qquad\quad\times\bigl(M_k\bigr)^{(k+\nu+n+1-j-\ell)/k}
\etat^{j+\ell}\Betr{\hat{\chi}_k(\eta)}\\
&\leq Ch^k \bigl(M_k\bigr)^{\tfrac{k+n+1+\nu}{k}}.
\end{split}
\end{equation*}
Since $\fM$ is $R$-semiregular it follows from Remark \ref{DerivClosedAlternative}(2)
that for each $\bM\in\fM$ there are $\bN\in\fM$, $C,h>0$ such that
\begin{equation}\label{Estimate2}
\Betr{\hat{\chi}_k(\eta)}\leq Ch^k N_k\left(\bigl(M_k\bigr)^{1/k}+\etat\right)^{-k-\nu-n-1}.
\end{equation}
The estimate \eqref{Estimate1} implies
\begin{equation*}
\begin{split}
\int_{\Gamma}\! \Betr{\hat{\chi}_{dk}(\xi-\eta)}\bigl\lvert\hat{f}_k(\eta)\bigr\rvert\,d\eta
&\leq Ch^kM_{dk}\int_{\R^n}\!(1+\Betr{\xi-\eta})^{-n-1-\nu}(1+\Betr{\eta})^{\nu}\,d\eta\\
&\leq Ch^kM_{dk}(1+\lvert\xi\rvert)^{\nu}.
\end{split}
\end{equation*}

On the other hand choose a closed cone $\Gamma_1\subseteq\Gamma\cup\{0\}$
with $\xi_0\in\Gamma_1$. Then there is a constant $c>0$ such that
\begin{equation*}
\lvert\xi-\eta\rvert\geq c(\xit +\lvert\eta\rvert)
\end{equation*}
for all $\xi\in\Gamma_1$
and  $\eta\notin\Gamma$.
If we also use \eqref{Estimate2} and set $\tilde{c}=\min\{1,c\}$
then it follows that
for each $\bM\in\fM$ there is some $\bN\in\fM$ such that
\begin{equation*}
\begin{split}
\int\limits_{\R^n\setminus\Gamma}\!\!\negthickspace
\Betr{\hat{\chi}_{dk}(\xi-\eta)}\Betr{\hat{f}_k(\eta)}\,d\eta
&\leq Ch^kN_{dk}\!\!\negthickspace
\int\limits_{\lvert\xi-\eta\rvert\geq c(\xit +\lvert\eta\rvert)}\negthickspace\negthickspace\negthickspace\negthickspace
\negthickspace
\left[\bigl(M_{dk}\bigr)^{\tfrac{1}{dk}}+\lvert\xi-\eta\rvert\right]^{-dk-\nu-n-1}\\
&\qquad\qquad\qquad\qquad\quad\times
\left[\bigl(M_{dk}\bigr)^{\tfrac{1}{dk}}+\etat\right]^{dk+\nu}d\eta\\
&\leq Ch^kN_{dk}\int_{\R^n}\negmedspace
\left[\bigl(M_{dk}\bigr)^{\tfrac{1}{dk}}+\tilde{c}\etat\right]^{-n-1}\,d\eta\\
&\leq Ch^kN_{dk}.
\end{split}
\end{equation*}

We have shown that if $\supp\chi_k\subseteq U$ and $\xi_0\in F\setminus\{0\}$
there is a closed conic neighborhood $\Gamma^\prime$ of $\xi_0$ such that
\begin{equation}\label{WFCharacteq}
\Betr{\F\left(\chi_{dk}P^ku\right)(\xi)}\leq C_0h_0^kM_{dk}(1+\xit)^{\nu_0},\quad \xi\in\Gamma^\prime,
\end{equation}
for some $C_0,h_0>0$, $\bM\in\fM$ and $\nu_0\in\R$.
Since $\xi_0\in F\setminus\{0\}$ was chosen arbitrarily, note that 
$F$ can be covered by a finite number of cones like $\Gamma^\prime$
and therefore \eqref{WFCharacteq} holds in $F$ for some constants $C,h$ and $\nu_0$ as long as $\supp\chi_k\subseteq U$ is a small enough neighborhood of $x_0$.
But $K$ is compact hence we can argue as in the proof of \cite[Lemma 8.4.4]{MR1996773}.
There is a finite number of such open sets $U_j$ that cover $K$
and we can choose a partition of unity $\chi_{j,k}\in\D(U_j)$ 
such that $(\chi_{j,k})_k$ satisfies \eqref{Testfunkt} for each $j$.
Then the same is true for $\chi_{j,k}\chi_k$ and we conclude from above
that \eqref{WFCharacteq} holds for $\chi_{j,dk}\chi_{dk}P^ku$.
Since $\sum_j \chi_{j,dk}\chi_{dk}P^ku=\chi_{dk}P^ku$
we have proven \eqref{WFCharacteq}  in the general case.

The proof of the estimate in the Beurling category is analogous.
Just note that if $\fM$ is $B$-semiregular then Remark \ref{DerivClosedAlternative}(2)
implies that for all $\bN\in\fM$ there
are $\bM\in\fM$, $C,h>0$ such that \eqref{Estimate2} holds.
\end{proof}
Lemma \ref{vWFCharLem} allows us to prove an analogue of Proposition \ref{Singsupport}:
\begin{Thm}
	If $\fM$ is $[$semiregular$]$ and $u\in\Dp(U)$ then
	$U_0=U\setminus\pi_1(\WF_{[\fM]}(u;P))$ is the greatest
	open set such that $u\in\E_{loc}^{[\fM]}(U_0;P)$.
\end{Thm}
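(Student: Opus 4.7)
The plan is to prove three facts: that $U_0$ is open in $U$; that $u\in\E_{loc}^{[\fM]}(U_0;P)$; and that any open $V\subseteq U$ with $u\in\E_{loc}^{[\fM]}(V;P)$ is contained in $U_0$. Openness of $U_0$ is immediate from the tube lemma: if $x_0\in U_0$, then $\{x_0\}\times S^{n-1}\subseteq\COT$ is compact and disjoint from the closed conic set $\WF_{[\fM]}(u;P)$, so there is a neighborhood $V$ of $x_0$ with $V\times S^{n-1}$ disjoint from $\WF_{[\fM]}(u;P)$, and conicity gives $V\subseteq U_0$.

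For the forward inclusion, fix $x_0\in U_0$ and a compact neighborhood $K\subseteq U_0$ of $x_0$. Cover $S^{n-1}$ by finitely many closed cones $F_1,\dots,F_N\subseteq\R^n\setminus\{0\}$, so that $(K\times F_j)\cap\WF_{[\fM]}(u;P)=\emptyset$ for every $j$. Choose a cutoff sequence $\chi_k\in\D(U)$ supported in $K$, identically $1$ on a fixed neighborhood $W$ of $x_0$ (independent of $k$), and satisfying \eqref{Testfunkt} by the standard convolution construction. Applying Lemma \ref{vWFCharLem} to each $F_j$ yields \eqref{vWF-Def2} for $f_k:=\chi_{dk}P^k u$ on $F_j$; combining the $N$ cone-wise estimates and handling the bounded-frequency piece via the finite distributional order of $u$ on $K$ (absorbed using that $(M_{dk})^{1/(dk)}\to\infty$) gives \eqref{microlocalEst1} on all of $\R^n$. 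Since $f_k=P^k u$ on $W$, Theorem \ref{FourierCharThm} produces $u|_W\in\vDC{\fM}{W}$, whence $u\in\E_{loc}^{[\fM]}(U_0;P)$.

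For the reverse inclusion, let $x_0\in V$ and pick a neighborhood $V_0\subseteq V$ with $u\in\vDC{\fM}{V_0}$. Theorem \ref{FourierCharThm} supplies a smaller neighborhood $W$ of $x_0$ and a sequence $f_k\in\Ep(U)$ with $f_k|_W=P^k u|_W$ satisfying \eqref{microlocalEst1}. This bound is already \eqref{vWF-Def2} with $\Gamma=\R^n\setminus\{0\}$, and also implies \eqref{vWF-Def1}: assuming without loss of generality $\nu\geq 0$ and $M_{dk}\geq 1$, one has $M_{dk}(1+|\xi|)^\nu\leq\bigl[(M_{dk})^{1/(dk)}+|\xi|\bigr]^{dk+\nu}$. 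Hence $(x_0,\xi_0)\notin\WF_{[\fM]}(u;P)$ for every $\xi_0\neq 0$, i.e.\ $x_0\in U_0$. The main technical hurdles are the construction of $(\chi_k)$ with the derivative bounds \eqref{Testfunkt} and the patching of the finitely many cone-wise Fourier estimates together with the bounded-frequency contribution; once these are in hand, each direction reduces to a single application of Theorem \ref{FourierCharThm} and of Lemma \ref{vWFCharLem}.
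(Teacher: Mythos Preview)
Your proof is correct and follows essentially the same strategy as the paper: one direction via Theorem \ref{FourierCharThm}, the other via Lemma \ref{vWFCharLem}. Two minor remarks: the finite covering of $S^{n-1}$ by cones and the separate ``bounded-frequency piece'' are unnecessary, since Lemma \ref{vWFCharLem} can be applied once with $F=\R^n\setminus\{0\}$ (its own proof already performs the cone patching), which is exactly what the paper does; conversely, your direct derivation of \eqref{vWF-Def1} from \eqref{microlocalEst1} via $M_{dk}(1+|\xi|)^\nu\le\bigl[(M_{dk})^{1/(dk)}+|\xi|\bigr]^{dk+\nu}$ is a clean alternative to the paper's reference to Proposition \ref{BoundedErsatz}.
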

\begin{proof}
Let $U_1\subseteq U$ be an open set such that $u\in\E_{loc}^{[\fM]}(U_1;P)$. If $x\in U_1$
then by Theorem \ref{FourierCharThm} (and Proposition \ref{BoundedErsatz})
it follows that $(x,\xi)\notin\WF_{[\fM]}(u;P)$ for all $(x,\xi)\in U_1\times\R^n\!\setminus\!\{0\}$.

On the other hand if $x\in U$ is such that $(x,\xi)\notin\WF_{[\fM]}(u;P)$ for all $\xi\in\R^n\setminus\{0\}$
then we can find a compact neighborhood $K$ of $x$ such that
$K\times\R^n\cap\WF_{[\fM]}(u;P)=\emptyset$.
If we choose  functions $\chi_k\in\D(K)$ satisfying \eqref{Testfunkt}
which equal $1$ in some neighborhood $V$ of $x$, which is possible due to 
\cite[Theorem 1.4.2]{MR1996773},
then Lemma \ref{vWFCharLem} implies that $f_k=\chi_{dk+q}P^ku$ satisfies
\eqref{microlocalEst1}. Hence by Theorem \ref{FourierCharThm} 
$u\in\vDC{\fM}{V}$.
\end{proof}
\subsection{Invariance under analytic mappings}
The aim of this section is to prove the invariance of the definition of 
$\WF_{[\fM]}(u;P)$.
We begin by recalling two results from \cite{MR0294849}, see also \cite{MR1037999}.
\begin{Lem}[{\cite[Lemma 3.6]{MR0294849}}]\label{InvarianceAuxLem1}
Let $U_1\subseteq\R^{n_1}$ and $U_2\subseteq\R^{n_2}$ be two open sets,
$a\in\An(U_1)$ and $f: U_1\rightarrow U_2$ be an analytic mapping.
Furthermore assume that $\chi_k\in\D(U_2)$ is a sequence of functions
with  support in the same fixed compact set and there are constants $C,h>0$ such that 
\begin{equation*}
\Betr{D^\alpha\chi_k(x)}\leq C(hk)^{\alp}, \qquad \alp\leq k.
\end{equation*}
Then the sequence $\chi_k^\prime=a(\chi_k\circ f)$ has the same properties
 with different constants $C,h$.
\end{Lem}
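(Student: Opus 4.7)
The plan is to verify the derivative estimate via Leibniz and multivariate Faà di Bruno, exploiting the analyticity of $a$ and $f$. For the support: since $\supp\chi_k$ sits inside a fixed compact $K_0\Subset U_2$, one has $\supp\chi_k'\subseteq\supp a\cap f^{-1}(K_0)$, which in the applications is contained in a fixed compact $K\subset U_1$. I would restrict to such a compact, on which the analytic bounds $\Betr{D^\beta a}\le C_a r^{\bet}\bet!$ and $\Betr{D^\gamma f_j}\le C_f r^{\lvert\gamma\rvert}\lvert\gamma\rvert!$ hold uniformly.

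For the derivatives I would first apply Leibniz,
\[
D^\alpha\chi_k'=\sum_{\beta\le\alpha}\binom{\alpha}{\beta}\,D^\beta a\cdot D^{\alpha-\beta}(\chi_k\circ f),
\]
and then expand the composition factor via the multivariate Faà di Bruno formula: $D^\gamma(\chi_k\circ f)(x)$ is a sum over ordered partitions $\gamma=\gamma^1+\cdots+\gamma^s$ into nonzero multi-indices and indices $j_1,\dotsc,j_s\in\{1,\dotsc,n_2\}$ of terms $c_\bullet\,(\partial_{j_1}\cdots\partial_{j_s}\chi_k)(f(x))\prod_{i=1}^s D^{\gamma^i}f_{j_i}(x)$. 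The decisive point is that the constraint $\alp\le k$ forces $s\le\lvert\gamma\rvert\le k$, so the hypothesis gives $\Betr{\partial_{j_1}\cdots\partial_{j_s}\chi_k}\le C(hk)^s$.

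The remaining task is combinatorial. I would use the exponential generating function $\sum_n B_{n,s}(x_1,x_2,\dotsc)\,t^n/n!=\frac{1}{s!}\bigl(\sum_{l\ge1}x_l t^l/l!\bigr)^s$ with $x_l=C_f r^l l!$ to obtain, in the scalar reduction, the sharp estimate $\lvert B_{n,s}(f',\dotsc)\rvert\le(n!/s!)\,C_f^s r^n\binom{n-1}{s-1}$. Inserting this into the Faà di Bruno sum and splitting the $s$-summation into the two regimes $s\lesssim hkC_f$ (dominated by the largest term) and $s\gtrsim hkC_f$ (where the series is dominated by $e^{hkC_f}$), combined with $n!\le k^n$ for $n\le k$, yields $\Betr{D^\gamma(\chi_k\circ f)}\le C'(h'k)^{\lvert\gamma\rvert}$ whenever $\lvert\gamma\rvert\le k$. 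Finally, combining with $\Betr{D^\beta a}\le C_a(rk)^{\bet}$ (using $\bet!\le k^\bet$ for $\bet\le k$) and the multi-index binomial identity $\sum_{\beta\le\alpha}\binom{\alpha}{\beta}A^\bet B^{\alp-\bet}=(A+B)^\alp$ gives $\Betr{D^\alpha\chi_k'(x)}\le C''(h''k)^{\alp}$ for $\alp\le k$, as required.

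The principal obstacle is the combinatorial bookkeeping in multivariate Faà di Bruno: one must verify that the product $(hk)^s\cdot\lvert\gamma\rvert!$ does not introduce a spurious extra power of $k^{\lvert\gamma\rvert}$, so that the exponent of $k$ in the final bound remains exactly $\lvert\gamma\rvert$ (rather than, say, $2\lvert\gamma\rvert$). The saving comes precisely from the sharp Bell polynomial generating function estimate together with the regime split above, which shows that the full $s$-sum behaves like the worst of its two endpoint regimes, both of which are of the desired form $(h'k)^{\lvert\gamma\rvert}$.
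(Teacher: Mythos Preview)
The paper does not give its own proof of this lemma: it is quoted verbatim as \cite[Lemma 3.6]{MR0294849} and used as a black box. So there is no proof in the paper to compare your proposal against.

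That said, your sketch is the right strategy and is essentially how H\"ormander proves it in \cite{MR0294849}: Leibniz to separate the analytic factor $a$, then a Fa\`a di Bruno type expansion of $D^\gamma(\chi_k\circ f)$, using that every derivative of $\chi_k$ appearing has order $\le\lvert\gamma\rvert\le k$ so the hypothesis applies. H\"ormander's argument packages the combinatorics slightly differently (via a direct induction rather than Bell polynomial generating functions), but the mechanism is the same: the analyticity of $a$ and $f$ supplies the factorial bounds $\bet!\le k^{\bet}$ on the relevant range, and the partition sum contributes only a geometric factor that can be absorbed into the new constant $h'$. Your worry about a spurious $k^{2\lvert\gamma\rvert}$ is well placed and correctly resolved: the point is that $(hk)^s$ pairs with the $1/s!$ in the Bell coefficient and with the factor $\lvert\gamma\rvert!/\prod_i\lvert\gamma^i\rvert!$ from the partition, so the total power of $k$ stays exactly $\lvert\gamma\rvert$.

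One small caveat on the support statement: as written the lemma does not assert that $\chi_k'$ has compact support in $U_1$ (indeed $f^{-1}(K_0)$ need not be compact). In the applications in the paper the $\chi_k$ are localized near a point and $f$ is a diffeomorphism, so this is harmless, and your parenthetical ``in the applications'' already flags this correctly.
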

\begin{Lem}[{\cite[Lemma 3.7]{MR0294849}}]\label{InvarianceAuxLem2}
	Let $\mathsf{F}$ be a compact family of analytic real-valued functions 
	on $U$ which do not have a critical point in $x_0\in U$. 
	Further suppose $\chi_k\in\D(U)$ is a sequence of functions with support in 
	the same small enough neighborhood of $x_0$ which satisfies
	\begin{equation*}
	\Betr{D^\alpha\chi_k(x)}\leq C(hk)^{\alp}, \qquad \alp\leq k,
	\end{equation*}
	for some constants $C,h>0$.
	
	Then there exist constants $C^\prime,h^\prime>0$ such that for all
	$t\in\R$ and $f\in\mathsf{F}$ we have
	\begin{equation*}
	\Betr{\int\!\chi_k(x)e^{-itf(x)}dx}\leq C^\prime \left(h^\prime k\right)^k
	\left(k+\Betr{t}\right)^{-k},\qquad k\in\N.
	\end{equation*}
\end{Lem}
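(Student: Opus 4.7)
The plan is to prove this via a non-stationary phase argument: integrate by parts $k$ times using a transport operator built from $\nabla f$, then combine the resulting bound (good for $|t|$ large) with the trivial estimate $|\int \chi_k e^{-itf}| \le \int |\chi_k|$ (good for $|t|$ small).

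First I would exploit the hypotheses to secure uniform constants over $\mathsf{F}$. Since the family $\mathsf{F}$ is compact in the analytic topology and each $f\in\mathsf F$ has no critical point at $x_0$, I can shrink to a neighborhood $W$ of $x_0$ on which $|\nabla f(x)|\ge c_0>0$ uniformly for all $f\in\mathsf{F}$, and on which the coefficients $a_j^f(x) := \partial_j f(x)/|\nabla f(x)|^2$ satisfy uniform analytic estimates $|D^\alpha a_j^f(x)|\le A\, R^{|\alpha|}|\alpha|!$. On $W$ I define the first-order operator $L^f = \sum_j a_j^f\,\partial_j$, so that $L^f(e^{-itf}) = -it\,e^{-itf}$; thus, assuming $\supp\chi_k\subseteq W$,
\begin{equation*}
\int \chi_k(x)\, e^{-itf(x)}\,dx \;=\; \frac{1}{(-it)^k}\int \bigl((L^f)^{t}\bigr)^{k}\chi_k(x)\,e^{-itf(x)}\,dx,
\end{equation*}
where $(L^f)^t\psi = -\sum_j \partial_j(a_j^f\psi)$ is the formal adjoint.

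Next I would establish the key pointwise estimate
\begin{equation*}
\bigl|\bigl((L^f)^{t}\bigr)^{k}\chi_k(x)\bigr| \;\le\; C\,(h'k)^{k},\qquad x\in W,\ f\in\mathsf F,
\end{equation*}
with $C,h'$ independent of $k$, $f$ and $t$. Expanding by Leibniz, $((L^f)^t)^k\chi_k = \sum_{|\alpha|\le k} c^f_{k,\alpha}(x)\,D^\alpha\chi_k(x)$, where $c^f_{k,\alpha}$ is a finite sum of products of derivatives of the analytic functions $a_j^f$ of total order $k$. A standard induction on $k$ (essentially Faà di Bruno combinatorics) combined with the uniform analytic estimate $|D^\alpha a_j^f|\le A R^{|\alpha|}|\alpha|!$ gives $\sum_{|\alpha|\le k}|c^f_{k,\alpha}(x)|\cdot\bigl(hk\bigr)^{|\alpha|}\le C(h'k)^{k}$, after invoking the hypothesis $|D^{\alpha}\chi_k|\le C(hk)^{|\alpha|}$ for $|\alpha|\le k$. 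Integrating against the bounded volume of $\supp\chi_k$ yields
\begin{equation*}
\Bigl|\!\int\!\chi_k\,e^{-itf}\,dx\Bigr| \;\le\; C(h'k)^k\,|t|^{-k}\qquad (t\ne 0).
\end{equation*}

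Finally I would merge this with the trivial bound $|\int\chi_k e^{-itf}\,dx|\le C_0$: in the regime $|t|\ge k$ the integration-by-parts estimate gives $C(h'k)^k|t|^{-k}\le C(2h'k)^k(k+|t|)^{-k}$, while in the regime $|t|\le k$ we have $C_0\le C_0(2k)^k(k+|t|)^{-k}\le C_0(2)^k(k+|t|)^{-k}\cdot k^k$, again of the required form after enlarging $h'$. I expect the main obstacle to be the combinatorial bookkeeping in the inductive estimate for $((L^f)^t)^k\chi_k$: one must pair derivatives of the analytic coefficients (producing factorials $|\alpha|!$) with the sub-analytic bounds $(hk)^{|\alpha|}$ on derivatives of $\chi_k$ of order $\le k$ in such a way that the total growth is only $(h'k)^k$ rather than $k!\cdot k^k$, and this requires carefully tracking that the $k$-th order derivatives of $\chi_k$ carry only the factor $(hk)^k$ (not $k!\,h^k$) and absorbing all combinatorial constants into $h'$.
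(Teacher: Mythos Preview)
The paper does not supply a proof of this lemma; it is simply quoted from H\"ormander \cite[Lemma~3.7]{MR0294849}. Your non-stationary phase argument---building the transport operator $L^f=\sum_j(\partial_jf/|\nabla f|^2)\partial_j$, integrating by parts $k$ times, bounding $((L^f)^t)^k\chi_k$ pointwise by $C(h'k)^k$ via the uniform analytic estimates on the coefficients $a_j^f$, and then interpolating with the trivial bound for $|t|\le k$---is exactly the standard route and is essentially H\"ormander's original proof. The combinatorial step you flag as the main obstacle is handled in \cite{MR0294849} by precisely the kind of induction you describe (cf.\ also \cite[Lemma~5.2]{MR0294849}, which the present paper invokes repeatedly for the same purpose), so your sketch is on target.
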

\begin{Thm}\label{InvarianceAuxThm}
Let $x_0\in U$, $u\in\Dp(U)$, $P$ be a differential operator of order $d$
 with analytic coefficients in $U$ and $\mathsf{F}$ be a compact family of
 analytic real-valued functions.
 Assume also that
 $\chi_k\in\D(U)$ is a sequence of functions satisfying
 \begin{equation*}
 \Betr{D^\alpha \chi_k}\leq Ch^\alp k^\alp,\quad \alp\leq k,
 \end{equation*}
 with supports inside of the same small enough neighborhood $W$ of $x_0$.
 Then the following holds:
 \begin{enumerate}
\item If $\fM$ is an $R$-semiregular weight matrix and 
$(x_0,df(x_0))\notin\WF_{\{\fM \}}(u;P)\cup\{0\}$ for all $f\in\mathsf{F}$
then there are a sequence $\bM\in\fM$, constants $C,h>0$, $\nu^\prime\in\R$ and $q\in\N_0$ such that
\begin{equation}\label{InvarianceAux}
\Betr{\left\langle \chi_{dk+q}P^k u,e^{-itf}\right\rangle}
\leq C h^{k}M_{dk}t^{\nu^\prime},\qquad k\in\N_0,\; t\geq 1.
\end{equation}
\item If $\fM$ is $B$-semiregular and 
$(x_0,df(x_0))\notin\WF_{(\fM )}(u;P)\cup \{0\}$ for all $f\in\mathsf{F}$
then there are $\nu^\prime$ and $q\in\N_0$ such that for all $\bM\in\fM$ and
$h>0$ there is some $C>0$ satisfying \eqref{InvarianceAux}.
 \end{enumerate}
\end{Thm}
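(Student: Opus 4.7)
The plan is to rewrite the pairing $\langle \chi_{dk+q}P^ku,e^{-itf}\rangle$ as a Fourier integral via Parseval, decompose the frequency domain into a conic neighbourhood of $\Sigma:=\{df(x_0):f\in\mathsf{F}\}$ and its complement, and treat each piece with one of the preceding tools: Lemma \ref{vWFCharLem} controls the contribution from the cone using the wavefront hypothesis, while Lemma \ref{InvarianceAuxLem2} (applied with the $k$-dependent test function $\chi_{dk+q}$ itself, after normalising the phase) controls the complement via non-stationary phase. I focus on the Roumieu case; the Beurling version follows from part (2) of the two lemmas together with $B$-semiregularity.

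First I set up the geometry and the Fourier-side estimates. Since $\mathsf{F}$ is compact, $\Sigma$ is a compact subset of $\R^n\setminus\{0\}$; closedness and conicity of $\WF_{\{\fM\}}(u;P)$ and continuity of $(x,f)\mapsto df(x)$ allow one to choose a compact neighbourhood $K$ of $x_0$ and closed cones $F_1,F_2\subset\R^n\setminus\{0\}$, with $F_1$ angularly contained in the interior of $F_2$, so that $df(x)\in\text{int}(F_1)$ for every $x\in K$, $f\in\mathsf{F}$, and $K\times F_2\cap\WF_{\{\fM\}}(u;P)=\emptyset$. By hypothesis one may assume $\supp\chi_k\subset K$ for all $k$. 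Fix $\eta\in\D(U)$ with $\eta\equiv 1$ on a neighbourhood of $K$ covering every $\supp\chi_{dk+q}$, and set $w_k:=\eta P^ku\in\Ep(U)$. Taking the constant sequence $\eta$ in Lemma \ref{vWFCharLem} yields $\bM\in\fM$ and $C_1,h_1>0$, $\nu\in\R$ with
\begin{equation*}
|\widehat{w_k}(\xi)|\leq C_1h_1^kM_{dk}(1+|\xi|)^{\nu},\qquad \xi\in F_2,
\end{equation*}
while writing $\widehat{w_k}(\xi)=\langle u,(P^T)^k(\eta(\cdot)e^{-i(\cdot)\xi})\rangle$ and combining analytic Cauchy estimates on the coefficients of $(P^T)^k$ with the local finite order $\mu$ of $u$ produces the global polynomial bound
\begin{equation*}
|\widehat{w_k}(\xi)|\leq C_2h_2^k(dk)!(1+|\xi|)^{\mu+dk},\qquad \xi\in\R^n.
\end{equation*}

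Now for the Fourier decomposition: Parseval gives
\begin{equation*}
\langle\chi_{dk+q}P^ku,e^{-itf}\rangle=\langle w_k,\chi_{dk+q}e^{-itf}\rangle=(2\pi)^{-n}\int\widehat{w_k}(\xi)\,K_{t,f,k}(-\xi)\,d\xi,
\end{equation*}
where $K_{t,f,k}(\zeta):=\int\chi_{dk+q}(x)e^{-ix\zeta-itf(x)}\,dx$. Choose $a\in C^\infty(\R^n)$ zero-homogeneous outside a compact set, with $a\equiv 1$ on $F_1$ and $\supp a\subset F_2$, and split the integral as $I_1+I_2$ according to $a(\xi)$ and $1-a(\xi)$. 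For $I_1$: on $\supp a\subset F_2$ the wavefront estimate applies to $\widehat{w_k}$; iterated integration by parts in $x$ against $i\xi\cdot\nabla_x/|\xi|^2$ inside $K_{t,f,k}(-\xi)$, combined with \eqref{Testfunkt} and the $t$-polynomial growth of derivatives of $e^{-itf}$, gives $|K_{t,f,k}(-\xi)|\leq C_N(k+t)^N(1+|\xi|)^{-N}$ for every $N\in\N$, and choosing $N=\nu+n+1$ yields $|I_1|\leq Ch^kM_{dk}(1+t)^{\nu+n+1}$ (the polynomial-in-$k$ factor being absorbed into $h^kM_{dk}$ by the super-exponential growth of $\bM$). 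For $I_2$: on $\supp(1-a)$ we have $\xi\notin F_1$ and $df(x)\in\text{int}(F_1)$ uniformly for $x\in K$, hence $|\xi-tdf(x)|\geq c(|\xi|+t)$. Normalising the phase of $K_{t,f,k}(-\xi)$ by $\tau:=(|\xi|^2+t^2)^{1/2}$ turns $x\mapsto-(x\xi-tf(x))/\tau$ into a compact family of real-analytic functions---parametrised by $\{((\xi,t)/\tau,f):\xi\notin F_1,\,t\geq 1,\,f\in\mathsf{F}\}\subset S^n\times\mathsf{F}$---without critical points on $K$. Lemma \ref{InvarianceAuxLem2} (after the standard re-indexing) then yields
\begin{equation*}
|K_{t,f,k}(-\xi)|\leq C'(h_3(dk+q))^{dk+q}(dk+q+\tau)^{-(dk+q)}.
\end{equation*}
Combining this with the global bound on $\widehat{w_k}$, selecting $q$ so that $dk+q\geq\mu+dk+n+1$, and using \eqref{AnalyticInclAlt} (valid since $\fM$ satisfies \eqref{matrixAnal}) to absorb $(dk+q)^{dk+q}$ and $(dk)!$ into $h^kM_{dk}$ produces $|I_2|\leq Ch^kM_{dk}t^{\nu'_2}$ for some $\nu'_2\in\R$. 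Summing $I_1$ and $I_2$ gives \eqref{InvarianceAux}.

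The delicate step, and the main expected obstacle, is the estimate of $I_2$: the penalty $(dk+q)^{dk+q}$ from Lemma \ref{InvarianceAuxLem2} and the order factor $(dk)!$ from the global bound must both be absorbed into $h^kM_{dk}$, and at the same time the polynomial growth $(1+|\xi|)^{\mu+dk}$ of $\widehat{w_k}$ must be dominated by $(dk+q+\tau)^{-(dk+q)}$ through a sufficiently large $q$. Both absorptions rely on the analytic-inclusion hypothesis \eqref{matrixAnal}, used in the form \eqref{AnalyticInclAlt}; without it the large-$k$ behaviour of $I_2$ could not be controlled uniformly in $t$.
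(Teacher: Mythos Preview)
Your overall strategy---Parseval, then a decomposition of the $\xi$-integral into a conic region around $\{df(x_0):f\in\mathsf{F}\}$ (controlled by the wavefront hypothesis) and its complement (controlled by the non-stationary phase Lemma~\ref{InvarianceAuxLem2})---is exactly the paper's approach. The gap lies in how you obtain and use the global bound on $\widehat{w_k}$, and it makes the $I_2$ estimate fail.

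The bound $|\widehat{w_k}(\xi)|\le C_2h_2^k(dk)!(1+|\xi|)^{\mu+dk}$ for $w_k=\eta P^ku$ with a \emph{fixed} $\eta\in\D(U)$ is not justified: expanding $\widehat{w_k}(\xi)=\langle u,(P^T)^k(\eta\,e^{-i(\cdot)\xi})\rangle$ requires control of $D^\gamma\eta$ for $|\gamma|$ up to $dk$, and no compactly supported $\eta$ satisfies $|D^\gamma\eta|\le C(h|\gamma|)^{|\gamma|}$ uniformly in $|\gamma|$---that would make $\eta$ real-analytic. The same objection applies to feeding the constant sequence $\chi_k\equiv\eta$ into Lemma~\ref{vWFCharLem}: its hypothesis $|D^{\alpha+\beta}\chi_k|\le C_\alpha(h_\alpha k)^{|\beta|}$ for all $|\beta|\le k$, evaluated at $k=|\beta|$, forces exactly this analytic-type growth on $\eta$.

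More decisively, even granting your global bound the absorption you describe is impossible. Using $(1+|\xi|)\le dk+q+\tau$ to cancel $(1+|\xi|)^{\mu+dk}$ against $(dk+q+\tau)^{-(dk+q)}$ (with $q\ge\mu+n+1$, keeping $n+1$ powers for integrability) leaves the full factor $(dk)!\cdot(h_3(dk+q))^{dk+q}$ intact. Via \eqref{AnalyticInclAlt} this is at best $C\gamma^{dk}M_{dk}M_{dk+q}\le C\gamma^{dk}M_{2dk+q}$, and for a generic $[$semiregular$]$ matrix---already for a single Gevrey sequence $\bG^s$---there is no $\bM'\in\fM$ with $M_{2dk}\le Ch^kM'_{dk}$. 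You are over-counting by one full factor of $M_{dk}$.

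The paper avoids both problems by working not with $\eta P^ku$ but with the sequence $f_k$ supplied by Definition~\ref{vWFDef} (made uniform over the compact set of directions via Lemma~\ref{vWFCharLem}), which by construction satisfies the global estimate \eqref{vWF-Def1},
\[
|\hat f_k(\xi)|\le Ch^k\bigl[(M_{dk})^{1/(dk)}+|\xi|\bigr]^{\nu+dk},
\]
in addition to the conic estimate \eqref{vWF-Def2}. This is strictly sharper than your bound---for large $|\xi|$ there is no separate $(dk)!$ multiplying $|\xi|^{dk}$---and it is precisely what closes the off-cone integral: after replacing $dk+q$ by $\gamma(M_{dk+q})^{1/(dk+q)}$ in the non-stationary phase bound (again \eqref{AnalyticInclAlt}), the $M$-scales in $\hat f_k$ and $\check v_{k,t}$ match and only $Ch^kM_{dk+q}t^{\nu-q+n}$ survives; a single application of \eqref{RouDeriv} then converts $M_{dk+q}$ to $M'_{dk}$. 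Condition \eqref{vWF-Def1} is included in the definition of $\WF_{[\fM]}(u;P)$ for exactly this purpose.
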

\begin{proof}
	Note first that the set $F=\{tdf(x_0):\; t>0,\, f\in\mathsf{F}\}$ is a closed cone in 
	$\R^n\!\setminus\!\{0\}$.
	Since by Proposition \ref{WFIteratesProperties}(1) $\WF_{[\fM]}(u;P)$ is a closed subset of $\COT$ which is conic in the second variable, there has to be a neighborhood $V$
	of $x_0$ and an open conic neighborhood $\Gamma\subseteq\R^n\!\setminus\!\{0\}$ of 
	$F$ such that $\WF_{[\fM]}u\cap \overline{V}\times\overline{\Gamma}=\emptyset$.
	Then Lemma \ref{vWFCharLem} implies that 
	we can find 
	a sequence $f_k\in\Ep(U)$
	and $\nu\in\R$ such that 
	the following holds.
	First, $f_k\vert_V=(P^ku)\vert_V$ and the Fourier transforms of the  $f_k$ either 
	satisfy
\begin{align}\label{b-EstWF1}
\left\lvert\hat{f}_k(\xi)\right\rvert
&\leq Ch^k\left[\bigl(M_{dk}\bigr)^{\tfrac{1}{dk}}
+\xit\right]^{\nu+dk}
& \forall k&\in\N,\;\forall \xi\in\R^n,\\
\left\lvert\hat{f}_k(\xi)\right\rvert
&\leq Ch^kM_{dk}(1+\xit)^\nu
& \forall k&\in\N,\;\forall \xi\in\Gamma\label{b-EstWF2}
\end{align}
 in the Roumieu case, for some constants $C,h$ and $\bM\in\fM$
 or, in the Beurling case, for all $\bM\in\fM$ and $h>0$ 
 there is some $C>0$ such that 
 \eqref{b-EstWF1} and \eqref{b-EstWF2} hold.

We assume for the moment that \eqref{b-EstWF1} and \eqref{b-EstWF2} holds for 
some fixed $\bM\in\fM$ and some constants $C,h>0$.
We can further suppose that $\supp\chi_k\subseteq W= V$. Moreover, we set 
$v_{k,t}=\chi_{dk+q}e^{-itf}$ for some fixed integer $q\geq n+1+\nu$.
We conclude that
\begin{equation}\label{Parseval}
\left\langle \chi_{dk+q}P^k u,e^{-itf}\right\rangle
=\frac{1}{(2\pi)^n}\int\!\hat{f}_k(\xi)\check{v}_{k,t}(-\xi)\,d\xi
\end{equation}
where 
\begin{equation*}
\check{v}_{k,t}(-\xi)=\int\!\chi_{dk+q}e^{i(x\xi-tf(x))}\,dx.
\end{equation*}

The normalized functions
\begin{equation*}
x\longmapsto \frac{x\xi-tf(x)}{\lvert t\rvert+\xit}
\end{equation*}
with $f\in\mathsf{F}$ and $t>0$ form a compact family of analytic functions
without a critical point in $x_0$ as long as $\xi\notin\Gamma$ or 
$\xi\in\Gamma$ and $\min(\lvert t\rvert/\xit,\xit/\lvert t\rvert)<\eps$
for some sufficiently small $\eps>0$.

If the supports of the $\chi_k$ are sufficiently small around $x_0$ Lemma \ref{InvarianceAuxLem2} allows us to estimate $\check{v}_{k,t}(-\xi)$.
In fact, there exist constant $C^\prime,h^\prime>0$ such that
\begin{equation}\label{b-Est3}
\Betr{\check{v}_{k,t}(-\xi)}\leq C^\prime \left(h^\prime (dk+q)\right)^{dk+q}
\left(dk+q+\lvert t\rvert+\xit\right)^{-dk-q}, \quad k\in\N_0,
\end{equation}
for $f\in\mathsf{F}$, $t>0$, $\xi\notin\Gamma$ or $\xi\in\Gamma$ 
and $\min(\lvert t\rvert/\xit,\xit/\lvert t\rvert)<\eps$.
Note that the right-hand side of \eqref{b-Est3} can be  bounded by 
$C^\prime (h^\prime)^{dk+q}$.
 
Now recall that \eqref{matrixAnal} implies that
 for all $\bM\in\fM$ there is some $\gamma>0$ such that
 \begin{equation*}
 k\leq \gamma \bigl(M_k\bigr)^{\tfrac{1}{k}},\qquad k\in\N.
 \end{equation*}
From this we obtain, with the same constant $\gamma$,
\begin{equation*}
\frac{k}{k+\tau}\leq
\frac{\gamma\bigl(M_k\bigr)^{\tfrac{1}{k}}}{\gamma\bigl(M_k\bigr)^{\tfrac{1}{k}}+\tau}
\end{equation*}
for all $k\in\N$ and all $\tau>0$.
 Hence 
  we obtain from  \eqref{b-EstWF1}, \eqref{b-EstWF2}, \eqref{Parseval} and \eqref{b-Est3}
 the following estimate
 \begin{equation*}
 \begin{split}
 \left\lvert\left\langle \chi_{dk+q}P^k u,e^{-itf}\right\rangle\right\rvert
 &=\frac{1}{(2\pi)^n}\int_{\R^n\setminus\Gamma}\! \hat{f}_k(\xi)
 \check{v}_{k,t}(-\xi)\,d\xi+\frac{1}{(2\pi)^n}\int_{\Gamma}\hat{f}_k(\xi)
 \check{v}_{k,t}(-\xi)\,d\xi\\ 
 &\leq C\left[\int_{\R^n}\! h^k \left[\bigl(M_{dk}\bigr)^{\tfrac{1}{dk}}+\xit\right]^{dk+\nu}
\right.\\
 &\qquad\qquad\times
 (h^\prime)^{dk+q}\frac{\gamma^{dk+q}M_{dk+q}}{\Bigl(\gamma 
 	\bigl(M_{dk+q}\bigr)^{\tfrac{1}{dk+q}}+\lvert t\rvert +\xit \Bigr)^{dk+q}}\,d\xi\\
 &\qquad\qquad\qquad \left.+\int\limits_{\eps t\leq \xit\leq t/\eps}\! \bigl(h^\prime)^{dk+q}h^kM_{dk}
 \bigl(1+\xit\bigr)^{\nu}\,d\xi\right].
 \end{split}
 \end{equation*}
Note that if $0<\gamma\leq 1$ then 
\begin{equation*}
	\gamma\bigl(M_{dk+q}\bigr)^{\tfrac{1}{dk+q}}+\lvert t\rvert+\xit\geq
	\gamma\left(\bigl(M_{dk+q}\bigr)^{\tfrac{1}{dk+q}}+\lvert t\rvert+\xit\right).
\end{equation*}
On the other hand, for $\gamma> 1$ we have the trivial estimate 
$(M_k)^{1/k}\leq \gamma (M_k)^{1/k}$.
Thence, since $t\geq 1$, the first integrand in the right-hand side above can be bounded by
\begin{equation*}
	C_1h_1^kM_{dk+q}t^{-q+\nu}
	\left(1+\frac{\xit}{t}\right)^{\nu-q}
\end{equation*}
with $h_1$ being a multiple of $h$, $h^\prime$ and possibly $\gamma$.

Following iterated application of \eqref{RouDeriv} we can conclude that
there are constants $C,h>0$ and a weight sequence $\bM^\prime\in\fM$ such that
 \begin{equation*}
 \Betr{\left\langle \chi_{dk+q}P^k u,e^{-itf}\right\rangle}\leq
 Ch^k M^\prime_{dk}\bigl(t^{-q+\nu+n}+\left(1+ t/\eps\right)^{\nu}t^n\bigr)
 \end{equation*}
 and we have proven the theorem in the Roumieu case.
 
 It is easy to see that the same proof holds also
 in the Beurling category.
\end{proof}

\begin{Thm}
	If $\fM$ is $[$semiregular$]$ then $\WF_{[\fM]}(u;P)$ 
	is invariant under analytic changes of coordinates.
\end{Thm}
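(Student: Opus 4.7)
The plan is to show that for an analytic diffeomorphism $\Phi : U_1 \to U_2$, setting $\tilde u = u \circ \Phi$ and letting $\tilde P$ denote the natural pullback of $P$ (so that $\tilde P^k \tilde u = (P^k u) \circ \Phi$), we have $(y_0, \eta_0) \notin \WF_{[\fM]}(\tilde u; \tilde P)$ if and only if $(x_0, \xi_0) := \bigl(\Phi(y_0), (d\Phi(y_0))^{-t}\eta_0\bigr) \notin \WF_{[\fM]}(u; P)$. By symmetry, it suffices to prove one direction; the other follows by applying the same argument to $\Phi^{-1}$. Since $\tilde P$ is again an analytic differential operator of order $d$, Theorem \ref{InvarianceAuxThm} is available on the $U_1$-side with the same weight matrix $\fM$ and the same $M_{dk}$.

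To verify $(x_0, \xi_0) \notin \WF_{[\fM]}(u; P)$, I would produce a witness sequence as in Definition \ref{vWFDef}. Fix a cutoff sequence $\chi_k \in \D(U_2)$ supported in a small compact neighborhood $K$ of $x_0$, equal to $1$ on a fixed smaller neighborhood of $x_0$, and satisfying \eqref{Testfunkt}. Define $f_k = \chi_{dk+q} P^k u$ for a suitable $q \in \N_0$. The global bound \eqref{vWF-Def1} is established by the same Paley--Wiener argument as in Proposition \ref{BoundedErsatz} combined with \eqref{AnalyticInclAlt} (which converts the powers of $k$ appearing on the right-hand side of that proposition into appropriate powers of $(M_{dk})^{1/dk}$), so the substantive step is the conic decay \eqref{vWF-Def2}.

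To obtain this, I would change variables $x = \Phi(y)$ in the Fourier integral to get $\hat f_k(\xi) = \langle \chi'_{dk+q}\, \tilde P^k \tilde u,\, e^{-i\Phi(y)\cdot \xi}\rangle$, where $\chi'_k(y) = \chi_k(\Phi(y)) \lvert \det d\Phi(y)\rvert$. By Lemma \ref{InvarianceAuxLem1} the $\chi'_k$ are supported in the fixed compact set $\Phi^{-1}(K)$ and still satisfy \eqref{Testfunkt}. Writing $\xi = t\omega$ with $t = \lvert\xi\rvert \geq 1$ and $\omega$ in a small open neighborhood $W$ of $\omega_0 := \xi_0/\lvert \xi_0\rvert$ on the unit sphere, the exponent becomes $-it f_\omega(y)$ with $f_\omega(y) = \Phi(y)\cdot \omega$. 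The collection $\mathsf{F} = \{f_\omega : \omega \in W\}$ is a compact family of real-valued analytic functions on $U_1$ with $df_\omega(y_0) = (d\Phi(y_0))^t \omega$; in particular $df_{\omega_0}(y_0) = \eta_0/\lvert\xi_0\rvert$ lies in the open conic complement of $\WF_{[\fM]}(\tilde u; \tilde P)$, so shrinking $W$ keeps every $df_\omega(y_0)$ in that complement.

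Theorem \ref{InvarianceAuxThm}, applied on $U_1$ to $\tilde u$, $\tilde P$, the cutoffs $\chi'_k$, and the family $\mathsf{F}$, then yields $\lvert \hat f_k(\xi)\rvert \leq C h^k M_{dk} \lvert\xi\rvert^{\nu'}$ for $\lvert\xi\rvert \geq 1$ and $\omega \in W$; combined with the trivial bound for $\lvert\xi\rvert \leq 1$ this is exactly \eqref{vWF-Def2} in the conic neighborhood $\Gamma = \{t\omega : t > 0,\; \omega \in W\}$ of $\xi_0$, completing the proof in the Roumieu case. The main obstacle is purely bookkeeping: one must confirm that the pullback operator $\tilde P$ is genuinely analytic of order $d$ (so that the same weight sequence governs both sides), track the Jacobian $\lvert \det d\Phi\rvert$ through Lemma \ref{InvarianceAuxLem1} without degrading \eqref{Testfunkt}, and, for the Beurling case, carefully invert the quantifier structure of Theorem \ref{InvarianceAuxThm}, exploiting $B$-semiregularity in place of $R$-semiregularity; the genuine microlocal content of the invariance is already entirely concentrated in Theorem \ref{InvarianceAuxThm} through the phase family $\{\Phi(\cdot)\cdot \omega\}$.
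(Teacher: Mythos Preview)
Your proposal is correct and follows essentially the same route as the paper: reduce to one direction by symmetry, build the witness sequence $f_k=\chi_{dk+q}P^ku$ from cutoffs satisfying \eqref{Testfunkt}, obtain the global bound \eqref{vWF-Def1} from Proposition~\ref{BoundedErsatz} together with \eqref{AnalyticInclAlt}, and derive the conic decay \eqref{vWF-Def2} by applying Theorem~\ref{InvarianceAuxThm} to the compact family of phases $y\mapsto \Phi(y)\cdot\omega$ after pulling the Jacobian through Lemma~\ref{InvarianceAuxLem1}. The only differences from the paper are cosmetic: the paper places the cutoffs on the source side first and then pushes forward via $\varphi_k=\chi_k\circ F^{-1}$, whereas you place them on the target side and pull back; and the paper parametrizes the phase family by $\eta$ in a cone normalized by $(1+|\eta|)^{-1}$ rather than by $\omega$ on a sphere patch.
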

\begin{proof}
	Let $F:U\rightarrow U^\prime$ be an analytic diffeomorphism from $U$ onto 
	an open subset $U^\prime\subseteq\R^n$ which transforms the operator 
	$P$ into the operator $P_F$ defined by
	\begin{equation*}
	P_F \psi = P(\psi\circ F)\circ F^{-1},\qquad \psi\in\D(U).
	\end{equation*}
	Then
	\begin{equation*}
	\bigl(P_F\bigr)^k\psi= P^k(\psi\circ F)\circ F^{-1}
	\end{equation*}
	 for all $k\in\N_0$.
	 We set $y=F(x)$ and $u=v\circ F$.
	We are going to show that, if $(x_0,\xi_0)\notin\WF_{[\fM]}(u;P)$
	then $(y_0,\eta_0)\notin\WF_{[\fM]}(v;P_F)$ where $y_0=F(x_0)$ and
	$\xi_0 =F^\prime(x_0)^T\eta_0$.
	
	Let $\chi_k\in\D(U)$ be a sequence of functions with supports in a small enough neighborhood of $x_0$ and which are equal to $1$ near $x_0$ and
	satisfy $\Betr{D^\alpha\chi_k}\leq C(hk)^{\alp}$ when $\alp\leq k$.
	If $\Gamma$ is the cone associated to $\xi_0$ in Definition \ref{vWFDef}
	then $(F^\prime(x_0)^T)^{-1}\Gamma$ is an open conic neighborhood of $\eta_0$.
	It follows that the family
	\begin{equation*}
	F_\eta: x\longmapsto \frac{1}{1+\etat}\langle F(x),\eta\rangle,
	\qquad \eta\in \left(F^\prime(x_0)^T\right)^{-1}\Gamma
	\end{equation*}
	is a compact set of real-valued analytic functions
	with $(x_0,dF_\eta(x_0))\notin\WF_{[\fM]}(u;P)\cup\{0\}$ since
	$(x_0,\xi_0)\notin\WF_{[\fM]}(u;P)$.
	
	According to Lemma \ref{InvarianceAuxLem1} we have that
	\begin{equation*}
	\Betr{D^\alpha\left(\Betr{F^\prime(x)^T}\chi_k(x)\right)}
	\leq C (hk)^\alp,\qquad \alp\leq k\in\N_0,\; x\in U,
	\end{equation*} 
	for some constants $C,h>0$.
	In the Roumieu case Theorem \ref{InvarianceAuxThm} implies that
	there are constants $C,h>0,\nu^\prime\in\R$ and $q\in\N$ such that
	\begin{equation*}
\Betr{\left\langle\Betr{F^\prime(x)^T}\chi_{dk+q}(x)P^ku,
e^{-iF(x)\eta}\right\rangle}
\leq Ch^kM_{dk}\bigl(1+\etat\bigr)^{\nu^\prime}.
	\end{equation*}
	If we define $\varphi_k=\chi_k\circ F^{-1}$ and 
	$g_k=\varphi_{dk+q}P_F^k v$ then we obtain
	\begin{equation*}
	\Betr{\hat{g}_k(\eta)}\leq Ch^kM_{dk}\bigl(1+\etat\bigr)^{\nu^\prime},
	\qquad k\in\N_0,\; \eta\in \bigl(F^\prime(x_0)^T\bigr)\Gamma. 
	\end{equation*}
	Furthermore, by Lemma \ref{InvarianceAuxLem1} the functions $\varphi_k$
	satisfy
	\begin{equation*}
	\Betr{D^\alpha\varphi_k}\leq C(hk)^\alp,\qquad \alp\leq k\in\N_0,
	\end{equation*}
	for some constants $C,h>0$.
	Hence, by Proposition \ref{BoundedErsatz} the estimate
	 \eqref{vWF-Def2} holds for the sequence $g_k$ too.
	Since $g_k\vert_V=P^k v$ in some neighborhood $V\subseteq U^\prime$
	of $y_0$ we have therefore shown that $(y_0,\eta_0)\notin\WF_{\{\fM\}}(v;P_F)$.
	
	Virtually the same proof gives us also the result in the Beurling case.
\end{proof}

\subsection{The elliptic Theorem of Iterates}
We are now in the position to 
prove the microlocal elliptic Theorem of Iterates for $[\fM]$-vectors.
We want to begin by showing that $\WF_{[\fM]}(u;P)$ is in fact a refinement of
$\WF_{[\fM]}u$, but to this end we need a variant of Lemma \ref{WFalternative}.
\begin{Lem}\label{WFalternativePara}
	Let $K\subseteq U$ be compact, $F\subseteq\R^n\setminus\{0\}$ be 
	a closed cone, $u\in\Dp(U)$, $P$ be an analytic differential operator
	and $\varphi_k(x,\xi)$ be a sequence of smooth functions on $U\times F$
	with $\supp \varphi_k(\,.\,,\xi)\subseteq K$ for all $k\in\N_0$ and 
	$\xi\in F$ for which there are constants $C,h>0$ such that
	\begin{equation}\label{Ehrenpreis}
	\Betr{D^\alpha \varphi_k(x,\xi)}\leq C(hk)^{\alp},\qquad \alp\leq k,\;x\in K,\; \xi\in F, \xit>k,
	\end{equation}
	for all $k\in\N_0$. Furthermore assume that 
	$\fM$ is a $[$semiregular$]$ weight matrix and 
	let $\mu$ be the order of $u$ near $K$. Then the following holds:
	\begin{enumerate}
		\item If $\WF_{[\fM]}u\cap \bigl(K\times F\bigr)=\emptyset$
		then there are $\bM\in\fM$ and constants $C,h>0$ (resp.\ for all
		$\bM\in\fM$ and  $h>0$ there exists some $C>0$) such that 
		\begin{equation*}
		\Betr{\widehat{\varphi_k u}(\xi)}\leq Ch^k M_{k-\mu-n}\xit^{\mu-n-k},
		\qquad \xi\in F,\; \xit>k,\; k\geq \mu+n.
		\end{equation*}
		\item If $\WF_{[\fM]}(u;P)\cap \bigl(K\times F\bigr)=\emptyset$ 
		then there are $\bM\in\fM$ constants $\nu\geq 0$ and $h, C>0$ 
		(resp.\ there is some $\nu\geq0$ such that 
		for all $\bM\in\fM$ and $h>0$ there exists some $C>0$) satisfying
		\begin{equation*}
		\Betr{\F(\varphi_{dk+q}P^ku)(\xi)}\leq Ch^k M_{dk}(1+\xit)^{\nu},
		  \;\,\xi\in F,\, \xit>{dk+q},\, q\geq n+\nu+1.
		\end{equation*}
	\end{enumerate}
\end{Lem}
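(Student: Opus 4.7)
The plan is to adapt the proofs of Lemmas~\ref{WFalternative} and~\ref{vWFCharLem} to the parameter-dependent cutoffs $\varphi_k(\cdot,\xi)$ via the classical Ehrenpreis--H\"ormander technique. The central idea is that the condition $\xit>k$ in \eqref{Ehrenpreis} makes $\varphi_k(\cdot,\xi)$, viewed as a function of $x$ alone, behave exactly like a member of a bounded sequence in $\D(U)$ to which Lemma~\ref{WFalternative} or Lemma~\ref{vWFCharLem} is applicable.

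I begin by enlarging $F$ slightly to a closed cone $F_1$ that is still disjoint from $\WF_{[\fM]} u$ (respectively $\WF_{[\fM]}(u;P)$) over $K$; this is possible because these wavefront sets are closed. Then I fix a sequence $\chi_k\in\D(U)$ with common support in some $K'\Supset K$ and $\chi_k\equiv 1$ on a neighborhood of $K$, satisfying the derivative bounds required by Lemmas~\ref{WFalternative}/\ref{vWFCharLem}. Applying those lemmas on the enlarged cone $F_1$ produces a weight sequence $\bM\in\fM$ and constants $C,h>0$ (uniform in $\bM$ and $h$ in the Beurling case) bounding $\widehat{\chi_k u}$, respectively $\F(\chi_{dk+q}P^k u)$, on $F_1$. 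Globally in $\zeta$, the uniform boundedness of the relevant sequence in $\E^{\prime,\mu}(K')$ yields a tempered estimate of order $\mu$.

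Since $\varphi_k(\cdot,\xi)$ is supported in $K$ and $\chi_k\equiv 1$ on a neighborhood of $K$, we have $\varphi_k(\cdot,\xi)u=\varphi_k(\cdot,\xi)\chi_k u$ (and the analogue $\varphi_{dk+q}(\cdot,\xi)P^k u=\varphi_{dk+q}(\cdot,\xi)\chi_{dk+q}P^k u$ for part~(2)). Taking Fourier transforms converts the product into a convolution
\begin{equation*}
\widehat{\varphi_k u}(\xi)=\frac{1}{(2\pi)^n}\int \widehat{\varphi_k(\cdot,\xi)}(\xi-\zeta)\,\widehat{\chi_k u}(\zeta)\,d\zeta,
\end{equation*}
and analogously for part~(2). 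From \eqref{Ehrenpreis}, together with the Leibniz/binomial manipulation already used in the proof of Lemma~\ref{vWFCharLem}, one obtains the family of parametric Fourier estimates $\bigl|\widehat{\varphi_k(\cdot,\xi)}(\eta)\bigr|\leq C(hk)^j(1+\etat)^{-j}$ valid for all $0\leq j\leq k$ and $\xit>k$.

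The remaining step is to split the $\zeta$-integral into $\zeta\in F_1$ and $\zeta\notin F_1$ and to bound each piece by choosing $j$ appropriately. In the ``good'' region $\zeta\in F_1$ I use the wavefront estimate on $\widehat{\chi_k u}$ together with a choice of $j$ that is large enough to ensure integrability; in the ``bad'' region the geometric separation $\lvert\xi-\zeta\rvert\geq c(\xit+\lvert\zeta\rvert)$ --- valid because $F$ lies strictly inside $F_1$ --- combined with the tempered bound on $\widehat{\chi_k u}$ and the choice $j=k$ converts $(hk/\lvert\xi-\zeta\rvert)^k$ into a factor of the form $h_1^k$ via $\xit>k$. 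The residual loss of $\mu+n$ indices in the weight sequence is absorbed through iterated application of \eqref{RouDeriv} (respectively \eqref{BeuDeriv}), at the cost of replacing $\bM$ by another sequence $\bN\in\fM$. The main technical obstacle is the careful bookkeeping required to balance the two competing decay mechanisms --- the $(hk)^j$ growth of $\widehat{\varphi_k(\cdot,\xi)}$ against the $\xit^{-k}$ WF decay --- so that the final exponents on $\xit$ and the weight sequence index match the precise form stated; in the Beurling case one additionally needs to verify that all intervening semiregularity constants can be chosen uniformly in $\bM$ and $h$, exactly as in Lemma~\ref{vWFCharLem}.
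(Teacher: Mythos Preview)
The proposal is correct and follows essentially the same approach as the paper: both obtain auxiliary Fourier estimates from Lemmas~\ref{WFalternative}/\ref{vWFCharLem} on a slightly enlarged cone, write $\widehat{\varphi_k u}$ (respectively $\F(\varphi_{dk+q}P^ku)$) as a convolution, and split the resulting integral geometrically using Ehrenpreis-type decay of $\hat\varphi_k(\cdot,\xi)$. The only cosmetic difference is that for part~(1) the paper invokes H\"ormander's splitting formula \cite[(8.1.3)]{MR1996773} (with the shifted index $k'=k-\mu-n$) rather than your cone/complement split, but the bookkeeping and the use of $[$semiregularity$]$ to absorb the index loss are the same.
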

\begin{proof}
	We begin with the proof of (1) in the Roumieu category. 
	Due to Lemma \ref{WFalternative} there is a bounded sequence
	$u_k\in\Ep(U)$ such that $u_k\vert_W=u\vert_W$ in some neighborhood
	$W$ of $K$ and
	\begin{equation*}
	\Betr{\hat{u}_k(\eta)}\leq Ch^k M_k\etat^{-k},\qquad \eta\in\Gamma
	\end{equation*}
	for some $C,h>0$ and $\bM\in\fM$
	 where $\Gamma$ is an open conic neighborhood of $F$.
	Clearly $\varphi_k u=\varphi_k u_{k^\prime}$, $k^\prime=k-\mu-n$.
	The estimate \eqref{Ehrenpreis} gives us 
	\begin{equation}\label{EhrenpreisFourier}
	\Betr{\hat{\varphi}_k(\eta,\xi)}\leq Ch^k\left(\frac{k}{k+\etat}\right)^k,
	\qquad \eta\in\R^n,\; \xi\in F,\; \xit>k,
	\end{equation}
	where $\hat{\varphi}_k(\eta,\xi)=\int\!e^{-ix\eta}\varphi_k(x,\xi)\,dx$ 
	is the partial Fourier transform of $\varphi_k$.
	Furthermore if $\xi\in F$ we can choose $0<c<1$ such that 
	$\eta\in\Gamma$ when $\lvert \xi-\eta\rvert\leq c\xit$.
	\cite[equation (8.1.3)]{MR1996773} states that 
	\begin{equation*}
		\begin{split}
	(2\pi)^n\Betr{\widehat{\varphi_k u}(\xi)}&\leq \norm[L^1]{\hat{\varphi}_k(\,.\,,\xi)}\sup_{\Betr{\eta-\xi}\leq c\xit}
	\Betr{\hat{u}_{k^\prime}(\eta)}\\
	&\qquad+ C\left(1+c^{-1}\right)^{\mu}\!
	\int\limits_{\etat>c\xit}\negmedspace\Betr{\hat{\varphi}_k(\eta,\xi)}
	(1+\etat)^\mu\,d\eta
	\end{split}
\end{equation*}
	for some $C>0$. 
	We have, if $k>\mu+n+d$,
	\begin{equation*}
	\norm[L^1]{\hat{\varphi}_k(\,.\,,\eta)}\leq Ch^k k^{\mu+n}
	\end{equation*}
	for some $C,h>0$.
	Since $\etat\leq (1-c)\xit$ we conclude that
	\begin{equation*}
	\begin{split}
	\xit^{k^\prime}\Betr{\widehat{\varphi_k u}(\xi)}
	&\leq Ch^k
	\left(k^{\mu+n}(1-c)^{-k^\prime}\sup_{\eta\in\Gamma}
	\Betr{\hat{u}_{k^\prime}(\eta)}\etat^{k^\prime}\right.\\
	&\qquad\left. +\bigl(1+c^{-1}\bigr)^\mu k^k \int_{\etat>c\xit}\! \etat^{\mu-k}\right).\\	
	\end{split}
	\end{equation*}
	Hence there are some $C,h>0$ such that
	\begin{equation*}
	\Betr{\widehat{\varphi_k u}(\xi)}\leq Ch^kM_k\xit^{-k^\prime}
	\end{equation*}
	for $\xi\in F$, $\xit>k$ and $k>\mu+n$.
	
	We now turn to the proof of (2). In the Roumieu case Lemma \ref{vWFCharLem}
	and Proposition \ref{BoundedErsatz} imply that
	there are a neighborhood $W$ of $K$, an open conic neighborhood $\Gamma$ of $F$ and a sequence $\Ep(U)$ such that $f_k=P^ku$ in $W$ and
	\begin{align*}
	\Betr{\hat{f}_k(\xi)}&\leq  Ch^k\left[\bigl(M_{dk}\bigr)^{\tfrac{1}{dk}}+\xit\right]^{\nu+dk} &
	\fa k&\in\N,\; \fa \xi\in\R^n,\\
	\Betr{\hat{f}_k(\xi)}&\leq 
	Ch^kM_{dk}\left(1+\xit\right)^\nu & \fa k&\in\N,\; \fa\xi\in\Gamma,
	\end{align*} 
	for some $\bM\in\fM$ and constants $\nu\in\R$ and $C,h>0$. Similarly to above we have
	\begin{equation*}
	(2\pi)^n\F\left(\varphi_{dk+q} P^ku\right)(\xi)=\int\! \hat{\varphi}_{dk+q}(\xi-\eta,\xi)\hat{f}_k(\eta)\,d\eta.
	\end{equation*}
	Without loss of generality we may assume that $\nu\geq 0$.
	By \eqref{Ehrenpreis} we have that there are constants $C,h>0$ such that
	\begin{equation*}
	\Betr{\hat{\varphi}_k(\eta,\xi)}\leq Ch^k(1+\etat)^{-\nu-n-1}
	\end{equation*}
	for $\xit>k$, $k\geq \nu+n+1$.
	
	It follows that
	\begin{equation*}
	\begin{split}
	\int_{\Gamma}\!\Betr{\hat{\varphi}_{dk+q}(\xi-\eta)}\Betr{\hat{f}_k(\eta)}\,d\eta
	&\leq Ch^kM_{dk}\int_{\R^n}\!(1+\lvert\xi-\eta\rvert)^{-n-\nu-1}(1+\etat)^\nu\,d\eta\\
	&\leq Ch^kM_{dk}(1+\xit)^{n+\nu+1}
	\end{split}
	\end{equation*}
	for $\xi\in F$, $\xit>{dk+q}$.
	
	Moreover, there is a constant $\kappa>0$ such that if $\xi\in F$ and $\eta\notin\Gamma$ then $\lvert\xi-\eta\rvert\geq \kappa(\xit+\etat)$.
	Hence, by \eqref{EhrenpreisFourier} we have 
	\begin{equation*}
	\begin{split}
	\int_{\R^n\setminus\Gamma}\!\Betr{\hat{\varphi}_{dk+q}(\xi-\eta)}\Betr{\hat{f}_k(\eta)}\,d\eta&\leq \int\limits_{\lvert\xi-\eta\vert\geq \kappa(\xit+\etat)}\!\Betr{\hat{\varphi}_{dk+q}(\xi-\eta)}\Betr{\hat{f}_k(\eta)}\,d\eta\\
	&\leq Ch^kM_{dk+q}\\
&\quad	\int\limits_{\lvert\xi-\eta\rvert\geq\kappa(\xit+\etat)}
	\negthickspace \negthickspace\negthickspace\negthickspace\negthickspace\negthickspace
	\left[\bigl(M_{dk+q}\bigr)^{1/(dk+q)}+\xi-\eta\rvert\right]^{-dk-q}\times\\
	&\qquad\qquad\qquad
	\times\left[\bigl(M_{dk}\bigr)^{1/(dk)}\right]^{\nu+dk}\,d\eta.
	\end{split}
	\end{equation*}

Thus there exists some $\bM^\prime\in\fM$ and constants $C,h>0$ such that
\begin{equation*}
\Betr{\F(\varphi_{dk+q}P^ku)(\xi)}\leq Ch^kM^\prime_{dk}(1+\xit)^{\nu +n+1}
\end{equation*}
for $\xi\in F$, $\xit>dk+q$.
\end{proof}
\begin{Thm}\label{Elliptic-ThmPart1}
Let $P$ be a differential operator with analytic coefficients on $U$
and $\fM$ be a $[$semiregular$]$ weight matrix. 
Then 
\begin{equation*}
\WF_{[\fM]}(u;P)\subseteq\WF_{[\fM]}(Pu)\subseteq\WF_{[\fM]} u
\end{equation*}
for $u\in\Dp(U)$.
\end{Thm}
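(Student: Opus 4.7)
The plan is to treat the two inclusions separately. The inclusion $\WF_{[\fM]}(Pu)\subseteq\WF_{[\fM]}u$ is exactly the leftmost inclusion in Theorem \ref{EllipticWFThm} applied to $u$ and $P$, so it requires no further argument. All the work lies in the first inclusion $\WF_{[\fM]}(u;P)\subseteq\WF_{[\fM]}(Pu)$.

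To prove it I fix $(x_0,\xi_0)\notin\WF_{[\fM]}(Pu)$, choose a compact neighborhood $K$ of $x_0$ and a closed conic neighborhood $F$ of $\xi_0$ with $K\times F\cap\WF_{[\fM]}(Pu)=\emptyset$, and by \cite[Theorem~1.4.2]{MR1996773} pick cutoffs $\chi_k\in\D(K)$ which are equal to $1$ on a common neighborhood $V$ of $x_0$ and satisfy $|D^\alpha\chi_k|\leq C(hk)^{|\alpha|}$ for $|\alpha|\leq k$. Writing $k':=dk+q$ with a large integer $q$ chosen later, the sequence $f_k:=\chi_{k'}P^ku$ agrees with $P^ku$ on $V$, and I only need to verify the two Fourier estimates of Definition \ref{vWFDef}.

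The decisive step is the transpose identity
\begin{equation*}
\hat f_k(\xi)=\langle Pu,\,{}^tP^{k-1}(\chi_{k'}e^{-ix\xi})\rangle=\F\bigl(R_{k-1}(\,\cdot\,,\xi)\,Pu\bigr)(\xi),
\end{equation*}
where $R_{k-1}(x,\xi):=e^{ix\xi}\,{}^tP^{k-1}(\chi_{k'}(x)e^{-ix\xi})$ is a polynomial in $\xi$ of degree at most $d(k-1)$ whose $x$-coefficients are smooth and supported in $K$. Leibniz expansion, combined with analytic bounds $|D^\gamma c^{(k-1)}_{\alpha\beta}|\leq C_0^{\,k}r^{|\gamma|}|\gamma|!$ on the coefficients of ${}^tP^{k-1}$, the Ehrenpreis bound on $\chi_{k'}$ and the elementary inequality $|\gamma|!\leq (k')^{|\gamma|}$ for $|\gamma|\leq k'$, yields
\begin{equation*}
|D^\gamma_xR_{k-1}(x,\xi)|\leq C_1^{\,k}(h_1k')^{|\gamma|}(h_1k'+|\xi|)^{d(k-1)},\qquad |\gamma|\leq k'.
\end{equation*}
Consequently the normalized family $\tilde\varphi_{k'}(x,\xi):=R_{k-1}(x,\xi)/(h_1k'+|\xi|)^{d(k-1)}$ satisfies a genuine parameter-dependent Ehrenpreis estimate after rescaling by $C_1^{\,k}$.

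Applying Lemma \ref{WFalternativePara}(1) to $Pu$ with this rescaled family and then restoring the factors delivers, for $\xi\in F$ with $|\xi|>k'$, the bound $|\hat f_k(\xi)|\leq C\,h_3^{\,k}M_{k'-\mu-n}|\xi|^{-d+\mu-n-q}$, since on that range $(h_1k'+|\xi|)^{d(k-1)}\leq(h_1+1)^{d(k-1)}|\xi|^{d(k-1)}$ collapses the $\xi$-exponent to $-d+\mu-n-q$. Choosing $q$ large enough fixes this exponent at a prescribed $\nu$, while an iteration of the derivation-closedness condition \eqref{RouDeriv} (resp.\ \eqref{BeuDeriv}) replaces $M_{k'-\mu-n}=M_{dk+(q-\mu-n)}$ by $C^{k}N_{dk}$ for a suitable $\bN\in\fM$, producing the cone estimate \eqref{vWF-Def2}. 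The complementary range $|\xi|\leq k'$ in $F$ and the global estimate \eqref{vWF-Def1} are obtained from the same transpose identity through a Paley--Wiener argument in the spirit of Proposition \ref{BoundedErsatz}. The main obstacle is the bookkeeping in the bound for $R_{k-1}$: the factorials arising from differentiating the analytic coefficients of ${}^tP^{k-1}$ must be traded against the available order $k'$, the exponential prefactor $C_1^{\,k}$ absorbed into the output $h$, and the $\xi$-degree $d(k-1)$ paid for out of the $|\xi|^{-k'}$-decay of Lemma \ref{WFalternativePara}; this last point is what forces the scaling $k'=dk+q$. The Beurling case is entirely parallel, using the Beurling version of Lemma \ref{WFalternativePara}(1) together with \eqref{BeuDeriv} in place of \eqref{RouDeriv}.
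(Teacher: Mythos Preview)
Your overall strategy is reasonable, and the reduction of the second inclusion to Theorem~\ref{EllipticWFThm} is correct. However, the core estimate in your treatment of the first inclusion contains a genuine error.

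The claimed bound $|D^\gamma c^{(k-1)}_{\alpha\beta}|\leq C_0^{\,k}r^{|\gamma|}|\gamma|!$ on the coefficients of $({}^tP)^{k-1}$ is false for analytic operators with non-constant coefficients. A one-dimensional counterexample: take $Q=D+b(x)$ with $b(x)=(1-x)^{-1}$. Then $Q^k(1)=k!\,(1-x)^{-k}$, so the zeroth-order coefficient $c_0^{(k)}$ of $Q^k$ satisfies $|c_0^{(k)}(0)|=k!$, which is not $\leq C_0^{\,k}$ for any $C_0$. In general, composing $({}^tP)^{k-1}$ forces up to $d(k-2)$ derivatives onto the analytic coefficients of ${}^tP$, producing factorials of order roughly $(dk)!$. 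Your subsequent bound on $D^\gamma_x R_{k-1}$ therefore cannot hold with a merely geometric prefactor $C_1^{\,k}$. The whole point of H\"ormander's decomposition (used in the paper) is that these factorials are compensated by the $|\xi|^{-j}$ weights: writing $Q(e^{-ix\xi}g)=e^{-ix\xi}|\xi|^d\sum_{j=0}^d R_j g$ with $R_j$ of order $\leq j$ and homogeneous of degree $-j$ in $\xi$, one obtains from \cite[Lemma~5.2]{MR0294849} that $|D^\beta R_{j_1}\cdots R_{j_k}\chi_N|\leq Ch^k k^{|\beta|+j}|\xi|^{-j}$, and it is the interplay between the $k^j$ and the $|\xi|^{-j}$ that makes the argument close for $|\xi|\geq dk$. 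A bound that separates ``coefficients of $({}^tP)^{k-1}$'' from ``action on $\chi_{k'}e^{-ix\xi}$'' loses exactly this compensation.

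There is a second, related counting problem: with $k'=dk+q$ the cutoff $\chi_{k'}$ does not carry enough controlled derivatives. Computing $D^\gamma_x R_{k-1}$ for $|\gamma|\leq k'$ requires up to $d(k-1)+k'\approx 2dk$ derivatives of $\chi_{k'}$, whereas the Ehrenpreis bound is only available for $|\alpha|\leq k'=dk+q$. This is why the paper works with $\chi_{2dk}$.

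Finally, note that the paper organises the argument differently: rather than attacking $\WF_{[\fM]}(u;P)\subseteq\WF_{[\fM]}(Pu)$ directly, it proves the stronger inclusion $\WF_{[\fM]}(v;P)\subseteq\WF_{[\fM]} v$ for every $v$, applies Lemma~\ref{WFalternativePara}(1) to $u$ (not to $Pu$), and then recovers the stated first inclusion from the identity $\WF_{[\fM]}(u;P)=\WF_{[\fM]}(Pu;P)$ (which follows from semiregularity). This avoids any need to bound coefficients of high powers of ${}^tP$ acting on $Pu$, and makes the role of H\"ormander's lemma transparent.
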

\begin{proof}
It is enough to prove 
\begin{equation*}
\WF_{[\fM]}(u;P)\subseteq\WF_{[\fM]} u.
\end{equation*}
Indeed, the [semiregularity] gives $\WF_{[\fM]}(u;P)=\WF_{[\fM]}(Pu;P)$ 
and $\WF_{[\fM]} Pu\subseteq\WF_{[\fM]} u$ by  Theorem \ref{EllipticWFThm}.

Now assume that $(x_0,\xi_0)\notin\WF_{\{\fM\}}u$.
Then there are a neighborhood $V$ of $x_0$, a conic neighborhood $\Gamma$
of $\xi_0$ and a bounded sequence $u_k\in\Ep(U)$ with $u_k\vert_V=u\vert_V$
such that
\begin{equation*}
\lvert \xi\rvert^k \Betr{\hat{u}_k}\leq Ch^kM_k \qquad\fa \xi\in\Gamma,\fa k\in\N_0
\end{equation*}
for some $\bM\in\fM$ and some constants $C,h>0$.

Let $W\Subset V$ be a  neighborhood of $x_0$ and $F\subseteq\Gamma\cup\{0\}$
a closed conic neighborhood of $\xi_0$. 
 Choose a sequence $\chi_k\in\D(V)$ with $\chi\vert_W = 1$ and
$\lvert D^\alpha\chi_k(x)\rvert\leq Ch^\alp k^\alp$ for $\alp\leq k$.
We set $f_k=\chi_{2dk}P^k u$. 
It follows that
\begin{equation*}
\hat{f}_k(\xi)=\left\langle \chi_{2dk}P^ku,e^{-ix\xi}\right\rangle\\
=\left\langle u,Q^k\left(e^{-ix\xi}\chi_{2dk}\right)\right\rangle
\end{equation*}
where $Q$ denotes the formal adjoint of $P$ given by
$\langle Q\phi,\psi\rangle=\langle\phi,P\psi\rangle$ with $\phi,\psi\in\D$.
Hence if $P=\sum_{\alp\leq d}p_\alpha(x)D^\alpha$ then 
$Qg=\sum_{\alp\leq d}(-D)^\alpha(p_\alpha g)=\sum_{\alp\leq d} q_\alpha D^\alpha g$.
We define a new differential operator $R$ by setting
\begin{equation*}
Q\left(e^{-ix\xi}\chi_{2dk}\right)=e^{-ix\xi}\xit^{dk}R\chi_{2dk}.
\end{equation*}
It follows that $R=R_1+\dots +R_d$, where $R_j=R_j(x,\xi,D)$ is a
differential operator of order $\leq j$ with analytic coefficients which
are homogeneous of degree $-j$ with respect to $\xi$.
More precisely,
\begin{equation*}
R_j(x,\xi,D)=\sum_{\alp\leq d}\sum_{\substack{\beta\leq\alpha\\ \bet=d-j}}
	\binom{\alpha}{\beta}q_\alpha(x)\frac{\xi^\beta}{\xit^d}D^{\alpha-\beta}.
\end{equation*}
It follows that
\begin{equation*}
Q^k\left(e^{-ix\xi}\chi_{2dk}\right)=e^{-ix\xi}\xit^{dk}R^k\chi_{2dk}
=e^{-ix\xi}\xit^{dk}\sum_{\substack{0\leq j_\ell\leq d\\1\leq\ell\leq k}}
R_{j_1}\dots R_{j_k}\chi_{2dk}.
\end{equation*}
By \cite[Lemma 5.2]{MR0294849} we have, for $\bet+j\leq 2dk$ and
 $j=j_1+\dots+j_k$,
\begin{equation*}
\Betr{D^\beta R_{j_1}\dots R_{j_k}\chi_{2dk}}
\leq Ch^{k}k^{\bet +j}\xit^{-j}
\end{equation*}
for some constants $C,h>0$. Hence if $\xit\geq dk$ then
\begin{equation*}
\Betr{D^\beta R_{j_1}\dots R_{j_k}\chi_{2dk}}
\leq Ch^{k}k^{\bet}.
\end{equation*}
We conclude that 
\begin{equation*}
\Betr{D^\beta R^k\chi_{2dk}}\leq Ch^k (dk)^\bet
\end{equation*}
when $\xit\geq dk$ and $\bet\leq dk$.

Lemma \ref{WFalternativePara}(1) gives that there is some $\bM\in\fM$ such that
\begin{equation*}
\Betr{\hat{f}_k(\xi)}=\xit^{dk}\Betr{\F\left(u R^k\chi_{2dk}\right)(\xi)}\leq Ch^kM_{dk-d-\mu}
\end{equation*}
for $\xi\in F$, $\xit>dk$ where $\mu$ is the order of $u$ near $\overline{W}$.
If we set $g_k=f_{k+d+\mu}$ then since $\fM$ is $R$-semiregular we obtain that
 there is some $\bM^\prime\in\fM$ such that
\begin{equation*}
\Betr{\hat{g}_k(\xi)}\leq Ch^kM_{dk+(d-1)(d+\mu)}\leq Ch^kM^\prime_{dk}
\end{equation*}
for $\xi\in F$, $\xit>d(k+d+\mu)$.

Proposition \ref{BoundedErsatz} implies that there is some $\nu$ such that
\begin{equation}\label{vWFDef-1beta}
\begin{split}
\Betr{\hat{g}_k(\xi)}&=\Betr{\F(\chi_{2dk+2d^2+2d\mu}u)(\xi)}\leq Ch^k(dk+\xit)^{dk+\nu}\\
&\leq Ch^k\left[\bigl(M_{dk}\bigr)^{\tfrac{1}{dk}}+\xit\right]^{dk+\nu},
\qquad\quad \xi\in\R^n,\;k\in\N\
\end{split}
\end{equation}
for any $\bM\in\fM$ and hence $(g_k)_k$ satisfies \eqref{vWF-Def1}.

On the other hand, if $\xit<dk$ then by \eqref{vWFDef-1beta} we obtain
\begin{equation*}
\begin{split}
\Betr{\hat{g}_k(\xi)}&\leq Ch^k\left[\bigl(M_{dk}\bigr)^{\tfrac{1}{dk}}+dk\right]^{dk+\mu}\\
&\leq Ch^k \bigl(M_{dk}\bigr)^{\tfrac{dk+\mu}{dk}}\\
&\leq Ch^k M^{\prime\prime}_{dk}
\end{split}
\end{equation*}
for some $\bM^{\prime\prime}\in\fM$.
Hence if we choose $\bM^{\prime\prime\prime}=\max\{\bM^\prime,\bM^{\prime\prime}\}$ then
$f_k$ satisfies \eqref{vWF-Def1} and \eqref{vWF-Def2} for $\bM^{\prime\prime\prime}$.
It follows that $(x_0,\xi_0)\notin\WF_{\{\fM\}}(u;P)$.

A close inspection of the proof in the Roumieu case reveals that 
a few obvious modificiations allow us also to show that $(x_0,\xi_0)\notin\WF_{(\bM)}u$
implies $(x_0,\xi_0)\notin\WF_{(\bM)}(u;P)$.
\end{proof}

\begin{Thm}\label{VectorEllipticThm}
Let $P$ be a differential operator with analytic coefficients on $U$
and $\fM$ be a $[$semiregular$]$ weight matrix. 
Then 
\begin{equation*}
\WF_{[\fM]}u\subseteq\WF_{[\fM]}(u;P)\cup\Char P
\end{equation*}
for $u\in\Dp(U)$.
\end{Thm}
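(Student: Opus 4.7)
The plan is to show that if $(x_0,\xi_0)\in \COT$ lies outside both $\WF_{[\fM]}(u;P)$ and $\Char P$, then we can exhibit a bounded sequence $u_k\in\Ep(U)$ with $u_k = u$ near $x_0$ for which $\xit^k\lvert\hat{u}_k(\xi)\rvert$ is bounded by $Ch^k M_k$ on a closed conic neighborhood of $\xi_0$, witnessing that $(x_0,\xi_0)\notin\WF_{[\fM]}u$. By Definition \ref{vWFDef} there exist a neighborhood $V$ of $x_0$, a conic neighborhood $\Gamma$ of $\xi_0$, and a sequence $f_k\in\Ep(U)$ with $f_k\vert_V=(P^ku)\vert_V$ satisfying \eqref{vWF-Def1} and \eqref{vWF-Def2}. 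Since $p_d(x_0,\xi_0)\neq 0$, after shrinking $V$ and $\Gamma$ we may assume there is a constant $c>0$ with $\lvert p_d(x,\xi)\rvert\geq c\xit^d$ for $x\in\overline{V}$ and $\xi\in\Gamma$; consequently $\lvert p(x,-\xi)\rvert\geq \tfrac{c}{2}\xit^d$ for $x\in \overline{V}$, $\xi\in\Gamma$ large.

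Fix closed nested neighborhoods $W\Subset V$ of $x_0$ and a closed conic $F\subseteq\Gamma\cup\{0\}$ around $\xi_0$. Let $\chi_k\in\D(V)$ be a sequence equal to $1$ on $W$, supported in a fixed compact set, and satisfying $\lvert D^\alpha\chi_k\rvert\leq C(hk)^\alp$ for $\alp\leq k$; this is possible by \cite[Theorem 1.4.2]{MR1996773}. Set $u_k=\chi_{2dk}u$; this sequence is bounded in $\Ep(U)$ since $u$ has finite order on compact subsets of $U$. For $\xi\in F$ with $\xit$ large enough, we write, following the idea used in the proof of Theorem~\ref{Elliptic-ThmPart1},
\begin{equation*}
\hat{u}_k(\xi)=\langle u,\chi_{2dk}e^{-ix\xi}\rangle
=\left\langle u,\tfrac{1}{p(x,-\xi)^k}Q^k\!\left(\chi_{2dk}e^{-ix\xi}\right)\right\rangle+\bigl\langle u,r_{k,\xi}\bigr\rangle
\end{equation*}
where $Q$ is the formal adjoint of $P$ and $r_{k,\xi}$ collects the terms arising from distributing derivatives onto the factor $\chi_{2dk}$. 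Using the formulas for $Q$ and the ellipticity bound on $p$, the first term pairs $u$ against an expression proportional to a parametrized cut-off times $e^{-ix\xi}$ divided by $p(x,-\xi)^k$, while the remainder $r_{k,\xi}$ is of the form $\xit^{-dk}\widetilde{R}_k(x,\xi,D)\chi_{2dk}\cdot e^{-ix\xi}$ with $\widetilde{R}_k$ controlled, exactly in the spirit of the operator $R=R_1+\dots+R_d$ built in Theorem~\ref{Elliptic-ThmPart1}.

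The crucial step is to estimate the resulting expression by feeding the bounds on $\hat{f}_k$ (valid in $\Gamma$) into Lemma \ref{WFalternativePara}(2), applied with the parameter-dependent symbols $\varphi_k(x,\xi)$ obtained from the $\xi$-dependent parametrix above. Ellipticity guarantees that division by $p(x,-\xi)^k$ contributes at most a factor of order $\xit^{-dk}$, and the Leibniz derivatives of $\chi_{2dk}$ (of order up to $2dk$) are absorbed by the $(hk)^\alp$ bound exactly because the total number of derivatives matches the window in \eqref{Ehrenpreis}. Collecting the estimates one obtains a bound
\begin{equation*}
\lvert \hat{u}_k(\xi)\rvert\leq C h^k M_{dk}\,\xit^{-dk+\nu}\qquad \xi\in F,\;\xit\geq \lambda k
\end{equation*}
for some $\bM\in\fM$ (resp.\ every $\bM\in\fM$ in the Beurling case), which after applying the $[$semiregularity$]$ of $\fM$ iteratively (Remark \ref{DerivClosedAlternative}(2)) converts to the required bound $\xit^k\lvert\hat{u}_k(\xi)\rvert\leq C'(h')^kM'_k$ on $F$, possibly after replacing $\bM$ by some $\bM'\in\fM$. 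For $\xit\leq \lambda k$ the estimate is instead deduced directly, as in the final steps of the proof of Theorem \ref{Elliptic-ThmPart1}, by converting the trivial pointwise control $\xit^k\leq (\lambda k)^k$ into $\xit^k\leq C'h^kM_k$ via \eqref{AnalyticInclAlt}.

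The main obstacle is the rigorous bookkeeping in the microlocal inversion step: without a full pseudodifferential calculus in class $[\fM]$, the ``division by $p^k$'' must be carried out by hand through the iterated commutator expansion of $Q^k(\chi_{2dk}e^{-ix\xi})$, and one has to verify that the symbols produced at each of the $k$ iterations continue to satisfy a bound of the form \eqref{Ehrenpreis} so that Lemma \ref{WFalternativePara} applies uniformly in $k$. The [semiregularity] condition \eqref{RouDeriv}/\eqref{BeuDeriv} on $\fM$ enters at this stage to keep track of the shift in indices between $M_{dk}$ and $M_k$ arising from the $\xit^{dk}$ factor traded for derivative bounds.
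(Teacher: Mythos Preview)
Your overall outline is right and matches the paper's, but the displayed decomposition is wrong in a way that blocks the whole argument. You write
\[
\hat u_k(\xi)=\Bigl\langle u,\tfrac{1}{p(x,-\xi)^k}Q^k\!\bigl(\chi_{2dk}e^{-ix\xi}\bigr)\Bigr\rangle+\langle u,r_{k,\xi}\rangle,
\]
but neither term is a pairing of $P^ku$ (equivalently $f_k$) with anything: the factor $p^{-k}$ sits \emph{outside} $Q^k$, so the adjoint identity $\langle u,Q^k v\rangle=\langle P^ku,v\rangle$ is unavailable. As written this is merely $\langle u,\chi_{2dk}e^{-ix\xi}\rangle$ split into two pieces, and the estimates \eqref{vWF-Def1}--\eqref{vWF-Def2} on $\hat f_k$ never enter; Lemma~\ref{WFalternativePara}(2), which you invoke, bounds expressions of the form $\F(\varphi P^ku)$ and cannot be applied to either of your terms. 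The analogy with Theorem~\ref{Elliptic-ThmPart1} is misleading: there the task is to estimate $\langle u,Q^k(\chi e^{-ix\xi})\rangle$ using information about $\WF_{[\fM]}u$, which is the reverse direction.

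What is actually needed is an approximate \emph{inverse} of $Q^k$: one constructs $v_k$ with $Q^kv_k=e^{-ix\xi}(\chi-e_k)$, so that $\hat u_k=\langle f_k,v_k\rangle+\langle u,e_ke^{-ix\xi}\rangle$. The paper does this by defining $R$ via $Q\bigl(e^{-ix\xi}g/p_d\bigr)=e^{-ix\xi}(I-R)g$ (so $R=R_1+\dots+R_d$ with $R_j$ of order $\le j$, homogeneous of degree $-j$ in $\xi$) and taking the truncated Neumann series
\[
w_k=p_d^k\!\!\sum_{\ell_1+\dots+\ell_k\le dk}p_d^{-1}R^{\ell_1}\cdots p_d^{-1}R^{\ell_k}\chi_{3d^2k},
\qquad v_k=e^{-ix\xi}w_k/p_d^k,
\]
with cut-off index $3d^2k$ (not $2dk$) large enough to absorb all derivatives produced by the series and by the error $e_k$. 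One then checks, via \cite[Lemma~5.2]{MR0294849}, that $w_k$ and $e_k$ satisfy \eqref{Ehrenpreis}-type bounds; after that Lemma~\ref{WFalternativePara}(2) handles $\langle f_k,v_k\rangle$ and the finite-order control of $u$ near $K$ handles $\langle u,e_ke^{-ix\xi}\rangle$, and the $[$semiregularity$]$ shifts close the estimate just as you describe. Note finally that one must work with the homogeneous principal symbol $p_d$, not the full symbol $p$, since it is $p_d^{-1}$ that remains analytic and bounded on the elliptic cone.
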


\begin{proof}
As in \cite{MR557524} for the Denjoy-Carleman case the proof follows
closely the pattern used in \cite{MR0294849} to show the elliptic regularity theorem,
see also \cite{MR2595651} and \cite{FURDOS2020123451}.

Let $(x_0,\xi_0)\in \COT$ be such that $(x_0,\xi_0)\notin\WF_{[\fM]}(U;P)$ and
$p_d(x_0,\xi_0)\neq 0$.
Thence there exist a conic neighborhood $V\times\Gamma$ of $(x_0,\xi_0)$ and 
a sequence $f_k\in\E^\prime(U)$ with $f_k\vert_V=P^ku\vert_V$ which satisfies
\eqref{vWF-Def1} and \eqref{vWF-Def2}. Furthermore there are a compact neighborhood
$K$ of $x_0$ and a conic neighborhood $F$ of $\xi_0$, closed in $\R^n\setminus\{0\}$,
such that $p_d(x,\xi)\neq 0$ for $(x,\xi)\in K\times F$.
W.l.o.g.\ we can assume that $K\times F\subseteq V\times\Gamma$.
Suppose that $\chi_k\in\D(K)$ is a sequence with
\begin{equation*}
\Betr{D^\alpha\chi_k}\leq C(hk)^\alp,\qquad \alp\leq k,
\end{equation*}
for some constants $C,h$ independent of $k$.

We set $u_k=\chi_{3d^2k}u$ and thus have
\begin{equation*}
\hat{u}_k(\xi)=\left\langle u,\chi_{3d^2k} e^{-ix\xi} \right\rangle.
\end{equation*}
If $Q$ is the adjoint of $P$ then we want to construct a solution $v$ of the equation
\begin{equation}\label{characteristic1}
Q^k(x,D)v(x)=\chi_{3d^2k}e^{-ix\xi}.
\end{equation}
We define a differential operator $R=R(x,\xi,D)$ on $K\times F$ by
\begin{equation*}
Q\left(\frac{e^{-ix\xi}g}{p_d(x,\xi)}\right)=e^{-ix\xi}(I-R)g.
\end{equation*}
Then $R=R_1+\dots + R_d$ where $R_j=R_j(x,D)$ is a differential operator of order 
$\leq j$ with analytic coefficients in $x$ which are homogeneous of degree $-j$ in 
$\xi$. By recurrence we obtain for $k\in\N$ that
\begin{equation*}
Q^k\left(\frac{e^{-ix\xi}g}{p^k_d(x,\xi)}\right)=e^{-ix\xi}
\left((I-R)p_d(x,\xi)\right)^k\bigl(p_d^{-k}g\bigr).
\end{equation*}
If we set in \eqref{characteristic1} 
\begin{equation*}
v=e^{-ix\xi}\frac{w}{p_d^k(x,\xi)}
\end{equation*}
then $w$ satisfies the equation
\begin{equation}\label{characteristic2}
\left((I-R)p_d\right)^k\frac{w}{p_d^k}=\chi_{3d^2k}(x).
\end{equation}
A formal solution of the above equation would be
\begin{equation*}
w=p_d^k(x,\xi)\left[\frac{1}{p_d(x,\xi)}\sum_{\ell=0}^\infty R^\ell \right]^k
\chi_{3d^2k}.
\end{equation*}
However, we cannot estimate arbitrary high derivatives of $\chi_{3d^2k}$, hence
we consider the following approximate solution of \eqref{characteristic2}
\begin{equation*}
w_k=p_d^k\sum_{\ell_1+\dots+\ell_k\leq dk}p_d^{-1}R^{\ell_1}\dots p_d^{-1}R^{\ell_k}
\chi_{3d^2k}.
\end{equation*}
Then we obtain
\begin{equation}\label{characteristic3}
\left((I-R)p_d\right)^k\frac{w_k}{p_d^k}=\chi_{3d^2k}-e_k
\end{equation}
where
\begin{equation*}
e_k=\sum_{j=1}^k \left((I-R)p_d\right)^{k-j}\sum_{\ell_j+\dots +\ell_k=dk}
R^{\ell_j+1}p_d^{-1}R^{\ell_{j+1}}\dots p_d^{-1}R^{\ell_k}\chi_{3d^2k}.
\end{equation*}
Inserting \eqref{characteristic3} in \eqref{characteristic1} gives
\begin{equation*}
Q^k\left(e^{-ix\xi}\frac{w_k}{p_d^k}\right)=e^{-ix\xi}\left(\chi_{3d^2k}-e_k
\right).
\end{equation*}
Hence we obtain the following representation for $\hat{u}_k$, i.e.\
\begin{equation}\label{EllipticRepr}
\hat{u}_k(\xi)=\left\langle u,e_k(x,\xi)e^{-ix\xi}\right\rangle
+\left\langle f_k,e^{-ix\xi}\frac{w_k(x,\xi)}{p_d^k(x,\xi)}\right\rangle
\end{equation}
for $\xi\in F$. 

Since $p_d^{-1}$ is real analytic in a neighborhood of $K$ and homogeneous of degree
$-d$ in $\xi\in F$ we can apply the proof of \cite[Lemma 5.2]{MR0294849}
in order to obtain that there are constants $C,h>0$ such that
\begin{align}\label{vElliptic1}
\Betr{D^{\beta}w_k(x,\xi)}&\leq Ch^k\left(dk\right)^\bet
\\
\Betr{D^{\beta}e_k(x,\xi)}&\leq Ch^k\left(dk\right)^{\bet+dk}\xit^{-dk}
\label{vElliptic2}
\end{align}
for $\bet\leq dk$, $\xit\geq dk$, $\xi\in F$ and $x\in K$.

If $\tau$ is the order of $u$ near $K$ then we can estimate the first term
on the right-hand side of \eqref{EllipticRepr} by
\begin{equation*}
\begin{split}
\Betr{\left\langle u,e_k(x,\xi)e^{-ix\xi}\right\rangle}
&\leq
C\sum_{\alp\leq\tau}\sup_{x\in K}\Betr{D^\alpha\left(
	e_k(x,\xi)e^{-ix\xi}\right)}\\
&\leq C\sum_{\alp\leq \tau}\xit^{\tau-\alp}\Betr{D^\alpha e_k(x,\xi)}
\end{split}
\end{equation*}
for $\xi\in F$, $\xit>1$. If $k\geq \tau/d$ then \eqref{vElliptic2} gives that
\begin{equation*}
\Betr{\left\langle u,e_k(x,\xi)e^{-ix\xi}\right\rangle}\leq Ch^k(dk)^{\tau +dk}
\xit^{\tau-dk}
\end{equation*}
for $\xi\in F$, $\xit>dk$.

Since $w_k$ satisfies \eqref{vElliptic1} we obtain that
\begin{equation*}
\Betr{D^\beta\left(\frac{w_k(x,\xi)}{p_d^k(x,\xi)}\right)}\leq Ch^k\frac{(dk)^\bet}{\xit^{dk}}
\end{equation*}
for $\bet\leq dk$, $\xi\in F$, $\xit>dk$ and $x\in K$.
Thus, if $(x_0,\xi_0)\notin\WF_{\{\fM\}}(u;P)$ then Lemma \ref{WFalternativePara}(2)
 implies that
there exist constants $C,h,\nu>0$ and a weight sequence $\bM\in\fM$ such that
\begin{equation*}
\Betr{\left\langle f_k,e^{-ix\xi}\frac{w_k(x,\xi)}{p_d^k(x,\xi)}\right\rangle}
\leq Ch^k\frac{M_{dk}}{\xit^{dk}}\left(1+\xit\right)^\nu
\end{equation*}
when $\xi\in F$, $\xit>dk$ and $k> (n+\nu+1)/d$.

If $\mu=\max\{\tau,\nu\}$ then we conclude that
\begin{equation}\label{vElliptic3}
\Betr{\hat{u}_k(\xi)}\leq Ch^k\bigl(M_{dk}\bigr)^{\tfrac{dk+\mu}{dk}}\xit^{\mu-dk}
\end{equation}
for $\xi\in F$, $\xit>dk$ and $k>(n+\mu+1)/d$. We set 
\begin{equation*}
\tilde{v}_k=u_{\lfloor k/d\rfloor}
\end{equation*}
where $\lfloor y\rfloor$ denotes the largest integer $\leq y\in\R$. Hence,
due to \eqref{vElliptic3} and \eqref{RouDeriv},
there are constants $C,h>0$ and a weight sequence $\bM\in\fM$ such that
\begin{equation*}
\Betr{\F\left(\tilde{v}_k\right)(\xi)}\leq Ch^kM_k\xit^{\mu-k}
\end{equation*}
for $\xi\in F$, $\xit>dk$ and $k>n+\mu+1$.
If we put $v_k=\tilde{v}_{k+n+\mu+1}$ then there exist  $C,h>0$ and 
$\bM\in\fM$ such that
\begin{equation*}
\Betr{\hat{v}_k(\xi)}\leq Ch^kM_k\xit^{-k}
\end{equation*}
when $\xi\in F$ and $\xit>dk$.

Since $u$ is of order $\tau$ near $K$ it follows that the sequence 
$v_k$ is bounded in $\E^{\prime,\tau}(K)$. Thus we have
\begin{align*}
\Betr{\hat{v}_k(\xi)}&\leq C(1+\xit)^\tau\\
\intertext{and therefore, for $\xit<dk$,}
\xit^k\Betr{\hat{v}_k(\xi)}&\leq C(dk)^{k+\tau}\\
\intertext{and since $\fM$ is $R$-semiregular we obtain that there are $C,h>0$ and $\bM\in\fM$ such that}
\Betr{\hat{v}_k(\xi)}&\leq Ch^kM_{k}\xit^{-k}
\end{align*}
for $\xit\leq dk$. We conclude that $(x_0,\xi_0)\notin\WF_{\{\fM\}}u$.

If $\fM$ is $B$-semiregular and
$(x_0,\xi_0)\notin\WF_{(\fM)}(u;P)\cap\Char P$ then we can argue similarly in
order to conclude that
$(x_0,\xi_0)\notin\WF_{(\fM)} u$.
\end{proof}

Recall that a system $\{P_1,\dotsc,P_\ell\}$ of differential operators
defined on $U$ is said to be elliptic iff 
\begin{equation*}
\bigcap_{j=1}^\ell\Char P_j=\emptyset.
\end{equation*}
\begin{Cor}\label{EllipticCor}
	Let $\sP=\{P_1,\dotsc,P_\ell\}$ be an elliptic system of analytic differential operators 
	and $\fM$ be a $[$semiregular$]$ weight matrix.
	Then
	\begin{equation*}
	\bigcap_{j=1}^\ell \vDC[P_j]{\fM}{U}=\DC{\fM}{U}.
	\end{equation*}
	In particular
	\begin{equation*}
    \vDC[\sP]{\fM}{U}=\DC{\fM}{U}.
	\end{equation*}
\end{Cor}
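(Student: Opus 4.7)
The plan is to prove the first identity by two inclusions, and then derive the ``in particular'' statement by sandwiching $\vDC[\sP]{\fM}{U}$ between $\DC{\fM}{U}$ and the intersection.

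For the easy direction $\DC{\fM}{U}\subseteq\bigcap_{j=1}^\ell\vDC[P_j]{\fM}{U}$, I would apply Proposition \ref{VectorProperties}(3): since $\fM$ is $[$semiregular$]$ it satisfies \eqref{matrixAnal}, and the proposition yields $\DC{\fM}{U}\subseteq\vDC[P_j]{\fM}{U}$ for every single operator $P_j$, hence for their intersection.

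For the nontrivial converse, let $u\in\bigcap_{j=1}^\ell\vDC[P_j]{\fM}{U}$. The theorem stated immediately after Lemma \ref{vWFCharLem} identifies $U\setminus\pi_1(\WF_{[\fM]}(u;P_j))$ as the largest open set on which $u$ is a local $[\fM]$-vector of $P_j$; since $u$ is by hypothesis a vector on all of $U$, that complement must equal $U$, and therefore $\WF_{[\fM]}(u;P_j)=\emptyset$ for every $j$. The elliptic iterate theorem (Theorem \ref{VectorEllipticThm}) then gives
\begin{equation*}
\WF_{[\fM]}u\subseteq\WF_{[\fM]}(u;P_j)\cup\Char P_j=\Char P_j
\qquad (j=1,\dots,\ell),
\end{equation*}
and intersecting over $j$ we obtain, using the ellipticity of the system $\sP$,
\begin{equation*}
\WF_{[\fM]}u\subseteq\bigcap_{j=1}^\ell\Char P_j=\emptyset.
\end{equation*}
Proposition \ref{Singsupport} now yields $\singsupp_{[\fM]}u=\pi_1(\WF_{[\fM]}u)=\emptyset$, i.e.\ $u\in\DC{\fM}{U}$.

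For the ``in particular'' statement it suffices to note that $\vDC[\sP]{\fM}{U}\subseteq\vDC[P_j]{\fM}{U}$ for each $j$, because the defining estimate for a $\sP$-vector contains in particular the multi-indices $\alpha=(j,\dots,j)\in\{1,\dots,\ell\}^k$ for every $k\in\N_0$. Combined with $\DC{\fM}{U}\subseteq\vDC[\sP]{\fM}{U}$ (again Proposition \ref{VectorProperties}(3) applied to the whole system $\sP$) and the identity just proved, one gets
\begin{equation*}
\DC{\fM}{U}\subseteq\vDC[\sP]{\fM}{U}\subseteq\bigcap_{j=1}^\ell\vDC[P_j]{\fM}{U}=\DC{\fM}{U},
\end{equation*}
so all inclusions are equalities. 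No serious obstacle is expected here, since the substantive microlocal content has already been packaged into Theorem \ref{VectorEllipticThm}; the only step that deserves a moment of care is the implication \emph{$u\in\vDC[P_j]{\fM}{U}\Longrightarrow\WF_{[\fM]}(u;P_j)=\emptyset$}, which rests on the Fourier-side characterization of vectors (Theorem \ref{FourierCharThm}) together with Proposition \ref{BoundedErsatz} and Lemma \ref{vWFCharLem}.
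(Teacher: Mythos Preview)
Your proof is correct and follows essentially the same route as the paper: both argue that $u\in\vDC[P_j]{\fM}{U}$ forces $\WF_{[\fM]}(u;P_j)=\emptyset$, then apply Theorem \ref{VectorEllipticThm} and the ellipticity hypothesis $\bigcap_j\Char P_j=\emptyset$ to obtain $\WF_{[\fM]}u=\emptyset$, and conclude via Proposition \ref{Singsupport}. Your write-up is in fact slightly more explicit than the paper's, spelling out why $\WF_{[\fM]}(u;P_j)=\emptyset$ (via the projection theorem following Lemma \ref{vWFCharLem}) and why $\vDC[\sP]{\fM}{U}\subseteq\vDC[P_j]{\fM}{U}$.
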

\begin{proof}
We have only to prove $\bigcap\vDC[P_j]{\fM}{U}\subseteq\DC{\fM}{U}$.
Assume that $u\in\bigcap\vRou[P_j]{\fM}{U}$.
Then $\WF_{[\fM]}(u;P_j)=\emptyset$ for all $j=1,\dotsc,\ell$.
Hence by Theorem \ref{VectorEllipticThm}
\begin{equation*}
\WF_{[\fM]}u\subseteq\bigcap_{j=1}^\ell\Char P_j=\emptyset.
\end{equation*}
We conclude that $u\in\DC{\fM}{U}$, cf.\ Proposition \ref{Singsupport}.
Therefore we have obtained
\begin{equation*}
\DC{\fM}{U}\subseteq\vDC[\sP]{\fM}{U}\subseteq\bigcap_{j=1}^\ell\vDC[P_j]{\fM}{U}\subseteq\DC{\fM}{U},
\end{equation*}
cf.\ Proposition \ref{VectorProperties}(3).
\end{proof}

\begin{Rem}
	Clearly the correspondence between weight functions and their associated weight matrices as
	described in Subsection \ref{subsec:Weightfct} yields instantly the transfer of all results 
	in this section to structures given by weight functions. Thus we have in particular generalized
	the results of \cite{MR3380075} to operators with analytic coefficients.
	We note here only the version of Corollary \ref{EllipticCor}:
\end{Rem}
\begin{Cor}\label{OmegaEllipticCor}
	Let $\sP=\{P_1,\dotsc,P_\ell\}$ be an elliptic system of analytic differential operators and
	 $\omega$ be a weight function such that $\omega(t)=o(t)$ for $t\rightarrow\infty$.
	 Then 
	 \begin{equation*}
	 	\bigcap_{j=1}^\ell\vDC[P_j]{\omega}{U}=\DC{\omega}{U}.
	 \end{equation*}
\end{Cor}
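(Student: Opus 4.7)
The plan is to reduce Corollary \ref{OmegaEllipticCor} to Corollary \ref{EllipticCor} applied to the weight matrix $\fW$ associated with $\omega$. The hypothesis $\omega(t)=o(t)$ (for instance weaker than \eqref{om3}) implies by the Proposition in Subsection \ref{subsec:Weightfct} that $\fW$ is semiregular, so that $\fW$ is available as a $[$semiregular$]$ weight matrix in the sense required by Corollary \ref{EllipticCor}. Moreover, by the topological identity $\DC{\omega}{U}=\DC{\fW}{U}$ recalled in Subsection \ref{subsec:Weightfct}, it is enough to identify the spaces of vectors, i.e.\ to establish
\begin{equation*}
\vDC[P_j]{\omega}{U}\subseteq \vDC[P_j]{\fW}{U}, \qquad j=1,\dotsc,\ell,
\end{equation*}
and also the opposite inclusion $\DC{\omega}{U}\subseteq \vDC[P_j]{\omega}{U}$.

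First I would carry out the translation between norms. By definition, $u\in \vRou{\omega}{U}$ means that for every $V\Subset U$ there exists $h>0$ such that
\begin{equation*}
\norm[L^2(V)]{P_j^k u}\leq C\, e^{\tfrac{1}{h}\varphi_\omega^\ast(hd_j k)}=C\,W^{h}_{d_j k},\qquad k\in\N_0,
\end{equation*}
which is precisely the statement $u\in \E^{\{\fW\}}_0(U;P_j)\subseteq\vRou[P_j]{\fW}{U}$ with $\sigma=0$. The analogous identification in the Beurling case requires the condition \eqref{newexpabsorb} for $\fW$, which absorbs the geometric factor $(h')^{d_j k}$ appearing in Definition \ref{DefinitionVectors} into the exponential factor $W^{\lambda}_{d_j k}$ at the cost of increasing $\lambda$; this yields $\vBeu{\omega}{U}=\E^{(\fW)}_0(U;P_j)\subseteq\vBeu[P_j]{\fW}{U}$ and is the only place where some care is needed.

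Next, having these inclusions, Corollary \ref{EllipticCor} applied to the $[$semiregular$]$ matrix $\fW$ and the elliptic system $\sP$ yields
\begin{equation*}
\bigcap_{j=1}^\ell \vDC[P_j]{\omega}{U}\subseteq \bigcap_{j=1}^\ell \vDC[P_j]{\fW}{U}=\DC{\fW}{U}=\DC{\omega}{U}.
\end{equation*}
The reverse inclusion $\DC{\omega}{U}\subseteq \bigcap_j\vDC[P_j]{\omega}{U}$ is a direct computation of the same flavour as Proposition \ref{VectorProperties}(3): starting from the $\DC{\omega}{U}$-estimates on $u$ and on the analytic coefficients of each $P_j$, an iteration using the multiplicative inequality \eqref{omMG} (or equivalently the corresponding $\omega$-inequality) yields the vector estimates for $P_j^k u$ in $L^2(V)$.

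The only nontrivial point is the Beurling half of Step~2, where one has to make sure that the quantifier order in Definition \ref{DefinitionVectors} (all $\bM\in\fW$ and all $h'>0$) matches the quantifier order in the definition of $\vBeu{\omega}{U}$ (all $h>0$); this is exactly the content of \eqref{newexpabsorb} and presents no difficulty beyond a careful bookkeeping of the parameters $h$, $h'$, $A$, $\lambda$.
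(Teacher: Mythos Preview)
Your proposal is correct and follows essentially the same route as the paper: reduce to Corollary \ref{EllipticCor} for the associated weight matrix $\fW$, using that $\omega(t)=o(t)$ makes $\fW$ semiregular, the identification $\DC{\omega}{U}=\DC{\fW}{U}$, and \eqref{newexpabsorb} to match the two notions of vectors (with the Beurling case being the only place where this absorption property is really needed). The paper's proof is just the one-line remark that the statement ``follows immediately from Corollary \ref{EllipticCor}, if we recall that $\fW$ satisfies \eqref{newexpabsorb}'', so you have simply spelled out the details the paper left to the reader; the only superfluous comment is the parenthetical about \eqref{om3}, which is unrelated to the condition $\omega(t)=o(t)$.
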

Here we have to generalize the definition of $\vDC[P_j]{\omega}{U}$ from Section \ref{Introduction}
in analogy to Definition \ref{DefinitionVectors}. 
However, note that by Remark \ref{SubRemark} the two definitions agree 
 for subelliptic systems of operators. 
The proof of Corollary \ref{OmegaEllipticCor} follows then immediately from Corollary 
\ref{EllipticCor}, if we recall that $\fW$ satisfies \eqref{newexpabsorb}.
We leave the details to the reader. 

\section{Ultradifferentiable scales}\label{sec:Scales}
In this section we introduce the notion of ultradifferentiable scales and apply them
to the Problem of Iterates of analytic differential operators of principal type.
\subsection{Definition}\label{Subsec:DefScales}
Let $\Lambda$ be 
a totally ordered set. 
We  call a map
 \begin{equation*}
 \zeta:\Lambda\times [0,\infty)\rightarrow [0,\infty)
 \end{equation*}
 a \emph{generating function} if
for each $\lambda\in\Lambda$ the function  
$\zeta_\lambda=\zeta(\lambda,\,.\,)$ is continuous, increasing
and satisfies the following conditions:
\begin{gather*}
 \zeta_\lambda(0)=0,\\
 \text{the mapping }k\mapsto \log k +\zeta_\lambda(k)-\zeta_\lambda(k-1)\text{ is increasing,}\\
\lim_{t\rightarrow\infty}\frac{\zeta_\lambda(t)}{t}=\infty.
\end{gather*}
For $\lambda\leq \lambda^\prime$ we also assume that $\zeta_\lambda(t)\leq
\zeta_{\lambda^\prime}(t)$ when $t\in [1,\infty)$.

To each such $\zeta$ we can associate a weight matrix 
$\fM_\zeta=\{\bM^\lambda=\bM^\lambda_\zeta:\,\lambda\in\Lambda\}$ by setting 
\begin{equation*}
M_k^\lambda=k!e^{\zeta_\lambda(k)}.
\end{equation*}
More precisely, $\bM^\lambda$
is a weight sequence satisfying \eqref{AnalyticInclusion} 
for each $\lambda\in\Lambda$ and $\bM^\lambda\leq \bM^{\lambda^\prime}$ when $\lambda\leq\lambda^\prime$, by definition.
Hence every sequence $\bM^\lambda$ is semiregular if and only if
\begin{equation}\label{singleDerivClosed}
\fa\lambda\in\Lambda\ex \gamma>0:\;\; \zeta_\lambda(p+1)-
\zeta_\lambda(p)\leq \gamma(p+1)\quad\forall p\in\N_0\tag{$\square$}.
\end{equation} 
On the other hand the matrix $\fM_\zeta$ is $R$-semiregular if and only
if 
\begin{equation}\label{RoumieuDerivClosed}
\fa \lambda\in\Lambda\;\ex \sigma\in\Lambda\;\ex \gamma>0:\;\;
\zeta_\lambda (p+1)-\zeta_\sigma (p)\leq \gamma(p+1)\quad\forall p\in\N_0\tag{$\star$}
\end{equation}
and $B$-semiregular if and only if $\zeta$ satisfies
\begin{equation}\label{BeurlingDerivClosed}
\fa \lambda\in\Lambda\;\ex \sigma\in\Lambda\;\ex \gamma>0:\;\;
\zeta_\sigma (p+1)-\zeta_\lambda (p)\leq \gamma(p+1)\quad \forall p\in\N_0.\tag{$\diamond$}
\end{equation}

For a generating function $\zeta$
 we call the ordered family of weight sequences 
$(\bM^\lambda_\zeta)_\lambda$ the \emph{ultradifferentiable scale generated by 
 $\zeta$}.
We also say that $\fM_\zeta$ is the weight matrix associated to the scale
$(\bM^\lambda_\zeta)_\lambda$.

To each ultradifferentiable scale $(\bM_{\zeta}^\lambda)_\lambda$ 
we can associate two scales of
ultradifferentiable classes, namely 
\begin{equation*}
\left(\Rou{\bM^\lambda}{U}\right)_\lambda \quad \text{and}\quad
\left(\Beu{\bM^\lambda}{U}\right)_\lambda,
\end{equation*}
the scale of Roumieu classes and of Beurling classes, respectively.
Clearly, $\DC{\bM^\lambda}{U}\subseteq\DC{\bM^\sigma}{U}$ when
$\lambda\leq\sigma$ and 
$\Beu{\fM_{\zeta}}{U}\subseteq\DC{\bM^\lambda}{U}
\subseteq\Rou{\fM_\zeta}{U}$ for all $\lambda\in\Lambda$.

We say that an ultradifferentiable scale $(\bM_\zeta^\lambda)_\lambda$
 with generating function $\zeta$ is \emph{fitting} if $\zeta$ satisfies
  \eqref{singleDerivClosed} and
\begin{equation}\tag{$\triangleright$}\label{R-scaleChange}
	\begin{gathered}
\forall\lambda\in\Lambda\;\fa\alpha>1\;\ex\lambda^\ast\geq\lambda\;\ex \gamma>0:\\
\zeta_\lambda (\alpha t)\leq \zeta_{\lambda^\ast}(t)+\gamma(t+1)
\quad\forall t\in [1,\infty). 
\end{gathered}
\end{equation}
On the other hand, the scale $(\bM_\zeta^\lambda)_\lambda$ is
\emph{apposite} if the generating function $\zeta$ obeys
\eqref{singleDerivClosed} and

\begin{equation}	
\begin{gathered}\tag{$\triangleleft$}\label{B-scaleChange}
\forall\lambda^\ast\in\Lambda\;\fa\alpha>1\;\ex\lambda\leq\lambda^\ast\;\ex \gamma>0:\\
\zeta_\lambda (\alpha t)\leq \zeta_{\lambda^\ast}(t)+\gamma(t+1)\quad\forall t\in [1,\infty).
\end{gathered}
\end{equation}
Furthermore, a scale $(\bM^\lambda_\zeta)_\lambda$ is \emph{$R$-admissible} if
\eqref{RoumieuDerivClosed} and \eqref{R-scaleChange} hold for $\zeta$
and \emph{$B$-admissible} if \eqref{BeurlingDerivClosed} and
\eqref{B-scaleChange} are satisfied.
We use the notation $[$admissible$]$ if the scale is either $R$- or $B$-admissible, 
depending on the context. 
Furthermore we say that a scale is \emph{admissible} if it is $R$- and $B$-admissible.
We observe that a fitting scale is also $R$-admissible and
an apposite scale is $B$-admissible but the other implications
do not hold in general.

If $\Lambda\subseteq V$ is the open positive cone 
of a totally ordered vector space $V$, we say that $\zeta$
is pseudo-homogeneous iff
\begin{equation}\label{PseudoHom}
\forall\lambda\in\Lambda\;\fa \alpha>1\;\ex \gamma,c,q>0:\;\; 
\zeta_\lambda(\alpha t)\leq \zeta_{c\alpha^q\lambda}(t)+\gamma(t+1)\;\;\forall t\in [1,\infty).
\end{equation}
If $\zeta$ is pseudohomogeneous then $\zeta$ satisfies both \eqref{R-scaleChange} and 
\eqref{B-scaleChange}.

\begin{Ex}\label{WFscalesExamples}
	The families of weight sequences from Example \ref{WSequencesExamples}
	are ultradifferentiable scales with pseudohomogeneous generating functions:
\begin{enumerate}
	\item The Gevrey scale $(\bG^{1+\lambda})_{\lambda>0}$ is generated by the function 
	  $\zeta(\lambda, t)=\lambda t\log t$ if $t>1$ and $\zeta(\lambda,t)=0$ 
	  for $0\leq t\leq 1$. 
	  Since the sequence $\bG^{1+\lambda}$ is semiregular for all
	  $\lambda>0$ we know that \eqref{singleDerivClosed} holds.
	  For $\alpha> 1$ and $t\geq 1$ we have that
	\begin{align*}
	\zeta(\lambda,\alpha t)
	&= \lambda \alpha t\log(\alpha t)\\
	&= (\alpha\lambda)t\left(\log\alpha+\log t\right)\\
	&\leq (\alpha\lambda)t\log t +\gamma(t+1)\\
	&=\zeta(\alpha\lambda,t) +\gamma(t+1).
	\end{align*}
	Hence $\zeta$ is pseudohomogeneous and therefore $(\bG^{1+\lambda})_\lambda$
	is a fitting and apposite scale.
	\item
	Let $r>1$. The scale $(\bL^{q,r})_{q>1}$ is generated by 
	  $\zeta^r(\lambda, t)=t^r\lambda$ where $\lambda=\log q$.
	We have that
	\begin{equation*}
	\zeta^r(\lambda,\alpha t)=(\alpha t)^r \lambda 
	=t^r(\alpha^r \lambda)=\zeta^r(\alpha^r\lambda,t).
	\end{equation*}
	It follows that the scale $(\bL^{q,r})_{q}$ is admissible.
	It is fitting and apposite if and only if $r\leq 2$.
	\item The generating function for the scale $(\bB^{1,\lambda}=\bB^\lambda)_{\lambda>0}$
	is $\zeta(\lambda,t)=\lambda t\log\log(t+e)$.
	For $\alpha>1$ and $t\geq 1$ we conclude
	\begin{align*}
	\zeta(\lambda,\alpha t)
	&=\lambda \alpha t\log\log (\alpha t+e)\\
	&\leq(\alpha\lambda) t\log\log (\alpha(t+e))\\
	&=(\alpha\lambda)t\log\left[\log(t+e)
	\left(1+\frac{\log\alpha}{\log(t+e)}\right)\right]\\
	&\leq (\alpha\lambda)t\left(\log\log(t+e)+\log(1+\log\alpha)\right)\\
	&\leq (\alpha\lambda) t\log\log(t+e)+\gamma_{\alpha,\lambda}(t+1)\\
	&=\zeta(\alpha\lambda,t)+\gamma_{\alpha,\lambda}(t+1).
	\end{align*}
	Hence the scale $(\bB^\lambda)_\lambda$ is fitting and apposite.
	\item Generally, the scale $(\bB^{j,\lambda})_\lambda$, $j\in\N$, is generated by 
	$\zeta^j(\lambda,t)=\lambda t\log^{(j+1)}(t+e^{(j)})$. If $\alpha >1$ and $t\geq 1$
	 we can argue analogously to above and  obtain
	\begin{align*}
	\zeta^j(\lambda,\alpha t)&=\lambda\alpha t\log^{(j+1)} \left(\alpha t+ e^{(j)} \right)\\
	&\leq (\alpha \lambda)t\left[\log^{(j+1)} \left(t+e^{(j)}\right)+
	\alpha^{[j]}\right]\\
	&\leq \zeta^j (\alpha\lambda,t)+\gamma_{\alpha,\lambda}(t+1),
	\end{align*}
	where $\alpha^{[j]}$ is defined recursively by $\alpha^{[1]}=\log(1+\log\alpha)$
	and $\alpha^{[j+1]}=\log(1+\log\alpha^{[j]})$.
	Therefore $(\bB^{j,\lambda})_\lambda$ is a fitting and apposite scale.
\end{enumerate}
\end{Ex}
\subsection{Vectors of operators of principal type}

If $P$ is an operator of principal type with analytic coefficients in $U\subseteq\R^n$
and  $(x_0,\xi_0)\in\COT$ then we say 
following \cite{MR0296509} that $P$ satisfies \emph{Condition} $C_{x_0,\xi_0}$
if either 
$p_d(x_0,\xi_0)\neq 0$
or 
$p_d(x_0,\xi_0)=0$ and for all $z\in\C$
with $d_\xi\real(zp_d(x_0,\xi_0))\neq 0$ we have that the function $\imag(zp_d)$,
restricted to the bicharacteristic strip of $\real(zp_d)$ through $(x_0,\xi_0)$, 
has a zero of finite even order.
We recall
\begin{Thm}[{\cite[Theorem II]{MR0296509}}]\label{TrevesThm}
Let $P$ be an analytic differential operator of principal type.
The following statements are equivalent:
\begin{enumerate}
	\item $P$ is hypoelliptic.
	 \item $P$ is subelliptic.
	 \item $P$ satisfies Condition $C_{x_0,\xi_0}$ for all $(x_0,\xi_0)\in\COT$.
\end{enumerate}	
\end{Thm}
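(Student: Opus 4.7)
The statement is a classical deep result of Tr\'{e}ves, so my plan is to decompose the equivalence into three implications and recall the main idea for each. First, (2)$\Rightarrow$(1) is a routine bootstrap: iterating the subelliptic estimate \eqref{Subelliptic} with fixed positive gain $\eps$ in the Sobolev scale shows that if $Pu\in\E(U)$ in a neighborhood of $x_0$ then so is $u$, hence $P$ is hypoelliptic.

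The substantive positive direction (3)$\Rightarrow$(2) is the main obstacle and where I would spend most of the effort. I would work microlocally near a characteristic point $(x_0,\xi_0)\in\Char P$. After multiplying by a suitable complex constant $z$ to arrange $d_\xi\real(zp_d)(x_0,\xi_0)\neq 0$ and conjugating by an elliptic Fourier integral operator (Egorov's theorem), the operator can be brought locally to the normal form $D_1+iF(x,D')$, where the principal symbol of $F$ is real-valued and vanishes to some finite even order $k$ along the bicharacteristic of $\real(zp_d)$ through $(x_0,\xi_0)$. Integrating by parts in $x_1$ and invoking a Glaeser-type inequality for the non-negative symbol raised to the power $k$, one obtains a microlocal subelliptic estimate with gain $\eps=1/(k+1)$. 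These microlocal estimates are then glued through a conic partition of unity of phase space and combined with ordinary elliptic regularity off $\Char P$ in order to recover \eqref{Subelliptic} on a relatively compact open set.

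For (1)$\Rightarrow$(3) I would argue by contraposition. Suppose Condition $C_{x_0,\xi_0}$ fails at some characteristic point: there exists $z\in\C$ with $d_\xi\real(zp_d)(x_0,\xi_0)\neq 0$ such that $\imag(zp_d)$ restricted to the bicharacteristic strip of $\real(zp_d)$ through $(x_0,\xi_0)$ has either a zero of odd order or vanishes to infinite order. A WKB ansatz concentrated along this strip then produces a sequence $u_N\in\Dp(U)$ with $Pu_N\in\E(U)$ but $u_N$ singular at $x_0$, violating hypoellipticity; the failure of Condition $C$ is precisely what makes the transport equations along the bicharacteristic solvable with amplitudes that cannot be absorbed into the smooth remainder.

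The hard part, and by far the longest piece in a complete proof, is the quantitative implication (3)$\Rightarrow$(2): the Fourier integral normal form, the tracking of the vanishing order of $\imag(zp_d)$ through the canonical transformation, and the sharp G\r{a}rding-type estimate that converts finite order of vanishing into a quantitative subellipticity gain all require the full weight of the microlocal calculus in the analytic category, which is why in this paper the theorem is simply cited from \cite{MR0296509}.
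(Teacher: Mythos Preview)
The paper does not prove Theorem \ref{TrevesThm} at all; it is stated as a citation from \cite{MR0296509} and used as a black box. You yourself note this in your final sentence, so there is no proof in the paper to compare your outline against.

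As a standalone sketch your decomposition is reasonable and follows the classical route: (2)$\Rightarrow$(1) by Sobolev bootstrap, (3)$\Rightarrow$(2) via an Egorov normal form $D_1+iF(x,D')$ and a subelliptic estimate exploiting the finite even vanishing order, and (1)$\Rightarrow$(3) by contraposition through a WKB construction of a non-smooth null solution. One caution on (3)$\Rightarrow$(2): the gain you quote, $\eps=1/(k+1)$, does not match the value the present paper later uses (see Remark \ref{DeltaRemark}, where $\delta=2k/(2k+1)$ so the gain for a first-order model is $\eps=1-\delta=1/(2k+1)$); the exponent depends on whether $k$ denotes the full vanishing order or half of it, so be careful to keep your conventions consistent with the paper's. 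Beyond that, the plan is a fair high-level summary of Tr\`eves' argument, but none of it appears in the paper under review.
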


Since $P$ is subelliptic 
we have by Remark \ref{SubRemark} 
that $u\in\vRou{\fM}{U}$ if and only if for every $V\Subset U$
there are $\bM\in\fM$ and constants $h,C>0$ such that $P^ku\in L^2(V)$ and
\begin{equation}\label{L2-estimate}
\norm[L^2(V)]{P^k u}\leq Ch^k M_{dk}
\end{equation}
for all $k\in\N_0$.
On the other hand $u\in\vBeu{\fM}{U}$ if and only if $P^ku\in L^2_{loc}(U)$ 
and for all $V\Subset U$, all $\bM\in\fM$ and all $h>0$ there is some $C>0$ 
such that \eqref{L2-estimate} is satisfied for all $k$.

The main technical result of \cite{MR654409} is the following theorem:
\begin{Thm}[{\cite[Theorem 1.2]{MR654409}}]\label{BaouendiMetivier}
	Let $P$ be a differential operator of order $d$ with analytic coefficients
	in $U\subseteq\R^n$.
	Let $(x_0,\xi_0)\in U\times\R^n\!\setminus\!\{0\}$
	and assume that there is a conic neighborhood $W_0\times\Gamma_0$ of 
	$(x_0,\xi_0)$ such that $P$ is of principal type in $W_0\times\Gamma_0$ 
	and Condition $C_{x,\xi}$ is satisfied for all $(x,\xi)\in V_0\times\Gamma_0$.
	
	Then there are neighborhoods
	$W^\prime\Subset W\Subset W_0$ of $x_0$, 
	a conical neighborhood $\Gamma\subseteq\Gamma_0$ of $\xi_0$, $C>0$, 
	$0\leq \delta<1$ and a sequence of  functions
	$(\psi_k)_k\subseteq\D(W)$ satisfying $0\leq \psi_k\leq 1$ and 
	$\psi_k\equiv 1$ on $W^\prime$ such that the following holds:
	For every $k\in\N$ and $u\in L^2(W)$ with $P^ku\in L^2(W)$ we have
	\begin{equation}\label{PrincipalEstimate}
	\left\lvert \lvert\xi\rvert^{(d-\delta)k}\widehat{\psi_k u}(\xi)\right\rvert
	\leq \frac{C^{k+1}}{(k!)^\delta}\left(\bigl\lVert P^ku\bigr\rVert_{L^2(W)}
	+(k!)^d\lVert u\rVert_{L^2(W)}\right)
	\end{equation}
	if $\xi\in\Gamma$.
\end{Thm}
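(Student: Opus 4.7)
The plan is to obtain \eqref{PrincipalEstimate} by iterating $k$ times the microlocal subelliptic estimate delivered by Theorem \ref{TrevesThm} against a nested family of cutoffs in $\D(W)$, with a simultaneous microlocal truncation inside the cone $\Gamma$. The number $\delta\in[0,1)$ will be the subelliptic loss of $P$ on $W_0\times\Gamma_0$: Theorem \ref{TrevesThm} supplies, after shrinking $W_0$ and $\Gamma_0$ slightly, a microlocal estimate
\begin{equation*}
\lVert v\rVert_{s+d-\delta}\leq C\bigl(\lVert Pv\rVert_s+\lVert v\rVert_s\bigr)
\end{equation*}
for every $s\in\R$ and every $v$ compactly supported in $W_0$ and microlocalized into $\Gamma_0$. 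Each application of $P$ therefore gains $d-\delta$ Sobolev derivatives rather than the full $d$ one would get in the elliptic case.

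Next, fix $W^\prime\Subset W\Subset W_0$, a closed conic neighborhood $\Gamma\Subset\Gamma_0$ of $\xi_0$, and a zeroth-order classical pseudodifferential cutoff $\Pi_\Gamma$ whose symbol equals $1$ on $\Gamma$ and is supported in $\Gamma_0$, so that the microlocal subelliptic estimate may be applied to $\Pi_\Gamma w$ for $w\in\D(W)$. Following the Ehrenpreis--H\"ormander construction (cf.\ \cite[Theorem 1.4.2]{MR1996773} and Lemma \ref{WFalternative}), build a nested family $(\psi_{k,j})_{0\leq j\leq k}\subseteq\D(W)$ with $\psi_{k,j}\equiv 1$ on $W^\prime$, with $\supp\psi_{k,j+1}\Subset\{\psi_{k,j}=1\}$, and satisfying the uniform derivative bound
\begin{equation*}
\bigl|D^{\alpha+\beta}\psi_{k,j}(x)\bigr|\leq C_\alpha(C_\alpha k)^{|\beta|},\qquad |\beta|\leq k.
\end{equation*}
Set $\psi_k:=\psi_{k,0}$.

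The heart of the argument is a finite induction in $j$ for fixed $k$. At step $j$ one applies the subelliptic estimate to $\Pi_\Gamma\psi_{k,j}u$ at Sobolev order $s_j=(d-\delta)(k-j-1)$, uses the commutator identity
\begin{equation*}
P(\psi_{k,j}u)=\psi_{k,j}(Pu)+[P,\psi_{k,j}]u,
\end{equation*}
and invokes the nesting $\psi_{k,j}\psi_{k,j+1}=\psi_{k,j+1}$ to shift $Pu$ into the role of the datum for the next stage, so that after $k$ iterations $u$ is replaced by $P^ku$. Each commutator $[P,\psi_{k,j}]$ is a differential operator of order $d-1$ whose coefficients contain up to $d$ derivatives of $\psi_{k,j}$; by the cutoff bound above together with the Cauchy estimates $|D^\alpha a_\beta|\leq Cr^{|\alpha|}|\alpha|!$ for the analytic coefficients of $P$, its operator norm on the relevant Sobolev scale is controlled by $C^dk^d$. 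Telescoping through the $k$ stages produces
\begin{equation*}
\lVert\psi_ku\rVert_{(d-\delta)k}\leq\frac{C^{k+1}}{(k!)^\delta}\bigl(\lVert P^ku\rVert_{L^2(W)}+(k!)^d\lVert u\rVert_{L^2(W)}\bigr),
\end{equation*}
where the $(k!)^{-\delta}$ arises from the $k$-fold iteration of the per-step ratio $k^{d-\delta}/k^d=k^{-\delta}$ (Cauchy loss versus subelliptic gain), and $(k!)^d$ accumulates the $k$ commutator losses against the zero-order term. Finally, since $\psi_ku$ is compactly supported and $(\id-\Pi_\Gamma)\psi_ku$ is smoothing on $\Gamma$, the standard pointwise bound $|\xi|^{(d-\delta)k}|\widehat{\psi_ku}(\xi)|\leq C\lVert\psi_ku\rVert_{(d-\delta)k}$ on $\Gamma$ yields \eqref{PrincipalEstimate}.

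The main obstacle is the inductive bookkeeping: maintaining the sharp gain $d-\delta$ per step across $k$ coupled applications of the subelliptic estimate and $k$ cutoff commutators of effective order $k$, so that the final constants are exactly $C^{k+1}(k!)^{-\delta}$ and $(k!)^d$ rather than something softer. Getting this balance right, between the Cauchy estimates for the analytic coefficients of $P$, the Ehrenpreis--H\"ormander estimates for the $\psi_{k,j}$, and the regularity gain supplied by Treves' theorem, is what distinguishes the argument from the elliptic case ($\delta=0$, a single application of the full elliptic estimate suffices) and what makes \eqref{PrincipalEstimate} the essential input for the ultradifferentiable regularity results developed later in the paper.
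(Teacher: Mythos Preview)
This theorem is not proved in the paper; it is quoted from Baouendi--M\'etivier \cite[Theorem~1.2]{MR654409} and used as a black box, so there is no in-paper argument to compare against. Your outline does, however, capture the correct architecture of the original proof: iterate the microlocal subelliptic estimate supplied by Condition~$C_{x,\xi}$ across a nested family of Ehrenpreis--H\"ormander cutoffs, commuting $P$ through one layer at each step and tracking the accumulated losses.

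Two points in your write-up are not convincing as stated. First, the accounting for the factor $(k!)^{-\delta}$: the subelliptic estimate $\lVert v\rVert_{s+d-\delta}\leq C(\lVert Pv\rVert_s+\lVert v\rVert_s)$ contributes a Sobolev \emph{shift} of $d-\delta$, not a numerical factor $k^{d-\delta}$, so the phrase ``per-step ratio $k^{d-\delta}/k^d=k^{-\delta}$'' does not parse. The factorial actually emerges from the interaction between the order~$d-1$ of the commutator $[P,\psi_{k,j}]$, the $j$-dependent size of the derivatives of $\psi_{k,j}$, and the repeated Sobolev shifts; making this precise is the real work in \cite{MR654409} and cannot honestly be compressed into a single sentence. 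You correctly flag this as ``the main obstacle,'' but the one-line mechanism you offer is not the right one.

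Second, the closing step ``$|\xi|^{(d-\delta)k}|\widehat{\psi_ku}(\xi)|\leq C\lVert\psi_ku\rVert_{(d-\delta)k}$ on $\Gamma$'' is false as written: an $H^s$-norm is an $L^2$ quantity in $\xi$ and does not bound $|\xi|^s|\hat v(\xi)|$ pointwise. One fixes this either by carrying a few extra Sobolev orders through the iteration (harmless, since the constant is already $C^{k+1}$) or, as Baouendi--M\'etivier in fact do, by running the entire iteration at the level of pointwise Fourier estimates rather than passing through Sobolev norms.
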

\begin{Rem}\label{DeltaRemark}
According to \cite[Remark 1.2]{MR654409} the number $\delta$ in Theorem \ref{BaouendiMetivier}
can be chosen to be $0$ if $P$ is elliptic at $(x_0,\xi_0)$.
When $P$ is non-elliptic at $(x_0,\xi_0)$ then 
we can take $\delta=2k/(2k+1)$ where $2k$ is the maximum order of vanishing
of $\imag (zp_d)$ mentioned in Condition $C_{x,\xi}$, for $(x,\xi)$ in a 
compact neighborhood of $(x_0,\xi_0)$ and $z\in\C$.

Hence, if $V\Subset U$ then we set
\begin{equation*}
\delta=\delta(V)=\frac{2k}{2k+1}
\end{equation*}
where now $2k$ is the maximum order of vanishing of $\imag(zp_d)$ in 
Condition $C_{x,\xi}$ for $(x,\xi)\in V\times\R^n\!\setminus\!\{0\}$.

Note that $\delta(V)$ is closely related to the subellipticity of $P$:
For $V\Subset U$ we can choose in \eqref{Subelliptic} $\eps=d-\delta(V)$,
see \cite{MR290201}.
\end{Rem}
Now suppose that $P$ is a hypoelliptic operator of principal 
type with analytic coefficients in $U$
and that $(\bM^\lambda_\zeta)$ is a fitting ultradifferentiable scale
with generating function $\zeta$.
Recall that Theorem \ref{TrevesThm} implies that Condition $C_{x,\xi}$ holds for
all $(x,\xi)\in \COT$.
Furthermore let $u\in\D^\prime(U)$ be an $\{\bM^\lambda\}$-vector of $P$ 
for some $\lambda\in\Lambda$  and $(x_0,\xi_0)\in \COT$. 
Applying  Theorem \ref{BaouendiMetivier} we conclude that there are  neighborhoods
$W^\prime\Subset W\Subset U$ of $x_0$, a conical neighborhood $\Gamma$ of $\xi_0$, 
$0\leq\delta<1$ and a bounded sequence $u_k\in\E^\prime(W)$ 
such that $u\vert_{W^\prime}=u_k\vert_{W^\prime}$ and
\begin{equation*}
\left\lvert \lvert\xi\rvert^{(d-\delta)k}\hat{u}_k(\xi)\right\rvert
\leq \frac{C^{k+1}}{(k!)^\delta}\left(C_0 h_0^kM^\lambda_{dk}+(k!)^d\right)\\
\end{equation*}
for $\xi\in\Gamma$, where  $C,C_0,h_0>0$ are independent of $k\in \N$ and 
$\delta=\delta(V)$ is defined in Remark \ref{DeltaRemark}.
Now, since \eqref{AnalyticInclusion} is satisfied for all $\bM^\lambda$, we have that
for each $\rho>0$ there exists $C_\rho>0$ such that 
$1\leq C_\rho \rho^k m_k^\lambda$ for all $k\in\N_0$.
Applying also Stirling's formula we obtain that there are constants $h>0$ and $C>0$ such that
\begin{equation*}
	\left\lvert \lvert\xi\rvert^{(d-\delta)k}\hat{u}_k(\xi)\right\rvert
	\leq Ch^kk^{(d-\delta)k} e^{\zeta_\lambda(dk)},\qquad k\in\N.
\end{equation*}

If we denote by $\lceil y\rceil$ the smallest integer $\geq y\in\R$ then we choose for $\ell\in\N$ an integer $k_\ell$ in the following way
\begin{equation*}
k_\ell=\left\lceil\frac{\ell}{d-\delta}\right\rceil\leq \frac{\ell}{d-\delta}+1
\leq\frac{\ell +d}{d-\delta}
\end{equation*}
and therefore $\ell\leq (d-\delta)k_\ell$. 
Note that if $\delta\geq 1$ then $\delta^{k_\ell}\leq \delta^{(\ell+d)/(d-\delta)}$ and on the other hand $0<\delta<1$ implies that
$\delta^{k_\ell}\leq \delta^{\ell/(d-\delta)}$.
Thus, if we set $v_\ell=u_{k_\ell}$ then we have that
\begin{equation*}
\begin{split}
\lvert\xi\rvert^\ell\bigl\lvert \hat{v}_\ell(\xi)\bigr\rvert
&\leq \left\lvert\lvert\xi\rvert^{(d-\delta)k_\ell}\hat{u}_{k_\ell}(\xi)\right\rvert
\\
&\leq Ch^{k_\ell}
 k_\ell^{(d-\delta)k_\ell}\exp\left[\zeta_\lambda(dk_\ell)\right]\\
&\leq Ch^{\tfrac{\ell}{d-\delta}}(d-\delta)^{-\ell-d}(\ell +d)^{\ell+d}
\exp\left[\zeta_\lambda\left(\frac{d}{d-\delta}(\ell+d)\right)\right]\\
&\leq C \left(\frac{\gamma h^{1/(d-\delta)}}{d-\delta}\right)^\ell(\ell+d)^{\ell+d}\,\exp\left[\zeta_{\lambda^\ast}(\ell +d)\right]
\end{split}
\end{equation*}
for $\xi\in\Gamma$ with $\lvert\xi\rvert\geq 1$ and some $\lambda^\ast$ according
to \eqref{R-scaleChange}. 
Then \eqref{singleDerivClosed} and the Stirling formula imply that
\begin{equation*}
\lvert\xi\rvert^\ell\bigl\lvert \hat{v}_\ell(\xi)\bigr\rvert
\leq Ch^\ell \ell!\,e^{\zeta_{\lambda^\ast}(\ell)}
= Ch^{\ell}M^{\lambda^\ast}_\ell,\qquad \ell\in\N,
\end{equation*}
for some constants $C,h>0$.
Hence $(x_0,\xi_0)\notin\WF_{\{\bM^{\lambda^\ast} \}}u$.

If  $u\in\D^\prime(U)$ is a $(\bM^\lambda)$-vector of $P$ for some $\lambda$, 
then we have by essentially the same arguments
that for every 
$(x_0,\xi_0)\in \COT$ there is some
$\lambda^\ast\in\Lambda$ such that $(x_0,\xi_0)\notin\WF_{(\bM^{\lambda^\ast})} u$.
 
 In fact, we have obtained the following theorem.
\begin{Thm}\label{ScaleCorollary1}
	Let $P$ be a hypoelliptic differential operator of principal type 
	with analytic coefficients in $U\subseteq\R^n$ and 
	$(\bM^\lambda)_\lambda$ be an ultradifferentiable scale.
	Then the following holds:
	\begin{enumerate}
	\item If $(\bM^\lambda)_\lambda$ is fitting then
	for all $V\Subset U$ and all $\lambda\in\Lambda$ there is some
	$\lambda^\ast\in\Lambda$ such that every $[\bM^\lambda]$-vector 
	of $P$ in $U$ is 
	of class $[\bM^{\lambda^\ast}]$ in $V$.
	\item If the scale $(\bM^\lambda)_\lambda$ is apposite
	then for all $V\Subset U$ and all
	 $\lambda^\ast\in\Lambda$ there exists
	$\lambda\in\Lambda$ such that every $u\in\vDC{\bM^\lambda}{U}$
	is of class $[\bM^{\lambda^\ast}]$ in $V$.
	\end{enumerate}
\end{Thm}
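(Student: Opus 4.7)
The argument preceding the theorem statement already accomplishes the key microlocal step. Specifically, for each fixed $(x_0,\xi_0) \in V \times \R^n \setminus \{0\}$, Theorem \ref{BaouendiMetivier} produces a sequence $u_k \in \E^\prime(U)$ that equals $u$ on a neighborhood of $x_0$ and satisfies
\begin{equation*}
\lvert\xi\rvert^{(d-\delta)k} \lvert\hat{u}_k(\xi)\rvert \leq Ch^k k^{(d-\delta)k} e^{\zeta_\lambda(dk)}, \qquad \xi \in \Gamma,
\end{equation*}
on a cone $\Gamma$ around $\xi_0$. Reindexing with $k_\ell = \lceil \ell/(d-\delta) \rceil$, setting $v_\ell = u_{k_\ell}$, and applying \eqref{R-scaleChange} with rescaling factor $\alpha = d/(d-\delta)$ together with \eqref{singleDerivClosed} transforms this into $\lvert\xi\rvert^\ell \lvert\hat{v}_\ell(\xi)\rvert \leq Ch^\ell M^{\lambda^\ast}_\ell$ on $\Gamma$, which means $(x_0,\xi_0) \notin \WF_{\{\bM^{\lambda^\ast}\}} u$.

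Two bookkeeping items remain. First, to obtain a single $\lambda^\ast$ that works uniformly over $V$, I would invoke Remark \ref{DeltaRemark}: the subellipticity index $\delta = \delta(V)$ is uniform on $V$, hence the rescaling factor $\alpha = d/(d-\delta)$ depends only on $V$ and $P$, and a single application of \eqref{R-scaleChange} to $\lambda$ and $\alpha$ produces a $\lambda^\ast$ valid at every $(x_0,\xi_0) \in V \times \R^n \setminus \{0\}$. Second, I would pass from the microlocal statement to the class membership statement via Proposition \ref{Singsupport}. Since the scale is fitting, \eqref{singleDerivClosed} holds and each $\bM^{\lambda^\ast}$ is semiregular; the singleton matrix $\{\bM^{\lambda^\ast}\}$ therefore trivially satisfies \eqref{matrixAnal} and \eqref{RouDeriv}/\eqref{BeuDeriv}, so Proposition \ref{Singsupport} gives $\pi_1(\WF_{\{\bM^{\lambda^\ast}\}} u) = \singsupp_{\{\bM^{\lambda^\ast}\}} u$. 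Since the former avoids $V$, this yields $u\vert_V \in \Rou{\bM^{\lambda^\ast}}{V}$.

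For the Beurling version, the hypothesis $u \in \vBeu{\bM^\lambda}{U}$ allows $h > 0$ to be taken arbitrarily small in \eqref{PrincipalEstimate}, and the same reindexing then produces an arbitrarily small constant in the final bound, giving $(x_0,\xi_0) \notin \WF_{(\bM^{\lambda^\ast})} u$ and hence $u\vert_V \in \Beu{\bM^{\lambda^\ast}}{V}$. Case (2) is entirely analogous, with \eqref{B-scaleChange} used in place of \eqref{R-scaleChange}: given the target $\lambda^\ast$, one extracts the source $\lambda$ from the apposite scale-change condition with $\alpha = d/(d-\delta)$. The only real obstacle is matching the quantifier order in the scale-change inequality with that of the class definitions — fitting corresponds to ``fix source, produce target'' and apposite to ``fix target, produce source'' — but this is precisely what the two conditions \eqref{R-scaleChange} and \eqref{B-scaleChange} are designed to provide, so once the matching is set up the argument reduces to Theorem \ref{BaouendiMetivier} plus Proposition \ref{Singsupport}.
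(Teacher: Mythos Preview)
Your proposal is correct and follows essentially the same route as the paper: invoke Remark~\ref{DeltaRemark} to get a uniform $\delta(V)$ and hence a single rescaling factor $\alpha=d/(d-\delta(V))$, apply \eqref{R-scaleChange} (resp.\ \eqref{B-scaleChange}) once to obtain $\lambda^\ast$ (resp.\ $\lambda$), feed the preceding microlocal computation into this to get $\WF_{[\bM^{\lambda^\ast}]}u\cap(V\times\R^n\setminus\{0\})=\emptyset$, and conclude via Proposition~\ref{Singsupport}. Your additional remark that the singleton matrix $\{\bM^{\lambda^\ast}\}$ is semiregular (so Proposition~\ref{Singsupport} applies) is a point the paper leaves implicit.
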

\begin{proof}
	 Note first that by Remark \ref{DeltaRemark} 
	 for every $V\Subset U$ there is some $\delta(V)\in[0,1)$ such that 
	\eqref{PrincipalEstimate} holds with $\delta=\delta(V)$ for all 
	$(x_0,\xi_0)\in V\times\R^n\!\setminus\!\{0\}$.
	Condition \eqref{R-scaleChange} implies that 
	  for every $\lambda$ there is
	some $\lambda^\ast$ such that
	\begin{equation}\label{ScaleChangeApplied}
	\zeta_\lambda\left(\frac{d}{d-\delta(V)}t\right)\leq \zeta_{\lambda^\ast}(t)+C(t+1)
	\end{equation}
	for all $t\in[1,\infty)$ and some $C>0$. Thus the above arguments give
	\begin{equation*}
\WF_{[\bM^{\lambda^\ast}]}u	\cap(V\times\R^n\!\setminus\!\{0\})=\emptyset.
	\end{equation*}
	Hence $u$ is of class $[\bM^{\lambda^\ast}]$ in $V$ 
	by Proposition \ref{Singsupport}, which proves (1).
	
	On the other hand, by \eqref{B-scaleChange} we obtain that
	for every $\lambda^\ast$ there is some $\lambda$ such that
	\eqref{ScaleChangeApplied} holds for $t\in[1,\infty)$ and some $C>0$. 
	Adapting the arguments above we then conclude that
	$\WF_{[\bM^{\lambda^\ast}]} u\cap (V\times\R^n\!\setminus\!\{0\})=\emptyset$
	for all $u\in\vDC{\bM^\lambda}{U}$ and thus Proposition \ref{Singsupport} implies
	again that $u$ is of class $[\bM^{\lambda^\ast}]$ in $V$.
\end{proof}
For special scales, like the Gevrey scale, cf.\ \cite[Theorem 1.3]{MR654409},
  we may obtain rather precise information about the loss of regularity of vectors.
  For example, for the other scales in Example \ref{WFscalesExamples} we have
\begin{Cor}\label{Cor:qScale}
Let $P$ be as in Theorem \ref{ScaleCorollary1}, $V\Subset U$, 
$\delta=\delta(V)$ be as defined
 in Remark \ref{DeltaRemark} and $u\in\Dp(U)$.
 \begin{enumerate}
 	\item If $u$ is an $[\bL^{q,r}]$-vector of $P$ for some $q>1$ and
  $1<r\leq 2$ then $u$ is of class $[\bL^{q^\prime,r}]$ in $V$, where
 	\begin{equation*}
 	q^\prime=q^{d^r/(d-\delta)^r}.
 	\end{equation*}
 \item If $u$ is a $[\bB^{j,\lambda}]$-vector for some $j\in\N$ and $\lambda>0$, then $u$ is of class $[\bB^{j,\lambda^\prime}]$ in $V$
 where 
 \begin{equation*}
 	\lambda^\prime=\frac{d}{d-\delta(V)}\lambda.
 \end{equation*}
\end{enumerate}
\end{Cor}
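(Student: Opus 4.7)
The plan is to instantiate Theorem \ref{ScaleCorollary1} for the two specific scales and extract the explicit parameter change. Both scales appear in Example \ref{WFscalesExamples} and are fitting and apposite under the stated hypotheses: the scale $(\bL^{q,r})_q$ for $1<r\leq 2$ as assumed, and the scale $(\bB^{j,\lambda})_\lambda$ for every $j\in\N$ and $\lambda>0$. Consequently Theorem \ref{ScaleCorollary1} applies in both the Roumieu and Beurling categories; what remains is to trace the scale-change parameter $\lambda^\ast$ through the explicit generating functions.

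Set $\alpha=d/(d-\delta)$, which is the dilation factor produced in the proof of Theorem \ref{ScaleCorollary1} via condition \eqref{R-scaleChange} (respectively \eqref{B-scaleChange}). For assertion (1), the generating function is $\zeta^r(\mu,t)=\mu t^r$ with $\mu=\log q$. A direct computation gives the exact identity
\[
\zeta^r(\mu,\alpha t)=\mu\alpha^r t^r=\zeta^r(\alpha^r\mu,t),
\]
so both \eqref{R-scaleChange} and \eqref{B-scaleChange} hold with $\mu^\ast=\alpha^r\mu$ and $\gamma=0$. Feeding this into Theorem \ref{ScaleCorollary1} yields $\log q^\prime=\alpha^r\log q$, i.e.\ $q^\prime=q^{d^r/(d-\delta)^r}$, as claimed.

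For assertion (2), the generating function is $\zeta^j(\lambda,t)=\lambda t\log^{(j+1)}(t+e^{(j)})$, and the argument already carried out in Example \ref{WFscalesExamples}(4) gives
\[
\zeta^j(\lambda,\alpha t)\leq \zeta^j(\alpha\lambda,t)+\gamma_{\alpha,\lambda}(t+1).
\]
Hence \eqref{R-scaleChange} and \eqref{B-scaleChange} are satisfied with $\lambda^\ast=\alpha\lambda=(d/(d-\delta))\lambda$, and Theorem \ref{ScaleCorollary1} produces the stated $\lambda^\prime=(d/(d-\delta(V)))\lambda$. No serious obstacle arises: once the scales are identified as fitting/apposite and the generating functions are available from Example \ref{WFscalesExamples}, the corollary is essentially bookkeeping in the inequalities \eqref{R-scaleChange} and \eqref{B-scaleChange}, the only mild subtlety being that in (1) the absence of an additive error $\gamma(t+1)$ is what permits the neat closed form $q^\prime=q^{d^r/(d-\delta)^r}$.
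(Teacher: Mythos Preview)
Your proposal is correct and follows exactly the route the paper intends: the corollary is an immediate specialization of Theorem \ref{ScaleCorollary1}, and the explicit values of $\lambda^\ast$ are read off from the pseudohomogeneity computations already carried out in Example \ref{WFscalesExamples}(2) and (4). One minor quibble with your closing remark: the presence or absence of the additive term $\gamma(t+1)$ is irrelevant to obtaining a closed form for $\lambda^\ast$ (indeed case (2) carries such a term), since that term is absorbed into the constants $C,h$ and does not shift the scale index.
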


\begin{Thm}\label{Theorem2}
Let $P$ be as in Theorem \ref{ScaleCorollary1}, $(\bM^\lambda_\zeta)_\lambda$ be an $[$admissible$]$ ultradifferentiable scale
and $\fM_\zeta$ the associated weight matrix.
Then 
\begin{equation*}
\vDC{\fM_\zeta}{U}=\DC{\fM_\zeta}{U}.
\end{equation*}
\end{Thm}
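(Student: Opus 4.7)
The inclusion $\DC{\fM_\zeta}{U} \subseteq \vDC{\fM_\zeta}{U}$ is immediate from Proposition \ref{VectorProperties}(3), since the defining property $\zeta_\lambda(t)/t \to \infty$ of a generating function forces \eqref{matrixAnal} for $\fM_\zeta$. So the substance is the reverse inclusion, and my plan is to reduce it to a localized, microlocal assertion exactly in the style of the proof of Theorem \ref{ScaleCorollary1}, unpacked to the level of the weight matrix.

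In the Roumieu direction, I would fix $u \in \vRou{\fM_\zeta}{U}$ and $V \Subset U$, and interpose $V \Subset W' \Subset W \Subset U$. By definition of $\vRou{\fM_\zeta}{U}$ there is some $\lambda \in \Lambda$ with $u \in \vRou{\bM^\lambda}{W}$. I would then apply Theorem \ref{BaouendiMetivier} at every $(x_0,\xi_0) \in W' \times (\R^n \setminus\{0\})$ and repeat the Fourier-side estimate from the proof of Theorem \ref{ScaleCorollary1}(1) with $\alpha = d/(d-\delta(W))$. Condition \eqref{R-scaleChange} (part of $R$-admissibility) supplies some $\lambda^* \in \Lambda$ with $\zeta_\lambda(\alpha t) \leq \zeta_{\lambda^*}(t) + \gamma(t+1)$, which feeds directly into that estimate to yield $\WF_{\{\bM^{\lambda^*}\}} u \cap \bigl(W' \times (\R^n\setminus\{0\})\bigr) = \emptyset$. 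Proposition \ref{Singsupport} then locates $u|_{W'}$ in $\Rou{\bM^{\lambda^*}}{W'} \subseteq \Rou{\fM_\zeta}{V}$; since $V$ was arbitrary, $u \in \Rou{\fM_\zeta}{U}$. The Beurling direction is dual with the quantifier order reversed: one first fixes a target $\lambda^* \in \Lambda$, then uses \eqref{B-scaleChange} to pick $\lambda \leq \lambda^*$ with the analogous scale-change estimate. Every element of $\vBeu{\fM_\zeta}{U}$ is in particular a $(\bM^\lambda)$-vector of $P$ with all Beurling constants, so the Beurling bookkeeping in the Baouendi--M\'etivier argument produces $\WF_{(\bM^{\lambda^*})} u \cap \bigl(W' \times (\R^n\setminus\{0\})\bigr) = \emptyset$, whence $u|_{W'} \in \Beu{\bM^{\lambda^*}}{W'}$ by Proposition \ref{Singsupport}; intersecting over $\lambda^* \in \Lambda$ and exhausting $U$ by $V$ then gives $u \in \Beu{\fM_\zeta}{U}$.

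The only delicate point, and the step where I expect to spend the most care, is that Theorem \ref{ScaleCorollary1} as stated requires the scale to be fitting or apposite (i.e.\ to satisfy \eqref{singleDerivClosed}), whereas the hypothesis here is only $[$admissible$]$ (i.e.\ \eqref{RoumieuDerivClosed} or \eqref{BeurlingDerivClosed}). In the proof of Theorem \ref{ScaleCorollary1} the single-parameter condition \eqref{singleDerivClosed} is used only at the very end, to absorb an additive index shift of at most $d$ inside the generating function; with the weaker admissibility hypothesis the same absorption goes through after iterating \eqref{RoumieuDerivClosed} (resp.\ \eqref{BeurlingDerivClosed}) a bounded number of times, at the cost of enlarging (resp.\ shrinking) the output parameter $\lambda^*$ within $\Lambda$. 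No new ideas are required beyond what is already implicit in Subsection \ref{Subsec:DefScales}, but the index bookkeeping must be redone explicitly to justify the invocation of Theorem \ref{ScaleCorollary1} under the weaker hypothesis.
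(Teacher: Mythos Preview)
Your proposal is correct and follows essentially the same route as the paper: apply the Baouendi--M\'etivier estimate \eqref{PrincipalEstimate} pointwise in $V\times(\R^n\setminus\{0\})$, use \eqref{R-scaleChange} (resp.\ \eqref{B-scaleChange}) to rescale the argument of $\zeta$, absorb the additive shift by $d$ via \eqref{RoumieuDerivClosed} (resp.\ \eqref{BeurlingDerivClosed}) rather than \eqref{singleDerivClosed}, and conclude with Proposition \ref{Singsupport}. The paper's writeup differs only in passing through Proposition \ref{WF-Intersection} to reach $\WF_{[\fM_\zeta]}u=\emptyset$ before invoking Proposition \ref{Singsupport} at the matrix level, whereas you apply Proposition \ref{Singsupport} at the single-sequence level and then embed into $\E^{[\fM_\zeta]}$; both are equivalent.
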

\begin{proof}
We begin with the Roumieu case.
If $u\in\vRou{\fM_\zeta}{U}$ then for every $V\Subset U$ there are $\lambda\in\Lambda$
and $C,h>0$ such that
\begin{equation*}
\norm[L^2(V)]{P^k u}\leq Ch^k M^\lambda_{dk}, \quad k\in\N_0.
\end{equation*}
Suppose that $(x_0,\xi_0)\in V\times\R^n\!\setminus\!\{0\}$.
As above we obtain from Theorem \ref{BaouendiMetivier} and  \eqref{PrincipalEstimate} that there is a bounded sequence $u_k\in\Ep(V)$
such that $u_k\vert_W=u\vert_W$ for some neighborhood $W\Subset V$ of $x_0$
and 
\begin{equation*}
\Betr{\xi}^{(d-\delta(W))k}\Betr{\hat{u}_k(\xi)}\leq
 Ch^{k}k^{(d-\delta)k}e^{\zeta_\lambda(dk)},\qquad \xi\in\Gamma,\;
 \lvert\xi\rvert\geq 1,
\end{equation*}
where $C>0,h>0$, $\Gamma$ is a conic neighborhood of $\xi_0$ and $\delta=\delta(V)$, depending only on the operator $P$ and $V$, is as in Remark \ref{DeltaRemark}.
If we choose $k_\ell$, $\ell\in\N$, as before and set $v_\ell=u_{k_\ell}$
then we can conclude in the same manner from \eqref{R-scaleChange} that
\begin{equation*}
\Betr{\xi}^\ell\Betr{\hat{v}_\ell(\xi)}\leq Ch^\ell (\ell+d)!
\exp\left[\zeta_{\lambda^\ast}(\ell+d)\right]
\end{equation*}
for some $\lambda^\ast\in\Lambda$. Hence \eqref{RoumieuDerivClosed} gives
\begin{equation*}
\Betr{\xi}^\ell\Betr{\hat{v}_\ell(\xi)}\leq Ch^\ell \ell!
\exp\left[\zeta_{\lambda^\prime}(\ell)\right]\leq Ch^\ell M_\ell^{\lambda^\prime}
\end{equation*}
for some constants $C,h>0$ and $\lambda^\prime\in\Lambda$
 independent of $\ell$.
Therefore, since $(x_0,\xi_0)\in V\times\R^n\!\setminus\!\{0\}$ was chosen 
arbitrarily,
\begin{align*}
\WF_{\{\bM^{\lambda^\prime}\}}u\cap \bigl(V\times\R^n\!\setminus\!\{0\}\bigr)&=\emptyset\\
\intertext{and by Theorem \ref{WF-Intersection}}
\WF_{\{\fM_\zeta\}}u\cap \bigl(V\times\R^n\!\setminus\!\{0\}\bigr)&=\emptyset.
\end{align*}
Since this holds for all $V\Subset U$ it follows that 
$\WF_{\{\fM_\zeta\}}u=\emptyset$. Hence $u\in\Rou{\fM_\zeta}{U}$ by
Proposition \ref{Singsupport}.

If $u\in\vBeu{\fM_\zeta}{U}$ then for all $V\Subset U$, $\lambda\in\Lambda$
and $h>0$ there is a constant $C>0$ such that
\begin{equation*}
\norm[L^2(V)]{P^ku}\leq Ch^kM_{dk}^\lambda,\qquad k\in\N_0.
\end{equation*}
If $(x_0,\xi_0)\in V\times\R^n\!\setminus\!\{0\}$ then
Theorem \ref{BaouendiMetivier} gives that there is a bounded sequence
$u_k\in\Ep(V)$ with $u_k\vert_W=u\vert_W$ in some neighborhood $W\Subset V$
of $x_0$.
Furthermore there is a conic neighborhood $\Gamma$ of $\xi_0$ such that for 
all $\lambda\in\Lambda$ and all $h>0$ there exists a constant $C>0$ such that
\begin{equation*}
\Betr{\xi}^{(d-\delta(W))k}\Betr{\hat{u}_k(\xi)}\leq
Ch^{k}k^{(d-\delta)k}e^{\zeta_\lambda(dk)},\qquad \xi\in\Gamma,\, \lvert\xi\rvert\geq 1.
\end{equation*}
 If $k_\ell$ for $\ell\in\N$ is defined as before then 
it is easy to see that \eqref{B-scaleChange} implies that
for all $\lambda^\ast$ and $h>0$ there is a constant $C>0$ such that
\begin{equation*}
\Betr{\xi}^\ell\Betr{\hat{v}_\ell(\xi)}\leq Ch^\ell (\ell+d)!
\exp\left[\zeta_{\lambda^\ast}(\ell+d)\right].
\end{equation*}
It follows from \eqref{BeurlingDerivClosed} that for all $\lambda^\prime\in\Lambda$
and $h>0$ there is some $C>0$ such that for all $\ell\in\N_0$ we have
\begin{equation*}
\Betr{\xi}^\ell\Betr{\hat{v}_\ell(\xi)}\leq Ch^\ell \ell!
\exp\left[\zeta_{\lambda^\prime}(\ell)\right]= Ch^\ell M_\ell^{\lambda^\prime}.
\end{equation*}
Hence
\begin{align*}
\WF_{(\bM^{\lambda^\prime})}u\cap \bigl(V\times\R^n\!\setminus\!\{0\}\bigr)&=\emptyset\\
\intertext{for all $\lambda^\prime\in\Lambda$ and therefore by Proposition
	\ref{WF-Intersection}}
\WF_{(\fM_\zeta)}u\cap \bigl(V\times\R^n\!\setminus\!\{0\}\bigr)&=\emptyset.
\end{align*}
This means that $\WF_{(\fM_\zeta)}u=\emptyset$ and
consequently $u\in\Beu{\fM_\zeta}{U}$.
\end{proof}
\begin{Cor}
	Let $P$ be as in Theorem \ref{ScaleCorollary1}. Then
	\begin{align*}
		\vDC{\fQ^r}{U}&=\DC{\fQ^r}{U}, & r&>1,\\
		\intertext{and}
		\vDC{\fB^j}{U}&=\DC{\fB^j}{U}, & j&\in\N.
	\end{align*}
\end{Cor}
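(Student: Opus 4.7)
The plan is to recognise the corollary as a direct instance of Theorem \ref{Theorem2}: once $\fQ^r$ and $\fB^j$ are identified as weight matrices associated with admissible ultradifferentiable scales in the sense of Subsection \ref{Subsec:DefScales}, no further work is needed beyond quoting the theorem.

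For the $q$-Gevrey case I would appeal to Example \ref{WFscalesExamples}(2), which exhibits $(\bL^{q,r})_{q>1}$ as an ultradifferentiable scale generated by the pseudohomogeneous function $\zeta^r(\lambda,t)=\lambda t^r$ with $\lambda=\log q$, and states that this scale is admissible for every $r>1$. By definition, the associated weight matrix is precisely $\fQ^r=\{\bL^{q,r}:q>1\}$. For the logarithmic case, Example \ref{WFscalesExamples}(4) presents $(\bB^{j,\sigma})_{\sigma>0}$ as a fitting and apposite scale generated by $\zeta^j(\lambda,t)=\lambda t\log^{(j+1)}(t+e^{(j)})$. Since a fitting scale is $R$-admissible and an apposite scale is $B$-admissible, this scale is admissible in both senses, and its associated weight matrix is $\fB^j$.

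With these identifications in hand, I would invoke Theorem \ref{Theorem2} once with $\fM_\zeta=\fQ^r$ and once with $\fM_\zeta=\fB^j$, immediately yielding the two equalities $\vDC{\fQ^r}{U}=\DC{\fQ^r}{U}$ and $\vDC{\fB^j}{U}=\DC{\fB^j}{U}$. I do not foresee any real technical obstacle, since all the substantial work — the reduction to the wavefront set via Theorem \ref{BaouendiMetivier}, the construction of the sequence $v_\ell$, and the use of the scale-change conditions \eqref{R-scaleChange} and \eqref{B-scaleChange} together with \eqref{RoumieuDerivClosed} and \eqref{BeurlingDerivClosed} — has already been carried out in the proof of Theorem \ref{Theorem2}. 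The only subtlety worth flagging is that for $r>2$ the individual sequences $\bL^{q,r}$ fail to be semiregular as single weight sequences, yet the matrix-level conditions \eqref{RoumieuDerivClosed}/\eqref{BeurlingDerivClosed} are still satisfied by moving between different parameters $q$; this is exactly the advantage that the scale formulation affords.
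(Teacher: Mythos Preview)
Your proposal is correct and matches the paper's approach exactly: the corollary is stated without proof in the paper because it follows immediately from Theorem \ref{Theorem2} once the scales from Example \ref{WFscalesExamples}(2) and (4) are recognised as admissible, precisely as you outline. Your remark about the case $r>2$ is also on point --- the individual sequences $\bL^{q,r}$ are not semiregular, but the matrix-level conditions \eqref{RoumieuDerivClosed} and \eqref{BeurlingDerivClosed} hold by passing to a larger (resp.\ smaller) parameter $q$, which is why Example \ref{WFscalesExamples}(2) asserts admissibility for all $r>1$ while fitting/apposite only for $r\leq 2$.
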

\begin{Ex}\label{MixedExample}
	Let $P$ be as in Theorem \ref{ScaleCorollary1}.
	\begin{enumerate}
		\item 
If we consider the scale $(\bL^{e,r})_r$ with
		generating function $\zeta(r,t)=t^r$ and associated weight matrix $\fR$ 
		then we have also that
		\begin{equation*}
		\vDC{\fR}{U}=\DC{\fR}{U}.
		\end{equation*}
		Indeed, since $\bL^{q,r_1}\lhd\bL^{e,r_2}$ for all $q>1$ and $r_1<r_2$, 
		we obtain that
		for all $\alpha>1$ and $r_1 <r_2$ there is a constant $\gamma>0$ such that
		\begin{equation*}
		\zeta(r_1,\alpha t)= \alpha^{r_1}t^{r_1}\leq \zeta(r_2,t)+
		\gamma (t+1),\qquad t\geq 1.
		\end{equation*}
\item
	We can also show that
\begin{equation*}
\vBeu{\fJ}{U}=\Beu{\fJ}{U}
\end{equation*}
where $\fJ=\fJ^1=\{\bB^{j,1}:\; j\in\N\}$.
Indeed, $\fJ$ is associated to the scale $(\bB^{j,1})_{j\in\N}$, which is 
generated by $\zeta(j,t)=t\log^{(j+1)}(t+e^{(j)})$.
Here we consider $\Lambda=(\N,\preceq)$ with the inverse order $\preceq$ defined by
\begin{equation*}
j\preceq k:\Longleftrightarrow k\leq j
\end{equation*}
for $j,k\in\N$. 
More generally, the function $\zeta_{\sigma}(j,t)=\sigma \zeta(j,t)$ generates
the ultradifferentiable scale $(\bB^{j,\sigma})_j$ for $\sigma>0$.
If $\alpha>0$ then we compute
\begin{equation}\label{MixedCond1}
\begin{split}
\zeta(j,\alpha t)&=(\alpha t)\log^{(j+1)}(\alpha t+e^{(j)})\\
&\leq (\alpha t)\log^{(j+1)}(t+e^{(j)})+\gamma_{j,\alpha}(t+1)\\
&\leq \alpha t\log^{(j)}(t+e^{(j-1)})+\gamma_{j,\alpha}(t+1)\\
&=\zeta_{\alpha}(j-1,t)+\gamma_{j,\alpha}(t+1)
\end{split}
\end{equation}
for $t\geq 1$ since $\log^{(j+1 )}(t+e^{(j)})\leq \log^{(j)}(t+e^{(j-1)})$ when $t\geq 1$.
Since $\fJ^\sigma$ is the weight matrix associated to $(\bB^{j,\sigma})_j$
we obtain by arguing as in the proof of Theorem \ref{Theorem2} that for
$V\Subset U$ all $(\fJ)$-vectors of $P$ are of class $(\fJ^\alpha)$ in $V$,
where $\alpha=d/(d-\delta(V))$ and $\delta(V)$ is as in Remark \ref{DeltaRemark}.
We have proven the claim because $\Beu{\fJ^\sigma}{V}=\Beu{\fJ}{V}$ 
for all $\sigma>0$, cf.\ Example \ref{WMatrixExamples}(3).
\end{enumerate}
\end{Ex}

\begin{Rem}\label{MixedRemark}
	In the last example the estimate \eqref{MixedCond1} involved two different scales
	in a ``mixed'' version of \eqref{PseudoHom}.
We can use ``mixed'' versions of \eqref{R-scaleChange} and \eqref{B-scaleChange} to
obtain results similar to Theorem \ref{ScaleCorollary1} in the case of weight matrices.
More precisely, 
let $P$ be a hypoelliptic analytic differential operator of principal type
 on $U\subseteq\R^n$ with analytic coefficients,
$V\Subset U$ and $\delta(V)$ as in Remark \ref{DeltaRemark}.
In the Roumieu case 
	we consider two  ultradifferentiable scales
	 $(\bM^\lambda_\zeta)_{\lambda\in\Lambda}$ and
	$(\bN^\upsilon_\eta)_{\upsilon\in\Upsilon}$ with generating functions 
	$\zeta:\Lambda\times [0,\infty)\rightarrow [0,\infty)$ and 
	$\eta:\Upsilon\times [0,\infty)\rightarrow [0,\infty)$ which satisfy both
\eqref{RoumieuDerivClosed} and 
	\begin{equation*}
		\fa\lambda\in\Lambda\,\ex\upsilon\in\Upsilon \;\ex \gamma>0:\;
		\zeta_{\lambda}(\alpha t)\leq \eta_{\upsilon}(t)+\gamma(t+1)\quad \fa t\in [1,\infty)
	\end{equation*}
where $\alpha=d/(d-\delta(V))$ and $d$ denotes the order of the operator.
Then every $u\in\vRou{\fM_{\zeta}}{U}$ is of class $\{\fN_\eta\}$ in $V$,
where $\fM_{\zeta}$ and $\fN_\eta$ are the weight matrices associated
to the scales $(\bM^\lambda_\zeta)_\lambda$ and $(\bN^\upsilon_\eta)_\upsilon$,
respectively.
	In the Beurling setting we assume that the generating functions $\zeta$ and $\eta$
	satisfy both \eqref{BeurlingDerivClosed} and 
	\begin{equation*}
		\fa\upsilon\in\Upsilon\,\ex\lambda\in\Lambda \,\ex\gamma>0:\;
		\zeta_{\lambda}(\alpha t)\leq \eta_{\upsilon}(t)+\gamma(t+1)\quad \fa t\in [1,\infty).
	\end{equation*}
	Then any $(\fM_{\zeta})$-vector $u\in\Dp(U)$ is of class $(\fN_\eta)$ in $V$.
\end{Rem}

\begin{Rem}
	Another important fact in Example \ref{MixedExample}(2) was that 
	the weight matrices $\fJ^\sigma$ associated to the scales $(\bB^{j,\sigma})_j$
	satisfy $\fJ^\rho(\approx)\fJ^\sigma$ for all $\rho,\sigma$. Of course,
	we can express this property in terms of the generating functions of the scales.
	
Assume, again, that two ultradifferentiable
scales $(\bM^\lambda_\zeta)_{\lambda\in\Lambda}$ and
	$(\bN^\upsilon_\eta)_{\upsilon\in\Upsilon}$  with generating functions 
	$\zeta:\Lambda\times [0,\infty)\rightarrow [0,\infty)$ and 
	$\eta:\Upsilon\times [0,\infty)\rightarrow [0,\infty)$, respectively, are given.
For such a pair of generating functions we define an auxillary function 
$\Phi^\zeta_\eta:\Lambda\times\Upsilon\times (0,\infty)\rightarrow \R$ by
	\begin{equation*}
		\Phi^\zeta_\eta(\lambda,\upsilon;t)=
		\frac{\zeta_{\lambda}(t)-\eta_{\upsilon}(t)}{t}.
	\end{equation*}
It is clear that $\bM^\lambda\preceq\bN^\upsilon$ if 
$\limsup_{t\rightarrow\infty} \Phi^\zeta_{\eta}(\lambda,\upsilon;t)<\infty$.
We can distinguish the following cases:
	\begin{enumerate}
		\item We have that $\fM_\zeta\{\preceq\}\fN_\eta$ when
		\begin{equation*}
			\fa \lambda\in\Lambda \ex \upsilon\in\Upsilon:\;
			\limsup_{t\rightarrow\infty} \Phi_\eta^\zeta(\lambda,\upsilon;t)<\infty.
		\end{equation*}
	\item On the other hand $\fM_\zeta(\preceq)\fN_\eta$ if
	\begin{equation*}
		\fa\upsilon\in\Upsilon\ex\lambda\in\Lambda:\;
		\limsup_{t\rightarrow\infty} \Phi_\eta^\zeta(\lambda,\upsilon;t)<\infty.
	\end{equation*}
\item  Finally $\fM_\zeta\{\lhd)\fN_\eta$ when
\begin{equation*}
	\fa\lambda\in\Lambda \fa\upsilon\in\Upsilon:\; \lim_{t\rightarrow\infty}
	\Phi_\eta^\zeta(\lambda,\upsilon;t)=-\infty.
\end{equation*}
	\end{enumerate}

We might also ask ourselves, when do two ultradifferentiable scales generate the same 
scales of Denjoy-Carleman classes?
In order to give an answer to this question, we say that two generating functions
$\zeta:\Lambda\times[0,\infty)\rightarrow [0,\infty)$ 
and $\eta:\Upsilon\times[0,\infty)\rightarrow [0,\infty)$ are comparable if
there is a bijective mapping $\chi:\Lambda\rightarrow \Upsilon$ such that
for each $\lambda\in\Lambda$ we have
\begin{equation*}
	-\infty< \liminf_{t\rightarrow\infty}\Phi_\eta^\zeta(\lambda,\chi(\lambda);t)
\leq\limsup_{t\rightarrow\infty}\Phi_\eta^\zeta(\lambda,\chi(\lambda);t)
<+\infty.
\end{equation*}
If $\zeta$ and $\eta$ are comparable then $\bM^\lambda_\zeta\approx\bN_\eta^{\chi(\lambda)}$
and thus $\DC{\bM^\lambda}{U}=\DC{\bN^{\chi(\lambda)}}{U}$ 
and $\vDC[\sP]{\bM^\lambda}{U}=\vDC[\sP]{\bN^{\chi(\lambda)}}{U}$ for all $\lambda\in\Lambda$ and all systems $\sP$ of differential operators.
\end{Rem}

\section{Scales induced by weight functions}\label{WeightFctScales}
\subsection{Condition \eqref{om7}}
In this section we are going to prove Theorem \ref{omMainThm},
but first we need to analyze condition \eqref{om7}.
It is useful for our deliberations to set
\begin{equation*}
	\begin{gathered}
\W_0=\bigl\{\omega\in\CC([0,\infty);\R):\;\,\omega(t)\rightarrow\infty 
\text{ is increasing},\qquad\\\qquad\qquad\qquad\qquad\qquad\qquad\qquad\qquad
 \omega\vert_{[0,1]}\equiv 0\text{ and }
\omega\text{ satisfies \eqref{om3} and \eqref{om4}}\bigr\}
	\end{gathered}
\end{equation*}
since we have the following Lemma.
\begin{Lem}\label{Basicom7Lemma}
If $\omega\in\W_0$ satisfies $\eqref{om7}$ then $\omega$ is a weight function.
Furthermore there is some $0<\alpha<1$ such that $\omega=O(t^\alpha)$.
\end{Lem}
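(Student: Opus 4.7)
My plan is to base both conclusions on a reformulation of \eqref{om7} obtained by the substitution $t\mapsto\sqrt{u}$: there exist constants $C,U_0>0$ such that
\begin{equation*}
\omega(u)\leq C\,\omega(H\sqrt{u})\qquad\text{for all }u\geq U_0;
\end{equation*}
call this inequality $(\ast)$. Both assertions of the lemma will fall out of $(\ast)$ by elementary manipulations, so the argument contains no genuine obstacle beyond careful bookkeeping.

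Since $\omega\in\W_0$ already encodes continuity, monotonicity, $\omega(0)=0$, and the conditions \eqref{om3} and \eqref{om4}, identifying $\omega$ as a weight function amounts to verifying \eqref{om2}. Applying $(\ast)$ at $u=2t$ gives $\omega(2t)\leq C\omega(H\sqrt{2t})$, and for $t\geq 2H^2$ one has $H\sqrt{2t}\leq t$; monotonicity of $\omega$ then immediately yields $\omega(2t)\leq C\omega(t)$ for all sufficiently large $t$, which is \eqref{om2}.

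For the polynomial bound I iterate $(\ast)$. A straightforward induction produces
\begin{equation*}
\omega(u)\leq C^{n}\,\omega\bigl(H^{a_n}u^{1/2^n}\bigr),\qquad a_n=\sum_{k=0}^{n-1}2^{-k}<2,
\end{equation*}
valid for $u$ so large that every intermediate argument remains above $U_0$. Choosing $n=n(u)$ of order $\log_2\log u$ so that $u^{1/2^n}$ stays in a fixed bounded interval, the term $H^{a_n}u^{1/2^n}$ is majorized by a fixed constant $M$, whence $\omega(u)\leq C^{n(u)}\omega(M)=O\bigl((\log u)^{\log_2 C}\bigr)$. Since any power of $\log u$ is $o(u^{\alpha})$ for every $\alpha>0$, any $\alpha\in(0,1)$ suffices. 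The one delicate point, and the only one requiring attention, is the threshold verification at each iteration step: the arguments $H^{a_{k-1}}u^{1/2^{k-1}}$ form a decreasing sequence in $k$, so it is enough to ensure that the last one exceeds $U_0$, which holds for $u$ large enough by the choice of $n(u)$.
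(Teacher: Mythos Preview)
Your argument is correct. The verification of \eqref{om2} is exactly what the paper does implicitly, and your iteration for the growth bound is sound: the recursion $a_n=1+a_{n-1}/2$ with $a_0=0$ indeed gives $a_n=\sum_{k=0}^{n-1}2^{-k}<2$, the intermediate arguments $H^{a_{k-1}}u^{1/2^{k-1}}$ are decreasing once $u>H^2$, and choosing $n$ maximal with $u^{1/2^{n-1}}\geq U_0$ keeps all thresholds satisfied while forcing $u^{1/2^n}<U_0$, hence $H^{a_n}u^{1/2^n}\leq H^2U_0=:M$ and $C^n=O\bigl((\log u)^{\log_2 C}\bigr)$.

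Your route is genuinely different from the paper's and in fact sharper. The paper does not iterate \eqref{om7} directly; instead it invokes two external results to pass from \eqref{om7} to the strong non-quasianalyticity condition $\int_1^\infty\omega(yt)t^{-2}\,dt\leq C\omega(y)+C$, and then cites a theorem of Meise--Taylor to conclude $\omega(t)=O(t^\alpha)$ for some $0<\alpha<1$. Your argument is self-contained, avoids these citations entirely, and yields the much stronger bound $\omega(u)=O\bigl((\log u)^{\log_2 C}\bigr)$, from which the stated conclusion follows for \emph{every} $\alpha\in(0,1)$ rather than merely for some such $\alpha$. The price is that the paper's detour through strong non-quasianalyticity connects \eqref{om7} to a structurally meaningful condition in the theory, whereas your iteration is purely ad hoc --- but for the purposes of this lemma that is no loss.
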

\begin{proof}
	It is easy to see that \eqref{om7} implies \eqref{om2}.
	On the other hand, by \cite[Lemma A.1]{sectorialextensions1} and 
	\cite[Lemma 4.3]{sectorextensions} 
	we know that $\omega$ satisfies the strong non-quasianalyticity condition:
	\begin{equation*}
	\exists C>0:\fa y>0:\; \int\limits_1^\infty \frac{\omega(yt)}{t^2}\leq C\omega(y)+C.
	\end{equation*}
	Hence \cite[Corollary 4.3]{MeiseTaylor88} states that there has to be some $0<\alpha<1$ 
	such that $\omega(t)=O(t^\alpha)$ for $t\rightarrow \infty$.
\end{proof}

We continue by recalling from \cite[Lemma A.1, Remark A.2]{sectorialextensions1},
cf.\ also Example \ref{q-GevreyOmega}, the following fact.
\begin{Prop}\label{FacultyAbsorb}
	Let $\omega$ be a weight function which satisfies \eqref{om7} and denote
	its associated weight matrix by $\fW=\{\bW^\lambda:\;\lambda>0\}$.
	If we define another weight matrix $\widehat{\fW}$ by
	\begin{equation*}
\widehat{\fW}:=\left\{\left(k!W^\lambda_k\right)_k:\lambda>0,\;\bW^\lambda\in\fW \right\}
	\end{equation*}
	then $\fW\{\approx\}\widehat{\fW}$ and $\fW(\approx)\widehat{\fW}$.
\end{Prop}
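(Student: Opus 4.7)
Since $\varphi_\omega^\ast\geq 0$ we have $W_k^\lambda\leq k!\,W_k^\lambda$ for all $\lambda>0$ and $k\in\N_0$, so the two halves $\fW\{\preceq\}\widehat{\fW}$ and $\fW(\preceq)\widehat{\fW}$ follow immediately by choosing the same parameter in both matrices. Hence the proposition reduces to the single statement
\begin{equation}\label{FacAbsEq1}
\forall\,\lambda>0\;\exists\,\mu>0\;\exists\,C,h>0:\;k!\,W_k^\lambda\leq Ch^kW_k^\mu\quad\forall\,k\in\N_0,
\end{equation}
because $\widehat{\fW}\{\preceq\}\fW$ follows from \eqref{FacAbsEq1} by taking $\mu\geq\lambda$, and $\widehat{\fW}(\preceq)\fW$ follows by the same construction with the roles of $\lambda$ and $\mu$ interchanged (which yields $\mu\leq\lambda$).

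The central step is to derive from \eqref{om7} the ``doubling-type'' inequality
\begin{equation}\label{FacAbsEq2}
W_k^\mu\geq h_0^k\bigl(W_k^\lambda\bigr)^2\qquad(k\in\N_0),
\end{equation}
for some $h_0>0$, whenever $\mu$ is chosen sufficiently large relative to $\lambda$. Writing $L=\log H$ and passing to $\varphi_\omega=\omega\circ\exp$, condition \eqref{om7} reads $\varphi_\omega(2s)\leq D\varphi_\omega(s+L)+E$ for $s$ large and some $D,E>0$; after possibly combining with \eqref{om2} and enlarging $D$ one may assume $D\geq 2$. The standard substitution $v=2s-2L$ inside the supremum defining $D\varphi_\omega^\ast(t/D)$ then yields
\begin{equation*}
D\varphi_\omega^\ast(t/D)\leq \varphi_\omega^\ast(t/2)+Lt+E.
\end{equation*}
Setting $\mu=D\lambda/2\geq\lambda$ and $t=2\mu k$ and exponentiating, this rearranges to \eqref{FacAbsEq2}.

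The last step is to absorb the factor $k!$. By Lemma \ref{Basicom7Lemma}, condition \eqref{om7} implies $\omega(t)=o(t)$, which in the preceding subsection was recalled to be equivalent to $(w_k^\lambda)^{1/k}\to\infty$, i.e.\ $(k!/W_k^\lambda)^{1/k}\to 0$. In particular, for every $\varepsilon>0$ there is $C_\varepsilon>0$ with $k!\leq C_\varepsilon\varepsilon^kW_k^\lambda$ for all $k$. Combining this bound with \eqref{FacAbsEq2},
\begin{equation*}
k!\,W_k^\lambda\leq C_\varepsilon\varepsilon^k\bigl(W_k^\lambda\bigr)^2\leq C_\varepsilon\bigl(\varepsilon/h_0\bigr)^kW_k^\mu,
\end{equation*}
which is exactly \eqref{FacAbsEq1}.

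The main technical obstacle I expect is the quantitative Young-conjugate calculation extracting \eqref{FacAbsEq2} from \eqref{om7}, and in particular the bookkeeping ensuring $D\geq 2$ so that the shift $\mu=D\lambda/2$ satisfies $\mu\geq\lambda$ (which is what lets the same construction handle both the Roumieu and the Beurling statements by relabelling). Once \eqref{FacAbsEq2} is available, the absorption of $k!$ via $\omega(t)=o(t)$ is essentially automatic and the trivial halves require no work.
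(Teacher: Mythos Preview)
Your argument is correct. The paper does not prove this proposition directly; it cites \cite[Lemma A.1, Remark A.2]{sectorialextensions1}. Your Young-conjugate computation extracting $D\varphi_\omega^\ast(t/D)\leq\varphi_\omega^\ast(t/2)+Lt+E$ from \eqref{om7} is essentially the same manipulation that the paper later carries out in the proof of Lemma~\ref{omMixedScale} (specialised to $\omega=\sigma$ and $\alpha=2$), so your approach is entirely in line with the paper's methods.

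One minor expository point: the requirement $D\geq 2$ is unnecessary, and your remarks about ``$\mu\geq\lambda$'' versus ``$\mu\leq\lambda$'' are a red herring. Neither $\widehat{\fW}\{\preceq\}\fW$ nor $\widehat{\fW}(\preceq)\fW$ imposes any ordering between the parameters; all that is needed is that the assignment $\lambda\mapsto\mu=D\lambda/2$ is a bijection of $(0,\infty)$, which holds for every $D>0$. Thus given $\lambda$ one takes $\mu=D\lambda/2$, and given $\mu$ one takes $\lambda=2\mu/D$, and in both cases your inequality \eqref{FacAbsEq2} together with the absorption $k!\leq C_\varepsilon\varepsilon^kW_k^\lambda$ (valid since \eqref{om7} forces $\omega(t)=o(t)$ by Lemma~\ref{Basicom7Lemma}) yields the conclusion. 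You should also note that \eqref{FacAbsEq2} as you state it comes with an additional harmless multiplicative constant $e^{-E/\mu}$ in front.
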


The main idea of the proof of Theorem \ref{omMainThm} is to associate to $\omega$ the
scale generated by
\begin{equation*}
 \zeta_\omega(\lambda,t)=\frac{1}{\lambda}\varphi_\omega^\ast(\lambda t).
 \end{equation*} 
If $\omega$ satisfies $\eqref{om7}$ and 
$\widehat{\fW}$ is the weight matrix 
associated to the scale generated by $\zeta_\omega$ then
$\DC{\omega}{U}=\DC{\widehat{\fW}}{U}$ by Proposition \ref{FacultyAbsorb}.
The generating function $\zeta_\omega$ satisfies \eqref{RoumieuDerivClosed} and \eqref{BeurlingDerivClosed}:
\begin{Lem}
	Let $\omega\in\W_0$ and 
$\varphi^\ast_{\lambda,\omega}(t):=\tfrac{1}{\lambda}\varphi^\ast_\omega(\lambda t)$
for $\lambda>0$. Then we have
\begin{equation*}
\varphi^{\ast}_{\lambda,\omega}(t+1)\leq \varphi^\ast_{2\lambda,\omega}(t)
+\varphi^\ast_{2\lambda,\omega}(1),\qquad t\geq 0,
\end{equation*}
for all $\lambda>0$.
\end{Lem}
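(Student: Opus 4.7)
The plan is to observe that the entire inequality collapses to a single application of midpoint convexity of $\varphi^\ast_\omega$ with the boundary condition $\varphi^\ast_\omega(0)=0$. First I would rewrite both sides in terms of $\varphi^\ast_\omega$ itself: the inequality to establish is
\begin{equation*}
\frac{1}{\lambda}\varphi^\ast_\omega(\lambda(t+1))
\;\leq\;
\frac{1}{2\lambda}\varphi^\ast_\omega(2\lambda t)+\frac{1}{2\lambda}\varphi^\ast_\omega(2\lambda),
\end{equation*}
which, after multiplying through by $2\lambda$, is equivalent to
\begin{equation*}
2\,\varphi^\ast_\omega(\lambda(t+1))\;\leq\;\varphi^\ast_\omega(2\lambda t)+\varphi^\ast_\omega(2\lambda).
\end{equation*}

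Next I would exploit that, since $\omega\in\W_0$ satisfies \eqref{om4}, the function $\varphi_\omega=\omega\circ\exp$ is convex and hence its Young conjugate $\varphi^\ast_\omega$ is convex and increasing on $[0,\infty)$. Writing the argument $\lambda(t+1)$ as the midpoint
\begin{equation*}
\lambda(t+1)=\tfrac12(2\lambda t)+\tfrac12(2\lambda),
\end{equation*}
convexity yields directly
\begin{equation*}
\varphi^\ast_\omega(\lambda(t+1))\;\leq\;\tfrac12\varphi^\ast_\omega(2\lambda t)+\tfrac12\varphi^\ast_\omega(2\lambda),
\end{equation*}
and doubling gives precisely the reformulated inequality above. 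Translating back to the $\varphi^\ast_{\lambda,\omega}$ notation completes the proof.

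There is essentially no obstacle here; the main point is simply to notice that the factor $2$ appearing in the parameter shift $\lambda\rightsquigarrow 2\lambda$ on the right is exactly what is needed to absorb the $\tfrac12$ coming from midpoint convexity, and that no use of \eqref{om3}, \eqref{om7} or the normalization $\varphi^\ast_\omega(0)=0$ beyond convexity is required (although $\varphi^\ast_\omega(0)=0$ is why there is no extra constant to worry about).
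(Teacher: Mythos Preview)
Your proof is correct and is essentially identical to the paper's own argument: both reduce the claim to a single application of midpoint convexity of $\varphi^\ast_\omega$ at the points $2\lambda t$ and $2\lambda$, followed by division by $2\lambda$.
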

\begin{proof}
The argument is similar to the one in the proof of \eqref{omMG}, 
cf.\ \cite{MR3285413}.
We include the proof for the convenience of the reader.

Let $\lambda>0$ be arbitrary but fixed. 
The convexity of $\varphi^\ast_{\omega}$ implies that
$\varphi^\ast_\omega((t+s)/2)\leq\frac{1}{2}\varphi^\ast_\omega(t)
+\frac{1}{2}\varphi^\ast_{\omega}(s)$ for all $s,t\geq 0$.
Hence the choices $t^\prime:=\frac{t}{2\lambda}$ and $s:=2\lambda$ yield
$\varphi^\ast_\omega(\lambda t^\prime+\lambda)
\leq\frac{1}{2}\varphi^\ast_\omega(2\lambda t^\prime) 
+\frac{1}{2}\varphi^\ast_\omega(2\lambda)$. 
Thus we obtain for all $t^\prime\geq 0$ that
\begin{equation*}
\frac{1}{\lambda}\varphi^\ast_\omega(\lambda(t^\prime+1))\leq
\frac{1}{2\lambda}\varphi^\ast_\omega (2\lambda t^\prime)
+\frac{1}{2\lambda}\varphi^\ast_{\omega}(2\lambda).
\end{equation*}
\end{proof}
\begin{Lem}[cf.\ {\cite[Appendix A]{sectorialextensions1}}]\label{om7Char}
Let $\omega\in\W_0$. The following statements are equivalent:
\begin{enumerate}
	\item $\omega$ satisfies \eqref{om7}.
	\item For all $\gamma>1$ there is a constant $C>0$ such that
	\begin{equation}\label{om7gen}
	\omega\left(t^\gamma\right)\leq C(\omega(t)+1),\qquad t\geq 0.
	\end{equation}
\end{enumerate}
\end{Lem}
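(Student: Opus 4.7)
The plan is to observe that (2) $\Rightarrow$ (1) is essentially trivial (take $\gamma = 2$ and $H=1$), so the real content is the forward direction, which follows from iterating \eqref{om7} after first absorbing the constant $H$ via \eqref{om2}.

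First I would dispose of the easy direction: assuming (2), specialize to $\gamma = 2$ to obtain $\omega(t^2) \leq C(\omega(t)+1)$ for all $t \geq 0$. Since $\omega(t) \to \infty$, for $t$ sufficiently large we have $\omega(t)+1 \leq 2\omega(t)$, and hence $\omega(t^2) \leq 2C\,\omega(t) = O(\omega(Ht))$ with $H = 1$, which is \eqref{om7}.

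For the main direction (1) $\Rightarrow$ (2), the first step is to remove the factor $H$. By Lemma \ref{Basicom7Lemma}, \eqref{om7} implies that $\omega$ is a weight function, hence satisfies \eqref{om2}. An easy iteration of \eqref{om2} yields a constant $L > 0$ (depending on $H$) with $\omega(Ht) \leq L(\omega(t)+1)$ for all $t \geq 0$. Combining with \eqref{om7}, we obtain a constant $A > 0$ such that
\begin{equation*}
\omega(t^2) \leq A\bigl(\omega(t)+1\bigr), \qquad t \geq 0.
\end{equation*}
(Large $t$ is handled by \eqref{om7}; small $t$ is handled by increasing $A$, since $\omega$ is bounded on bounded sets.)

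Next I would iterate this base estimate. A straightforward induction on $n \in \N$ shows that there is a constant $C_n$ (depending on $A$ and $n$) such that
\begin{equation*}
\omega\bigl(t^{2^n}\bigr) \leq C_n\bigl(\omega(t)+1\bigr), \qquad t \geq 0.
\end{equation*}
Finally, given any $\gamma > 1$, choose $n$ large enough so that $2^n \geq \gamma$. For $t \geq 1$ we have $t^\gamma \leq t^{2^n}$, and since $\omega$ is increasing this yields $\omega(t^\gamma) \leq C_n(\omega(t)+1)$. For $0 \leq t \leq 1$ we have $t^\gamma \leq 1$, so $\omega(t^\gamma) = 0$ by the normalization $\omega|_{[0,1]} \equiv 0$ and the inequality is trivial. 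This establishes \eqref{om7gen}.

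The argument is essentially bookkeeping; no step presents a genuine obstacle, the only thing to be careful about is that \eqref{om2} really is available in the forward direction, which is guaranteed by Lemma \ref{Basicom7Lemma}.
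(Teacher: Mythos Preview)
Your proof is correct and follows essentially the same approach as the paper: both directions are handled the same way, with the forward implication obtained by iterating \eqref{om7} and using that \eqref{om7} implies \eqref{om2} (via Lemma~\ref{Basicom7Lemma}) together with monotonicity of $\omega$. The only cosmetic difference is that you absorb the constant $H$ via \eqref{om2} before iterating, whereas the paper iterates first and invokes \eqref{om2} afterward.
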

\begin{proof}
Condition \eqref{om7} is equivalent to the existence of constants $C,H>0$ such that
\begin{equation}\tag{$\Xi^\prime$}\label{om7alt}
\omega\left(t^2\right)\leq C(\omega(Ht)+1),\qquad t\geq 0.
\end{equation}
Hence \eqref{om7gen} implies \eqref{om7}.

For $\gamma>1$ fixed choose $j\in\N$ such that $\gamma\leq 2^j$.
If we iterate \eqref{om7alt} we conclude that
\begin{equation*}
\omega(t^\gamma)\leq \omega\bigl(t^{2^j}\bigr)\leq C(\omega(t)+1),\qquad t\geq 0,
\end{equation*}
for some constant $C>0$, since $\omega$ is increasing and \eqref{om7} implies \eqref{om2}.
\end{proof}

\begin{Lem}\label{omMixedScale}
Let $\alpha>1$ and $\omega,\sigma\in\W_0$. Then the following are equivalent:
\begin{enumerate}
\item
$\ex H\geq 1 \ex C>0:\qquad\qquad\quad\;\;\, \omega(t^\alpha)\leq C(\sigma(Ht)+1),
\qquad\quad\;\;\, t\geq 0$,
\item $\ex A\geq 1 \fa \lambda>0 \ex D>0: \quad \varphi^\ast_{\lambda,\sigma}(\alpha t)\leq
\varphi^\ast_{A\lambda,\omega}(t)+D(t+1),\quad t\geq 0$,
\item $\ex A\geq 1 \ex \lambda>0 \ex D>0:\quad \varphi^\ast_{\lambda,\sigma}(\alpha t)
\leq \varphi^{\ast}_{A\lambda,\omega}(t)
+D(t+1),\quad t\geq 0$.
\end{enumerate}
\end{Lem}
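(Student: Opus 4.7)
The plan is to establish the cycle $(1)\Rightarrow (2)\Rightarrow (3)\Rightarrow (1)$, with $(2)\Rightarrow (3)$ being trivial. The main tools are the translation $\varphi_\omega(s)=\omega(e^s)$ between multiplicative bounds on $\omega$ and additive shifts on $\varphi_\omega$, together with the biconjugation identity $\varphi_\omega^{\ast\ast}=\varphi_\omega$ that holds because $\varphi_\omega$ is convex by \eqref{om4}.

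For $(1)\Rightarrow(2)$, I rewrite $\omega(t^\alpha)\le C(\sigma(Ht)+1)$ by setting $t=e^u$ as
\begin{equation*}
\varphi_\omega(\alpha u)\le C\varphi_\sigma(u+\log H)+C,\qquad u\ge 0.
\end{equation*}
Then compute $\varphi_\sigma^\ast(\alpha t)=\sup_{s\ge 0}(\alpha ts-\varphi_\sigma(s))$, splitting into $s\ge\log H$ (where the displayed inequality gives a lower bound on $\varphi_\sigma(s)$) and $s\in[0,\log H]$ (where $\alpha ts-\varphi_\sigma(s)\le\alpha t\log H$). In the first region the substitution $v=\alpha(s-\log H)$ converts $\sup_s$ into $\tfrac{1}{C}\sup_v(Cvt-\varphi_\omega(v))=\tfrac{1}{C}\varphi_\omega^\ast(Ct)$ plus an affine error. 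This yields
\begin{equation*}
\varphi_\sigma^\ast(\alpha t)\le \tfrac{1}{C}\varphi_\omega^\ast(Ct)+\alpha t\log H+1.
\end{equation*}
Replacing $t$ by $\lambda t$, dividing by $\lambda$, and recognising $\varphi^\ast_{C\lambda,\omega}(t)=\tfrac{1}{C\lambda}\varphi^\ast_\omega(C\lambda t)$ gives $(2)$ with $A:=C$ independent of $\lambda$ and $D_\lambda:=\max(\alpha\log H,1/\lambda)$.

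For $(3)\Rightarrow(1)$, unwinding the $\lambda$-scaling in $\varphi^\ast_{\lambda,\sigma}(\alpha t)\le \varphi^\ast_{A\lambda,\omega}(t)+D(t+1)$ (substitute $u=\lambda t$ and multiply by $\lambda$) gives
\begin{equation*}
\varphi_\sigma^\ast(\alpha u)\le \tfrac{1}{A}\varphi_\omega^\ast(Au)+Du+D\lambda,\qquad u\ge 0.
\end{equation*}
Now apply biconjugation: $\varphi_\sigma(s)=\sup_{t\ge 0}(st-\varphi_\sigma^\ast(t))$, and for every $u\ge 0$ insert $t=\alpha u$ to get the lower bound
\begin{equation*}
\varphi_\sigma(s)\ge \sup_{u\ge 0}\Bigl((\alpha s-D)u-\tfrac{1}{A}\varphi_\omega^\ast(Au)\Bigr)-D\lambda.
\end{equation*}
The change of variables $v=Au$ turns the supremum, whenever $\alpha s\ge D$, into $\tfrac{1}{A}\varphi_\omega(\alpha s-D)$; thus $\varphi_\omega(\alpha s-D)\le A\varphi_\sigma(s)+AD\lambda$. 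Exponentiating with $s=\log t$ yields $\omega(t^\alpha)\le A\sigma(t\cdot e^{D/\alpha})+AD\lambda$ for $t\ge 1$; for $t<1$ the inequality is trivial since $\omega$ vanishes on $[0,1]$. Choosing $H:=e^{D/\alpha}$ and $C:=\max(A,AD\lambda)$ recovers $(1)$.

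The only real obstacle is bookkeeping: one has to keep track of how the affine correction terms $D(t+1)$, the constant shifts $\log H$, and the $\lambda$-normalisations interact through the Legendre transform. None of this is conceptually hard, but the constants must be chased carefully to be sure that the quantifier structure in $(2)$ (a single $A$ working for \emph{all} $\lambda$, with $D$ allowed to depend on $\lambda$) is exactly what the computation delivers.
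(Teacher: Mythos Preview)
Your proof is correct and follows essentially the same Legendre-transform duality argument as the paper: both directions use the translation $\varphi_\omega=\omega\circ\exp$ and the biconjugation $\varphi^{\ast\ast}=\varphi$, and your constants $A=C$, $D=\max(\alpha\log H,1/\lambda)$ for $(1)\Rightarrow(2)$ and $H=e^{D/\alpha}$, $C=\max(A,AD\lambda)$ for $(3)\Rightarrow(1)$ match those obtained in the paper. The only cosmetic difference is that for $(1)\Rightarrow(2)$ the paper bounds $\varphi_\omega^\ast$ from below while you bound $\varphi_\sigma^\ast$ from above, but after the obvious substitution these are the same inequality.
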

\begin{proof}
The implication $(2)\Rightarrow(3)$ is trivial. If $(3)$ holds then we have
for some $\lambda>0$
\begin{equation*}
\begin{split}
\varphi^{\ast\ast}_\sigma(y)&=\sup_{x\geq 0}\left[xy-\varphi^\ast_{\sigma}(x)\right]
=\sup_{x^\prime\geq 0}\left[\lambda\alpha x^\prime y-\varphi^\ast_{\sigma}(\lambda\alpha x^\prime)\right]\\
&\geq\sup_{x^\prime\geq 0}\left[\lambda\alpha x^\prime y- A^{-1}
\varphi^\ast_\omega(A\lambda x^\prime)-\lambda D(x^\prime +1)\right]\\
&=A^{-1}\sup_{w\geq 0}\left[\alpha wy-\varphi^\ast_\omega(w) -Dw\right]-D\lambda\\
&=A^{-1}\varphi^{\ast\ast}_\omega(\alpha y-D)-D\lambda.
\end{split}
\end{equation*}
Since $\varphi^{\ast\ast}_\tau=\varphi_\tau$ for any $\tau\in\W_0$, we conclude that
\begin{equation*}
\omega(t^\alpha)\leq A\sigma(e^{D/\alpha}t)+DA\lambda,\quad t\geq 0.
\end{equation*}
Hence we have proven (1) with $H=e^{D/\alpha}$ and $C=\max\{A,DA\lambda\}$.

On the other hand, if (1) holds then there are constants $C,h>0$ such that
\begin{equation*}
\varphi_\omega(\alpha t)\leq C\varphi_\sigma (t+h) +C, \quad t\geq 0.
\end{equation*}
Thus for $t\geq 0$
we can compute that
\begin{equation*}
\begin{split}
\varphi^\ast_{\omega}(t)&=\sup_{s\geq 0}
\left[\alpha st-\varphi_\omega(\alpha s)\right]
\\
&\geq \sup_{s\in\R}\left[\alpha st-C\varphi_\sigma(s+h)\right]-C\\
&\geq C\sup_{u\in\R}\left[\frac{t}{C}\alpha u-\varphi_\sigma(u)\right]
-h\alpha t-C\\
&=C\varphi_\sigma^\ast\left(\alpha C^{-1}t\right)-h\alpha t-C
\end{split}
\end{equation*}
where we have $\varphi_{\sigma}(u)=0$ for $u<0$ by normalization. 
Hence for all $\lambda>0$ and $t\geq 0$ we have
\begin{equation*}
\frac{1}{\lambda}\varphi^\ast_\sigma(\lambda \alpha t)
\leq \frac{1}{C\lambda}\varphi_\omega^\ast(C\lambda t)+h\alpha t+\frac{1}{\lambda}.
\end{equation*}
Thus (2) is verified with the constants $A:=C$ and $D:=\max\{h\alpha,\lambda^{-1}\}$.
Observe that $A$ does not depend on $\lambda$.
\end{proof}
An immediate consequence of Lemma \ref{omMixedScale} is
\begin{Cor}
If $\omega\in\W_0$ then the following are equivalent:
\begin{enumerate}\label{MixedCor1}
	\item For all $\alpha>1$ there exists $\sigma\in\W_0$ and $L\geq 1$ such that
	\begin{equation*}
	\omega(t^\alpha)\leq L(\sigma(Lt)+1),\quad t\geq 0.
	\end{equation*}
	\item For all $\alpha>1$ there exists $\sigma\in\W_0$ such that 
	\begin{equation*}
	\ex A\geq 1\fa \lambda>0\ex D>0:\quad 
	\varphi^\ast_{\lambda,\sigma}(\alpha t)
	\leq\varphi^\ast_{A\lambda,\omega}(t)+D(t+1),
	\quad t\geq 0.
	\end{equation*}
\end{enumerate}
\end{Cor}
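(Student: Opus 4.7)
The plan is to view Corollary \ref{MixedCor1} as a direct quantifier manipulation of Lemma \ref{omMixedScale}. Both conditions of the Corollary have the structure ``$\forall\,\alpha > 1\ \exists\,\sigma \in \W_0 : P_\alpha(\omega, \sigma)$'', where $P_\alpha$ denotes condition~(1) of the Lemma in the first case and condition~(2) of the Lemma in the second. Since Lemma \ref{omMixedScale} already establishes the equivalence of these two properties for every fixed triple $(\omega, \sigma, \alpha)$, the witnessing $\sigma$ may be chosen to be the same on both sides, and the outer block $\forall\,\alpha\ \exists\,\sigma$ survives the equivalence.

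More concretely, I would fix $\alpha > 1$ arbitrarily and argue in both directions. Assuming (1), there exist $\sigma \in \W_0$ and $L \geq 1$ with $\omega(t^\alpha) \leq L(\sigma(Lt)+1)$; this is exactly condition (1) of Lemma \ref{omMixedScale} for the pair $(\omega, \sigma)$ at exponent $\alpha$, with $H = C = L$. Invoking the implication (1)$\Rightarrow$(2) of the Lemma produces some $A \geq 1$ such that for every $\lambda > 0$ there is a $D > 0$ with $\varphi^\ast_{\lambda,\sigma}(\alpha t) \leq \varphi^\ast_{A\lambda,\omega}(t) + D(t+1)$, which is precisely condition (2) of the Corollary at this $\alpha$ with the same $\sigma$. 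The reverse direction proceeds analogously via (2)$\Rightarrow$(1) of the Lemma (or by passing through (3), since (2)$\Rightarrow$(3) is trivial). Since $\alpha > 1$ is arbitrary, the two quantified statements coincide.

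The main obstacle here is essentially absent: all analytic content — the interaction between the Young conjugate $\varphi_\omega^\ast$ and the behaviour of $\omega$ on powers $t^\alpha$, together with the coupling of the constant $H$ on the $\omega$-side to the constant $A$ on the conjugate side — has already been carried out inside Lemma \ref{omMixedScale}. The only point worth a moment's thought is that in the Lemma, the constant $A$ is independent of $\lambda$ (as emphasized in its proof), which is exactly what is needed so that the existential $\exists A$ in the Corollary's statement (2) may be placed outside the universal $\forall \lambda$. Beyond that, the Corollary amounts to a purely formal repackaging of the Lemma with a uniform outer quantifier, and the proof reduces to the single observation that the witnessing $\sigma$ is shared across the two conditions for each fixed $\alpha$.
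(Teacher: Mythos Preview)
Your proposal is correct and matches the paper's approach exactly: the paper simply states that the Corollary is ``an immediate consequence of Lemma \ref{omMixedScale}'' without further elaboration, and what you have written is precisely the unpacking of that phrase. Your observation that the constant $A$ in the Lemma is independent of $\lambda$ (so that $\exists A$ may sit outside $\forall\lambda$) and that the single constant $L$ in the Corollary plays the role of both $H$ and $C$ in the Lemma are the only points worth noting, and you have handled both.
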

Hence, if we combine Corollary \ref{MixedCor1} with Lemma \ref{om7Char} we obtain
\begin{Cor}
Let $\omega\in\W_0$. The following two conditions are equivalent:
\begin{enumerate}
	\item $\omega$ satisfies $\eqref{om7}$.
	\item The function $\zeta_\omega(\lambda,t)=\varphi^\ast_{\lambda,\omega}(t)$
	satisfies
	\begin{equation*}
	\fa \alpha>1\ex A\geq 1\fa \lambda>0 \ex D>0:\quad
	\varphi^\ast_{\lambda,\omega}(\alpha t)\leq
	\varphi^\ast_{A\lambda,\omega}(t)+D(t+1),\quad t\geq 0.
	\end{equation*}
\end{enumerate}
\end{Cor}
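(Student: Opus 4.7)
The plan is to derive the Corollary as a direct specialization of Lemma \ref{omMixedScale} to the diagonal case $\sigma = \omega$, combined with the characterization of \eqref{om7} provided by Lemma \ref{om7Char}. Both conditions in the Corollary involve only $\omega$, so the equivalence is really a symmetric version of the more general mixed-scale equivalence already established.

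For the implication (1) $\Rightarrow$ (2), I would fix an arbitrary $\alpha > 1$. By Lemma \ref{om7Char}, \eqref{om7} gives a constant $C > 0$ with $\omega(t^\alpha) \leq C(\omega(t) + 1)$ for all $t \geq 0$. Since $\omega$ is increasing, this trivially yields $\omega(t^\alpha) \leq C(\omega(Ht) + 1)$ with $H := 1$, which is statement (1) of Lemma \ref{omMixedScale} with $\sigma = \omega$. The equivalent statement (2) of that lemma is precisely the scale-change inequality in (2) of the Corollary, and crucially its quantifier order $\exists A \geq 1\ \forall \lambda > 0\ \exists D > 0$ matches the required form. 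Since $\alpha > 1$ was arbitrary, (2) of the Corollary follows in full.

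For the converse (2) $\Rightarrow$ (1), I would specialize the scale-change condition to $\alpha := 2$ and any fixed $\lambda > 0$. The resulting inequality is exactly statement (3) of Lemma \ref{omMixedScale} applied with $\sigma = \omega$ and $\alpha = 2$, so the lemma produces constants $H \geq 1$ and $C > 0$ with $\omega(t^2) \leq C(\omega(Ht) + 1)$ for all $t \geq 0$; this is the form \eqref{om7alt} of \eqref{om7} obtained in the proof of Lemma \ref{om7Char}. The proof is essentially bookkeeping of quantifiers, and the only point worth a moment's care is verifying that the $A$ furnished by Lemma \ref{omMixedScale}(2) is genuinely independent of $\lambda$; this is explicit in the statement of that lemma (and in its proof, where $A$ is set to $C$ from condition (1)), so no substantive obstacle arises.
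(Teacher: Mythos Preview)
Your proof is correct and follows essentially the same route as the paper, which simply states that the Corollary is obtained by combining Lemma~\ref{om7Char} with the preceding mixed-scale equivalence. Your direct appeal to Lemma~\ref{omMixedScale} with $\sigma=\omega$ is in fact slightly cleaner than the paper's reference to Corollary~\ref{MixedCor1} (whose statement carries an existentially quantified $\sigma$), and your attention to the quantifier structure---in particular that $A$ is independent of $\lambda$---is exactly the point one needs to check.
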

If we summarize we have proven
\begin{Prop}\label{omScale}
Let $\omega$ be a weight function such that \eqref{om7} holds.
Then the scale $(\bM^\lambda_\zeta)_{\lambda>0}$ generated by
$\zeta(\lambda,t)=\zeta_\omega(\lambda,t)=\varphi^\ast_{\lambda,\omega}(t)$
is admissible.
Furthermore, if $\fM=\fM_{\zeta}$ denotes the weight matrix associated to the scale
$(\bM^\lambda_\zeta)_\lambda$ then 
\begin{equation*}
\DC{\omega}{U}=\DC{\fM}{U},\qquad \vDC{\omega}{U}=\vDC{\fM}{U} 
\end{equation*}
for any differential operator $P$ with analytic coefficients.
\end{Prop}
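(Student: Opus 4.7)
The plan is to verify that the generating function $\zeta_\omega(\lambda,t)=\varphi^\ast_{\lambda,\omega}(t)$ defines an admissible scale, and then transfer the class equalities through the weight matrix $\widehat{\fW}$ of Proposition~\ref{FacultyAbsorb}. First I would check that $\zeta_\omega$ satisfies the requirements of a generating function from Subsection~\ref{Subsec:DefScales}: continuity, monotonicity in $t$, and $\zeta_\omega(\lambda,0)=0$ are inherited from $\varphi^\ast_\omega$; log-convexity of $M^\lambda_k=k!\,W^\lambda_k$ follows because both factors are log-convex; $\zeta_\omega(\lambda,t)/t\to\infty$ reduces to $\omega(t)=o(t)$, which holds by Lemma~\ref{Basicom7Lemma}; and monotonicity in $\lambda$ on $[1,\infty)$ uses that $s\mapsto\varphi^\ast_\omega(s)/s$ is increasing.

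Admissibility then follows from the preceding lemmas of this section: the inequality $\varphi^\ast_{\lambda,\omega}(t+1)\leq\varphi^\ast_{2\lambda,\omega}(t)+\varphi^\ast_{2\lambda,\omega}(1)$ yields \eqref{RoumieuDerivClosed} with $\sigma=2\lambda$ and, rewritten with $\lambda/2$ in place of $\lambda$, also \eqref{BeurlingDerivClosed}; the reformulation of \eqref{om7} as $\varphi^\ast_{\lambda,\omega}(\alpha t)\leq\varphi^\ast_{A\lambda,\omega}(t)+D(t+1)$ with $A\geq 1$ independent of $\lambda$ furnishes \eqref{R-scaleChange} (take $\lambda^\ast=A\lambda\geq\lambda$) and \eqref{B-scaleChange} (take $\lambda=\lambda^\ast/A\leq\lambda^\ast$) in one stroke.

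For the class equalities, by construction $\fM_\zeta=\{(k!\,W^\lambda_k)_k:\lambda>0\}=\widehat{\fW}$, so Proposition~\ref{FacultyAbsorb} combined with the identification in Subsection~\ref{subsec:Weightfct} gives $\DC{\omega}{U}=\DC{\fW}{U}=\DC{\widehat{\fW}}{U}=\DC{\fM}{U}$. For the vector spaces, the inclusion $\vDC{\omega}{U}\subseteq\vDC{\fM}{U}$ is immediate since $W^\lambda_{dk}\leq M^\lambda_{dk}$. In the Roumieu direction of the reverse inclusion, the hypothesis $\norm[L^2(V)]{P^ku}\leq Ch^{dk}(dk)!\,W^\lambda_{dk}$ combines with $\fW\{\approx\}\widehat{\fW}$ to give $\norm[L^2(V)]{P^ku}\leq C'(hc_0)^{dk}W^\mu_{dk}$ for some $\mu$ and $c_0$, which is then absorbed via \eqref{newexpabsorb} into $C''\,W^{A\mu}_{dk}$. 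In the Beurling case, given a target parameter $h''$, one invokes $\fW(\approx)\widehat{\fW}$ with $\mu=h''$ to find $\nu,c_0$ with $(dk)!\,W^\nu_{dk}\leq c_0^{dk}W^{h''}_{dk}$, and then applies the Beurling hypothesis with $\lambda=\nu$ and $h\leq 1/c_0$ to conclude $\norm[L^2(V)]{P^ku}\leq C\,W^{h''}_{dk}$. The only delicate point is matching the direction of Proposition~\ref{FacultyAbsorb} to the Roumieu versus Beurling case, namely using $\{\approx\}$ when the source parameter is chosen after seeing the target and $(\approx)$ when the target parameter is fixed first.
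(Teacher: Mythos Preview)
Your proposal is correct and follows the same route as the paper, which presents Proposition~\ref{omScale} as a summary of the preceding lemmas (``If we summarize we have proven''). You verify admissibility from the same two ingredients---the derivative inequality for $\varphi^\ast_{\lambda,\omega}$ and the translation of \eqref{om7} in Lemma~\ref{omMixedScale}/its corollaries---and you obtain the class equalities from Proposition~\ref{FacultyAbsorb} via $\fM_\zeta=\widehat{\fW}$, exactly as the paper intends. Your treatment of the vector-space equalities is in fact more explicit than the paper's: there the identity $\vDC{\omega}{U}=\vDC{\fM}{U}$ is implicit in Proposition~\ref{VectorProperties}(1) applied to $\fW[\approx]\widehat{\fW}$ together with \eqref{newexpabsorb}, whereas you spell out the two directions by hand. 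One minor remark: the ``immediate'' inclusion $\vDC{\omega}{U}\subseteq\vDC{\fM}{U}$ in the Beurling case also needs \eqref{newexpabsorb} to handle the range $h<1$, but this is the same mechanism you already invoke in the reverse inclusion.
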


\begin{proof}[Proof of Theorem \ref{omMainThm}]
Combine Theorem \ref{Theorem2} with Proposition \ref{omScale}.
\end{proof}

\begin{Rem}
It is clear that Theorem \ref{omMainThm} cannot hold for general weight functions. For example, if $s>1$ then  $\Rou{t^{1/s}}{U}\subsetneq\vRou{t^{1/s}}{U}$ for all 
non-elliptic operators $P$ by \cite[Theorem 1.3]{MR654409}. 
Using the proof of \cite[Theorem 1.2]{doi:10.1080/03605307808820078}
 Boiti and Jornet \cite[Example 3.1]{MR3652556} showed that
  if $P$ is not elliptic then there is
a weight function $\omega_P$ which is not equivalent to any Gevrey weight
function $t^{1/s}$ such that $\Rou{\omega_P}{U}\subsetneq\vRou{\omega_P}{U}$.
This example does not contradict Theorem \ref{omMainThm} since
$\omega_P$ does not satisfy \eqref{om7}.
In fact, for each $\omega_P$ there exist $1<s<s^\prime$ by construction 
such that $\G^s(U)\subsetneq\Rou{\omega_P}{U}\subsetneq\G^{s^\prime}(U)$,
but the class associated with a weight function satisfying \eqref{om7} 
is not contained
in any Gevrey class as the following result shows.
\end{Rem}
\begin{Prop}
	Let $\omega\in\W_0$ be such that \eqref{om7} holds.
	Then $\DC{\omega}{\R}\nsubseteq\DC{t^{1/s}}{\R}$ for all $s>1$.
\end{Prop}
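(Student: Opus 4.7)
The plan is to use condition \eqref{om7} to force $\omega$ to grow strictly slower than every positive power of $t$, and then to exhibit a single function lying in $\Beu{\omega}(\R)$ but not in $\G^s(\R)$, which handles both the Roumieu and the Beurling halves of the claim simultaneously.

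First, I would iterate the equivalent condition \eqref{om7gen} from Lemma \ref{om7Char}. By Lemma \ref{Basicom7Lemma} there exists $\alpha\in(0,1)$ with $\omega(t)\le Ct^\alpha$ for $t$ large. For any $\gamma>1$, substituting $t$ by $t^{1/\gamma}$ in \eqref{om7gen} yields
\[
\omega(t)\le C_\gamma\bigl(\omega(t^{1/\gamma})+1\bigr)\le C_\gamma\bigl(Ct^{\alpha/\gamma}+1\bigr)=O(t^{\alpha/\gamma}).
\]
Letting $\gamma\to\infty$ gives $\omega(t)=O(t^\beta)$ for every $\beta>0$; in particular $\omega(t)=o(t^{1/s})$ for every $s>1$, i.e.\ $t^{1/s}\lhd\omega$ in the paper's notation. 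Moreover $\omega$ is non-quasianalytic, since $\omega(t)\le Ct^\alpha$ with $\alpha<1$ makes $\int_1^\infty\omega(t)/t^2\,dt$ converge.

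Because $\Beu{t^{1/s}}{\R}\subseteq\Rou{t^{1/s}}{\R}=\G^s(\R)$ and $\Beu{\omega}{\R}\subseteq\Rou{\omega}{\R}$, a single function $f\in\Beu{\omega}(\R)\setminus\G^s(\R)$ shows $f\in\Rou{\omega}{\R}\setminus\Rou{t^{1/s}}{\R}$ and $f\in\Beu{\omega}{\R}\setminus\Beu{t^{1/s}}{\R}$ at once, settling both interpretations of $\DC{\omega}{\R}\nsubseteq\DC{t^{1/s}}{\R}$. Non-quasianalyticity of $\omega$ guarantees that $\Beu{\omega}(\R)$ contains non-trivial compactly supported functions, and I would produce a specific one via the classical Bang--Mandelbrojt--Rudin infinite product
\[
\hat f(\xi)=\prod_{k=1}^\infty\frac{\sin(\xi/b_k)}{\xi/b_k},
\]
where the $b_k>0$ are summable and so calibrated that the counting function $N(r)=\#\{k:b_k\le r\}$ is comparable with $\omega(r)/\log r$. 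The standard analysis of this product yields, for every $h>0$, the Paley--Wiener upper bound $|\hat f(\xi)|\le C_h e^{-h\omega(|\xi|)}$ (placing $f$ in $\Beu{\omega}(\R)$) together with a matching lower bound $|\hat f(\xi_k)|\ge c\,e^{-C\omega(|\xi_k|)}$ along some sequence $|\xi_k|\to\infty$. Since any compactly supported $g\in\G^s(\R)$ obeys a Paley--Wiener bound $|\hat g(\xi)|\le C'e^{-c'|\xi|^{1/s}}$ and we have shown $\omega(|\xi|)=o(|\xi|^{1/s})$, the lower bound on $|\hat f|$ rules out $f\in\G^s(\R)$.

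The main obstacle is the sharp two-sided calibration of $(b_k)$ against $\omega$: one must simultaneously realise the Beurling upper bound on $|\hat f|$ for every $h>0$ and a lower bound saturating the rate $e^{-C\omega(|\xi|)}$ along a sequence tending to infinity. This is, however, a classical technique going back to Bang and Mandelbrojt, and the polynomial-in-$\log t$ growth of $\omega$ forced by \eqref{om7} leaves ample room to perform it.
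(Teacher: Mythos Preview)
Your first step---iterating \eqref{om7gen} together with Lemma~\ref{Basicom7Lemma} to conclude $\omega(t)=o(t^{1/s})$ for every $s>1$---is correct, as is your reduction to exhibiting a single $f\in\Beu{\omega}{\R}\setminus\G^s(\R)$. The construction, however, contains a genuine error: the two Fourier bounds you claim are mutually incompatible. If $f$ is compactly supported and lies in $\Beu{\omega}{\R}$, the Paley--Wiener characterization forces $|\hat f(\xi)|\le C_h e^{-h\omega(|\xi|)}$ for \emph{every} $h>0$; but you also demand $|\hat f(\xi_k)|\ge c\,e^{-C\omega(|\xi_k|)}$ for some \emph{fixed} $C>0$ along a sequence $|\xi_k|\to\infty$. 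Taking $h=2C$ in the upper bound gives $c\,e^{-C\omega(|\xi_k|)}\le C_{2C}\,e^{-2C\omega(|\xi_k|)}$, hence $e^{C\omega(|\xi_k|)}\le C_{2C}/c$, which is absurd since $\omega(|\xi_k|)\to\infty$. To salvage this route you would need a lower bound $|\hat f(\xi_k)|\ge e^{-\psi(|\xi_k|)}$ where $\psi$ grows strictly faster than every multiple of $\omega$ yet is still $o(t^{1/s})$; such an intermediate rate exists, but arranging the two-sided calibration against an \emph{auxiliary} $\psi$ rather than $\omega$ itself is not the off-the-shelf Bang--Mandelbrojt construction you invoke and is more delicate than your final paragraph suggests.

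The paper avoids any construction by a short contradiction argument: assuming $\DC{\omega}{\R}\subseteq\DC{t^{1/s}}{\R}$, the known characterization of inclusion between Braun--Meise--Taylor classes (quoted from Rainer--Schindl) yields $t^{1/s}\le B(\omega(t)+1)$; one application of Lemma~\ref{om7Char} with $\gamma=s$ then gives $t\le B_1(\omega(t)+1)$, i.e.\ $\omega\preceq t$, forcing $\DC{\omega}{\R}\subseteq\An(\R)$ and contradicting the strict inclusion $\An(\R)\subsetneq\DC{\omega}{\R}$ supplied by Lemma~\ref{Basicom7Lemma}.
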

\begin{proof}
	Suppose that $\DC{\omega}{\R}\subseteq\DC{t^{1/s}}{\R}$ for some $s>1$.
	Then according to \cite[Corollary 5.17(i)]{MR3285413} we obtain
	that $\omega\preceq t^{1/s}$, i.e.\ there is some $B>0$ such that
	\begin{equation*}
	t^{\tfrac{1}{s}}\leq B(\omega(t)+1), \quad t\geq 0,
	\end{equation*}
	and therefore by Lemma \ref{om7Char} for all $\alpha>1$ we can find
	a constant $B_1>0$ such that
	\begin{equation*}
	t^{\tfrac{\alpha}{s}}\leq B(\omega(t^\alpha)+1)\leq B_1(\omega(t)+1),
	\quad t\geq 0.
	\end{equation*}
	Hence if we choose $\alpha=s$ then $\omega\preceq t$, which means
	that $\DC{\omega}{\R}$ is contained in the space of analytic functions on $\R$.
	
	However, by Lemma \ref{Basicom7Lemma} there is some $0<\gamma<1$ 
	such that $t^\gamma\preceq\omega$, which in particular implies
	that the space of analytic functions is strictly contained in
	$\DC{\omega}{\R}$.
\end{proof}
\subsection{Some remarks}
We can use the ``mixed'' conditions of Corollary \ref{MixedCor1} to obtain results
like Theorem \ref{ScaleCorollary1}, cf.\ also Remark \ref{MixedRemark},
for weight functions. In fact, the conditions in Corollary \ref{MixedCor1} seem 
to be similar to those in Remark \ref{MixedRemark}.
However, arguing absolutely analogously to Section \ref{sec:Scales} we would not obtain results for some weight functions $\omega$ and $\sigma$ and their
associated weight matrices $\fW$ and $\fS$ but for the weight matrices
$\widehat{\fW}$ and $\widehat{\fS}$, cf. Proposition \ref{FacultyAbsorb}.
As we have seen that does not matter if $\omega=\sigma$ satisfies \eqref{om7}.

But for the ``mixed'' setting note first that we can drop $(k!)^{-\delta}$ in
\eqref{PrincipalEstimate} since $(k!)^{\delta}\geq 1$ for all $k\in\N_0$ and
 $\delta>0$. The other estimates before Theorem \ref{ScaleCorollary1} remain also
 valid if we drop the ``factorial'' factors of the form $k^{k(d-\delta)}$.
 We obtain therefore the following Theorem, but we need to  discuss subsequently
 how it fits in the theory presented in Section \ref{sec:Scales}.

\begin{Thm}\label{MixedThm}
Let $P$ be a hypoelliptic operator of principal type with analytic coefficients 
in $U\subseteq \R^n$, $V\Subset U$ and $\delta=\delta(V)$
as in Remark \ref{DeltaRemark}. 
Furthermore suppose that $\omega$ and $\sigma$ are two weight functions satisfying
\begin{equation*}
\omega(t^{\alpha})=O(\sigma(Ht)),\qquad t\rightarrow\infty,
\end{equation*}
 where $H\geq 1$ and 
$\alpha =d/(d-\delta)$.
Then every $[\sigma]$-vector of $P$ is
an ultradifferentiable function of class $[\omega]$ in $V$.
\end{Thm}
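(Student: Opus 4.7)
The plan is to adapt the microlocal argument of Theorem~\ref{Theorem2} directly at the level of the weight matrix $\fW$ associated to $\omega$, bypassing its ``factorial-absorbed'' variant $\widehat{\fW}$ (which in Theorem~\ref{omMainThm} was only accessible under~\eqref{om7}). The key translation is Lemma~\ref{omMixedScale}, which converts the hypothesis $\omega(t^\alpha)=O(\sigma(Ht))$ with $\alpha=d/(d-\delta)$ into the existence of $A\geq 1$ such that for every $\lambda>0$ one can find $D>0$ with
\begin{equation*}
\varphi^\ast_{\lambda,\sigma}(\alpha t)\leq\varphi^\ast_{A\lambda,\omega}(t)+D(t+1),\qquad t\geq 0.
\end{equation*}

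Fixing $x_0\in V$ and $\xi_0\in\R^n\setminus\{0\}$, I would apply Theorem~\ref{BaouendiMetivier} to obtain neighbourhoods $W^\prime\Subset W$ of $x_0$, a conic neighbourhood $\Gamma$ of $\xi_0$ and a bounded sequence $u_k\in\Ep(W)$ with $u_k|_{W^\prime}=u|_{W^\prime}$ satisfying, after discarding the harmless factor $(k!)^{-\delta}\leq 1$,
\begin{equation*}
\xit^{(d-\delta)k}\lvert\hat{u}_k(\xi)\rvert\leq C^{k+1}\bigl(\lVert P^ku\rVert_{L^2(W)}+(k!)^d\lVert u\rVert_{L^2(W)}\bigr),\qquad\xi\in\Gamma.
\end{equation*}
The $[\sigma]$-vector hypothesis bounds $\lVert P^ku\rVert_{L^2(W)}$ by $Ch^k e^{\varphi^\ast_{\lambda,\sigma}(dk)}$ for some (all) $\lambda,h>0$ in the Roumieu (Beurling) case; the second term $(k!)^d\lVert u\rVert_{L^2}$ is absorbed into the first because the standard weight-function property $\sigma(t)=o(t)$ forces $(dk)!=o(e^{\varphi^\ast_{\lambda,\sigma}(dk)})$. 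This is precisely the sense in which one can ``drop the factorial factors $k^{k(d-\delta)}$'' mentioned before the theorem.

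Next I would re-index as in the proof of Theorem~\ref{ScaleCorollary1}: set $\ell=(d-\delta)k$, $k_\ell=\lceil\ell/(d-\delta)\rceil\leq(\ell+d)/(d-\delta)$ and $v_\ell=u_{k_\ell}$. Since $dk_\ell\leq\alpha(\ell+d)$, iterating the convexity inequality $\varphi^\ast_{\mu,\sigma}(t+1)\leq\varphi^\ast_{2\mu,\sigma}(t)+O(1)$ (the lemma preceding Lemma~\ref{om7Char}) reduces $\varphi^\ast_{\lambda,\sigma}(dk_\ell)$ to $\varphi^\ast_{\lambda^\prime,\sigma}(\alpha\ell)+O(1)$ for some $\lambda^\prime=c_{d,\delta}\lambda$, and the mixed inequality above then gives $\varphi^\ast_{\lambda^\prime,\sigma}(\alpha\ell)\leq\varphi^\ast_{A\lambda^\prime,\omega}(\ell)+D(\ell+1)$. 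Absorbing all linear-in-$\ell$ corrections into $h^\ell$, and extending from $\xit\geq 1$ to all $\xi\in\Gamma$ via the boundedness of $(u_k)$ in $\Ep(W)$, I would arrive at
\begin{equation*}
\xit^\ell\lvert\hat{v}_\ell(\xi)\rvert\leq Ch^\ell e^{\varphi^\ast_{\mu,\omega}(\ell)},\qquad\xi\in\Gamma,
\end{equation*}
with $\mu=A\lambda^\prime$. By Definition~\ref{WF-Definition} this shows $(x_0,\xi_0)\notin\WF_{[\bW^\mu]}u$ for the corresponding sequence in the weight matrix $\fW$ of $\omega$, whence Propositions~\ref{WF-Intersection} and~\ref{Singsupport} conclude that $u$ is of class $[\omega]$ in $V$.

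The only nontrivial bookkeeping is tracking the parameter chain $\lambda\rightsquigarrow\lambda^\prime\rightsquigarrow A\lambda^\prime=\mu$. In the Roumieu case any bounded rescaling is admissible; in the Beurling case, given a target $\mu>0$, I would pick $\lambda=\mu/(Ac_{d,\delta})$, which is permissible because the constant $A$ in Lemma~\ref{omMixedScale}(2) is independent of $\lambda$ and $c_{d,\delta}$ is a fixed constant determined by $d$ and $\delta$. Notably, condition~\eqref{om7} is never invoked---only the standard weight-function axioms together with the mixed growth hypothesis are needed.
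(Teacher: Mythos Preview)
Your proposal is correct and follows essentially the same route as the paper's proof: both translate the hypothesis via Lemma~\ref{omMixedScale}/Corollary~\ref{MixedCor1}, feed the Baouendi--M\'etivier estimate~\eqref{PrincipalEstimate} into the re-indexing argument preceding Theorem~\ref{ScaleCorollary1} with the factorial factors suppressed, and conclude through Propositions~\ref{WF-Intersection} and~\ref{Singsupport}. One small caveat: $\sigma(t)=o(t)$ is not part of the paper's axioms \eqref{om2}--\eqref{om4} for a weight function, so it is an extra hypothesis rather than a ``standard property''; the paper's own terse proof relies on the same absorption without comment (it is the blanket condition ensuring the associated weight matrix is semiregular and $\An(U)\subseteq\E^{[\sigma]}(U)$).
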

\begin{proof}
	We denote by $\fW=\{\bW^\lambda:\,\lambda>0\}$, $W^\lambda_k=\varphi^\ast_{\lambda,\omega}(k)$,
	 and $\fS=\{\bS^\lambda:\,\lambda>0\}$, $S^\lambda_k=\varphi^\ast_{\lambda,\sigma}(k)$, the weight matrices
	 associated to $\omega$ and $\sigma$, respectively.
	According to Corollary \ref{MixedCor1} there is a constant $A>0$
	such that for every $\lambda>0$ we have 
	\begin{equation}\label{Mixedpseudohomogeneous}
	\varphi^{\ast}_{\lambda,\sigma}(\alpha t)\leq \varphi^\ast_{A\lambda,\omega}+ D(t+1)
	\end{equation}
	for some constant $D>0$.
	
If $u\in\vRou{\sigma}{U}$ then there exist $\lambda>0$, $h>0$ and $C>0$ such that 
\begin{equation*}
\norm[L^2(V)]{P^ku}\leq Ch^k S^\lambda_k
\end{equation*}
for all $k\in\N_0$.
Now \eqref{PrincipalEstimate} and \eqref{Mixedpseudohomogeneous} imply similarly to the argument before Theorem \ref{ScaleCorollary1} that
\begin{equation*}
\WF_{\{\bW^{A\lambda}\}}u\cap V\times\R^n\!\setminus\!\{0\}=\emptyset.
\end{equation*}
Hence  $u\vert_V\in\Rou{\fW}{V}=\Rou{\omega}{V}$
by Proposition \ref{WF-Intersection} and 
Proposition \ref{Singsupport}.

The Beurling case follows analogously.
\end{proof}

\begin{Rem}
If we set in Theorem \ref{MixedThm} $\omega(t)=t^{1/(\alpha s)}$ and $\sigma(t)=t^{1/s}$ 
then we obtain that any $s$-Gevrey vector is a $\alpha s$-Gevrey function in $V$.
But this is a weaker result than  \cite[Theorem 1.3]{MR654409}. In particular by
Theorem \ref{MixedThm} we would only obtain that an analytic vector is an
$\alpha$-Gevrey function in $V$.

This reflects the difference in the definition of the ultradifferentiable scales:
In section \ref{sec:Scales} we have defined the weight sequences $\bM^\lambda$
 of the scale generated by $\zeta$ by $m_k^\lambda=\exp\circ\zeta_{\lambda}(k)$, i.e.\
$M^\lambda_k=k!(\exp\circ\zeta_{\lambda}(k))$, whereas the definition of the scale associated
to a weight function in this section corresponds to $M_k^\lambda=\exp\circ\zeta_{\lambda}(k)$. 
By Proposition \ref{FacultyAbsorb} the two definitions 
are essentially equivalent when the weight 
function satisfies \eqref{om7}.
For the moment we may call a scale $(\bM^\lambda)_\lambda$ of 
semiregular weight sequences weak if it is defined
via the sequences 
$M_k^\lambda=\exp\circ\zeta_{\lambda}(k)$.\footnote{In order to guarantee that 
	the sequences $\bM^\lambda$ of a weak scale
	are semiregular weight sequences we need to change the definition of generating functions in Subsection \ref{Subsec:DefScales} a little bit. For example, instead of demanding only that 
$\log k +\zeta_\lambda(k)-\zeta_{\lambda}(k-1)$ is increasing in $k$ for fixed $k$
we need that the sequence $\zeta_\lambda(k)-\zeta_{\lambda}(k-1)$ is increasing.
Furthermore we replace $\lim (t^{-1}\zeta_\lambda(t))=\infty$
 for each $\lambda$
with $\lim (t^{-1}\zeta_{\lambda}(t)-\log(t))=\infty$.
In fact, these are the only changes necessary.}
On the other hand, we might say that the scales from Section \ref{sec:Scales},
i.e.\ those given by $k!\exp\circ\zeta_{\lambda}(k)$, are strong.

We observe that Theorem \ref{MixedThm} shows that it would make a big difference if we would have used weak scales in Section \ref{sec:Scales}:
As we pointed out above, for the Gevrey scale it would mean that we could only prove
a weaker version of \cite[Theorem 1.3]{MR654409}, and we would prove
the Roumieu version of Proposition \ref{GevreyCor} but not the Beurling version.
We note also that in this situation
 the scales $(\bB^{j,\sigma})_\sigma$ are not recognized under
the framework of weak scales, as the fact from above that analytic vectors might only be
Gevrey functions indicates.

On the other hand, if we consider scales that are larger than the Gevrey scales there
is not much difference. In the case of the scales $(\bL^{q,r})_q$
we have already noted that for the proof of Theorem \ref{Theorem2} for 
the matrices $\fQ^r$ (and therefore for the weight matrix $\fR$)
there is no real difference if we use the scale $(\bL^{q,r})_q$ or the
scale $(\bN^{q,r})_q$ given by $N^{q,r}_k=q^{k^r}$.
In fact, we have the following variant of Corollary \ref{Cor:qScale}:
\end{Rem}
\begin{Cor}
	Let $q>1$, $1<r\leq 2$ and $P$ be as in Theorem \ref{ScaleCorollary1} and 
	suppose that $u$ is an $[\bN^{q,r}]$-vector.
	If $V\Subset U$ then $u$ is of class $[\bN^{q^\prime,r}]$ in $V$,
	where $q^\prime$ is as in Corollary \ref{Cor:qScale}(1).
\end{Cor}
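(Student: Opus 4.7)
The plan is to run Theorem \ref{BaouendiMetivier} on the vector $u$ and extract a Fourier-side decay sharp enough to place any prescribed $(x_0,\xi_0)\in V\times(\R^n\setminus\{0\})$ outside $\WF_{[\bN^{q',r}]}u$, before concluding via the singular-support characterization. Note that $\bN^{q,r}$ is semiregular for $1<r\leq 2$ (logarithmic convexity is the convexity of $k\mapsto k^r$, and $N^{q,r}_{k+1}/N^{q,r}_k = q^{(k+1)^r-k^r}\leq q^{r(k+1)^{r-1}}\leq q^{r(k+1)}$ since $r\leq 2$), so Proposition \ref{Singsupport} applies to the singleton weight matrix $\{\bN^{q',r}\}$.

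First, fix $(x_0,\xi_0)\in V\times\R^n\!\setminus\!\{0\}$ and invoke Theorem \ref{BaouendiMetivier} with the constant $\delta=\delta(V)\in[0,1)$ from Remark \ref{DeltaRemark}. Feeding the vector estimate $\|P^ku\|_{L^2(W)}\leq Ch^kq^{(dk)^r}$ into \eqref{PrincipalEstimate}, and absorbing the lower-order term $(k!)^d$ (which is dominated by $q^{(dk)^r}$ whenever $r>1$, since $k^r$ outpaces $k\log k$) and the harmless factor $(k!)^{-\delta}\leq 1$, yields
\begin{equation*}
|\xi|^{(d-\delta)k}\bigl|\widehat{\psi_ku}(\xi)\bigr|\leq C' h'^k q^{(dk)^r},\qquad \xi\in\Gamma,
\end{equation*}
on a conic neighbourhood $\Gamma$ of $\xi_0$, with $\psi_k\in\D(W)$ as furnished by the theorem. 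Next, as in the proof of Theorem \ref{ScaleCorollary1}, I set $k_\ell=\lceil\ell/(d-\delta)\rceil$ and $v_\ell=\psi_{k_\ell}u$, so that $\ell\leq(d-\delta)k_\ell$ and $dk_\ell\leq\alpha(\ell+d)$ with $\alpha=d/(d-\delta)$. Combining with the elementary bound $(\ell+d)^r\leq\ell^r+C_d\ell^{r-1}$ (mean value theorem applied to $t\mapsto t^r$) gives
\begin{equation*}
q^{(dk_\ell)^r}\leq q^{\alpha^r(\ell+d)^r}\leq (q^{\alpha^r})^{\ell^r}\cdot q^{C_d\alpha^r\ell^{r-1}}=(q')^{\ell^r}\cdot q^{C_d\alpha^r\ell^{r-1}},
\end{equation*}
with $q'=q^{d^r/(d-\delta)^r}$ the target exponent. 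Since $r\leq 2$, we have $\ell^{r-1}\leq\ell$, so the excess is geometric in $\ell$ and can be folded into a single constant $h''$, yielding $|\xi|^\ell|\hat v_\ell(\xi)|\leq C'(h'')^\ell N^{q',r}_\ell$ for $\xi\in\Gamma$.

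The main obstacle is precisely the arithmetic of $(\ell+d)^r-\ell^r$: the hypothesis $r\leq 2$ is exactly what turns this discrepancy into a term $O(\ell^{r-1})=O(\ell)$ absorbable into the geometric prefactor. For $r>2$ the error would be super-linear in $\ell$ and no longer fit the sequence $\bN^{q',r}$; the very same restriction shows up in Example \ref{WSequencesExamples}(2) as the threshold at which semiregularity of the sister sequences $\bL^{q,r}$ fails, and it is also the reason $\bN^{q,r}$ ceases to be closed under differentiation beyond $r=2$.

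Finally, since $u\in L^2_{loc}$ (being an $L^2$-vector of $P$) and the $\psi_k$ are uniformly bounded, the sequence $v_\ell$ is bounded in $\E'(\overline{W})$. The decay above therefore places $(x_0,\xi_0)$ outside $\WF_{\{\bN^{q',r}\}}u$ in the Roumieu case. As $(x_0,\xi_0)$ was arbitrary, the wavefront set is empty above $V$ and Proposition \ref{Singsupport} (applied to the singleton semiregular matrix $\{\bN^{q',r}\}$) delivers $u\vert_V\in\Rou{\bN^{q',r}}{V}$. The Beurling case follows from the same chain of inequalities: the original estimate $\|P^ku\|_{L^2(W)}\leq C_hh^kq^{(dk)^r}$ holds for every $h>0$, so taking $h$ small makes $h''$ in the final estimate small, yielding $u\vert_V\in\Beu{\bN^{q',r}}{V}$.
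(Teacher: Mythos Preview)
Your proof is correct and follows the same route the paper intends. The paper does not spell out a separate proof for this corollary; it is stated immediately after the remark that in the Baouendi--M\'etivier argument one may drop the factor $(k!)^{-\delta}$ and the factorial contributions $k^{(d-\delta)k}$, so that the computation before Theorem~\ref{ScaleCorollary1} goes through unchanged when $\bL^{q,r}$ is replaced by $\bN^{q,r}=q^{k^r}$. You have simply carried out that computation explicitly: drop $(k!)^{-\delta}\leq 1$, absorb $(k!)^d$ into $q^{(dk)^r}$ (possible precisely because $r>1$), perform the index change $k_\ell=\lceil\ell/(d-\delta)\rceil$, and use $r\leq 2$ to turn the defect $(\ell+d)^r-\ell^r=O(\ell^{r-1})=O(\ell)$ into a geometric factor, landing on $q'=q^{d^r/(d-\delta)^r}$ exactly as in Corollary~\ref{Cor:qScale}(1). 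Your Beurling argument is also fine: the fixed geometric constants coming from Theorem~\ref{BaouendiMetivier} and from $\gamma=q^{\alpha^rC_d}$ are compensated by choosing the vector-estimate constant $h$ small enough, and the $(k!)^d$ term is dominated by $(h'')^\ell(q')^{\ell^r}$ for \emph{every} $h''>0$ since $r>1$.
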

\begin{Rem}
In order to decide which kind of scales should be used for studying 
the regularity of vectors of a given operator, 
one can, in the case of operators which have been already studied, 
look at the regularity of Gevrey vectors.
The technical reason 
why strong scales are  advantageous for the study of vectors of operators of principal type is the factor $(k!)^{-\delta}$ in 
the main estimate
\eqref{PrincipalEstimate}, cf.\ the estimates before Theorem \ref{ScaleCorollary1}.
For another example using the definition from section \ref{sec:Scales} 
see Subsection \ref{Appendix1}.

On the other hand, in the case of H\"ormander's sum of squares operators, introduced
in \cite{Hoermander1967}, there is some $r>1$ depending on the operator such 
that $s$-Gevrey vectors are $rs$-Gevrey vectors and these results are strict,
see \cite{doi:10.1080/03605307808820078}, \cite{MR3556261} and also the survey in \cite{Derridj2017}.
A similar result was obtained for some class of locally integrable structures 
of corank one in \cite{MR3043156}.
Hence in these two instances weak scales
are more appropriate for the study of ultradifferentiable vectors.

We can try to analyze these examples to find some general conditions which can help 
to decide which  kind of scales to use for the study of vectors of a given operator or system of operators.
It seems that two properties play an important role: 
subellipticity and that analytic vectors are analytic. 
We have seen that hypoelliptic operators of principal type satisfy both conditions
(as the systems of vector fields from Subsection \ref{Appendix1} do).
In contrast, the sums of squares operator of H\"ormander are subelliptic but
there are analytic vectors which are not analytic functions, 
see \cite{doi:10.1080/03605307808820078} and also \cite{MR3556261}.
The analytic vectors of the locally integrable structures considered in \cite{MR3043156} are analytic but locally integrable structures are in general 
not subelliptic, cf.\ \cite{SEDP_2005-2006____A14_0}.
In this case we refer also to the discussion in  \cite[Section 10]{MR3043156}.
\end{Rem}

 \section{Miscellanea}

\subsection{Systems of vector fields}\label{Appendix1}
Let $X_1,\dotsc X_\ell$ be smooth real-valued vector fields on $U\subseteq\R^n$.
We say that the family $\sX=\{X_1,\dotsc,X_\ell\}$ is of finite type of order $\nu\in\N$ 
if the tangent space $T_x U$ at each point $x\in U$ is generated by the iterated
Lie brackets of order $\leq \nu$
\begin{equation*}
X_I=[X_{j_1},\dotsc,[X_{j_{k-1}},X_{j_k}]\dots],
\qquad I=(j_1,\dotsc,j_k)\in\{1,\dotsc,\ell\}^k,\;k\leq \nu.
\end{equation*}
The main result of \cite{Hoermander1967} is that if the system 
$\sX=\{X_1,\dotsc,X_\ell\}$ is of finite type then $\sX$ is hypoelliptic. 
In the case of analytic vector fields
\cite{SEDP_1970-1971____A12_0} showed that the finite type condition is also necessary for smooth hypoellipticity.

In \cite{doi:10.1080/03605308008820164} it was proven that  if
the family $\sX=\{X_1,\dotsc,X_\ell\}$ is analytic and of finite type then
\begin{equation*}
\bigcap_{j=1}^\ell\An (U;X_j)=\An(U).
\end{equation*}
For Gevrey vectors \cite{ASENS_1980_4_13_4_397_0} showed
that 
\begin{equation*}
\G^{1+\sigma}(U;\sX)\subseteq\G^{1+\nu\sigma}(U),\qquad \sigma\geq 0,
\end{equation*}
if the collection of analytic $X_1,\dotsc,X_\ell$ is of finite type of order $\nu$ and
generates a stratified nilpotent Lie algebra $\mathsf{G}$ of rank $\nu$, i.e.
\begin{equation*}
\mathsf{G}=\mathsf{G}_1\oplus\dots\oplus\mathsf{G}_\nu
\end{equation*}
with
\begin{align*}
\left[\mathsf{G}_1,\mathsf{G}_j\right]&=\mathsf{G}_{j+1},\qquad 1\leq j<\nu,\\
\left[\mathsf{G}_1,\mathsf{G}_\nu\right]&=0.
\end{align*}
The theory of ultradifferentiable scales from Section \ref{sec:Scales}
 allows us to generalize the result
of \cite{ASENS_1980_4_13_4_397_0}:
\begin{Thm}\label{DH-Theorem}
Let $\sX=\{X_1,\dotsc,X_\ell\}$ be a family of analytic, real-valued vector fields on $U\subseteq\R^n$ that is of finite type of order $\nu$ and generates a stratified
Lie algebra of rank $\nu$.
\begin{enumerate}
\item If $(\bM^{\lambda}_\zeta)_\lambda$ is a fitting scale then
for every $\lambda\in\Lambda$ there is some $\lambda^\ast\in\Lambda$
such that 
$\vDC[\sX]{\bM^\lambda}{U}\subseteq\DC{\bM^{\lambda^\ast}}{U}$.
\item If $(\bM_\zeta^\lambda)_\lambda$ is apposite then
for all $\lambda^\ast\in\Lambda$ there exists some $\lambda\in\Lambda$
such that
$\vDC[\sX]{\bM^\lambda}{U}\subseteq\DC{\bM^{\lambda^\ast}}{U}$.
\item If $(\bM_{\zeta}^{\lambda})_\lambda$ is an $[$admissible$]$ scale then
$\vDC[\sX]{\fM_\zeta}{U}=\DC{\fM_\zeta}{U}$.
\end{enumerate}
\end{Thm}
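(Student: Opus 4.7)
The plan is to run the argument used for Theorem \ref{ScaleCorollary1} and Theorem \ref{Theorem2}, with the Baouendi--M\'etivier estimate \eqref{PrincipalEstimate} replaced by its analogue for a system $\sX$ of finite type of order $\nu$ generating a stratified Lie algebra of rank $\nu$. The key technical input, extracted from the global Gevrey-loss argument of \cite{ASENS_1980_4_13_4_397_0} and localized in the spirit of \cite{MR654409}, is the following: for every $(x_0,\xi_0)\in U\times(\R^n\!\setminus\!\{0\})$ there exist neighborhoods $W'\Subset W\Subset U$ of $x_0$, a conic neighborhood $\Gamma$ of $\xi_0$, a constant $C>0$, and a sequence $\psi_k\in\D(W)$ with $0\leq \psi_k\leq 1$, $\psi_k\equiv 1$ on $W'$ and $|D^\alpha\psi_k|\leq (Ck)^{\alp}$ for $\alp\leq k$, such that
\[
|\xi|^{k/\nu}\,|\widehat{\psi_k u}(\xi)|\leq \frac{C^{k+1}}{(k!)^{(\nu-1)/\nu}}\Bigl(\,\sum_{|I|\leq k}\|X^I u\|_{L^2(W)}+k!\,\|u\|_{L^2(W)}\Bigr),\qquad \xi\in\Gamma,
\]
holds for all $u\in L^2(W)$ with $X^Iu\in L^2(W)$ when $|I|\leq k$. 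This is the counterpart of \eqref{PrincipalEstimate} with $d=1$ and the effective loss $\delta=(\nu-1)/\nu$ coming from the subellipticity index $1/\nu$ of $\sX$.

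Granting this estimate, the proof of (1) proceeds exactly as in the paragraph preceding Theorem \ref{ScaleCorollary1}. Let $u\in\vDC[\sX]{\bM^\lambda}{U}$; subellipticity of $\sX$ together with Remark \ref{SubRemark} gives $\|X^I u\|_{L^2(V)}\leq Ch^{|I|}M^\lambda_{|I|}$ for each $V\Subset U$ (with constants quantified according to the Roumieu or Beurling case). Fixing $(x_0,\xi_0)\in V\times(\R^n\!\setminus\!\{0\})$, inserting this bound into the microlocal estimate and using $M^\lambda_k=k!\,e^{\zeta_\lambda(k)}$ together with Stirling's bound $(k!)^{1/\nu}\leq C^k k^{k/\nu}$ yields
\[
|\xi|^{k/\nu}|\widehat{\psi_k u}(\xi)|\leq Ch^k k^{k/\nu}e^{\zeta_\lambda(k)},\qquad \xi\in\Gamma,\;|\xi|\geq 1.
\]
For $\ell\in\N$ set $k_\ell=\lceil\nu\ell\rceil\leq\nu(\ell+1)$ and $v_\ell=\psi_{k_\ell}u$. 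Since $\ell\leq k_\ell/\nu$ and $k_\ell^{k_\ell/\nu}\leq(\nu(\ell+1))^{\ell+1}\leq C^\ell\ell!$ by Stirling, we obtain
\[
|\xi|^\ell|\hat{v}_\ell(\xi)|\leq|\xi|^{k_\ell/\nu}|\hat{v}_\ell(\xi)|\leq C(h')^\ell\ell!\,e^{\zeta_\lambda(\nu(\ell+1))}.
\]
Applying \eqref{R-scaleChange} with $\alpha=2\nu$ produces $\lambda^\ast\in\Lambda$ and $\gamma>0$ with $\zeta_\lambda(2\nu t)\leq\zeta_{\lambda^\ast}(t)+\gamma(t+1)$ for $t\geq 1$; since $\nu(\ell+1)\leq 2\nu\ell$ for $\ell\geq 1$, it follows that $|\xi|^\ell|\hat{v}_\ell(\xi)|\leq Ch^\ell M^{\lambda^\ast}_\ell$ on $\Gamma\cap\{|\xi|\geq 1\}$, showing $(x_0,\xi_0)\notin\WF_{[\bM^{\lambda^\ast}]}u$. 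Proposition \ref{Singsupport} then gives $u|_V$ of class $[\bM^{\lambda^\ast}]$, which is (1). Part (2) is entirely analogous using \eqref{B-scaleChange} instead of \eqref{R-scaleChange}, with the quantifiers reversed. Part (3) follows from (1) and (2) combined with Proposition \ref{WF-Intersection}, Proposition \ref{Singsupport} and Proposition \ref{VectorProperties}(3), exactly as in the proof of Theorem \ref{Theorem2}; the [admissibility] of the scale provides both \eqref{RoumieuDerivClosed}/\eqref{BeurlingDerivClosed} (needed for Proposition \ref{Singsupport} to apply to the associated matrix $\fM_\zeta$) and the scale-change condition used above.

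The main obstacle is establishing the microlocal Fourier estimate in the stated sharp form. The global Gevrey loss $\sigma\mapsto \nu\sigma$ of \cite{ASENS_1980_4_13_4_397_0} supplies the analytic substance, but obtaining the $|\xi|^{k/\nu}$ decay in a conic neighborhood of $\xi_0$ with the sharp factor $(k!)^{-(\nu-1)/\nu}$ requires iterating the basic subelliptic estimate $\|\varphi\|_{1/\nu}\leq C(\sum_j\|X_j\varphi\|_{L^2}+\|\varphi\|_{L^2})$ together with the cutoffs $\psi_k$ and controlling the commutator errors. Here the stratified hypothesis $[\mathsf{G}_1,\mathsf{G}_\nu]=0$ is crucial: it truncates the commutator expansions at length $\nu$ and keeps the constants bounded, which is what allows the sharp factorial loss to be maintained throughout the iteration.
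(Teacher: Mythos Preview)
Your approach has a genuine gap: the microlocal Fourier estimate you posit as the ``key technical input'' is not established, and you acknowledge this yourself at the end. Producing a sharp inequality of the form
\[
|\xi|^{k/\nu}\,|\widehat{\psi_k u}(\xi)|\leq \frac{C^{k+1}}{(k!)^{(\nu-1)/\nu}}\Bigl(\sum_{|I|\leq k}\|X^I u\|_{L^2(W)}+k!\,\|u\|_{L^2(W)}\Bigr)
\]
with the correct factorial prefactor and localized in a cone is not available from \cite{ASENS_1980_4_13_4_397_0} in any direct way; that paper contains no microlocal statement of this kind, and extracting one would require a separate, nontrivial argument. Without it the rest of your proof does not get off the ground.

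The paper sidesteps this issue completely by taking an algebraic route that never touches microlocal Fourier estimates. The stratified hypothesis is used through \cite[Corollaire 2.3]{ASENS_1980_4_13_4_397_0}, which gives for each $|I|\leq\nu$ an explicit finite expansion
\[
\frac{X_I^k}{k!}=\sum_{k_1+\dots+k_N=k|I|}\frac{a_1^{k_1}X_{j(1)}^{k_1}\cdots a_N^{k_N}X_{j(N)}^{k_N}}{k_1!\cdots k_N!}.
\]
Feeding the vector bound $\|X^\alpha u\|_{L^2(V)}\leq Ch^{|\alpha|}M^\lambda_{|\alpha|}$ into this formula and applying the scale-change condition \eqref{R-scaleChange} (or \eqref{B-scaleChange}) with $\alpha=|I|$ shows directly that $u$ is an $[\bM^{\lambda_\mu}]$-vector of each single operator $X_I$. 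Since the family $\{X_I:|I|\leq\nu\}$ is elliptic by the finite-type assumption, the conclusion then follows immediately from the elliptic Theorem of Iterates, Corollary \ref{EllipticCor}. This is both shorter and self-contained: the only analytic input beyond the elliptic case is the algebraic identity above, which is precisely where the stratified structure enters.
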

\begin{proof}
If $\lvert I\rvert\leq \nu$ then 
according to \cite[Corollaire 2.3]{ASENS_1980_4_13_4_397_0}
there exist $N\in\N$, $a_1,\dotsc,a_N\in\R$ and $j(1),\dotsc,j(N)\in\{1,\dotsc,\ell\}$ such that for all $k\in\N$
we have
\begin{equation*}
\frac{X_I^k}{k!}=\sum_{k_1+\dots +k_N=k\lvert I\rvert}
\frac{a_1^{k_1}X_{j(1)}^{k_1}\cdots a_N^{k_N}X_{j(N)}^{k_N}}{k_1!\cdots k_N!}.
\end{equation*}
Hence if $u\in\vRou[\sX]{\bM^\lambda}{U}$ and $\lvert I\vert=\mu$
then  for $V\Subset U$  there are constants $C,h>0$ such that
\begin{equation*}
\frac{\norm[L^2(V)]{X_I^ku}}{k!}
\leq Ch^{\mu k}\exp[\zeta_\lambda(\mu k)]
\sum_{k_1+\dots +k_N=\mu k}\frac{(\mu k)!\lvert a_1\rvert^{k_1}\cdots\lvert a_N\rvert^{k_N}}{k_1!\cdots k_N!}.
\end{equation*}
If we apply \eqref{R-scaleChange} then we obtain that
\begin{equation*}
\frac{\norm[L^2(V)]{X_I^ku}}{k!}
\leq C\gamma\left(\gamma h^{\mu}\left(\lvert a_1\rvert+\dots +\lvert a_N\rvert\right)^{\mu}\right)^k\exp[\zeta_{\lambda_\mu}( k)]
\end{equation*}
for some $\lambda_\mu$. In other words
\begin{equation*}
\norm[L^2(V)]{X_I^ku}\leq C_1h_1^k M^{\lambda_\mu}_k
\end{equation*}
for some constants $C_1,h_1>0$
and therefore $u\in\vRou[X_I]{\bM^{\lambda_{\mu}}}{U}$.
If $\lambda^\ast=\max\left\{\lambda_1,\dotsc,\lambda_{\nu}\right\}$
then
\begin{equation*}
u\in\bigcap_{\lvert I\rvert\leq \nu}\vRou[X_I]{\bM^{\lambda^\ast}}{U}.
\end{equation*}
Now Corollary \ref{EllipticCor} implies that $u\in\Rou{\bM^{\lambda^\ast}}{U}$.

The rest of the theorem can be shown in a similar manner.
\end{proof}
\begin{Cor}
Let $\sX$ be as in Theorem \ref{DH-Theorem}
 and assume that $\omega$ is a weight function which satisfies \eqref{om7}. Then
\begin{equation*}
\vDC[\sX]{\omega}{U}=\DC{\omega}{U}.
\end{equation*}
\end{Cor}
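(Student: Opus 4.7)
The plan is to reduce this statement about weight functions to the weight-matrix version already established in Theorem \ref{DH-Theorem}, by invoking Proposition \ref{omScale} as the bridge. First I would apply Proposition \ref{omScale}: since $\omega$ satisfies \eqref{om7}, the generating function $\zeta_\omega(\lambda,t)=\varphi^\ast_{\lambda,\omega}(t)$ produces an admissible ultradifferentiable scale $(\bM^\lambda_{\zeta_\omega})_{\lambda>0}$, and if $\fM=\fM_{\zeta_\omega}$ denotes its associated weight matrix then
\begin{equation*}
\DC{\omega}{U}=\DC{\fM}{U}.
\end{equation*}

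Second, I would establish the vector-space analogue
\begin{equation*}
\vDC[\sX]{\omega}{U}=\vDC[\sX]{\fM}{U}.
\end{equation*}
The argument parallels the function-space identity: the norms defining $\vDC[X_j]{\omega}{U}$ use $e^{-\frac{1}{h}\varphi^\ast_\omega(hdk)}=1/W^h_{dk}$, where $\bW^h$ is a sequence in the weight matrix associated to $\omega$; Proposition \ref{FacultyAbsorb} (equating $\fW$ with $\widehat{\fW}$ in both Roumieu and Beurling orderings under \eqref{om7}) together with the exponential absorption property \eqref{newexpabsorb} implies that an $L^2$-estimate of the form $\lVert X_j^k u\rVert_{L^2(V)}\leq Ch^k W^\lambda_{dk}$ is equivalent to one of the form $\lVert X_j^k u\rVert_{L^2(V)}\leq Ch^k (dk)!\,W^\lambda_{dk}=Ch^k M^\lambda_{\zeta_\omega,dk}$, uniformly in the Roumieu and Beurling quantifiers. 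This equivalence is precisely what was used in the proof of Theorem \ref{omMainThm} for a single operator, and it transfers without change to the system $\sX$ since the estimates are applied operator-by-operator.

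Finally, since $(\bM^\lambda_{\zeta_\omega})_\lambda$ is admissible, Theorem \ref{DH-Theorem}(3) applied to the system $\sX$ yields
\begin{equation*}
\vDC[\sX]{\fM}{U}=\DC{\fM}{U}.
\end{equation*}
Combining the three displayed identities gives $\vDC[\sX]{\omega}{U}=\DC{\omega}{U}$, which is the claim.

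The only place requiring any care is the equivalence of the two descriptions of $[\omega]$-vectors, i.e.\ verifying that the absorption arguments justifying $\DC{\omega}{U}=\DC{\fM}{U}$ really do pass to the $L^2$ setting for systems; but this is essentially bookkeeping given Proposition \ref{FacultyAbsorb} and \eqref{newexpabsorb}, and is of the same character as the analogous step in the proof of Theorem \ref{omMainThm}. I do not anticipate a genuine obstacle.
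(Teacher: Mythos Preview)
Your proposal is correct and matches the paper's intended argument: the corollary is stated without proof because it follows immediately by combining Theorem \ref{DH-Theorem}(3) with Proposition \ref{omScale}, the latter supplying both the admissibility of the scale generated by $\zeta_\omega$ and the identifications $\DC{\omega}{U}=\DC{\fM_{\zeta_\omega}}{U}$ and $\vDC{\omega}{U}=\vDC{\fM_{\zeta_\omega}}{U}$. Your only addition is the observation that the vector-space identification in Proposition \ref{omScale}, literally formulated for a single operator, carries over to the system $\sX$ by the same absorption argument; this is indeed routine, though note that the relevant estimates involve $X^\alpha u$ for arbitrary words $\alpha\in\{1,\dotsc,\ell\}^k$, not just powers $X_j^k$ of a single vector field.
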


\subsection{Weight sequences, associated weight functions and condition \eqref{om7}}\label{Appendix2}
If $\bM$ is a weight sequence then the associated weight function
\begin{equation*}
\omega_\bM(t)=\sup_{k\in\N_0}\log\frac{t^k}{M_k}
\end{equation*}
may not satisfy \eqref{om2} in general, cf.~\cite[Theorem 3.1]{Schindl21}
Therefore $\omega_\bM$ is not necessarily a weight function 
in the sense of \cite{MR1052587}.
However, $\omega_\bM\in\W_0$, see e.g.\ \cite{Komatsu73} 
or \cite{MR0051893}.
Thus, if $\omega_\bM$ satisfies \eqref{om7} then $\omega_\bM$ 
is a weight function
in the sense of \cite{MR1052587} by Lemma \ref{Basicom7Lemma}.
We also recall that 
\begin{equation}\label{InverseWeight}
M_k=\sup_{t>0}\frac{t^k}{\exp(\omega_\bM(t))}
\end{equation}
for all $k\in\N_0$.

We want to characterize those weight sequences $\bM$ 
for which $\omega_{\bM}$ 
satisfies \eqref{om7}. 
For technical reasons we will also assume in this section that $M_1\geq 1$ for all weight
sequences $\bM$. 

We begin  with an analogue of 
Lemma \ref{omMixedScale}.
\begin{Lem}
	Let $\bM,\bN$ be two weight sequences. The following statements are equivalent:
	\begin{enumerate}
		\item $\ex H\geq 1 \ex C>0:\quad \omega_\bN(t^2)\leq C
		\left(\omega_\bM(Ht)+1\right),\quad t\geq 0,$
		\item $\ex q\in\N \ex A,\gamma>0:\quad M_k^{2q}\leq A\gamma^k N_{qk},\quad k\in\N_0.$
	\end{enumerate}
\end{Lem}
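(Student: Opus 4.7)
The plan is to prove the equivalence by passing between the sequences and their associated weight functions via the duality formula \eqref{InverseWeight}. The key auxiliary identity I will use is
\begin{equation*}
M_k^{2q} = \sup_{r > 0}\frac{r^{2qk}}{e^{2q\omega_\bM(r)}},
\end{equation*}
which follows from \eqref{InverseWeight} upon noting that $x\mapsto x^{2q}$ is increasing on $[0,\infty)$, so that $(\sup f)^{2q} = \sup f^{2q}$ for any non-negative $f$.

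For $(1)\Rightarrow(2)$: I substitute $t = s^2$ in \eqref{InverseWeight} applied to $\bN$ at index $qk$, obtaining $N_{qk} \geq s^{2qk}/e^{\omega_\bN(s^2)}$. The hypothesis $\omega_\bN(s^2) \leq C\omega_\bM(Hs) + C$ then produces $s^{2qk}/e^{C\omega_\bM(Hs)} \leq e^C N_{qk}$. After the substitution $r = Hs$ and taking the supremum over $r > 0$,
\begin{equation*}
\sup_{r>0}\frac{r^{2qk}}{e^{C\omega_\bM(r)}} \leq e^C H^{2qk} N_{qk}.
\end{equation*}
I then choose an integer $q \geq C/2$, which forces $e^{2q\omega_\bM(r)} \geq e^{C\omega_\bM(r)}$ since $\omega_\bM \geq 0$, and the auxiliary identity above immediately yields $M_k^{2q} \leq e^C H^{2qk} N_{qk}$, which is (2) with $A = e^C$ and $\gamma = H^{2q}$.

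For $(2)\Rightarrow(1)$: I compute $\omega_\bN(t^2) = \sup_{j\in\N_0}(2j\log t - \log N_j)$ and split the supremum by the residue of $j$ modulo $q$. On the subsequence $j = qk$, the hypothesis rewrites as $2qk\log t - \log N_{qk} \leq 2q\log((t\gamma^{1/(2q)})^k/M_k) + \log A$, whose supremum over $k$ equals $2q\omega_\bM(Ht) + \log A$ with $H := \gamma^{1/(2q)}$. For the remaining indices $j = qk+r$ with $1 \leq r < q$, the log-convexity inequality \eqref{logconvexity2} applied to $\bN$ gives $\log N_{qk+r} \geq \log N_{qk} + \log N_r$, and hence the correction $2r\log t - \log N_r$ is bounded uniformly in $r$ by $2q\log t + C_1$ for $t\geq 1$. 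Combining, $\omega_\bN(t^2) \leq 2q\omega_\bM(Ht) + 2q\log t + O(1)$ on $t \geq 1$.

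To conclude, I invoke the elementary fact $\log t = o(\omega_\bM(t))$, valid for any weight sequence since $(M_k)^{1/k}\to\infty$ together with \eqref{InverseWeight} forces $\omega_\bM(t)/\log t \to \infty$; this absorbs the $2q\log t$ term into a constant multiple of $\omega_\bM(Ht)$ at the cost of enlarging the additive constant. For $t$ in a compact set both sides are bounded, so (1) follows after a further enlargement of the constant. The main technical obstacle is the cross-residue argument in this direction: without \eqref{logconvexity2}, the bound on the arithmetic progression $j = qk$ alone would not suffice to control the full supremum defining $\omega_\bN(t^2)$, and it is precisely the log-convexity of $\bN$ that closes the gap.
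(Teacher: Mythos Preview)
Your proof is correct. The direction $(1)\Rightarrow(2)$ is essentially the same as the paper's, both exploiting the duality \eqref{InverseWeight}; the paper routes the estimate through $M_k^2\leq M_{2k}$ and takes $q=C$ (after assuming $C\in\N$), while you bound $M_k^{2q}$ directly via $2q\geq C$, but this is only a cosmetic reorganisation.

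The direction $(2)\Rightarrow(1)$ is genuinely different. The paper rewrites the hypothesis as $M_k^2\leq A_1\gamma_1^k N_{qk}^{1/q}$, recognises $N_{qk}^{1/q}$ as $N_k^{(q)}$, the $q$-th sequence in the weight matrix $\fN$ associated to $\omega_\bN$, deduces $\omega_{\bN^{(q)}}(t^2)\leq 2\omega_\bM(\sqrt{\gamma_1}\,t)+\log A_1$, and then transfers back to $\omega_\bN$ via the known equivalence $\omega_{\bN^{(\lambda)}}\sim\omega_\bN$ from \cite{MR3285413}. Your argument is more elementary: you split the supremum defining $\omega_\bN(t^2)$ over residue classes modulo $q$, handle the class $j=qk$ directly from the hypothesis, and control the off-classes via \eqref{logconvexity2}, picking up an extra $2q\log t$ which is then absorbed using $\log t=O(\omega_\bM(t))$ (in fact $\log t\leq\omega_\bM(t)+\log M_1$ already suffices; your little-$o$ statement is correct but stronger than needed). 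Your route avoids importing the weight-matrix machinery entirely; the paper's route is shorter once that machinery is in place.
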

\begin{proof}
	If (1) holds then we can without loss of generality assume that $C\in\N$. 
	Hence \eqref{InverseWeight} gives
	\begin{equation*}
	\begin{split}
	M_{2k}&=\sup_{t>0}\frac{(Ht)^{2k}}{\exp(\omega_\bM(Ht))}\\
	&\leq eH^{2k}\sup_{t>0}\frac{t^{2k}}{\exp(C^{-1}\omega_\bN(t^2))}\\
	&=eH^{2k}\left(\sup_{s> 0}\frac{s^{Ck}}{\exp(\omega_\bN(s))}\right)^{1/C}\\
	&=eH^{2k}N^{1/C}_{Ck}.
	\end{split}
	\end{equation*}
	Since $M_k^2\leq M_{2k}$ by \eqref{logconvexity} we have proven
	(2) for $q=C$, $A=e^C$ and $\gamma=H^{2C}$.
	
	On the other hand (2) implies that 
	\begin{equation*}
	M_{k}^2\leq A^{1/q}\gamma^{k/q}N_{qk}^{1/q}=A_1\gamma_1^kN_{qk}^{1/q}.
	\end{equation*}
	We denote by $\fM=\{\bM^{(\lambda)}:\;\lambda>0\}$ resp.\ 
	$\fN=\{\bN^{(\lambda)}:\;\lambda>0\}$ the weight matrix associated to
	$\omega_\bM$ resp.\ $\omega_\bN$.
	It is easy to show that $\bM=\bM^{(1)}$ 
	(see for example the proof of \cite[Theorem 6.4]{MR3601829})
	and observe also that
	\begin{equation*}
	N^{(q)}_k=\exp\left[q^{-1}\varphi_{\omega_\bN}^\ast(qk)\right]
	=\left(N^{(1)}_{qk}\right)^{1/q}=N_{qk}^{1/q}
	\end{equation*}
	for all $q\in\N$ and $k\in\N_0$.
	Therefore from (2) we obtain
	\begin{equation*}
	\ex q\in\N\,\ex A_1,\gamma_1>0:\quad M_k^2\leq A_1\gamma_1^kN_k^{(q)},
	\quad k\in\N_0,
	\end{equation*}
	and thus
	\begin{equation*}
	\log \left(\frac{t^k}{N_k^{(q)}}\right)\leq 
	\log \left(\frac{(\gamma_1t)^k}{M_k^2}\right)+\log A_1
	=2\log\left(\frac{(\gamma_1 t)^{k/2}}{M_k}\right)+\log A_1
	\end{equation*}
	for all $t>0$ and $k\in\N_0$. Hence by definition
	\begin{equation*}
	\omega_{\bN^{(q)}}\bigl(t^2\bigr)\leq 2\omega_\bM\left(\sqrt{\gamma_1}t\right)+\log A_1,\quad t\geq 0.
	\end{equation*}
We recall that $\omega_{\bN^{(\lambda)}}\sim\omega_\bN$, more precisely we have
\begin{equation*}
\fa \lambda>0 \ex D_\lambda >0:\quad \lambda\omega_{\bN^{(\lambda)}}(t)\leq \omega_\bN(t)\leq 
2\lambda\omega_{\bN^{(\lambda)}}(t)+D_\lambda,\quad t\geq 0,
\end{equation*}
cf.\ \cite[Section 5]{MR3285413} or \cite[Lemma 2.5]{sectorextensions}.

Combining the last two estimates together we conclude that
\begin{equation*}
\omega_{\bN}(t^2)\leq 4q\omega_\bM\bigl(\sqrt{\gamma_1}t\bigr) +2q\log A_1+D_q,\quad
t\geq 0.
\end{equation*}
Hence (1) is proven with $H=\sqrt{\gamma_1}$ and $C=\max\{4q,2q\log (A_1)+D_q\}$.
\end{proof}
\begin{Cor}
	Let $\bM$ be a weight sequence. Then the following are equivalent:
	\begin{enumerate}
		\item The associated weight function $\omega_\bM$ satisfies \eqref{om7}.
		\item There is a positive integer $p\in\N$ and constants $A,B>0$ such that
		\begin{equation}\label{om7Seq}
		\bigl(M_k\bigr)^{2p}\leq AB^kM_{pk}
		\end{equation}
		holds for all $k\in\N_0$.
	\end{enumerate}
\end{Cor}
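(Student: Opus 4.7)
The plan is to obtain this corollary as an immediate specialization of the preceding Lemma by setting $\bN = \bM$. With this choice, condition (2) of the Lemma becomes exactly the estimate $(M_k)^{2q} \leq A\gamma^k M_{qk}$, i.e.\ \eqref{om7Seq} after renaming the parameters $q \rightsquigarrow p$ and $\gamma \rightsquigarrow B$; while condition (1) of the Lemma becomes $\omega_\bM(t^2) \leq C(\omega_\bM(Ht)+1)$ for all $t \geq 0$.

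The only step that is not completely mechanical is verifying that this latter inequality is equivalent to the big-$O$ formulation $\omega_\bM(t^2) = O(\omega_\bM(Ht))$ appearing in \eqref{om7}. For one direction, since $\omega_\bM$ is a weight sequence, $\omega_\bM(s) \to \infty$ as $s \to \infty$, so for $t$ sufficiently large $\omega_\bM(Ht) \geq 1$ and hence $C(\omega_\bM(Ht)+1) \leq 2C\,\omega_\bM(Ht)$, which gives the big-$O$ estimate. Conversely, if $\omega_\bM(t^2) \leq C'\omega_\bM(Ht)$ for all $t \geq t_0$, then on the compact interval $[0, t_0^2]$ both sides are bounded, so enlarging the constant yields an estimate of the form $\omega_\bM(t^2) \leq C(\omega_\bM(Ht)+1)$ valid for all $t \geq 0$.

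Thus the proof reduces to a single sentence: apply the Lemma with $\bN = \bM$ and use the equivalence of the two formulations of \eqref{om7} noted above. I do not expect any genuine obstacle — all the analytic content (the interplay between $\omega_\bM$ and the growth of $\bM$, including the use of \eqref{InverseWeight}, the comparison $\omega_{\bN^{(\lambda)}} \sim \omega_\bN$, and the identity $N^{(q)}_k = N_{qk}^{1/q}$) has already been carried out in the Lemma.
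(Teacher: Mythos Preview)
Your proposal is correct and matches the paper's approach: the corollary is stated without proof precisely because it is the specialization $\bN=\bM$ of the preceding Lemma, together with the already-noted equivalence \eqref{om7alt} between the big-$O$ formulation of \eqref{om7} and the additive form $\omega(t^2)\le C(\omega(Ht)+1)$. (Minor slip: you wrote ``$\omega_\bM$ is a weight sequence'' where you meant that $\omega_\bM\in\W_0$ and hence tends to infinity.)
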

Note that in \eqref{om7Seq} we can assume that $p\geq 2$, because
$p=1$ would yield that $\sup_k M_k^{1/k}<\infty$.

It is a natural question to ask if there is a weight sequence $\bM$ such that
\eqref{om7Seq} and $\DC{\bM}{U}=\DC{\omega_\bM}{U}$.
However, according to \cite{BonetMeiseMelikhov07}, a necessary condition for
the last identity is for $\bM$ to be of moderate growth, i.e.\
there is a constant $\gamma>0$ such that
\begin{equation}\label{ModerateGrowth}
M_{j+k}\leq \gamma^{j+k}M_{j}M_k
\end{equation}
for all $j,k\in\N_0$.
\begin{Lem}
A weight sequence $\bM$ does not satisfy simultaneously \eqref{om7Seq} and
\eqref{ModerateGrowth}.
\end{Lem}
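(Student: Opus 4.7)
The plan is to derive a contradiction with the defining property $\lim_{k\to\infty}(M_k)^{1/k}=\infty$ of a weight sequence by combining the two hypotheses to extract a geometric upper bound on $M_k$.

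First, I would iterate the moderate growth condition \eqref{ModerateGrowth} to bound $M_{pk}$ in terms of $(M_k)^p$. Writing $M_{pk}=M_{(p-1)k+k}\leq\gamma^{pk}M_{(p-1)k}M_k$ and inducting, one obtains
\begin{equation*}
M_{pk}\leq \gamma^{(2+3+\cdots+p)k}\,(M_k)^p = C^k\,(M_k)^p,\qquad k\in\N_0,
\end{equation*}
for the constant $C=\gamma^{p(p+1)/2-1}$ (which depends only on $\gamma$ and $p$). This is the only place where \eqref{ModerateGrowth} is used.

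Next, I would plug this bound into \eqref{om7Seq}, which gives, for the same $p\geq 2$,
\begin{equation*}
(M_k)^{2p}\leq AB^kM_{pk}\leq A(BC)^k(M_k)^p,\qquad k\in\N_0.
\end{equation*}
Cancelling $(M_k)^p>0$ on both sides yields $(M_k)^p\leq A(BC)^k$, hence
\begin{equation*}
(M_k)^{1/k}\leq A^{1/(pk)}(BC)^{1/p}
\end{equation*}
for every $k\in\N$. The right-hand side is bounded as $k\to\infty$, contradicting the required property $\lim_{k\to\infty}(M_k)^{1/k}=\infty$ of a weight sequence.

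There is essentially no serious obstacle here: the entire argument consists of one iteration of moderate growth followed by a cancellation. The only thing to keep track of is that \eqref{om7Seq} may be assumed with $p\geq 2$ (as remarked just before the lemma), so that after cancelling $(M_k)^p$ a positive power $p\geq 2\geq 1$ of $M_k$ really does get controlled. No use of logarithmic convexity beyond that already built into $\bM$ being a weight sequence is needed.
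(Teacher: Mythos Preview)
Your proof is correct and follows essentially the same approach as the paper: iterate moderate growth to bound $M_{pk}$ by a constant times $(M_k)^p$, substitute into \eqref{om7Seq}, cancel $(M_k)^p$, and conclude that $(M_k)^{1/k}$ is bounded, contradicting the definition of a weight sequence. The only cosmetic difference is the constant you obtain from the iteration ($\gamma^{p(p+1)/2-1}$ versus the paper's cruder $\gamma^{p^2}$), which is irrelevant.
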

\begin{proof}
Assume that both \eqref{om7Seq} and \eqref{ModerateGrowth} hold 
for $\bM$. Then \eqref{ModerateGrowth} implies that
\begin{equation*}
M_{pk}\leq \gamma^{p^2k}\bigl(M_k\bigr)^p, \quad k\in\N_0,
\end{equation*}
where $p$ is the integer from \eqref{om7Seq}.
Hence we have, if we combine the estimate above with \eqref{om7Seq}, that
\begin{equation*}
\bigl(M_k\bigr)^{2p}\leq AB^kM_{pk}\leq AB^k\gamma^{p^2k}\bigl(M_k\bigr)^p.
\end{equation*}
It follows that $\sup_k (M_k)^{1/k}<\infty$ and therefore $\bM$ is not
a weight sequence.
\end{proof}
\begin{Ex}
	\begin{enumerate}
	\item
The sequences $\bN^{q,r}=q^{k^r}$ satisfy \eqref{om7Seq} for 
$p\geq 2^{1/(r-1)}$:
Then $2p\leq p^r$ and therefore 
\begin{equation*}
\left(N_k^{q,r}\right)^{2p}=q^{2pk^r}\leq q^{(pk)^r}=N^{q,r}_{pk}.
\end{equation*}

\item The sequence $\bM$ given by $M_0=1$ and $M_k=e^{e^k}$, $k\in\N$,
 satisfies 
\eqref{om7Seq} with $p=8$ because we have the estimate
\begin{equation*}
\left(e^{e^k}\right)^{16}=e^{16e^k}\leq e^{e^{8k}}
\end{equation*}
since $4+k\leq 8k$ for all $k\in\N$.
\end{enumerate}
\end{Ex}
\subsection{Families of weight functions: An example}
Let $P$ be again a hypoelliptic operator of principal type with analytic coefficients in an open set $U\subseteq\R^n$
and consider $\Omega=\{\omega_s:\;s>0\}$, where $\omega_s(t)=(\max\{0,\log(t))\})^s$ is the weight function from Example
\ref{q-GevreyOmega}.
Then by Theorem \ref{omMainThm} we know that $\vDC{\omega_s}{U}=\DC{\omega_s}{U}$, but
analogously to the case of weight matrices, i.e.\ families of weight sequences, we can also consider 
the spaces associated to $\Omega$, i.e.\ we define
\begin{align*}
\Rou{\Omega}{U}&=\left\{f\in\E(U):\;\fa V\Subset U\ex s>1\ex h>0\quad
\norm[V,\omega_s,h]{f}<\infty\right\},\\
\Beu{\Omega}{U}&=\left\{f\in\E(U):\;\fa V\Subset U\fa s>1\fa h>0\quad
\norm[V,\omega_s,h]{f}<\infty\right\}\\
\intertext{and also}
\vRou{\Omega}{U}&=\Bigl\{u\in\Dp(U):\;\fa V\Subset U\ex s>1 \ex h>0\quad
\lVert u\rVert^P_{V,\omega_s,h}<\infty\Bigr\},\\
\vBeu{\Omega}{U}&=\Bigl\{u\in\Dp(U):\;\fa V\Subset U\fa s>1 \fa h>0\quad
\lVert u\rVert^P_{V,\omega_s,h}<\infty\Bigr\}.
\end{align*}
We have
\begin{Prop}
If $P$ is a hypoelliptic analytic operator of principal type then
\begin{equation*}
\vDC{\Omega}{U}=\DC{\Omega}{U}.
\end{equation*}
\end{Prop}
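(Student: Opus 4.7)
The strategy is to reduce everything to Theorem \ref{omMainThm} applied to each individual weight function $\omega_s$, $s>1$, and then to juggle the quantifiers appearing in the definitions of the classes associated to the family $\Omega$. The only genuine content needed is the identity $\vDC{\omega_s}{U'}=\DC{\omega_s}{U'}$ for each $s>1$ and each open $U'$; the rest is bookkeeping with compactly contained subsets.

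The first step is to verify that each $\omega_s$ satisfies \eqref{om7}. For $t\geq 1$ one computes
\begin{equation*}
\omega_s(t^2)=(\log t^2)^s=2^s(\log t)^s=2^s\omega_s(t),
\end{equation*}
so \eqref{om7} holds trivially with $H=1$. Hence Theorem \ref{omMainThm} applies on every open subset of $\R^n$ and yields $\vRou{\omega_s}{U'}=\Rou{\omega_s}{U'}$ as well as $\vBeu{\omega_s}{U'}=\Beu{\omega_s}{U'}$ for all $s>1$.

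The non-trivial inclusion $\vDC{\Omega}{U}\subseteq\DC{\Omega}{U}$ is now obtained by quantifier juggling. In the Roumieu case, let $u\in\vRou{\Omega}{U}$ and fix $V_0\Subset U$; choose $V$ with $V_0\Subset V\Subset U$. By definition there are $s>1$ and $h>0$ with $\vNorm{V,\omega_s,h}{u}<\infty$, and since the $L^2(W)$-norm is monotone in $W$ we deduce $u\in\vRou{\omega_s}{V}$. Theorem \ref{omMainThm} applied on $V$ then gives $u\in\Rou{\omega_s}{V}$, which in particular provides some $h'>0$ with $\norm[V_0,\omega_s,h']{u}<\infty$, so $u\in\Rou{\Omega}{U}$. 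The Beurling case runs identically: given $u\in\vBeu{\Omega}{U}$, $V_0\Subset U$ and $s>1$, pick $V_0\Subset V\Subset U$; the hypothesis yields $\vNorm{V,\omega_s,h}{u}<\infty$ for every $h>0$, so $u\in\vBeu{\omega_s}{V}$, hence $u\in\Beu{\omega_s}{V}$ by Theorem \ref{omMainThm}, and in particular $\norm[V_0,\omega_s,h]{u}<\infty$ for every $h>0$.

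The reverse inclusion $\DC{\Omega}{U}\subseteq\vDC{\Omega}{U}$ is the ``functions are vectors'' direction. Given $f\in\DC{\Omega}{U}$ and $V_0\Subset U$, enlarge again to $V_0\Subset V\Subset U$; the defining seminorms are finite on $V$ for a suitable $s$ (Roumieu) resp.\ for every $s$ (Beurling), so $f\in\DC{\omega_s}{V}$. Theorem \ref{omMainThm}, or alternatively the weight-matrix statement Proposition \ref{VectorProperties}(3) transported to $\omega_s$ via the equivalence $\DC{\omega_s}{V}=\DC{\fW^s}{V}$ of Example \ref{q-GevreyOmega} (noting that $\fW^s$ satisfies \eqref{matrixAnal} because $\omega_s(t)=o(t)$, and that the two definitions of $\vDC{\omega_s}{V}$ coincide by Remark \ref{SubRemark} since $P$ is subelliptic by Theorem \ref{TrevesThm}), then places $f$ in $\vDC{\omega_s}{V_0}$, whence $f\in\vDC{\Omega}{U}$. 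There is no real obstacle beyond organising the nested open sets so that ``for some $V\Subset U$'' / ``for all $V\Subset U$'' clauses line up across the three classes $\vDC{\Omega}{U}$, $\vDC{\omega_s}{V}$ and $\DC{\omega_s}{V}$.
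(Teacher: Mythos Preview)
Your proof is correct and follows essentially the same route as the paper: verify that each $\omega_s$ satisfies \eqref{om7}, invoke Theorem \ref{omMainThm} for each fixed $s>1$, and then thread the quantifiers through the definitions of the $\Omega$-classes. The paper is slightly terser---in the Beurling case it simply writes $\vBeu{\Omega}{U}=\bigcap_{s>1}\vBeu{\omega_s}{U}=\bigcap_{s>1}\Beu{\omega_s}{U}=\Beu{\Omega}{U}$ without the intermediate $V_0\Subset V$ step---but your more explicit bookkeeping with nested open sets, and your remark on why the $L^2$- and $H^\sigma$-based vector definitions coincide here (via Theorem \ref{TrevesThm} and Remark \ref{SubRemark}), are welcome clarifications rather than a different argument.
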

\begin{proof}
Observe 
in the Beurling case that 
\begin{align*}
\vBeu{\Omega}{U}=\bigcap_{s>1}\vBeu{\omega_s}{U}=\bigcap_{s>1}\Beu{\omega_s}{U}
=\Beu{\Omega}{U}.
\end{align*}

On the other hand, if $u\in\vRou{\Omega}{U}$ then for all $V\Subset U$ there is some $s>1$ such that 
\begin{equation*}
u\vert_V\in\vRou{\omega_s}{V}=\Rou{\omega_s}{V}\subseteq\Rou{\Omega}{V}.
\end{equation*}
Hence $u\in\Rou{\Omega}{U}$.

For the other direction, recall that
$\DC{\omega_s}{U}\subseteq\vDC{\omega_s}{U}$ for all $s>0$
 by Proposition \ref{VectorProperties}.
 Arguing analogously to above gives the desired inclusion.
\end{proof}

However, it turns out that we have already encountered the spaces $\E^{[\Omega]}$:
\begin{Thm}\label{ThmAppC}
	Let $P$ as above and $\fR$ be as in Example \ref{WMatrixExamples}(2). 
	Then
\begin{align*}
\DC{\Omega}{U}&=\DC{\fR}{U},\\
\vDC{\Omega}{U}&=\vDC{\fR}{U}.
\end{align*}
\end{Thm}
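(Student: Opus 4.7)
The plan is to chain the four identities
\[
\vDC{\Omega}{U}\;=\;\DC{\Omega}{U}\;=\;\DC{\fR}{U}\;=\;\vDC{\fR}{U}.
\]
The first equality is precisely the preceding Proposition, and the last is recorded in Example~\ref{MixedExample}(1) as a consequence of Theorem~\ref{Theorem2} applied to the admissible scale $(\bL^{e,r})_{r>1}$. Only the middle equality $\DC{\Omega}{U}=\DC{\fR}{U}$ requires a separate argument, and it is a purely set-theoretic comparison of ultradifferentiable classes that does not involve the operator $P$.

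For that middle identity I would combine Example~\ref{q-GevreyOmega} with the comparisons recorded in Example~\ref{WMatrixExamples}(2). By the former, the weight matrix $\fW^s$ associated with $\omega_s$ satisfies $\fW^s[\approx]\fQ^r$ with $r=s/(s-1)$, so $\DC{\omega_s}{V}=\DC{\fQ^r}{V}$ for every open $V$, and the map $s\mapsto r$ is a bijection of $(1,\infty)$ onto itself. In the Roumieu case, unfolding the quantifiers, $u\in\Rou{\Omega}{U}$ iff for every $V\Subset U$ there is some $s>1$ (possibly depending on $V$) with $u|_V\in\Rou{\omega_s}{V}=\Rou{\fQ^r}{V}$. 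Since $\fQ^r\{\preceq\}\fR$ gives $\Rou{\fQ^r}{V}\subseteq\Rou{\fR}{V}$, one obtains $\Rou{\Omega}{U}\subseteq\Rou{\fR}{U}$. Conversely, any $u\in\Rou{\fR}{U}$ satisfies $u|_V\in\Rou{\bL^{e,r}}{V}\subseteq\Rou{\fQ^r}{V}$ on each $V$ for some $r>1$ depending on $V$, because $\bL^{e,r}\in\fQ^r$, whence $u\in\Rou{\Omega}{U}$.

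In the Beurling case the universal quantifiers over $V$ and over $s$ commute, so $\Beu{\Omega}{U}=\bigcap_{r>1}\Beu{\fQ^r}{U}$. The inclusion $\Beu{\fR}{U}\subseteq\Beu{\Omega}{U}$ follows at once from $\fR(\preceq)\fQ^r$, which yields $\Beu{\fR}{U}\subseteq\Beu{\fQ^r}{U}$ for every $r>1$. For the reverse, any $f\in\Beu{\Omega}{U}$ belongs to $\Beu{\fQ^r}{U}\subseteq\Beu{\bL^{e,r}}{U}$ for every $r>1$ (since $\bL^{e,r}\in\fQ^r$), hence to $\Beu{\fR}{U}=\bigcap_{r>1}\Beu{\bL^{e,r}}{U}$. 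The argument contains no genuine obstacle; the only point requiring some care is the asymmetric quantifier structure on $V$ in the Roumieu definition, which forces one to carry out the weight matrix comparisons locally on each $V\Subset U$ rather than globally.
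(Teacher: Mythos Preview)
Your argument is correct and takes a genuinely different route from the paper. The paper does not chain through the preceding Proposition and Example~\ref{MixedExample}(1) to reduce to the function-space equality; instead it identifies $\DC{\Omega}{U}$ and $\vDC{\Omega}{U}$ with $\DC{\Omega_{\fR}}{U}$ and $\vDC{\Omega_{\fR}}{U}$ (using $\omega_{\bN^{q,r}}\sim\omega_s$ and Proposition~\ref{FacultyAbsorb}) and then invokes Theorem~\ref{SchindlThm} after verifying the moderate-growth conditions \eqref{R-mg}, \eqref{B-mg} and the absorption conditions \eqref{R-L}, \eqref{B-L} for $\fR$; for vectors it appeals to the observation that the proof of Theorem~\ref{SchindlThm} carries over verbatim. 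Your approach is more elementary: it bypasses Theorem~\ref{SchindlThm} entirely and relies only on the comparisons $\fW^s[\approx]\fQ^r$, $\fQ^r\{\preceq\}\fR$, $\fR(\preceq)\fQ^r$ already recorded in Examples~\ref{WMatrixExamples}(2) and~\ref{q-GevreyOmega}. The price is that your vector identity uses the operator-specific results (the preceding Proposition and Theorem~\ref{Theorem2}), whereas the paper's argument for $\vDC{\Omega}{U}=\vDC{\fR}{U}$ is in principle independent of $P$ being of principal type. One small point: when you pass from a bound $\norm[V,\bL^{e,r},h]{u}<\infty$ to membership in $\Rou{\omega_s}{V}$ (and vice versa), the comparison should be phrased at the level of a single seminorm on $V$, not via $\Rou{\fQ^r}{V}$ as a space, since the Roumieu definition on $V$ quantifies over $W\Subset V$; but the underlying $\preceq$-relations do give the needed seminorm inequalities directly, so the argument goes through.
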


The first equality follows from a more general theorem in \cite{MR3601829}.
In order to state that theorem we need to recall some notations.
If $\fM$ is a weight matrix we denote by 
$\Omega_\fM=\{\omega_{\bM}:\;\bM\in\fM\}$
the family of weight functions associated to $\fM$.
Similarly to above we can define the spaces of 
ultradifferentiable functions associated
to $\Omega_\fM$:
\begin{align*}
\Rou{\Omega_\fM}{U}&=\left\{f\in\E(U):\;\fa V\Subset U\ex \bM\in\fM\ex h>0\quad
\norm[V,\omega_\bM,h]{f}<\infty\right\},\\
\Beu{\Omega_\fM}{U}&=\left\{f\in\E(U):\;\fa V\Subset U\fa \bM\in\fM\fa h>0\quad
\norm[V,\omega_\bM,h]{f}<\infty\right\}
\end{align*}
We consider the following conditions
\begin{gather}
\fa\bM\in\fM\,\ex \bN\in\fM\,\ex C>0,\fa j,k\in\N_0:\;M_{j+k}\leq C^{j+k}N_jN_k,
\label{R-mg}\\
\fa\bN\in\fM\,\ex\bM\in\fM\,\ex C>0,\fa j,k\in\N_0:\; M_{j+k}\leq C^{j+k}N_jN_k,
\label{B-mg}\\
\fa\bM\in\fM\,\fa h>0\,\ex \bN\in\fM\,\ex D>0\,\fa k\in\N_0:\; h^kM_k\leq DN_k,
\label{R-L}\\
\fa\bN\in\fM\,\fa h>0\,\ex \bM\in\fM\,\ex D>0\,\fa k\in\N_0:\; h^kM_k\leq DN_k.
\label{B-L}
\end{gather}
In \cite{MR3601829} the following result was shown:
\begin{Thm}\label{SchindlThm}
	Let $\fM$ be a weight matrix. Then we have
	\begin{enumerate}
		\item If $\fM$ satisfies \eqref{R-mg} and \eqref{R-L} then
		$\Rou{\fM}{U}=\Rou{\Omega_\fM}{U}$.
		\item If $\fM$ satisfies \eqref{B-mg} and \eqref{B-L} then
		$\Beu{\fM}{U}=\Beu{\Omega_\fM}{U}$.
	\end{enumerate}
\end{Thm}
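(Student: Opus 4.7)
The plan is to prove both statements by reducing them to inclusions between weight matrices, using the associated weight matrix $\fW^{\bM}=\{\bW^{\bM,\lambda}\}_{\lambda>0}$ of the weight function $\omega_{\bM}$, where $W^{\bM,\lambda}_k=\exp[\lambda^{-1}\varphi^\ast_{\omega_\bM}(\lambda k)]$. The crucial identities are, first, that $W^{\bM,1}_k=M_k$ for all $k$ (a consequence of the log-convexity of $\bM$ and Legendre biconjugation applied to $\varphi_{\omega_\bM}(s)=\sup_k(ks-\log M_k)$), and second, that $W^{\bM,p}_k=(M_{pk})^{1/p}$ for each integer $p\geq 1$. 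Together with the fact $\varphi^\ast_\omega(t)/t$ is increasing, these give the monotonicity $W^{\bM,\lambda}_k\leq W^{\bM,\lambda'}_k$ for $\lambda\leq\lambda'$. Throughout, the proposition at the end of Subsection \ref{subsec:Weightfct} lets us identify $\DC{\omega_\bM}{U}=\DC{\fW^{\bM}}{U}$ as locally convex spaces, which shifts the whole discussion to comparisons of weight matrices as in Section \ref{Uclasses}.

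For the Roumieu case, the inclusion $\Rou{\fM}{U}\subseteq\Rou{\Omega_\fM}{U}$ requires no hypothesis on $\fM$: since $\bM=\bW^{\bM,1}\in\fW^{\bM}$, any $f\in\Rou{\bM}{U}$ lies in $\Rou{\fW^{\bM}}{U}=\Rou{\omega_\bM}{U}\subseteq\Rou{\Omega_\fM}{U}$. For the converse, suppose $f\in\Rou{\omega_\bM}{U}$, so $|D^\alpha f|\leq Ch^{|\alpha|}W^{\bM,\lambda}_{|\alpha|}$ for some $\lambda,h>0$. Choose an integer $p\geq\lambda$, so that $W^{\bM,\lambda}_k\leq W^{\bM,p}_k=(M_{pk})^{1/p}$. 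I would then iterate \eqref{R-mg} exactly $\lceil\log_2 p\rceil$ times: starting from $\bM$ produces $\bM_1\in\fM$ with $M_{2k}\leq C_1^{2k}M_{1,k}^2$; applying \eqref{R-mg} to $\bM_1$ produces $\bM_2\in\fM$ with $M_{1,2k}\leq C_2^{2k}M_{2,k}^2$, whence $M_{4k}\leq C^{4k}M_{2,k}^4$; continuing yields $\bN\in\fM$ and constants with $M_{pk}\leq \tilde C^{pk}N_k^p$, i.e.\ $W^{\bM,\lambda}_k\leq \tilde Ch^kN_k$. Finally \eqref{R-L} is applied to the pair $(\bN, C\tilde h)$ to absorb the factor $(C\tilde h)^k$ into a new $\bN'\in\fM$, placing $f$ in $\Rou{\bN'}{U}\subseteq\Rou{\fM}{U}$.

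For the Beurling case, the quantifier roles reverse. The easy inclusion is now $\Beu{\Omega_\fM}{U}\subseteq\Beu{\fM}{U}$: given $f$ in the left-hand side and arbitrary $\bN\in\fM$ and $h'>0$, applying the definition with $\bM=\bN$, $\lambda=1$, $h=h'$ and using $\bW^{\bN,1}=\bN$ directly produces the required bound. For the harder direction $\Beu{\fM}{U}\subseteq\Beu{\Omega_\fM}{U}$, fix $\bM\in\fM$, $\lambda>0$ and $h>0$; for $\lambda\geq 1$ the monotonicity $W^{\bM,\lambda}_k\geq M_k$ reduces this to the Beurling hypothesis with choice $\bN=\bM$, $h'=h$. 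The genuinely new case is $\lambda<1$, where $W^{\bM,\lambda}$ may be strictly smaller than $\bM$; here I would invoke \eqref{B-mg} in its natural "descending" form to produce, for the given target $\bM$, an auxiliary $\bN\in\fM$ dominated up to a geometric factor by $W^{\bM,\lambda}$, and then use \eqref{B-L} to pass from the finitely many constants produced during the iteration to a smaller $\bN'\in\fM$ with $(h')^k N'_k\leq h^kW^{\bM,\lambda}_k$, into which the Beurling estimate for $f$ plugs directly.

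The main obstacle is the iterated moderate-growth step: to pass from the bilinear inequality $M_{j+k}\leq C^{j+k}N_jN_k$ to an estimate of the form $M_{pk}\leq\tilde C^{pk}N^p_k$ with a single sequence $\bN\in\fM$, one must apply \eqref{R-mg} (resp.\ \eqref{B-mg}) recursively while checking that each intermediate sequence remains inside $\fM$, that the constants accumulate only geometrically in $k$, and that the quantifier structure $\fa\bM\ex\bN$ (resp.\ $\fa\bN\ex\bM$) is preserved in the right direction; this is a purely bookkeeping exercise, but it is the technical heart of the argument, and it is exactly at its end that the absorption condition \eqref{R-L} or \eqref{B-L} must be invoked to convert a bound of the form $W^{\bM,\lambda}_k\leq \tilde C^kN_k$ into an unqualified $W^{\bM,\lambda}\leq \bN'$ within the matrix $\fM$.
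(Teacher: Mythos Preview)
The paper does not prove Theorem \ref{SchindlThm}; it is quoted verbatim from \cite{MR3601829} and used as a black box in the proof of Theorem \ref{ThmAppC}. There is therefore no in-paper argument to compare your proposal against.

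That said, your strategy is the standard one and is essentially the proof given in \cite{MR3601829}: identify $\DC{\omega_\bM}{U}$ with $\DC{\fW^\bM}{U}$ via the material in Subsection \ref{subsec:Weightfct}, use the key identities $W^{\bM,1}_k=M_k$ and $W^{\bM,p}_k=(M_{pk})^{1/p}$ for integers $p$, and reduce the nontrivial inclusion in each case to an iterated moderate-growth estimate followed by an absorption step. Two minor points of bookkeeping: first, in the definitions of $\Rou{\Omega_\fM}{U}$ and $\Beu{\Omega_\fM}{U}$ the parameter $h$ appearing in $\norm[V,\omega_\bM,h]{\,\cdot\,}$ \emph{is} the index $\lambda$ of $\bW^{\bM,\lambda}$, so in your Beurling paragraph the separate quantification over both ``$\lambda>0$'' and ``$h>0$'' is redundant and should be collapsed; second, iterating \eqref{R-mg} (resp.\ \eqref{B-mg}) yields $M_{2^m k}\le\tilde C^{2^m k}N_k^{2^m}$ rather than $M_{pk}\le\tilde C^{pk}N_k^p$ for an arbitrary integer $p$, but since you are free to choose $p=2^m\ge\lambda$ and then invoke the monotonicity $W^{\bM,\lambda}_k\le W^{\bM,2^m}_k$, this causes no difficulty. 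With these cosmetic adjustments your outline is correct.
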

\begin{proof}[Proof of Theorem \ref{ThmAppC}]
	We recall from Example \ref{q-GevreyOmega} that the weight matrix associated
	to $\omega_s$, $s>1$, is $\fW_s=\{\bN^{q,r}:\;q>1\}$, where $r=s/(s-1)$ and $\bN^{q,r}=q^{k^r}$.
	Then $\omega_{\bN^{q,r}}\sim\omega_s$ for all $s>0$ and $q>0$ by 
	\cite[Lemma 5.7]{MR3285413}.
	Note also that by Proposition \ref{FacultyAbsorb} we have 
	$\fW_s[\approx]\fQ^r$.
	Thus $\DC{\Omega}{U}=\DC{\Omega_\fR}{U}$ and 
	$\vDC{\Omega}{U}=\vDC{\Omega_\fR}{U}$.
	
	For given $r^\prime>r>1$ choose $A>0$ large enough such that
	\begin{equation*}
	2^{r-1}\leq k^{r^\prime-r}+k^{1-r}\frac{\log A}{\log 2}
	\end{equation*}
	for all $k\in\N$. We conclude that
	\begin{equation*}
	N^{2,r}_{2k}\leq A^{k}\left(N_k^{2,r^\prime}\right)^2
	\end{equation*}
	which implies \eqref{R-mg} and \eqref{B-mg} by 
	\cite[Theorem 9.5.1 and Theorem 9.5.3]{SchindlThesis}.
	
	On the other hand, for any $h>0$ and $r>1$
	 we can choose $r^\prime>1$ and $D>0$ large enough such that
	\begin{equation*}
	k\log h\leq \log 2\left(k^{r^\prime}-k^{r}\right)+\log D
	\end{equation*}
	for all $k$
	which gives
	\begin{equation*}
	h^kN^{2,r}_k\leq DN^{2,r^\prime}_k,\qquad k\in\N_0.
	\end{equation*}
	It follows  that
	$\fR$ satisfies \eqref{R-L} and \eqref{B-L}.
	
	Hence $\DC{\Omega}{U}=\DC{\fR}{U}$ by Theorem \ref{SchindlThm}.
	A close inspection shows that the proof of 
	Theorem \ref{SchindlThm} 
	in \cite{MR3601829} applies also to the spaces $\vDC{\fM}{U}$.

	Therefore
	\begin{equation*}
	\vDC{\Omega}{U}=\vDC{\fR}{U}=\DC{\fR}{U}=\DC{\Omega}{U}.
	\end{equation*}
\end{proof}
\subsection{A characterization of ellipticity by non-Gevrey vectors}
The aim of this section is to prove Theorem \ref{MetivierThm2}.
We begin with noticing two easy observations, which we will need later on:
\begin{Lem}\label{D-AuxLemma1}
	Let $\bM$ be a weight sequence and $\rho, R\geq 1$. Then
	\begin{equation*}
	\rho^jM_{k+\ell}R^\ell\leq \rho^{j+\ell}M_k+M_{j+k+\ell}R^{j+\ell}
	\end{equation*}
	for all $j,k,\ell\in\N_0$.
\end{Lem}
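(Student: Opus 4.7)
The plan is to derive the inequality in two short steps: first invoke the log-convexity of $\bM$ to bound $M_{k+\ell}$ by a geometric mean of $M_k$ and $M_{j+k+\ell}$, and then apply weighted AM--GM to convert that geometric mean into the desired arithmetic sum. Throughout I may assume $j,\ell\geq 1$; the cases $j=0$ or $\ell=0$ reduce to the trivial observation that the left-hand side appears verbatim as one of the two summands on the right.

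For the first step, recall that log-convexity \eqref{logconvexity} asserts that $k\mapsto \log M_k$ is a convex function of $k\in\N_0$. Writing $k+\ell$ as the convex combination
\begin{equation*}
k+\ell=\tfrac{j}{j+\ell}\,k+\tfrac{\ell}{j+\ell}\,(j+k+\ell),
\end{equation*}
convexity of $\log M$ at this interior point yields
\begin{equation*}
M_{k+\ell}\leq M_k^{j/(j+\ell)}\,M_{j+k+\ell}^{\ell/(j+\ell)}.
\end{equation*}

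For the second step, observe the elementary identities $\rho^j=(\rho^{j+\ell})^{j/(j+\ell)}$ and $R^\ell=(R^{j+\ell})^{\ell/(j+\ell)}$. Multiplying the previous estimate by $\rho^j R^\ell$ and regrouping the factors yields
\begin{equation*}
\rho^j M_{k+\ell} R^\ell\leq \bigl(\rho^{j+\ell}M_k\bigr)^{j/(j+\ell)}\,\bigl(R^{j+\ell}M_{j+k+\ell}\bigr)^{\ell/(j+\ell)}.
\end{equation*}
Now the weighted AM--GM inequality $x^p y^q\leq p\,x+q\,y$, valid for $p,q\geq 0$ with $p+q=1$ and $x,y\geq 0$, applied with $p=j/(j+\ell)$, $q=\ell/(j+\ell)$, $x=\rho^{j+\ell}M_k$ and $y=R^{j+\ell}M_{j+k+\ell}$, gives the bound $\tfrac{j}{j+\ell}\rho^{j+\ell}M_k+\tfrac{\ell}{j+\ell}R^{j+\ell}M_{j+k+\ell}$, which is manifestly dominated by $\rho^{j+\ell}M_k+R^{j+\ell}M_{j+k+\ell}$ since both coefficients lie in $[0,1]$.

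There is no substantive obstacle here: the only non-trivial content is the clever regrouping $\rho^j(\cdot)^{j/(j+\ell)}=(\rho^{j+\ell}\cdot)^{j/(j+\ell)}$, which is what allows a pure convexity estimate (giving a geometric mean) to be converted directly into the arithmetic bound with exactly the claimed exponents $\rho^{j+\ell}$ and $R^{j+\ell}$. Notably, the argument does not even use the hypothesis $\rho,R\geq 1$.
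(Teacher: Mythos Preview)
Your proof is correct and takes a genuinely different route from the paper's. The paper argues by a case distinction on whether $\rho\geq\mu_{k+\ell}R$ or $\rho\leq\mu_{k+\ell}R$, where $\mu_k=M_k/M_{k-1}$; in each case the monotonicity of $(\mu_k)_k$ shows that the left-hand side is bounded by one of the two summands on the right individually. Your argument instead passes through the geometric-mean inequality $M_{k+\ell}\leq M_k^{j/(j+\ell)}M_{j+k+\ell}^{\ell/(j+\ell)}$ and then invokes weighted AM--GM, avoiding any case analysis. The paper's approach yields the marginally stronger conclusion that the left-hand side is always dominated by a single term of the right-hand side, while your approach is more streamlined and makes transparent that the hypothesis $\rho,R\geq 1$ is never used (in fact the paper's proof does not use it either, though this is less visible there).
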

\begin{proof}
	If $\mu_k=M_{k}/M_{k-1}$ then \eqref{logconvexity} implies that the sequence
	$(\mu_k)_k$ is increasing.
	For $\rho\geq \mu_{k+\ell}R$ we obtain that
	\begin{equation*}
	M_{k+\ell}R^\ell=M_k\mu_{k+1}R\dots \mu_{k+\ell}R\leq \rho^{\ell}M_k.
	\end{equation*}
	If $\rho\leq \mu_{k+\ell}R$ then
	\begin{equation*}
	\rho^j\leq \mu_{k+\ell+1}R\dots \mu_{k+\ell+j}R\leq \frac{M_{j+k+\ell}}{M_{k+\ell}}R^j.
	\end{equation*}
\end{proof}
\begin{Lem}[{cf.\ \cite[Lemma 5.7]{MR3285413}}]\label{D-AuxLemma2}
	Let $\omega\in\W_0$  and 
	$\fW=\{\bW^{\rho}:\rho>0\}$ be the weight matrix associated to $\omega$.
	If $\omega_{\rho}$ is the weight function associated to $\bW^{\rho}$
	then 
	\begin{equation*}
	\omega_{\rho}(t)\leq\frac{\omega(t)}{\rho}
	\end{equation*}
	for all $t> 0$.
\end{Lem}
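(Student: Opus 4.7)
The plan is to unfold the definitions of $\omega_\rho$ and of the weight matrix $\fW$, substitute, and then recognise the resulting supremum as bounded by the Young biconjugate of $\varphi_\omega$. By construction $W^\rho_k=\exp[\rho^{-1}\varphi^\ast_\omega(\rho k)]$, so
\begin{equation*}
\omega_\rho(t)=\sup_{k\in\N_0}\bigl(k\log t-\log W^\rho_k\bigr)
=\frac{1}{\rho}\sup_{k\in\N_0}\bigl(\rho k\log t-\varphi^\ast_\omega(\rho k)\bigr).
\end{equation*}
Enlarging the discrete supremum over $u=\rho k\in\rho\N_0$ to a continuous supremum over $u\in[0,\infty)$ gives
\begin{equation*}
\omega_\rho(t)\leq\frac{1}{\rho}\sup_{u\geq 0}\bigl(u\log t-\varphi^\ast_\omega(u)\bigr).
\end{equation*}

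Now I would split into the cases $t\geq 1$ and $0<t<1$. For $t\geq 1$ we have $s:=\log t\geq 0$, and by the biconjugation property $\varphi^{\ast\ast}_\omega=\varphi_\omega$ recalled in Subsection~\ref{subsec:Weightfct}, the right-hand side equals $\rho^{-1}\varphi_\omega(s)=\rho^{-1}\omega(t)$, which is the desired inequality. For $0<t<1$ we observe that $\omega\vert_{[0,1]}\equiv 0$ since $\omega\in\W_0$, so we must show $\omega_\rho(t)\leq 0$. But $\varphi^\ast_\omega(u)\geq 0\cdot u-\varphi_\omega(0)=0$ for every $u\geq 0$, hence the term $\rho k\log t-\varphi^\ast_\omega(\rho k)$ is $\leq 0$ for all $k\in\N_0$ (using $\log t\leq 0$ and $\varphi^\ast_\omega\geq 0$); the supremum is therefore $\leq 0$.

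There is no real obstacle here: the only subtle points are ensuring that $\varphi^\ast_\omega(0)=0$ so that the $k=0$ term does not create a positive contribution, and verifying that the biconjugation identity applies on $[0,\infty)$ — both facts are already recorded in Subsection~\ref{subsec:Weightfct}. The argument does not use any property of $\omega$ beyond membership in $\W_0$ (in particular, neither \eqref{om2} nor \eqref{om7} is needed), which is consistent with the lemma being stated for the whole class $\W_0$.
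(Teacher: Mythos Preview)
Your proof is correct and is essentially the same as the paper's: both unfold $\omega_\rho$ via $W^\rho_k=\exp[\rho^{-1}\varphi^\ast_\omega(\rho k)]$, enlarge the discrete supremum to a continuous one, and identify the result as $\rho^{-1}\varphi^{\ast\ast}_\omega(\log t)=\rho^{-1}\omega(t)$. The paper compresses this into a single displayed line without an explicit case split for $0<t<1$, whereas you spell out that case; both treatments are valid.
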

\begin{proof}
	Since $\W^\rho_k=\exp\left[\rho^{-1}\varphi^\ast_\omega(\rho k)\right]$
	we obtain
	\begin{equation*}
	\omega_\rho(t)=\sup_{k\in\N_0}\left[k\log t-\rho^{-1}\varphi^\ast_\omega(\rho k)\right]
	\leq\sup_{s\geq 0}\left[s\log t-\rho^{-1}\varphi_\omega^\ast(\rho s)\right]
	=\frac{1}{\rho}\omega(t).
	\end{equation*}
\end{proof}
Before we can begin with the proof of Theorem \ref{MetivierThm2} we also need to
take a closer look at the scale $(\bN^q)_q$, given by $N_k^q=q^{k^2}$, specifically.
Recall that $\fN=\{\bN^q:\,q>1\}$ is the weight matrix associated to
$\omega_2(t)=(\max\{0;\log t\})^2$. More precisely, 
$\varphi_2(t)=\omega_2\circ\exp(t)=t^2$ and $\varphi_2^\ast(t)=t^2/4$.
Hence the canonical weight matrix $\fW^2=\{\bW^{2,\rho}:\,\rho>0\}$
associated to $\omega_2$
is given by $W^{2,\rho}_k=\exp(\rho k^2/4)$, cf.\ \cite[Section 5.5]{MR3711790}.
Thus it is convenient to set $\lambda=\log q$ and to write 
in a slight abuse of notation $\bN^\lambda=\bN^q$.
It follows that $\bN^\lambda=\bW^{2,4\lambda}$ and therefore Lemma \ref{D-AuxLemma2}
implies that
\begin{equation}\label{MellinEstimate}
\omega_{\bN^\lambda}(t)\leq \frac{(\log t)^2}{4\lambda},\qquad t\geq 1.
\end{equation}

Now observe that $(\bN^\lambda)_\lambda$ is the weak scale 
associated to the 
generating function $\zeta(t,\lambda)=\lambda t^2$,
which clearly can be extended to an entire function $\zeta(z,\lambda)$ 
in the first variable. Hence $\theta(z,\lambda)=\exp\circ\zeta(z,\lambda)$
is holomorphic in $z$ and when $\lambda$ is fixed
we have that for every strip $G=\{w=u+iv\in\C:\,a<u<b\}$ there is a constant $C>0$
such that $\Betr{\theta(z,\lambda)}\leq Ce^{-\Betr{\imag z}^2}$. 
It follows that
$\theta(\,.\,,\lambda)$ is the Mellin transform of the function
\begin{equation*}
\Theta(t,\lambda)=\frac{1}{2\pi i}\int\limits_{i\R}\!t^{-w}\theta(w,\lambda)\,dw
=\frac{1}{2\pi}\int\limits_{-\infty}^{\infty}\!
e^{-i\sigma\log t}e^{-\lambda\sigma^2}\,d\sigma,
\end{equation*}
that is
\begin{equation*}
\theta(z,\lambda)=\int\limits_{0}^{\infty}\!t^{z-1}\Theta(t,\lambda)\,dt,
\end{equation*}
see e.g.\ \cite{zbMATH03028224} or \cite{MR1511564}. In particular
\begin{equation}\label{GammaIdentity}
N_k^\lambda=\int\limits_0^\infty\! t^{k-1}\Theta(t,\lambda)\,dt.
\end{equation}
If we set $t=e^s$ we can compute 
\begin{equation*}
\Theta(e^s,\lambda)=\frac{1}{2\pi}\int\limits_{-\infty}^{\infty}\! e^{-i\sigma s}e^{-\lambda\sigma^2}\,d\sigma
=\frac{1}{\sqrt{4\pi\lambda}}e^{-\tfrac{s^2}{4\lambda}}
\end{equation*}
and therefore
\begin{equation*}
\Theta(t,\lambda)=\frac{1}{\sqrt{4\pi\lambda}}\exp\left[-\frac{(\log t)^2}{4\lambda}\right].
\end{equation*}

In order to prove Theorem \ref{MetivierThm2} it is enough to show the following
statement.
\begin{Thm}\label{MetivierThm3}
	Let $P$ be a differential operator with analytic coefficients on $U$ which is not
	elliptic at some point $x_0\in U$.
	Then  we have
	\begin{equation*}
	\Rou{\bN^\lambda}{U}\subsetneq\vRou{\bN^\lambda}{U}
	\end{equation*}
for all $\lambda>0$.
\end{Thm}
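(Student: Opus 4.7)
The strategy is to adapt M\'etivier's proof of the non-elliptic direction of Theorem \ref{MetivierThm} from the Gevrey scale to the scale $(\bN^q)_q$, using the Mellin-type representation \eqref{GammaIdentity} of $N_k^\lambda$ as a guide for the one-dimensional model. For each $\lambda>0$ I plan to exhibit a distribution $u\in\Dp(U)$ belonging to $\vRou{\bN^\lambda}{U}$ but not to $\Rou{\bN^\lambda}{U}$. Since $P$ is non-elliptic at $x_0$, after an affine change of coordinates I may assume $x_0=0$ and that there is a direction $\xi_0=e_n$ with $p_d(0,e_n)=0$.

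Fix a parameter $\mu>\lambda$ (to be pinned down later) and consider
\begin{equation*}
u(x)=\int_1^\infty a(x,\tau)\,e^{i\tau x_n}\,\Theta(\tau,\mu)\,d\tau,
\end{equation*}
where $a(x,\tau)$ is a smooth amplitude localizing $x$ near $0$ -- either a fixed cutoff $\chi(x)\in\D(U)$ equal to $1$ near $0$ or, if needed to absorb commutator terms, a $\tau$-rescaled cutoff $\chi(\tau^\beta x)$ for a small $\beta>0$. A Plancherel computation and a saddle-point asymptotic on
\begin{equation*}
\int_1^\infty\tau^{2k}\,\Theta(\tau,\mu)^2\,d\tau\sim C\,e^{\mu(2k+1)^2/2},
\end{equation*}
with stationary point at $\log\tau=(2k+1)\mu$, gives $\|D_n^k u\|_{L^2(V)}\gtrsim e^{\mu k^2+O(k)}$ in any neighbourhood of $0$; for $\mu>\lambda$ this exceeds every $h^k N_k^\lambda=h^k e^{\lambda k^2}$, so $u\notin\Rou{\bN^\lambda}{U}$.

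To verify $u\in\vRou{\bN^\lambda}{U}$ I would expand $P^k u$ and use that $P(e^{i\tau x_n})=p(x,\tau e_n)\,e^{i\tau x_n}$, together with $p_d(0,e_n)=0$ and analyticity of the coefficients of $P$, to deduce on the support of $a$ an estimate of the form $|p(x,\tau e_n)|\leq C\tau^{d-\delta_0}$ for some $\delta_0>0$ depending only on $P$ (analogous to the parameter $\delta(V)$ of Remark \ref{DeltaRemark}). Iterating, I expect a symbol bound
\begin{equation*}
\bigl|P^k\bigl(a(x,\tau)e^{i\tau x_n}\bigr)\bigr|\leq C^{k+1}\,\tau^{(d-\delta_0)k},
\end{equation*}
and plugging this into the integral representation of $P^k u$ combined with the Mellin identity
\begin{equation*}
\int_1^\infty\tau^{(d-\delta_0)k}\,\Theta(\tau,\mu)\,d\tau\leq N^\mu_{(d-\delta_0)k+1}=e^{\mu(d-\delta_0)^2 k^2+O(k)}
\end{equation*}
yields $\|P^k u\|_{L^2(V)}\leq C^{k+1}\,e^{\mu(d-\delta_0)^2 k^2+O(k)}$. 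The required Roumieu-vector bound $\|P^k u\|_{L^2(V)}\leq Ch^k N_{dk}^\lambda=Ch^k e^{\lambda d^2 k^2}$ thus holds as soon as $\mu(d-\delta_0)^2\leq\lambda d^2$. Since $\delta_0>0$, the open interval $\bigl(\lambda,\lambda d^2/(d-\delta_0)^2\bigr)$ is non-empty, and picking any $\mu$ in it completes the construction.

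The hard part will be proving the symbol estimate for $P^k(a(\cdot,\tau)e^{i\tau x_n})$ with the exponent $d-\delta_0$ uniformly in $k$ and $\tau$. A naive iteration $P(\chi e^{i\tau x_n})=p(x,\tau e_n)\chi\,e^{i\tau x_n}+[\text{commutators involving }D\chi]$ produces many terms in which derivatives land on the cutoff rather than on the oscillatory factor, and these terms a priori restore the full $\tau^{dk}$ growth. Absorbing them into the gain factor $\tau^{-\delta_0 k}$ requires either a $\tau$-dependent dilation of the cutoff with carefully tuned exponent $\beta$, or the combinatorial bookkeeping worked out in \cite{doi:10.1080/03605307808820078} and \cite{MR654409} for the Gevrey case; either approach should transfer to the present situation with only cosmetic changes. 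All remaining steps -- the change of coordinates, the Laplace/saddle-point asymptotics of the Mellin integrals, and the final comparison of exponents $\mu(d-\delta_0)^2$ versus $\lambda d^2$ -- are routine given the machinery already assembled in Sections \ref{sec:Scales} and \ref{WeightFctScales}.
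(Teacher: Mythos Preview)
Your overall architecture matches the paper's: one constructs $u(x)=\int_1^\infty \psi\bigl(t^\beta(x-x_0)\bigr)\,\Theta(t,\mu)\,e^{it(x-x_0)\xi_0}\,dt$, uses \eqref{GammaIdentity} to show $D_{\xi_0}^k u(x_0)\sim N^\mu_{k+1}$ so that $u\notin\Rou{\bN^\lambda}{U}$ for $\mu>\lambda$, and then estimates $P^ku$ by tracking the amplitude recursion. The parameter window $\mu\in\bigl(\lambda,\lambda d^2/(d-\beta)^2\bigr)$ you identify is exactly the one the paper uses.

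There is, however, a genuine gap in your vector estimate. First, the quantity you call $\delta_0$ is \emph{not} determined by $P$ and has nothing to do with $\delta(V)$ from Remark \ref{DeltaRemark}: there is no hypoellipticity or principal-type hypothesis here. The only source of gain is the shrinking support $|x-x_0|\leq C t^{-\beta}$, which together with $p_d(x_0,\xi_0)=0$ and analyticity gives $|p(x,t\xi_0)|\leq Ct^{d-\beta}$ with $\beta$ a \emph{freely chosen} exponent. Second, and more importantly, the one-term bound $|P^k(a\,e^{i\tau x_n})|\leq C^{k+1}\tau^{(d-\beta)k}$ is unachievable. Each application of $P$ may land derivatives on the rescaled cutoff, producing factors $t^{\beta|\nu|}|D^\nu\psi|$ that accumulate; the correct estimate is the two-term bound
\[
|Q_k(x,t)|\;\leq\; C\,A^k\Bigl(t^{(d-\beta)k}\;+\;t^{(2d-1)k\beta}\,N^{\lambda_0}_{dk}\Bigr),
\]
where $\lambda_0>0$ is the regularity class of the cutoff $\psi$. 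These commutator terms cannot be ``absorbed'' into $t^{(d-\beta)k}$; they live at a different scale. Controlling the second term against $\Theta(t,\mu)$ forces $\psi\in\Rou{\bN^{\lambda_0}}{U}$ with $\lambda_0<\lambda$ strictly, and then one exploits the factorization $N^{\lambda_0}_{dk}\,N^{\lambda-\lambda_0}_{dk}=N^\lambda_{dk}$ specific to the $q$-Gevrey sequences, together with \eqref{MellinEstimate}, to reach $N^\lambda_{dk}$. The balance of $\beta,\lambda_0,\mu$ is then a genuine three-parameter optimization, not the single-parameter choice you describe; this is the substance, not cosmetics, of the adaptation from \cite{doi:10.1080/03605307808820078}.
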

\begin{proof}
It is sufficient for given $\lambda>0$ to construct  a function $u$ which
is an $\{\bN^\lambda\}$-vector of $P$ but is not in $\Rou{\bN^\lambda}{U}$.
In order to do so we shall try to follow the pattern of the proof of  
\cite[Theorem 2.3]{doi:10.1080/03605307808820078}.
From now on let $\lambda>0$ be arbitrary but fixed and choose parameters
$\eps$, $\lambda^\prime>0$ and $0<\lambda_0<\lambda$ depending on $\lambda$
which will later be specified.
Since $P$ is not elliptic at $x_0$ there exists some $\xi_0\in S^{n-1}$ such that
\begin{equation}\label{notElliptic}
p_d(x_0,\xi_0)=0.
\end{equation}
Let $\delta>0$ be such that $B_0=\{x\in\R^n:\;\Betr{x-x_0}<2\delta\}\Subset U$ and let
$\psi\in\Rou{\bN^{\lambda_0}}{\R^n}$ be such that 
$\supp \psi\subseteq \{x\in\R^n:\, \Betr{x}<2\delta\}$ and
$\psi(x)=1$ for $\Betr{x}\leq \delta$.
Thus there are constants $C_0,h_0>0$ such that
\begin{equation}\label{D-Estimate1}
\Betr{D^\nu\psi(x)}\leq C_0h_0^{\lvert\nu\rvert}N^{\lambda_0}_{\lvert\nu\rvert}
\end{equation}
for all $x\in \R^n$.
This is possible since $\bN^\tau$ is non-quasianalytic for any $\tau>0$.

Then we define the function $u$ to be of the form
\begin{equation*}
u(x)=\int\limits_{1}^\infty\! \psi\left(t^\eps(x-x_0)\right)
\Theta(t,\lambda^\prime)e^{it(x-x_0)\xi_0}\,dt.
\end{equation*}
It follows that
\begin{equation*}
D^k_{\xi_0} u(x_0)=\int\limits_{1}^{\infty}\! t^k\Theta(t,\lambda^\prime)\,d t,
\end{equation*}
where $D_{\xi_0}=-i\tfrac{\partial}{\partial {\xi_0}}$ is the directional derivative in direction $\xi_0$.
Thence \eqref{GammaIdentity} implies that
\begin{equation*}
D^k_{\xi_0}u(x_0)= N^{\lambda^\prime}_{k+1} -\int\limits_0^1\!
t^k\Theta\left(t,\lambda^\prime\right)\,dt.
\end{equation*}
Since $\int_0^1 t^k\Theta(t,\lambda^\prime)\,dt\rightarrow 0$  when $k \rightarrow\infty$ we have shown that
 $u$ cannot be of class $\{\bN^\tau\}$ in any neighborhood of $x_0$
for all $\tau<\lambda^\prime$.

On the other hand, it is easy to see that
\begin{equation*}
P^ku(x)=\int_{1}^{\infty}\limits\!Q_k(x,t)\Theta(t,\lambda^\prime)
e^{it(x-x_0)\xi_0}\,dt
\end{equation*}
where $Q_k$ is defined recursively by
\begin{align*}
Q_0(x,t)&=\psi\left(t^\eps(x-x_0)\right)\\
\intertext{and}
Q_{k+1}(x,t)&=\sum_{\alp \leq d}\frac{1}{\alpha!}\partial^\alpha_\xi
p(x,t\xi_0) D^\alpha_xQ_k(x,t).
\end{align*}
Since $P$ is analytic in $U$ we have that there is a constant $H>0$ such that
 for all $\nu,\alpha\in\N_0^n$ with $\alp\leq d$, all $x\in B_{2\delta}(x_0)$ and
 all $t\geq 1$:
\begin{gather}\label{D-DiffProp1}
\Betr{D^\nu_x\partial^\alpha_\xi p(x,t\xi_0)}\leq H^{\lvert\nu\rvert+1}
\lvert\nu\rvert! t^{d-\alp},\\
\intertext{and due to \eqref{notElliptic} for all $0<\eps<1$ there is $C_1>0$
such that for all $t>0$ and all $x\in U$ with $\Betr{x-x_0}\leq 2\delta t^{-\eps}$:}
\Betr{p\bigl(x,t\xi_0\bigr)}\leq C_1 t^{d-\eps}\label{D-DiffProp2}.
\end{gather}
Using the above estimates \eqref{D-DiffProp1} and \eqref{D-DiffProp2} together with
Lemma \ref{D-AuxLemma1} it is easy to see that we can adapt the proof
of \cite[Lemme 2.1]{doi:10.1080/03605307808820078} and therefore obtain the following
statement.
\begin{Lem}
	There exists a constant $A>0$ such that for all $k\in\N_0$,
	all $\nu\in\N_0^n$, all $x\in B_0$ and all $t\geq 1$ we have
	\begin{equation}\label{D-eq1}
	\Betr{D^\nu_xQ_k(x,t)}\leq C_0\left(h_0 t^\eps\right)^{\Betr{\nu}}A^k
	\left[t^{(d-\eps)k}N^{\lambda_0}_{\Betr{\nu}}
	+t^{(2d-1)k\eps}N^{\lambda_0}_{\Betr{\nu}+kd}\right].
	\end{equation}
\end{Lem}
If we set $\nu=0$ in \eqref{D-eq1} then we get
\begin{equation*}
\Betr{Q_k(x,t)}\leq C_0A^k\left(t^{(d-\eps)k}+t^{(2d-1)k\eps}N^{\lambda_0}_{dk}\right).
\end{equation*}
When we set $\rho=t^{1-\eps/d}$ and $R=t^{\eps(2-1/d)}$ then 
$\rho^{dk}=t^{(d-\eps)k}$ and $R^{dk}=t^{(2d-1)k\eps}$, respectively.
Hence, if $\lambda_1=\lambda -\lambda_0$ then we have
\begin{align*}
\rho^{dk}&\leq N^{\lambda}_{dk}e^{\omega_{\bN^{\lambda}}(\rho)}
\leq N^{\lambda}_{dk}
\exp\left[\frac{\left(\log t\right)^2}{4\lambda\tfrac{d^2}{(d-\eps)^2}}\right],
\\
R^{dk}&\leq N_{dk}^{\lambda_1}e^{\omega_{\bN^{\lambda_1}}(R)}
\leq N^{\lambda_1}_{dk}
\exp\left[\frac{(\log t)^2}{4\lambda_1\tfrac{d^2}{\eps^2(2d-1)^2}}\right]
\end{align*}
by \eqref{MellinEstimate}
and thus
\begin{equation*}
\Betr{Q_k(x,t)}\leq C_0A^kN^\lambda_{dk}\exp\left[
\frac{(\log t)^2}{4}\left(\frac{(d-\eps)^2}{\lambda d^2}
+\frac{\eps^2(2d-1)^2}{\lambda_1d^2}\right)\right].
\end{equation*}
If for fixed $\lambda>0$ we choose the parameters
$0<\lambda_0<\lambda$ and $\eps$ such that
\begin{equation*}
0<\eps\leq\frac{d\sqrt{\lambda-\lambda_0}}{\sqrt{\lambda-\lambda_0}+\sqrt{\lambda}(2d-1)}<\frac{1}{2},
\end{equation*}
then
\begin{equation*}
\frac{\eps^2(2d-1)^2}{(\lambda-\lambda_0)d^2}\leq \frac{(d-\eps)^2}{\lambda d^2}.
\end{equation*}

It follows that 
\begin{equation*}
\Betr{P^ku(x)}\leq C_0A^kN_{dk}^\lambda\int\limits_{1}^\infty
\!\exp\left[\frac{\left(\frac{(d-\eps)^2}{\lambda d^2}
	-\frac{1}{\lambda^\prime}\right)(\log t)^2}{4}\right]\,dt.
\end{equation*}
The integral converges as long as
\begin{equation*}
\lambda^\prime< \frac{d^2}{(d-\eps)^2}\lambda.
\end{equation*}
The proof of Theorem \ref{MetivierThm3} is complete if we put 
additionally $\lambda^\prime>\lambda$.
\end{proof}

\appendix
\section{Subelliptic estimates}\label{SubAppendix}
The aim of this appendix is to indicate how 
\eqref{Subelliptic} implies \eqref{Subelliptic1}.
Following \cite{MR0350177} we introduce the local Sobolev space $H^\sigma(V)$,
$\sigma\in\R$, over
an arbitrary open set $V\subseteq\R^n$ as the quotient space
$H^\sigma(V)=H^\sigma(\R^n)/F^\sigma(V)$,
where $F^\sigma(V)$ is the space of all functions $f\in H^\sigma(\R^n)$ which
vanish on $V$. Clearly $F^\sigma(V)$ is a closed subspace of $H^\sigma(\R^n)$
hence $H^\sigma(V)$ is a Hilbert space with the structure inherited from
$H^\sigma(\R^n)$.
\begin{Rem}
	It is easy to see that $H^0(V)=L^2(V)$ for all open sets $V\subseteq\R^n$.
	However, if we consider the classical Sobolev space
	\begin{equation*}
		W^k(V)=\left\{f\in L^2(V):\; \partial^\alpha f\in L^2(V)\;\fa \alp\leq k\right\}
	\end{equation*}
then we cannot conclude in general that $W^k(V)=H^k(V)$ for $k\in\N$, unless 
$V$ is a relatively compact set in $\R^n$ with smooth boundary.
\end{Rem}
We denote the (quotient) norm of $H^\sigma(V)$ by $\norm[H^\sigma(V)]{\:.\:}$.
For $f\in H^\infty_{loc}$ this agrees with the previous definition of 
$\norm[H^\sigma(V)]{f}$ in \eqref{SobolevNorm}.
More precisely, if $U$ is an open set in $\R^n$, $V\Subset U$ and $\iota$ is 
the natural inclusion map of $H^\infty_{loc}(U)$ into $H^\sigma(V)$ then
$\norm[H^\sigma(V)]{f}=\norm[H^\sigma(V)]{\iota(f)}$ for all
$f\in H^\infty_{loc}(U)$.

Now suppose that $U$ and $V$ are given open sets in $\R^n$ such that $V\Subset U\subseteq\R^n$  and $\sP=\{P_1,\dotsc,P_\ell\}$
is a family of analytic partial differential operators of orders $d_j$ on $U$
satisfying 
 \begin{equation}\label{SubellipticVariant}
 	\norm[\sigma+\eps]{\varphi}\leq 
 	C\left[\sum_{j=1}^\ell \norm[\sigma]{P_j\varphi}+\norm[\sigma]{\varphi}\right]
 \end{equation}
for all $\varphi\in\D(V)$ and some constant $C>0$ independent of $\varphi$.
If we multiply all of the coefficients of the operator $P_j$ with a  test function
$\chi\in\D(U)$ satisfying $\chi\vert_V=1$ we may assume that the operator $P_j$
is a continuous mapping from the space $H^\sigma(\R^n)$ 
into $H^{\sigma-d_j}(\R^n)$ for all $\sigma$. This clearly does not
change the value of $\norm[\sigma]{P_j\varphi}$ when $\varphi\in\D(V)$
or of $\norm[H^\sigma(V)]{P_jg}$ when $g\in H^\sigma_{loc}(U)$.
Therefore the mapping $P_j:H^\infty(\R^n)\rightarrow H^\infty(\R^n)$, where
$H^\infty(\R^n)=\proj_{\sigma} H^\sigma(\R^n)$, is also continuous.
Moreover, observe that
 $F^\infty(V)=\bigcap_{\sigma} F^\sigma(V)$ is closed in $H^\infty(\R^n)$.
 Similarly, $H^\infty(V)=\proj_{\sigma} H^\sigma(V)$ is a Fr\'{e}chet space and
 $P_j$ is a continuous automorphism on $H^\infty(V)$ since $P_j$ is a local operator.
 
We recall that we want to show that \eqref{SubellipticVariant} implies
\begin{equation}\label{SubellipticVariant1}
	\norm[H^{\sigma+\varepsilon}(V)]{g}\leq C\left[\sum_{j=1}^\ell 
	\norm[H^\sigma(V)]{P_jg}+\norm[H^\sigma(V)]{g}\right]
\end{equation}
for all $g\in \E(U)$.
 
For $V\Subset U$ given we denote by $\iota_V: H^\infty_{loc}(U)\rightarrow H^\infty(V)$ the canonical inclusion mapping.
Due to the continuity of the operators $P_j$ we would be done 
if we could show that $\iota_V(H^\infty_{loc}(U))\subseteq\hat{\D}(V)$, where
$\hat{\D}(V)$ is the closure of $\D(V)$ in the topology of $H^\infty(V)$.
If $V=B$ we have the following result:
\begin{Thm}\label{BallTheorem}
	Let $U\subseteq\R^n$ be an open set and $B$ be an open ball such that $B\Subset U$.
	 Then	we have $\iota_B(H^\infty_{loc}(U))\subseteq\hat{\D}(B)$.
\end{Thm}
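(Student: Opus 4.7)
The plan is to combine an extension-by-cutoff, a scaling, and a smooth cutoff inside $B$ to produce an approximating sequence in $\D(B)$. Given $f \in H^\infty_{loc}(U)$, I would choose $\chi \in \D(U)$ with $\chi \equiv 1$ in a neighborhood of $\bar B$ and set $F := \chi f \in H^\infty(\R^n)$, noting $\iota_B(f) = \iota_B(F)$, so it suffices to approximate $F$. Writing $B = B_r(x_0)$, for $\lambda > 1$ close to $1$ I would define $F_\lambda(x) := F\bigl(x_0 + (x - x_0)/\lambda\bigr)$, so that $\widehat{F_\lambda}(\xi) = \lambda^n e^{i(\lambda - 1)x_0 \cdot \xi}\hat F(\lambda \xi)$. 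A dominated convergence argument on the Fourier side, using that $(1 + \xit^2)^\sigma \lvert\hat F(\xi)\rvert^2$ is integrable for every $\sigma$, gives $F_\lambda \to F$ in $H^\sigma(\R^n)$ as $\lambda \to 1^+$ for each $\sigma \in \R$, and in fact with super-polynomial rate in $(\lambda - 1)$ because $F$ lies in every Sobolev space.

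The key geometric observation is that $F_\lambda|_B$ depends only on the values of $F$ on the strictly smaller ball $B_{r/\lambda}(x_0) \Subset B$. Accordingly I would choose $\chi_\lambda \in \D(B)$ with $\chi_\lambda \equiv 1$ on an intermediate ball $B_{(r + r/\lambda)/2}(x_0)$ and $\lvert\partial^\alpha \chi_\lambda\rvert \leq C_\alpha (\lambda - 1)^{-\alp}$, and set $\varphi_\lambda := \chi_\lambda F_\lambda \in \D(B)$. These will be the candidate approximants, and the claim reduces to showing that $\norm[H^\sigma(B)]{\varphi_\lambda - F} \to 0$ for every $\sigma$.

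To verify convergence in the quotient seminorm, one must exhibit for each $\sigma$ an extension $G_\lambda^\sigma \in H^\sigma(\R^n)$ with $G_\lambda^\sigma|_B = (\varphi_\lambda - F)|_B$ whose norm tends to zero. The decomposition
\begin{equation*}
\varphi_\lambda - F = \chi_\lambda(F_\lambda - F) + (\chi_\lambda - 1) F
\end{equation*}
on $\R^n$ furnishes a first candidate. The initial summand tends to zero in $H^\sigma(\R^n)$: the super-polynomial decay of $\norm[\sigma]{F_\lambda - F}$ defeats the $(\lambda - 1)^{-\lceil \sigma \rceil}$ growth of the $H^\sigma$-multiplier norm of $\chi_\lambda$. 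The second summand, by contrast, does not vanish in $H^\sigma(\R^n)$; however, on $B$ it is supported in the thin shell $B \setminus B_{(r + r/\lambda)/2}(x_0)$ of width $O(\lambda - 1)$, and the freedom afforded by the quotient structure permits modifying the extension outside $B$ to absorb this contribution. The main technical obstacle is precisely here: constructing the modified extension so that $\norm[\sigma]{G_\lambda^\sigma} \to 0$ for every $\sigma$ simultaneously, by carefully balancing the shell's vanishing measure against the polynomial blow-up of derivatives of both $\chi_\lambda$ and the extension. This delicate interplay, exploited through the infimum in the quotient-norm definition, is the heart of the argument and the part most likely to require nontrivial bookkeeping to carry out rigorously.
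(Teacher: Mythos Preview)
Your claimed ``super-polynomial rate in $(\lambda-1)$'' for $\norm[\sigma]{F_\lambda-F}$ is false, and without it the argument for the first term collapses. For a generic $F\in\D(\R^n)$ one has, by a one-term Taylor expansion, $F_\lambda(x)-F(x)=(\lambda-1)\,G(x)+O((\lambda-1)^2)$ with $G(x)=-\lambda^{-1}(x-x_0)\cdot\nabla F(x)$ not identically zero; hence $\norm[\sigma]{F_\lambda-F}\sim c_\sigma(\lambda-1)$ is only \emph{linear}. Membership of $F$ in every Sobolev space lets you take $\sigma$ arbitrary, but it does not improve the exponent in $(\lambda-1)$. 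Now look at the Leibniz expansion of $\partial^\alpha\bigl(\chi_\lambda(F_\lambda-F)\bigr)$: a term with $\partial^\beta\chi_\lambda$, $\beta\neq 0$, has $L^\infty$-size $\sim(\lambda-1)^{-\bet}\cdot(\lambda-1)$ and is supported in a shell of measure $\sim(\lambda-1)$, giving $L^2$-norm $\sim(\lambda-1)^{3/2-\bet}$. For $\bet\geq 2$ this blows up, so $\chi_\lambda(F_\lambda-F)$ does \emph{not} tend to $0$ in $H^k(\R^n)$ for $k\geq 2$. Your treatment of the second summand is, as you say yourself, left unfinished; so the proposal establishes convergence in $H^\sigma(B)$ only for $\sigma<2$.

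The paper proceeds differently and avoids the dilation altogether. It picks cutoffs $\psi_j\in\D(B)$ with $\psi_j\equiv 1$ on an exhaustion $K_j\nearrow B$ and \emph{exterior} cutoffs $\lambda_j$ with $\supp\lambda_j\subseteq U\setminus\overline{B}$, then sets $\varphi_j=\psi_j g$ and $h_j=\lambda_j\chi g$. Since $h_j$ vanishes on $B$ it lies in every $F^\sigma(B)$, so one only has to show $\chi g-\varphi_j-h_j\to 0$ in each $H^\sigma(\R^n)$. The point is that this difference is supported in a shell $L_j\setminus K_j$ collapsing to $\partial B$ (which has Lebesgue measure zero) and is uniformly bounded in $L^\infty$; from this one gets pointwise convergence of the Fourier transforms and a dominating function via Paley--Wiener, whence dominated convergence finishes the job. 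The moral is that the quotient structure is exploited not by juggling bad extensions of a shell-supported remainder, but by adding a single explicit element $h_j$ of $F^\sigma(B)$ that cancels everything outside a thin neighbourhood of $\partial B$. If you want to repair your route, drop the super-polynomial claim and instead build the analogue of $h_j$ directly: choose an exterior cutoff $\lambda$-close to $\overline{B}$ so that $\chi_\lambda F_\lambda-F$ plus that correction is supported in a shell of vanishing measure with uniform sup-bound.
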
 
\begin{proof}
	For each $g\in H^\infty_{loc}(U)=\E(U)$ we have to find a 
	sequence $\varphi_j\in\D(B)$ such that 
	$\dot{\varphi}_j=\varphi_j+F^\infty(B)$ converges to $\dot{g}=\iota_B(g)$
	in $H^\infty(B)$.
	A representative of $\dot{g}$ is given by $\chi g$ where
	$\chi\in\D(U)$ with $\chi\vert_B=1$.
	We choose two sequences $(K_j)_j$, $(L_j)_j$ of compact subsets of $U$ with the
	following properties:
	\begin{itemize}
		\item $K_j\subseteq B$ and $\dist(K_j,U\setminus B)\rightarrow 0$ when $j\rightarrow \infty$.
		\item $\overline{B}\subseteq L_j^\circ$ and $\dist(\overline{B},\partial L_j)
		\rightarrow 0$ if $j\rightarrow \infty$.
	\end{itemize}
	For each $j\in\mathbb{N}$ choose test functions $\psi_j\in\D(B)$ and $\lambda_j\in\D(U)$
	such that $0\leq\psi_j,\lambda_j\leq 1$, $\psi_j\vert_{K_j}=1$, $\lambda_j\vert_{\supp\chi\setminus L_j}=1$ and
	$\supp \lambda_j\subseteq U\setminus \overline{B}$.
	We set $\varphi_j=\psi_j g\in\D(B)$ and $h_j=\lambda_j\chi g\in\D (U)$.
	Then $\supp(\chi g-\varphi_j-h_j)\subseteq L_j\setminus K_j$ and
	\begin{equation*}
		\left\lvert\,\int_{L_j\setminus K_j} (\chi(x) g(x)-\varphi_j(x)-h_j(x))\Phi(x)\,dx\,
		\right\rvert\leq
		\sup \Betr{(\chi g-\varphi_j-h_j)\Phi}\cdot \underbrace{\Betr{L_j\setminus K_j}}_{\rightarrow \Betr{\partial B}=0}
	\end{equation*}
	for $\Phi\in\E(U)$, i.e.\ $\chi g-\varphi_j-h_j\rightarrow 0$ in $\temp$ and thus
	$\mathcal{F}(\chi g-\varphi_j-h_j)\rightarrow 0$ in $\temp$ since 
	$\mathcal{F}:\temp\rightarrow\temp$ 
	is continuous.
	But this means that $\mathcal{F}(\chi g-\varphi_j-h_j)(\xi)\rightarrow 0$
	almost everywhere. Note also that since $\chi g-\varphi_j-h_j\in \D(\supp\chi\!\setminus\! K_j)$ and
	\begin{equation*}
		\Betr{\chi g-\varphi_j-h_j}\leq\sup\Betr{\chi g}
	\end{equation*}
	we have by the Paley-Wiener Theorem, see \cite[page 181]{MR1996773},
	that for each $N\in\N$ there is a constant $C_N$ depending on 
	$N$, $g$ and $\chi$ such that
	\begin{equation*}
		\Betr{\mathcal{F}(\chi g-\varphi_j-h_j)(\xi)}\leq C_N(1+\xit)^{-N}
	\end{equation*}
	for all $\xi\in\R^n$.
	Hence the dominated convergence theorem implies that
	\begin{equation*}
		\norm[\sigma]{\chi g-\varphi_j-h_j}^2=\int \bigl(1+\xit^2\bigr)^{\sigma}
		\Betr{\mathcal{F}(\chi g-\varphi_j-h_j)(\xi)}^2\,d\xi
		\longrightarrow 0
	\end{equation*}
	for all $\sigma\in\R$.
	It follows that $\dot{\varphi}_j\rightarrow \dot{g}$ in $H^\infty(B)$.
\end{proof}
For the proof of \eqref{SubellipticVariant1} we also need the following
fact.
\begin{Prop}\label{Hilbert}
	Let $E$ be a Hilbert space and $\{M_j:\;j\in I\}$ a family of closed
	subspaces of $E$. If $M=\bigcap_{j\in I} M_j$ and if
	$\norm[M]{\,.\,}$ and $\norm[M_j]{\,.\,}$ are the quotient norms of 
	$E/M$ and $E/M_j$, respectively, then 
	\begin{equation*}
		\norm[M]{x}=\sup_{j\in I}\norm[M_j]{f_j(x)},\qquad x\in E,
	\end{equation*} 
	where the $f_j$ are the induced canonical projections $E/M\rightarrow E/M_j$
	given by $x+M\mapsto x+M_j$. 
\end{Prop}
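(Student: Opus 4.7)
The plan is to exploit the Hilbert-space structure via orthogonal projections, reducing the identity to a statement about orthogonal complements. Let $P_N$ denote the orthogonal projection onto a closed subspace $N\subseteq E$; then $\norm[E/N]{x+N}=\norm{P_{N^\perp}x}$, and the standard de Morgan identity $M^\perp=\overline{\spanc\bigcup_j M_j^\perp}$ allows one to rewrite the proposition as
\begin{equation*}
\norm{P_{M^\perp}x}=\sup_{j\in I}\norm{P_{M_j^\perp}x}.
\end{equation*}

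First I would verify the easy inequality $\sup_{j}\norm[M_j]{f_j(x)}\leq\norm[M]{x}$: since $M\subseteq M_j$, any $m\in M$ satisfies $\norm[M_j]{x+M_j}\leq\norm{x-m}$, so taking the infimum over $m\in M$ gives $\norm[M_j]{f_j(x)}\leq\norm[M]{x}$ for every $j$, and the supremum inherits this bound. Equivalently, $M_j^\perp\subseteq M^\perp$ makes $P_{M_j^\perp}$ a contraction obtained by composing $P_{M^\perp}$ with an orthogonal projection of $M^\perp$ onto $M_j^\perp$.

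For the reverse inequality, I would fix $\varepsilon>0$ and pick, for each $j\in I$, an almost-minimizer $m_j\in M_j$ with $\norm{x-m_j}\leq\norm[M_j]{f_j(x)}+\varepsilon$. The net $(m_j)$ is uniformly bounded by $\norm{x}+\sup_j\norm[M_j]{f_j(x)}+\varepsilon$, so weak compactness of bounded sets in $E$ yields a weakly convergent subnet $m_{j_\alpha}\rightharpoonup m\in E$. Each $M_j$ is closed, hence weakly closed, and if the indexing family is directed so that eventually $m_{j_\alpha}\in M_{j_\alpha}\subseteq M_{j_0}$ for any prescribed $j_0$, the weak limit $m$ lies in every $M_{j_0}$, hence in $M$. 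Lower semicontinuity of the norm under weak convergence then gives
\[
\norm[M]{x}\leq\norm{x-m}\leq\liminf_\alpha\norm{x-m_{j_\alpha}}\leq\sup_j\norm[M_j]{f_j(x)}+\varepsilon,
\]
and letting $\varepsilon\to 0$ closes the estimate.

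The main obstacle is precisely the passage to the weak limit: to conclude $m\in\bigcap_j M_j$ one needs the family $\{M_j\}_{j\in I}$ to be directed by reverse inclusion, so that $M_{j_\alpha}$ eventually lies inside every fixed $M_{j_0}$. This directedness is tacit in the intended application, where the indexing parameter is the Sobolev regularity $\sigma\in\R$ and the kernels $F^\sigma(V)$ form a totally ordered family decreasing with $\sigma$. Without such directedness the equality is not valid in general (as one sees from $E=\R^2$ with $M_1,M_2$ the two coordinate axes and $M=\{0\}$), but only this directed version is needed to chain Theorem \ref{BallTheorem} through the projective-limit structure of $H^\infty(V)$ and deduce \eqref{SubellipticVariant1}, so the restriction is harmless for the subelliptic-estimate application.
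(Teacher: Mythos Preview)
Your observation that the proposition fails without extra hypotheses is correct, and the $\R^2$ counterexample with the coordinate axes is valid: for $x=(1,1)$ one has $\norm[M]{x}=\sqrt{2}$ while $\sup_j\norm[M_j]{f_j(x)}=1$. The paper's proof contains exactly the gap you implicitly flag. Its key step asserts $\bigcap_j f_j^{-1}(B_j)=B_0$ by way of an identity of the shape $\bigcap_j\bigl(A\cap M^\perp+\ker f_j\bigr)=A\cap M^\perp+\bigcap_j\ker f_j$; in your counterexample the left-hand side is the square $[-1,1]^2$ and the right-hand side is the unit disk, so that step is simply false. Your proof of the inequality $\sup_j\norm[M_j]{f_j(x)}\leq\norm[M]{x}$ is fine, and the weak-compactness argument under a directedness hypothesis is a correct route to equality in that restricted setting.

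However, your diagnosis of the intended application is off. The proposition is invoked for a \emph{fixed} Sobolev order $\sigma$, with $M_j=F^\sigma(B_j)$ where the $B_j$ are open balls covering $V$; the identity $F^\sigma(V)=\bigcap_j F^\sigma(B_j)$ comes from $V=\bigcup_j B_j$, not from varying $\sigma$. This family is not directed by reverse inclusion (the balls overlap arbitrarily), so your directedness restriction does not cover the case the appendix actually uses. The defect you uncovered in the proposition is therefore a genuine gap in the paper's argument, and your proposed fix does not repair it.
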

\begin{proof}
	Let denote the inner product on $E$ by $\langle\,.\,,\,.\,\rangle_E$
	and
	\begin{align*}
		M_j^\bot&=\{x\in E\mid \langle x,y\rangle_E=0\;\,\forall y\in M_j \}\\
		M^\bot&=\{x\in E\mid \langle x,y\rangle_E=0\;\,\forall y\in M \}
	\end{align*}
	be the orthogonal complements of $M_j$ and $M$, respectively.
	It is well-known that $M_j^\bot$ and $M_j$ are Hilbert spaces as closed subspaces
	of $E$.
	Using the canonical Hilbert space isomorphisms $M_j^\bot\cong E/M_j$,  and $M^\bot\cong E/M$ we can identify $f_j$ with the canonical projection
	$M^\bot\rightarrow M_j^\bot$.
	Since $M=\bigcap_{j\in I}M_j$ we have that $\bigcap \ker f_j=\{0\}$.
	It is easy to see that the topology on $M^\bot$ is equivalent to
	 the initial topology with respect
	to the mappings $f_j$. Indeed, the closed subsets of $M^\bot$ and $M^\bot_j$
	are exactly the sets of the form $V=A\cap M^\bot$ and $V_j=A\cap M^\bot_j$, respectively, where $A\subseteq E$ is closed.
	Clearly, the canonical topology on $M^\bot$ is finer than
	the initial topology induced by the $f_j$'s which is generated by 
	\begin{equation*}
		f^{-1}_j(V_j)=A\cap M^\bot+\ker f_j.
	\end{equation*}
	It follows that
	\begin{equation*}
		\bigcap_{j\in I}f^{-1}(V_j)=A\cap M^\bot+\bigcap_{j\in I}\ker f_j=A\cap M^\bot.
	\end{equation*}
	
	Now set $\varphi_j(x)=\norm[M_j]{x}$, $x\in E/M\cong M^\bot$.
	Obviously $\varphi_j$ is a seminorm on $E/M\cong M^\bot$.
	The same is true for 
	\begin{equation*}
		\Phi(x)=\sup_{j\in I}\varphi_j(x),\qquad x\in E/M.
	\end{equation*}
	In fact $\Phi$ is a norm. Suppose $\Phi(x)=0$ for some $x\in E/M$, thus
	$\varphi_j(x)=0$ for all $j\in I$.
	Hence $f_j(x)=0$ for all $j\in I$, since $\norm[M_j]{\,.\,}$ is a norm.
	We conclude that
	\begin{equation*}
		x\in\bigcap_{j\in I} \ker f_j=\{0\}.
	\end{equation*}
	
	If $B$ is the closed unit ball in $E$ then $B_0=B\cap M^\bot$ and 
	$B_j=B\cap M^\bot_j$ 
	are the unit balls in $M^\bot$ and $M^\bot_j$, respectively.
	By the above we know that 
	\begin{equation*}
		\bigcap f^{-1}_{j\in I}(B_j)=B_0
	\end{equation*}
	and furthermore
	\begin{equation*}
		B_\Phi=\{x\in M^\bot\mid \Phi(x)\leq 1\}=\{x\in M^\bot_j\mid \norm[M_j]{x}\leq 1,\;\,\forall j\in I\}=\bigcap_{j\in I}f^{-1}_j(B_j)
	\end{equation*}
	is the closed unit ball for the norm $\Phi$.
	Since both norms have the same closed unit ball, they have to agree everywhere.
\end{proof}

Now there are at most countable many open balls $B_j$, $j\in J$, such that
$V=\bigcup_{j\in J}B_j$. 
 In particular, $F^\sigma(V)=\bigcap_{j\in J}F^\sigma(B_j)$ for all $\sigma\in\R$.
 Hence Proposition \ref{Hilbert} implies that
 \begin{equation}\label{Norms}
 	\norm[H^\sigma(V)]{\,.\,}=\sup_{j\in I}\norm[H^\sigma(B_j)]{\,.\,}.
 \end{equation} 

As indicated above, Theorem \ref{BallTheorem} shows \eqref{SubellipticVariant1}
if $V$ is a ball. The general case follows from \ref{Norms}.

\bibliographystyle{abbrv}
\bibliography{vectors.bib}
\end{document}